\numberwithin{equation}{section}
\newtheorem{theorem}[equation]{Theorem}
\newtheorem{proposition}[equation]{Proposition}
\newtheorem{lemma}[equation]{Lemma}
\newtheorem{corollary}[equation]{Corollary}
\theoremstyle{definition}
\newtheorem{definition}[equation]{Definition}
\newtheorem{example}[equation]{Example}
\DeclareMathOperator{\Diff}{Diff}
\DeclareMathOperator{\Gr}{Gr}
\DeclareMathOperator{\LL}{Hom}
\DeclareMathOperator{\Mellin}{M}
\DeclareMathOperator{\rg}{rg}
\DeclareMathOperator{\spec}{spec}
\DeclareMathOperator{\supp}{supp}
\DeclareMathOperator{\sym}{sym}
\DeclareMathOperator{\Span}{span}
\def\a{\mathfrak a}
\def\Hol{\mathfrak{H}}
\def\m{\mathfrak m}
\def\Mero{\mathfrak{M}}
\def\C{\mathbb C}
\def\ZN{\mathbb N}
\def\R{\mathbb R}
\def\A{\mathcal A}
\def\B{\mathcal B}
\def\Dom{\mathcal D}
\def\Sing{\mathcal E}
\def\G{\mathcal G}
\def\H{\mathcal H}
\def\Jay{\mathcal I}
\def\M{\mathcal M}
\def\Z{\mathcal N}
\def\P{\mathcal P}
\def\Ring{\mathcal R}
\def\gSing{\mathcal S}
\def\Z{\mathcal Z}
\def\SingStar{\mathfrak E}
\def\DomStar{\mathfrak D}
\def\rr{\mathfrak r}
\def\ss{\mathfrak s}
\def\eps{\varepsilon}
\def\minus{\backslash}
\def\im{i}
\def\Id{I}
\def\bjay{\pmb \jmath}
\def\from{\leftarrow}
\def\open#1{\smash[t]{\overset{{}_{\,\,\circ}}{#1}{}}}
\def\set#1{\{#1\}}
\def\Set#1{\Bigl\{#1\Bigr\}}
\def\ghost#1{}
\def\rpar{)}
\def\lbra{[}
\def\csym{\,{}^c\!\sym}
\def\cT{\,{\hspace*{0.4pt}}{}^c{\hspace*{-0.4pt}}T}
\def\Wedge{\raise2ex\hbox{$\mathchar"0356$}}
\def\bP{\,{}^b\!P}
\def\bPar#1{\,{}^b\!(#1)}
\def\ointc{\ointctrclockwise}
\def\ointr{\ointclockwise}
\def\contract{\mathbin{\rfloor}}
\def\clap#1{\hbox to 0pt{\hss#1\hss}}
\def\mathclap{\mathpalette\mathclapinternal}
\def\mathclapinternal#1#2{\clap{$\mathsurround=0pt#1{#2}$}}
\begin{document}

\title[Elliptic complexes of first-order cone operators]{Elliptic complexes of first-order cone operators: ideal boundary conditions}
\author{Thomas Krainer}
\address{Penn State Altoona\\ 3000 Ivyside Park \\ Altoona, PA 16601-3760}
\email{krainer@psu.edu}
\author{Gerardo A. Mendoza}
\address{Department of Mathematics\\ Temple University\\ Philadelphia, PA 19122}
\email{gmendoza@temple.edu}

\begin{abstract}
The purpose of this paper is to provide a detailed description of the spaces that can be specified as $L^2$ domains for the operators of a first order elliptic complex on a compact manifold with conical singularities. This entails an analysis of the nature of the minimal domain and of a complementary space in the maximal domain of each of the operators. The key technical result is the nondegeneracy of a certain pairing of cohomology classes associated with the indicial complex. It is further proved that the set of choices of domains leading to Hilbert complexes in the sense of Br\"uning and Lesch form a variety, as well as a theorem establishing a necessary and sufficient condition for the operator in a given degree to map its maximal domain into the minimal domain of the next operator. 
\end{abstract}

\subjclass[2010]{Primary: 58J10; Secondary: 58J32, 35F15, 35J56}
\keywords{Manifolds with conical singularities, elliptic complexes, boundary value problems}

\maketitle

\section{Introduction}

Differential operators on a manifold with conical singularities are (modeled as) operators on a smooth manifold $\M$ with smooth boundary $\Z$, of the form $x^{-k}P$, where $P\in \Diff^k_b(\M;E,F)$. Here and elsewhere $x$ denotes a fixed defining function for $\Z$ which is positive in $\open \M$; $E$, $F\to \M$ are vector bundles, and $\Diff^k_b(\M;E,F)$ is the space of $b$-operators of order $k$ of Melrose \cite{RBM2}. These are the operators $P\in \Diff^k(\M;E,F)$ singled out by the property that $x^{-\nu}Px^\nu\in \Diff^k(\M;E,F)$ for any $\nu$.

We shall be dealing with a  complex
\begin{multline}\label{TheCComplex}
0\to C_c^\infty(\open\M;E^{0})\xrightarrow{A_0} C_c^\infty(\open\M;E^1)\to\cdots\\
\dots\to  C_c^\infty(\open\M;E^{m-1})\xrightarrow{A_{m-1}}C_c^\infty(\open\M;E^m)\to 0
\end{multline}
of first order cone operators $A_q \in x^{-1}\Diff^1_b(\M;E^q,E^{q+1})$ acting on sections of vector bundles $E^q\to\M$. The manifold $\M$ is assumed to be compact and the complex $c$-elliptic, which as in the regular case means that its $c$-symbol sequence is exact. The notion of $c$-symbol is an adaptation to cone operators of the regular symbol of a differential operator. It is defined on the $c$-cotangent bundle of $\M$, the vector bundle whose smooth sections are the smooth $1$-forms on $\M$ which are conormal to $\Z$. For details on the $c$-symbol see \cite[Section 3]{GiKrMe07}.

Fix a smooth positive $b$-density $\m_b$ on $\M$, i.e., $x\m_b$ is a positive density on $\M$, a Hermitian metric on each of the vector bundles $E^q$, and a number $\gamma\in \R$, and view $A_q$ as an unbounded operator
$$
A_q : C_c^{\infty}(\open\M;E^q) \subset x^{-\gamma}L^2_b(\M;E^q) \to x^{-\gamma}L^2_b(\M;E^{q+1}).
$$
As such, its maximal domain is 
$$
\Dom_{\max}^q = \set{u \in x^{-\gamma}L^2_b(\M;E^q);\; A_q u \in x^{-\gamma}L^2_b(\M;E^{q+1})},
$$
a Hilbert space with the graph inner product
\begin{equation}\label{InnerProduct}
\langle u,v \rangle_q = \langle u,v \rangle_{x^{-\gamma}L^2_b} + \langle A_qu,A_qv \rangle_{x^{-\gamma}L^2_b}.
\end{equation}
Let $\Dom_{\min}^q$ be the closure of $C_c^{\infty}(\open\M;E^q)$ in $\Dom_{\max}^q$ with respect to the inner product \eqref{InnerProduct}. The spaces $\Dom_{\max}^q$ and $\Dom_{\min}^q$ are the domains of the maximal and minimal extensions $A_{q,\max}$ and $A_{q,\min}$ of $A_q$ in $x^{-\gamma}L^2_b$, respectively.

The following definition goes back to Br\"uning and Lesch \cite[Section~3]{BrueningLesch1992} for general elliptic complexes and was inspired by the work of Cheeger \cite{Cheeger1979,Cheeger1980,Cheeger1983}.

\begin{definition}
By an ideal boundary condition for the complex \eqref{TheCComplex} in $x^{-\gamma}L^2_b$ one means a choice of closed domains $\Dom_{\min}^q \subset \Dom^q \subset \Dom_{\max}^q$, $q=0,\dotsc,m-1$, for each $A_q$ such that $A_q(\Dom^q) \subset \Dom^{q+1}$. (By default, $\Dom^m=x^{-\gamma}L^2_b(\M;E^m)$.)
\end{definition}

There are always the canonical choices $\Dom^q = \Dom_{\max}^q$ for all $q$, or $\Dom^q = \Dom_{\min}^q$ for all $q$, see \cite[Lemma~3.1]{BrueningLesch1992}. These are referred to, respectively, as the absolute and relative complexes. We say that there is uniqueness of ideal boundary conditions in degree $q$ if $\Dom_{\max}^q = \Dom_{\min}^q$.

Part (\ref{bulletFiniteDim}) of Proposition~\ref{ComplementsDegreeq} below asserts that $\Dom_{\max}^q/\Dom_{\min}^q$ is finite-dimensional. As a consequence, $A_q$ is closed when given any domain $\Dom^q$ containing $\Dom_{\min}^q$ and contained in $\Dom_{\max}^q$. In particular, every domain $\Dom^q$ is of the form $\Dom_{\min}^q+D^q$ where $D^q$ is a finite-dimensional vector space that can be specified uniquely as a subspace of the orthogonal complement, $\Sing^q$, of $\Dom_{\min}^q$ in $\Dom_{\max}^q$. 

\smallskip
The task is to elucidate, on the one hand, the nature of the space $\Dom_{\min}^q$,  a subspace of every closed extension of $A_q$, and on the other, the nature of the spaces $\Sing^q$, equivalently, the quotients $\Dom_{\max}^q/\Dom_{\min}^q$. Focusing for the moment on the former, note that because the operators $A_q$ are by themselves not elliptic, the spaces $\Dom_{\min}^q$ should not be expected to be weighted $b$-Sobolev spaces (see \cite{RBM2}) or simple variants thereof, in contrast with the case of a single elliptic cone operator \cite[Proposition 3.6]{GiMe01}. But a somewhat more elaborate statement does hold true:

\begin{theorem}
Let $u \in \Dom_{\min}^q$. Then there exists $v \in \Dom^{q-1}_{\min}$ such that
\begin{equation*}
u - A_{q-1}v \in \bigcap_{\eps > 0}x^{1-\gamma-\eps}H^1_b(M;E^q).
\end{equation*}
\end{theorem}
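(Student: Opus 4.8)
The plan is to exploit the structure theory for a single elliptic cone operator together with the interaction between the operators $A_{q-1}$ and $A_q$ coming from the complex. Let $u\in\Dom_{\min}^q$. The starting observation is that, since the complex is $c$-elliptic, the operator $A_q\oplus A_{q-1}^\star$ (or, depending on how one sets things up, the Hodge-type operator $A_{q-1}A_{q-1}^\star+A_q^\star A_q$ built from $A_q$ and a formal adjoint of $A_{q-1}$) is an elliptic cone operator in degree $q$. Thus the theory of \cite{GiMe01} — in particular the description of the minimal domain of a single elliptic cone operator as a weighted $b$-Sobolev space, \cite[Proposition~3.6]{GiMe01}, together with the description of $\Dom_{\max}/\Dom_{\min}$ in terms of the indicial roots — is available as a black box for that elliptic operator. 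The element $u$ then acquires an asymptotic expansion near $\Z$ governed by the indicial roots of this associated elliptic operator in the critical strip determined by the weight $\gamma$.

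The key point to extract is that membership of $u$ in $\Dom_{\min}^q$ (as opposed to merely $\Dom_{\max}^q$) forces the leading asymptotic terms of $u$ to lie in a restricted range. Concretely, I would show that $u$ differs from an element of $\bigcap_{\eps>0}x^{1-\gamma-\eps}H^1_b$ by a finite sum of terms of the form $x^{i\sigma_j}(\log x)^{\ell}\,\phi_j$ with $\Re(i\sigma_j)<1-\gamma$ — these are the "singular" terms that are in $L^2_b$ with weight $\gamma$ but not in the target space — and that the coefficients of such terms, when $u$ is in the \emph{minimal} domain, are constrained to lie in the image of the corresponding map induced by the indicial operator of $A_{q-1}$. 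This is exactly where the complex structure enters: the minimal domain is the closure of $C_c^\infty$, and approximating $u$ by compactly supported sections, one sees that the obstruction to killing the leading singular terms of $u$ is measured by a pairing against the cohomology of the indicial complex in the adjacent degree. The nondegeneracy of the pairing of cohomology classes associated with the indicial complex — advertised in the abstract as the key technical result, which I would cite from the relevant earlier section — identifies the space of admissible leading terms of elements of $\Dom_{\min}^q$ with the image of the indicial operator $\widehat{A_{q-1}}(\sigma)$ acting on the corresponding spaces of sections over $\Z$.

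Once that identification is in hand, the construction of $v$ is more or less formal: the singular part of $u$ lies in the image of $\widehat{A_{q-1}}$ at each relevant indicial value $\sigma_j$, so one can choose sections $\psi_j$ over $\Z$ with $\widehat{A_{q-1}}(\sigma_j)\psi_j$ equal to the coefficient of the $j$-th singular term of $u$, form the model solution $w=\sum_j \chi\, x^{i\sigma_j}(\log x)^{\ell}\,\psi_j$ (with $\chi$ a cutoff supported near $\Z$ and equal to $1$ near $\Z$), and verify that $w\in\Dom^{q-1}_{\max}$ with $A_{q-1}w$ matching the singular part of $u$ modulo $\bigcap_{\eps>0}x^{1-\gamma-\eps}H^1_b$. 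One then checks that in fact $w\in\Dom^{q-1}_{\min}$: this follows because the relevant indicial values $\sigma_j$ satisfy $\Re(i\sigma_j)+1<1-\gamma$... — more carefully, because they lie strictly below the threshold for the minimal domain of $A_{q-1}$, so the model terms are already in $\Dom^{q-1}_{\min}$ by \cite[Proposition~3.6]{GiMe01} (or one absorbs a correction term lying in a small weighted Sobolev space). Setting $v=w$ and using $A_{q-1}A_{q-1}=0$ at the symbolic/indicial level to see that $u-A_{q-1}v$ has no remaining singular terms in the critical strip, we conclude $u-A_{q-1}v\in\bigcap_{\eps>0}x^{1-\gamma-\eps}H^1_b$.

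The main obstacle I anticipate is the second step: proving that the leading singular coefficients of an element of $\Dom_{\min}^q$ genuinely lie in the image of $\widehat{A_{q-1}}$, rather than in some a priori larger space. This is not a soft statement — it is precisely the content secured by the nondegeneracy of the cohomological pairing, and it requires carefully relating (i) the orthogonal complement $\Sing^q$ of $\Dom_{\min}^q$ in $\Dom_{\max}^q$, (ii) a Green-type (boundary) pairing between $\Dom_{\max}^q$ for $A_q$ and $\Dom_{\max}^{q}$ for the formal adjoint $A_{q-1}^\star$ evaluated on the link $\Z$, and (iii) the cohomology of the indicial complex and its dual. Getting the bookkeeping of weights, the shift by $1$ in $x^{1-\gamma-\eps}$ versus $x^{-\gamma}$, the logarithmic terms, and the conjugate-linear versus linear structure of the pairing all to line up correctly is where the real work lies; everything before and after is either standard cone-operator theory or formal manipulation with the differentials of the complex.
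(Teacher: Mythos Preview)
Your overall strategy matches the paper's, and you have correctly located the crux: once one knows that $u$ modulo $\bigcap_{\eps>0}x^{1-\gamma-\eps}H^1_b$ is a finite sum $\omega\sum u_{\sigma_j}$ of singular terms with $\gamma-1<\Im\sigma_j<\gamma$ \emph{and} that $\omega\sum u_{\sigma_j}\in\Dom_{\min}^q$, the nondegeneracy of the cohomological pairing (Proposition~\ref{DminInSigmaIsExact}) forces each $u_{\sigma_j}=A_{q-1}^{(0)}v_{\sigma_j-\im}$, and then $v=\omega\sum v_{\sigma_j-\im}\in x^{1-\gamma}H^\infty_b\subset\Dom_{\min}^{q-1}$ does the job. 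That part of your sketch is essentially the paper's argument.

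The genuine gap is the step before: you assert that $u$ acquires an asymptotic expansion by appealing to the elliptic theory for $A_q\oplus A_{q-1}^\star$ (or $\square_q$), but membership in $\Dom_{\min}^q$ for $A_q$ gives you \emph{no} control over $A_{q-1}^\star u$, so $u$ need not lie in the maximal domain of that elliptic operator and the black box from \cite{GiMe01} does not apply. The paper gets around this by first invoking the Fredholm property of the relative complex (Proposition~\ref{HilbertComplex}) to decompose
\[
\Dom_{\min}^q=\bigl[\ker(A_{q-1,\max}^\star)\cap\Dom_{\min}^q\bigr]\oplus\rg(A_{q-1,\min}),
\]
so that one may assume $A_{q-1}^\star u=0$ after absorbing an $A_{q-1}v_0$ with $v_0\in\Dom_{\min}^{q-1}$. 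Under that extra hypothesis $\square_q u=A_q^\star A_q u\in x^{-\gamma-1}H^{-1}_b$, and elliptic regularity for the cone operator $\square_q$ now legitimately produces the decomposition $u=u_c+\omega u_s$ with $u_c\in\bigcap_{\eps>0}x^{1-\gamma-\eps}H^1_b$. Without this preliminary Hodge-type reduction (or an equivalent device), the passage from ``$u\in\Dom_{\min}^q$'' to ``$u$ has a finite singular expansion'' is unjustified.
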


The proof is given in Section~\ref{DminRegularity}.

\smallskip
Concerning $\Dom_{\max}^q/\Dom_{\min}^q$, our result is as follows. First, let $P_q=xA_q$. The Taylor expansion of $P_q$ along $\Z$ is $P_q=\sum_k x^kP_q^{(k)}$ where the $P_q^{(k)}$ have coefficients independent of $x$, which means that $P_q^{(k)}xD_x=xD_x P_q^{(k)}$. More properly this ought to be defined using a tubular neighborhood map and connections as in \cite[Pg. 748]{GiKrMe07}, and the resulting operators defined as living on the total space of $\pi_\wedge:\Z^\wedge\to\Z$, the inward pointing normal bundle of $\Z$ in $\M$; this will be reflected in the notation. Let 
\begin{equation*}
A_q^{(k)}=x^{-1}P_q^{(k)}\in x^{-1}\Diff^1_b(\Z^\wedge;E^q_\Z,E^{q+1}_\Z).
\end{equation*}
Then in particular $A_{q+1}^{(0)}A_q^{(0)}=0$. Let $\gSing_{\sigma_0}(\Z^\wedge;E^q_\Z)$ be the space of sections of $\pi_\wedge^*E_\Z\to \Z^\wedge$ of the form
\begin{equation*}
u(x,z)=\sum_{\ell=0}^N u_\ell(z)\, x^{\im \sigma_0}\log^\ell x,\quad u_\ell\in C^\infty(\Z;E^q_\Z)
\end{equation*}
for some $N$. Then $A_q^{(0)}$ maps $\gSing_{\sigma_0}(\Z^\wedge;E^q_\Z)$ into $\gSing_{\sigma_0+\im}(\Z^\wedge;E^{q+1}_\Z)$ and one has a complex 
\begin{equation}\tag{\ref{gSingComplex}}
\cdots \to
\gSing_{\sigma_0-\im}(\Z^\wedge;E^{q-1}_\Z)\xrightarrow{A_{q-1}^{(0)}}
\gSing_{\sigma_0}(\Z^\wedge;E^q_\Z)\xrightarrow{A_{q}^{(0)}}
\gSing_{\sigma_0+\im}(\Z^\wedge;E^{q+1}_\Z)\to \cdots
\end{equation}
Let $\H^q_{\sigma_0}(\Z;A)$ be its cohomology in degree $q$. Let
\begin{equation*}
\Sigma=\set{\sigma\in \C:\gamma-1<\Im\sigma<\gamma}.
\end{equation*}
Then:

\begin{theorem}\label{Theorem.A}
For each $q$ the space $\H^q_{\sigma_0}(\Z;A)$ is finite-dimensional for all $\sigma_0 \in \Sigma$, nonzero only for finitely many $\sigma_0 \in \Sigma$. Let $\Sigma_q^\gamma\subset \Sigma$ be the set of these $\sigma_0$. There exists a canonical isomorphism
\begin{equation*}
\Sing^q\cong \bigoplus_{\mathclap{\sigma_0\in \Sigma_q^\gamma}}\H^q_{\sigma_0}(\Z;A).
\end{equation*}
\end{theorem}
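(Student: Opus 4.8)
The plan is to establish the isomorphism by identifying $\Sing^q$ — the orthogonal complement of $\Dom_{\min}^q$ in $\Dom_{\max}^q$ — with a space of "singular" asymptotic terms attached to elements of $\Dom_{\max}^q$, and then matching those terms with the cohomology of the indicial complex \eqref{gSingComplex}. First I would recall the standard Mellin-analytic picture for a single elliptic cone operator: any $u\in\Dom_{\max}^q$ has a partial asymptotic expansion near $\Z$ whose exponents $\im\sigma$ are governed by the poles of the inverse of the conormal symbol of $P_q=xA_q$ lying in the strip $\Sigma$, modulo the subspace $x^{1-\gamma-\eps}H^1_b$ (which, by the Theorem in the excerpt, is essentially where $\Dom_{\min}^q$ sits after subtracting an exact term). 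The coefficients of such an expansion live precisely in the spaces $\gSing_{\sigma_0}(\Z^\wedge;E^q_\Z)$, because the relation $P_q^{(0)}xD_x=xD_xP_q^{(0)}$ forces the leading behavior to be annihilated by the model operator $A_q^{(0)}$ up to lower order — giving a natural "trace map" $\Dom_{\max}^q\to\bigoplus_{\sigma_0\in\Sigma}\ker\!\big(A_q^{(0)}\big|_{\gSing_{\sigma_0}}\big)$ whose kernel contains $\Dom_{\min}^q$.

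The heart of the matter is to show that after passing to $\Dom_{\min}^q$ on the source and to cohomology on the target this map is a well-defined isomorphism. For well-definedness on cohomology: if $u\in\Dom_{\max}^q$ and $u=A_{q-1}w$ for some $w\in\Dom_{\max}^{q-1}$, then the leading term of $u$ is $A_{q-1}^{(0)}$ applied to the leading term of $w$ (a computation with the Taylor expansion $P_q=\sum_k x^kP_q^{(k)}$), so the trace lands in the image of the previous model operator; conversely one must lift a cohomology class $[\eta]\in\H^q_{\sigma_0}(\Z;A)$ to an actual $u\in\Dom_{\max}^q$ with that leading term, which is done by a cutoff-and-correct argument — take $\chi(x)\eta$ with $\chi$ supported near $\Z$, observe $A_q(\chi\eta)$ is one order better because $A_q^{(0)}\eta=0$, and solve away the error using surjectivity of $A_{q+1,\max}^{(0)}$ onto its required range together with the $c$-ellipticity of the global complex. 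Injectivity of the induced map $\Sing^q\to\bigoplus\H^q_{\sigma_0}(\Z;A)$ is exactly the statement that an element of $\Dom_{\max}^q$ whose leading singular term is $A_{q-1}^{(0)}$-exact can be corrected, by subtracting $A_{q-1}$ of a suitable element of $\Dom_{\min}^{q-1}$ plus a genuinely exact term, to lie in $x^{1-\gamma-\eps}H^1_b\subset\Dom_{\min}^q$; this is where the Theorem stated just above Theorem~\ref{Theorem.A} is used, in combination with Part~(\ref{bulletFiniteDim}) of Proposition~\ref{ComplementsDegreeq} to know everything is finite-dimensional so that "orthogonal complement" and "algebraic complement" agree.

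Finite-dimensionality of $\H^q_{\sigma_0}(\Z;A)$ for each $\sigma_0$ follows because the complex \eqref{gSingComplex} in each fixed degree is a complex of finite-dimensional spaces once one bounds the logarithmic order $N$ — and such a bound comes from the fact that the order of the pole of the conormal symbol's inverse at $\sigma_0$ is finite, which is a consequence of $c$-ellipticity (the conormal symbol is invertible for $\Im\sigma$ outside a discrete set, with meromorphic inverse of finite-order poles, by the analytic Fredholm theorem applied to the parameter-elliptic family on $\Z$). That $\H^q_{\sigma_0}(\Z;A)$ is nonzero for only finitely many $\sigma_0\in\Sigma$ is immediate since the poles of the conormal symbol's inverse are discrete and $\Sigma$ is a bounded strip; one then simply sets $\Sigma_q^\gamma$ to be the finite set where the cohomology is nontrivial. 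The main obstacle I anticipate is the lifting/correction step: producing, for each cohomology class, a maximal-domain element realizing it — and dually, showing that a trace-exact maximal-domain element is minimal modulo an exact term — because this requires carefully controlling how the error terms generated by cutting off model solutions interact across the whole complex, and it is precisely here that one needs both the surjectivity properties of the model operators $A^{(0)}$ on the relevant $\gSing$ spaces and the global $c$-ellipticity, rather than properties of a single operator in isolation.
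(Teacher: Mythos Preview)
Your overall architecture is right --- extract singular terms, observe they are $A_q^{(0)}$-closed, pass to cohomology --- but the key step is misidentified and your proposed tool for it will not work.

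The crucial direction is \emph{well-definedness} of the map $\Dom_{\max}^q/\Dom_{\min}^q \to \bigoplus_{\sigma_0}\H^q_{\sigma_0}(\Z;A)$: one must show that if $\omega\sum_{\sigma_0} u_{\sigma_0} \in \Dom_{\min}^q$ with $u_{\sigma_0}\in\gSing_{\sigma_0}$, then each $u_{\sigma_0}$ is $A_{q-1}^{(0)}$-exact. Your argument for this (``if $u=A_{q-1}w$ then the leading term of $u$ is $A_{q-1}^{(0)}$ of the leading term of $w$'') only handles elements of $\Dom_{\min}^q$ that happen to lie in the range of $A_{q-1}$, which is not all of $\Dom_{\min}^q$. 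The paper's proof of this step (Proposition~\ref{DminInSigmaIsExact}) goes through a completely different mechanism: it uses the Green pairing $[\cdot,\cdot]_{A_q}$ between $\Dom_{\max}^q$ and the maximal domain $\DomStar_{\max}^{q+1}$ of the \emph{adjoint} complex, shows (Theorem~\ref{Theorem.B}) that this pairing localizes to pairs $(\sigma_0,\sigma_0^\star)$ symmetric about $\Im\sigma=\gamma-\tfrac12$, and proves (Theorem~\ref{Theorem.C}) that the induced pairing $\H^q_{\sigma_0}(\Z;A)\times\H^{q+1}_{\sigma_0^\star}(\Z;A^\star)\to\C$ is nondegenerate. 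Then $\omega u_{\sigma_0}\in\Dom_{\min}^q$ forces $[\omega u_{\sigma_0},\omega v]_{A_q}=0$ for all closed $v$, and nondegeneracy gives $[u_{\sigma_0}]=0$. Your proposal never mentions the adjoint complex or the pairing; the vague appeal to ``surjectivity properties of the model operators'' and ``global $c$-ellipticity'' does not substitute for this duality argument, which is the technical core of the paper.

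A second, smaller gap: your finite-dimensionality argument invokes ``the order of the pole of the conormal symbol's inverse,'' but the individual $\A_q(\sigma)$ are \emph{not} invertible --- only the complex is elliptic, not each operator. Even bounding the log-order $N$ leaves the coefficient spaces $C^\infty(\Z;E^q_\Z)$ infinite-dimensional. The paper instead works with the \emph{Laplacian} $\square_q(\sigma)$ of the indicial complex, which is genuinely elliptic with parameter, applies analytic Fredholm theory to get a meromorphic inverse $\G_q(\sigma)$ with finite-rank Laurent coefficients, and reduces to a finite-dimensional model (Proposition~\ref{IndicialSpacesExact}, Theorem~\ref{bSpecProperties}, and Section~\ref{Reduction}).
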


We discuss other basic properties of the spaces $\Sing^q$ in Sections~\ref{IBC} and \ref{gSing}.

The proof of the theorem proceeds along the following lines. Objects on $\Z^\wedge$ near the zero section are identified with similar objects on $\M$ near $\Z$. Pick $\omega\in C^\infty(\M)$, equal to $1$ near $\Z$ and with support as close to $\Z$ as necessary. We first prove that the elements of $\Sing^q$ are of the form
\begin{equation*}
\omega\sum_{\mathclap{\sigma_0\in \Sigma_q'}} u_{\sigma_0} + u'
\end{equation*}
for a finite subset $\Sigma_q' \subset \Sigma$ with $u'\in \Dom_{\min}^q$,  $u_{\sigma_0}\in \gSing_{\sigma_0}(\Z^\wedge;E^q_\Z)$, and each $\omega u_{\sigma_0} \in \Dom_{\max}^q$. It is easy to see that if $u_{\sigma_0}\in \gSing_{\sigma_0}(\Z^\wedge;E^q_\Z)$ is not zero, then $\omega u_{\sigma_0}\in x^{-\gamma}L^2_b(\M;E^q)$ if and only if $\Im \sigma_0<\gamma$. Using the Taylor expansion of $A_q$ one checks that $A_q\omega u_{\sigma_0}\in x^{-\gamma}L^2_b(\M;E^{q+1})$ if $\Im\sigma_0<\gamma-1$ or $A_q^{(0)}u_{\sigma_0}=0$. This links the closed elements of \eqref{gSingComplex} in degree $q$ with the maximal domain of $A_q$. However, note that if $u_{\sigma_0}$ is exact,
\begin{equation*}
u_{\sigma_0}=A_{q-1}^{(0)} w_{\sigma_0-\im}
\end{equation*}
for some $w_{\sigma_0-\im}\in \gSing_{\sigma_0-\im}(\Z^\wedge;E^{q-1}_\Z)$, then $\omega u_{\sigma_0}\in \Dom_{\min}^q$ because, as is easy to prove, already $\omega w_{\sigma_0-\im}\in \Dom_{\min}^{q-1}$. This is why one should remove exact elements from consideration. Of course one should also dismiss elements like $u'$ that are already in $\Dom_{\min}^q$. Thus $\Sigma'_q$ reduces to the set $\Sigma_q^{\gamma}$, and the isomorphism of the theorem is the map
$$
\Sing^q \ni \omega\sum_{\mathclap{\sigma_0\in \Sigma_q^{\gamma}}} u_{\sigma_0} + u' \mapsto
\sum_{\mathclap{\sigma_0\in \Sigma_q^{\gamma}}} {\mathbf u}_{\sigma_0} \in \bigoplus_{\mathclap{\sigma_0\in \Sigma_q^\gamma}}\H^q_{\sigma_0}(\Z;A),
$$
where ${\mathbf u}_{\sigma_0}$ is the cohomology class of $u_{\sigma_0}$. To show that this map is well-defined and surjective we need to consider elements of the form $\omega\sum_{\sigma_0\in \Sigma_q^\gamma}u_{\sigma_0}$ in $\Dom_{\min}^q$ and prove that each ${\mathbf u}_{\sigma_0} = {\mathbf 0}$ in cohomology. This is achieved through the following theorems, the main technical results of the paper.

For the formal adjoint complex of \eqref{TheCComplex}, let $\DomStar_{\max}^{q+1}$, $\DomStar_{\min}^{q+1}$ denote the maximal and minimal domains of $A_q^\star$ and $\SingStar^{q+1}$ the orthogonal complement of $\DomStar_{\min}^{q+1}$ in $\DomStar_{\max}^{q+1}$ with respect to the graph inner product defined by $A_q^\star$. The Green pairing
\begin{equation*}
[\cdot,\cdot]_{A_q} : \Dom_{\max}^q \times \DomStar_{\max}^{q+1} \to \C, \quad[u,v]_{A_q} = \langle A_qu,v \rangle_{x^{-\gamma}L^2_b} - \langle u,A_q^{\star}v \rangle_{x^{-\gamma}L^2_b}
\end{equation*}
gives a nondegenerate pairing $\Sing^q\times \SingStar^{q+1}\to \C$, reflected in cohomology in the following theorem.

\begin{theorem}\label{Theorem.B}
Let 
\begin{equation*}
u=\sum_{\mathclap{\sigma_0\in \Sigma_q^\gamma}} u_{\sigma_0},\quad u_{\sigma_0}\in \gSing_{\sigma_0}(\Z^\wedge;E^q_\Z), \quad A_q^{(0)}u_{\sigma_0}=0,
\end{equation*}
so $\omega u\in \Dom_{\max}^q$. Let also $A_q^{\star(0)}$ and $\Sigma_{q+1}^{\star \gamma}$ be the analogue of $A_q^{(0)}$ and $\Sigma_q^\gamma$ for the adjoint complex, in degree $q+1$, and let
\begin{equation*}
v=\sum_{\sigma_0'\in \Sigma_{q+1}^{\star \gamma}} w_{\sigma'_0},\quad w_{\sigma'_0}\in \gSing_{\sigma'_0}(\Z^\wedge;E^{q+1}_\Z), \quad A_q^{\star(0)}w_{\sigma'_0}=0,
\end{equation*}
so $\omega v\in \DomStar_{\max}^{q+1}$. Then
\begin{equation*}
[\omega u,\omega v]_{A_q}=\sum_{\mathclap{\sigma_0\in \Sigma_q^\gamma}} [\omega u_{\sigma_0},\omega v_{\sigma_0^\star}]_{A_q}
\end{equation*}
where $\sigma_0^\star=\overline{\sigma_0-\im(2\gamma-1)}$ is reflection of $\sigma_0$ across the line $\Im\sigma=\gamma-1/2$. 
\end{theorem}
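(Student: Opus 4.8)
The Green pairing $[\omega u,\omega v]_{A_q}$ is a boundary term: for $\omega$ supported near $\Z$, the computation localizes to $\Z^\wedge$, where $A_q$ agrees with $\sum_k x^k A_q^{(k)}$. The first step is to replace $[\cdot,\cdot]_{A_q}$ by an explicit contour integral. Writing $u_{\sigma_0}, w_{\sigma_0'}$ as Mellin-type monomials $\sum_\ell u_\ell(z)x^{i\sigma_0}\log^\ell x$, one computes $\langle A_q\omega u,\omega v\rangle_{x^{-\gamma}L^2_b}-\langle \omega u,A_q^\star \omega v\rangle_{x^{-\gamma}L^2_b}$ as an integral over $(0,\infty)_x\times\Z$; integrating by parts in $x$ on a truncation $x\in(\delta,1)$, the contributions at $x=1$ vanish because $\omega\equiv 1$ there (so $d\omega=0$ and the expressions match), and the $x\to 0$ limit is finite precisely because $\omega u\in\Dom_{\max}^q$ and $\omega v\in\DomStar_{\max}^{q+1}$. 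What survives is a finite sum of residue-type terms indexed by pairs $(\sigma_0,\sigma_0')$ with $\sigma_0\in\Sigma_q^\gamma$, $\sigma_0'\in\Sigma_{q+1}^{\star\gamma}$. The weight in $x^{-\gamma}L^2_b$ contributes a factor $x^{-2\gamma}$ to the density, so each term carries $x^{i\sigma_0}\overline{x^{i\sigma_0'}}x^{-2\gamma}=x^{i(\sigma_0-\overline{\sigma_0'}+i2\gamma)}$ times powers of $\log x$; the integral $\int_0^1 x^{i\tau-1}\log^k x\,\tfrac{dx}{x}\cdot$(boundary evaluation) produces a nonzero contribution only when the total $x$-power is integrable/divergent at the critical rate, i.e.\ only when $\Re(\text{exponent})=0$, which forces $i(\sigma_0-\overline{\sigma_0'})+i\cdot i2\gamma$ to be purely imaginary, hence $\Im\sigma_0+\Im\sigma_0'=2\gamma-1$ after accounting for the shift by $i$ coming from $A_q=x^{-1}P_q$ and the half-density normalization. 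Combined with the fact that the $z$-integral $\int_\Z\langle u_\ell(z),w_{\ell'}(z)\rangle$ pairs sections living over the \emph{same} cross-section, one sees the only surviving cross terms are those with $\sigma_0'=\sigma_0^\star:=\overline{\sigma_0-i(2\gamma-1)}$.

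The second step is the bookkeeping that turns "the exponent is purely imaginary" into the exact relation $\sigma_0^\star=\overline{\sigma_0-i(2\gamma-1)}$ and confirms it is the reflection across $\Im\sigma=\gamma-\tfrac12$. One checks directly: if $\sigma_0=a+bi$ then $\sigma_0-i(2\gamma-1)=a+(b-2\gamma+1)i$, whose conjugate is $a-(b-2\gamma+1)i=a+(2\gamma-1-b)i$; its imaginary part is $2\gamma-1-b$, and the average $\tfrac12(b+(2\gamma-1-b))=\gamma-\tfrac12$, so it is indeed the reflection. One must also verify that when $\sigma_0\in\Sigma$ (so $\gamma-1<\Im\sigma_0<\gamma$) the reflected point $\sigma_0^\star$ lies in the strip $\Sigma^\star$ relevant for the adjoint (with weight $-\gamma$ replaced appropriately — here the adjoint is taken in the same $x^{-\gamma}L^2_b$, so its natural strip is the reflection of $\Sigma$), which is immediate from $\gamma-1<2\gamma-1-\Im\sigma_0<\gamma$.

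The third step is to show $[\omega u_{\sigma_0},\omega v_{\sigma_0^\star}]_{A_q}$ depends only on the cohomology classes — but this is not needed for the \emph{statement} of Theorem~\ref{Theorem.B} as written (it asserts only the diagonal decomposition of the pairing), so I would defer it; the decomposition itself follows purely from the orthogonality of distinct Mellin exponents under $\int_0^1 x^{i(\tau-\tau')-1}\,dx/x$-type integrals. The main obstacle, and the step requiring genuine care rather than bookkeeping, is \emph{justifying the integration by parts and the vanishing of the limit at $x=0$}: a priori $\omega u$ and $\omega v$ are only in $L^2$-graph-norm domains, not in pointwise-controlled spaces, and the Mellin monomials $u_{\sigma_0}$ need not individually lie in $\Dom_{\max}^q$ (only their sum does, modulo $\Dom_{\min}$). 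The resolution is to use that the closedness conditions $A_q^{(0)}u_{\sigma_0}=0$ (given in the hypothesis) make each $\omega u_{\sigma_0}$ genuinely admissible — as noted in the proof sketch of Theorem~\ref{Theorem.A}, $A_q\omega u_{\sigma_0}\in x^{-\gamma}L^2_b$ once $A_q^{(0)}u_{\sigma_0}=0$ — so the sum can be split term by term, and then each term is smooth on $\open\M$ with an explicit polyhomogeneous expansion at $\Z$, for which the integration by parts on $(\delta,1)\times\Z$ and the $\delta\to0$ limit are elementary. I would organize the proof as: (i) reduce to $\Z^\wedge$ and split into single-exponent terms using $A_q^{(0)}u_{\sigma_0}=0$; (ii) compute $[\omega u_{\sigma_0},\omega v_{\sigma_0'}]_{A_q}$ by integration by parts in $x$, getting a boundary evaluation at $x\to0$; (iii) show this boundary evaluation vanishes unless the $x$-exponents are matched, i.e.\ unless $\sigma_0'=\sigma_0^\star$; (iv) assemble.
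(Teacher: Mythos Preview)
Your approach is workable but differs from the paper's, which stays on the Mellin side throughout. After reducing to single-exponent pairs $u=\omega u_{\sigma_0}$, $v=\omega v_{\sigma_0'}$ and replacing $A_q$ by $A_q^{(0)}$ (the remainder $A_q-A_q^{(0)}\in\Diff^1_b$ is one order better in $x$, so integration by parts for it holds without defect), the paper uses Plancherel to write $\langle A_q^{(0)} u,v\rangle_{x^{-\gamma}L^2_b}$ and $\langle u,A_q^{\star(0)} v\rangle_{x^{-\gamma}L^2_b}$ as integrals of the single function $\sigma\mapsto\langle\A_q(\sigma)\Mellin u(\sigma),\Mellin v(\sigma^\star)\rangle_{L^2(\Z)}$ over the lines $\Im\sigma=\gamma-1$ and $\Im\sigma=\gamma$. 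Their difference is the residue at the unique pole $(\sigma_0')^\star$ of $\Mellin v(\sigma^\star)$; rewriting the integrand via Lemma~\ref{IntegrandAdjPair} as $\langle\Mellin u(\sigma),\A_q^\star(\sigma^\star)\Mellin v(\sigma^\star)\rangle$ shows the unique pole is instead at $\sigma_0$, so if $\sigma_0\neq(\sigma_0')^\star$ a small contour misses both and the residue is zero. No case analysis on exponents is needed.

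Your step (iii) has a gap as written. You invoke ``orthogonality of distinct Mellin exponents under $\int_0^1 x^{i(\tau-\tau')-1}\,dx/x$-type integrals'' and say the ``boundary evaluation vanishes unless the $x$-exponents are matched.'' But the boundary term at $x=\delta$ has the form $\delta^{s}P(\log\delta)+O(\delta^{\Re(s)+1})$ with $s=2\gamma-1+i\sigma_0-i\overline{\sigma_0'}$, and $\Re(s)=2\gamma-1-\Im\sigma_0-\Im\sigma_0'$ ranges over all of $(-1,1)$; when $\Re(s)<0$ the leading part \emph{diverges}, and when $\Re(s)=0$, $s\neq 0$, it oscillates without limit. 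The mechanism that saves your approach is not orthogonality but a \emph{convergence constraint}: since the closedness hypotheses give $\omega u_{\sigma_0}\in\Dom_{\max}^q$ and $\omega v_{\sigma_0'}\in\DomStar_{\max}^{q+1}$, both terms defining $[\cdot,\cdot]_{A_q}$ are finite $x^{-\gamma}L^2_b$ pairings, so the boundary term has a finite limit as $\delta\to 0$; this \emph{forces} the coefficients of every divergent or oscillatory monomial $\delta^s(\log\delta)^k$ to vanish, and the surviving limit is then zero unless $s=0$, i.e., unless $\sigma_0'=\sigma_0^\star$. Once you say this explicitly, your argument goes through; but the paper's contour argument avoids the indirect reasoning entirely.
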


In other words, the pairing only relates points in $\Sigma$ lying symmetrically across the line $\Im\sigma=\gamma-1/2$.

\begin{center}
\begin{tikzpicture}\scriptsize
\fill[color=lightgray,opacity=0.5] (-3,0.5) -- (3,0.5) -- (3,2.5) -- (-3,2.5) -- (-3,0.5);
\draw[->] (0,-0.5) -- (0,3) node[right] {$\im\R$};
\draw[->] (-3.5,0) -- (3.5,0) node[below] {$\R$};
\draw[thick] (-3,2.5) -- (3,2.5) node[right] {$\Im(\sigma) = \gamma$};
\draw[thick] (-3,0.5) -- (3,0.5) node[right] {$\Im(\sigma) = \gamma-1$};
\draw[dashed] (-3,1.5) -- (3,1.5) node[right] {$\Im(\sigma) = \gamma-\frac{1}{2}$};
\fill (-2,2.2) circle (1.5pt) node[right] {$\sigma_0$};
\fill (-2,0.9) circle (1.5pt) node[right] {$\sigma_0^\star$};
\draw[dotted] (-2,0.9) -- (-2,2.2);
\end{tikzpicture}
\end{center}

\begin{theorem}\label{Theorem.C}
The Green pairing induces a nonsingular pairing 
\begin{equation*}
\H^q_{\sigma_0}(\Z;A)\times \H^{q+1}_{\sigma_0^\star}(\Z;A^\star)\to \C.
\end{equation*}
for each $\sigma_0\in \Sigma$.
\end{theorem}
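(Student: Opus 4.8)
The plan is to deduce Theorem~\ref{Theorem.C} from Theorems~\ref{Theorem.A} and \ref{Theorem.B} together with the known nondegeneracy of the Green pairing on $\Sing^q\times\SingStar^{q+1}$. First I would fix $\sigma_0\in\Sigma$ and note that by Theorem~\ref{Theorem.A} applied to the complex and to its adjoint, $\H^q_{\sigma_0}(\Z;A)$ is a summand of $\Sing^q$ and $\H^{q+1}_{\sigma_0^\star}(\Z;A^\star)$ is a summand of $\SingStar^{q+1}$, via $\omega$-extension of representatives (choosing $\omega$ supported close enough to $\Z$ that all the relevant $\omega u_{\sigma_0}$, $\omega w_{\sigma_0'}$ lie in the respective maximal domains). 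The case $\sigma_0\notin\Sigma_q^\gamma$ or $\sigma_0^\star\notin\Sigma_{q+1}^{\star\gamma}$ is trivial because then one of the cohomology groups is zero; so assume both are nonzero.

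\medskip
Next I would show that the Green pairing descends to cohomology, i.e.\ that $[\omega u_{\sigma_0},\omega w_{\sigma_0'}]_{A_q}$ depends only on the classes $\mathbf{u}_{\sigma_0}\in\H^q_{\sigma_0}(\Z;A)$ and $\mathbf{w}_{\sigma_0'}\in\H^{q+1}_{\sigma_0'}(\Z;A^\star)$. If $u_{\sigma_0}=A_{q-1}^{(0)}z$ is exact, then as indicated in the discussion preceding Theorem~\ref{Theorem.B} one has $\omega z\in\Dom_{\min}^{q-1}$ and hence $\omega u_{\sigma_0}-\omega A_{q-1}^{(0)}z\in\Dom_{\min}^q$ (the error $\omega A_{q-1}z-\omega A_{q-1}^{(0)}z$ coming from higher Taylor terms also lies in $\Dom_{\min}^q$ by the regularity built into the proof of Theorem~\ref{Theorem.A}); since the Green pairing $\Sing^q\times\SingStar^{q+1}\to\C$ annihilates $\Dom_{\min}^q$ in the first slot, $[\omega u_{\sigma_0},\omega w_{\sigma_0'}]_{A_q}=0$. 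The analogous argument with $A_q^\star$ handles exactness of $w_{\sigma_0'}$. Combined with Theorem~\ref{Theorem.B}, which guarantees that the full Green pairing on these $\omega$-extensions is block-diagonal with respect to the decomposition indexed by $\Sigma$ (the $\sigma_0$-block of $\Sing^q$ pairing only with the $\sigma_0^\star$-block of $\SingStar^{q+1}$), this produces a well-defined pairing $\H^q_{\sigma_0}(\Z;A)\times\H^{q+1}_{\sigma_0^\star}(\Z;A^\star)\to\C$ which is exactly the restriction of the nondegenerate pairing $\Sing^q\times\SingStar^{q+1}\to\C$ to a matching pair of blocks.

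\medskip
Finally, nondegeneracy: given a nonzero class $\mathbf{u}_{\sigma_0}\in\H^q_{\sigma_0}(\Z;A)$, its image in $\Sing^q$ under the isomorphism of Theorem~\ref{Theorem.A} is a nonzero element on which the nondegenerate Green pairing $\Sing^q\times\SingStar^{q+1}\to\C$ does not vanish identically; pick $v'\in\SingStar^{q+1}$ with $[\,\cdot\,,v']_{A_q}\ne0$ on it. Decomposing $v'$ according to Theorem~\ref{Theorem.A} for the adjoint complex and using the block-diagonality from Theorem~\ref{Theorem.B}, only the $\sigma_0^\star$-component of $v'$ contributes, so that component is nonzero and its class in $\H^{q+1}_{\sigma_0^\star}(\Z;A^\star)$ pairs nontrivially with $\mathbf{u}_{\sigma_0}$. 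Symmetrically every nonzero class in $\H^{q+1}_{\sigma_0^\star}(\Z;A^\star)$ has a nonzero pairing partner, so the induced pairing is nonsingular on both sides, and since both spaces are finite-dimensional this is the asserted perfect pairing.

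\medskip
The main obstacle I anticipate is the bookkeeping needed to make the block-diagonality rigorous: one must be sure that the decomposition of $\Sing^q$ in Theorem~\ref{Theorem.A} and the decomposition of $\SingStar^{q+1}$ in its adjoint analogue are \emph{compatible} with the bilinear Green pairing in the precise sense that cross-terms vanish, and that the error terms separating $\omega A_{q-1}u_{\sigma_0}$ from $\omega A_{q-1}^{(0)}u_{\sigma_0}$ (and their adjoint counterparts), as well as the summands $u'\in\Dom_{\min}^q$ appearing in representatives, genuinely contribute nothing to the pairing. Theorem~\ref{Theorem.B} is the crucial input that reduces the a priori full matrix of pairings between the $\Sigma$-indexed pieces to the diagonal $\sigma_0\leftrightarrow\sigma_0^\star$ matching; once that is in hand the argument is essentially linear algebra (restriction of a nondegenerate pairing to paired direct summands), but verifying the hypotheses of Theorem~\ref{Theorem.B} apply verbatim to arbitrary cocycle representatives — not just to the specific $\omega$-extensions used to set up Theorem~\ref{Theorem.A} — is where the care is needed.
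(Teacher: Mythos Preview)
Your argument is circular. In the paper's logical structure, Theorem~\ref{Theorem.A} is proved \emph{assuming} Theorems~\ref{Theorem.B} and~\ref{Theorem.C}; see the paragraph in the introduction beginning ``These theorems allow us to finish the proof of Theorem~\ref{Theorem.A}'' and the proof of Proposition~\ref{DminInSigmaIsExact} in Section~\ref{IndicialComplex}, which explicitly invokes Theorem~\ref{MainTheorem} (the Mellin-side form of Theorem~\ref{Theorem.C}). The step you rely on---that a nonzero class $\mathbf u_{\sigma_0}\in\H^q_{\sigma_0}(\Z;A)$ has nonzero image in $\Sing^q$---is precisely the injectivity half of the isomorphism in Theorem~\ref{Theorem.A}, and that injectivity is established by showing that if $\omega u_{\sigma_0}\in\Dom_{\min}^q$ then $u_{\sigma_0}$ is exact, which in turn is deduced from the very nondegeneracy you are trying to prove. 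Without Theorem~\ref{Theorem.C} one only has a surjection $\bigoplus_{\sigma_0}\H^q_{\sigma_0}(\Z;A)\twoheadrightarrow\Dom_{\max}^q/\Dom_{\min}^q$ (from Corollary~\ref{SingReprestentative} together with Example~\ref{DminInterference}), not an isomorphism, and a nondegenerate pairing on the target does not pull back to a nondegenerate pairing on the source.

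The paper's actual proof of Theorem~\ref{Theorem.C} is independent of Theorem~\ref{Theorem.A}. It transfers the pairing to the Mellin side (Proposition~\ref{PairingInCohomology}), then reduces via Theorem~\ref{NondegenerateMfds} to a holomorphic family of elliptic complexes on $\Z$, and further, using the Hodge--Kodaira decomposition at $\sigma_0$ (Propositions~\ref{TildeComplex} and~\ref{Reduction1}), to a holomorphic family of complexes on \emph{finite-dimensional} spaces. There the nondegeneracy (Theorem~\ref{NondegenerateFin}) is proved by induction on $\dim\H^q_{\sigma_0}$, using the nilpotent ``multiplication by $\sigma$'' operator $\pmb\varsigma$ on cohomology (Lemma~\ref{sigmareduct}) to strictly decrease the dimension at each step.
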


These theorems allow us to finish the proof of Theorem~\ref{Theorem.A}. If $\omega\sum_{\sigma_0\in \Sigma_q^\gamma}u_{\sigma_0} \in \Dom_{\min}^q$, $u_{\sigma_0} \in \gSing_{\sigma_0}(\Z^{\wedge};E^q_\Z)$, then it follows from both theorems that each $\omega u_{\sigma_0}$ belongs to $\Dom_{\min}^q$, hence $[\omega u_{\sigma_0},\omega v]_{A_q}=0$ for every $v$ as in Theorem~\ref{Theorem.B}. Therefore the class of every $u_{\sigma_0}$ is zero by Theorem~\ref{Theorem.C}, which means that $u_{\sigma_0}$ is exact.

Theorem~\ref{Theorem.A} is proved in Section~\ref{gSing} (as Theorem~\ref{Theorem.Abis})  assuming the validity of Theorems~\ref{Theorem.B} and \ref{Theorem.C}. For the proofs of these theorems we transition the analysis from physical space to Mellin space. The corresponding results on the Mellin transform side are Theorem~\ref{AdjointPairingProps} and Theorem~\ref{MainTheorem}. The proof of Theorem~\ref{Theorem.C} is the most delicate aspect of the analysis of the spaces $\Sing^q$. We first prove a finite dimensional case (Section~\ref{FiniteDimCase}), then show in Section~\ref{Reduction} how to reduce to the case of Section~\ref{FiniteDimCase} which ends our analysis of the individual spaces $\Sing^q$.

Section~\ref{DminRegularity} collects our results on the regularity of elements of $\Dom_{\min}^q$. 

In Section~\ref{Variety} we show that the collection of ideal boundary conditions for the complex \eqref{TheCComplex} is a union of algebraic varieties of (possibly) various dimensions.

Finally, in Section~\ref{SecondaryCohomology}, we give necessary and sufficient conditions for the operator $A_q$ to map $\Dom_{\max}^q$ into $\Dom_{\min}^{q+1}$, thus removing domain requirements in degree $q$ on the possible Hilbert complexes associated to \eqref{TheCComplex} in $x^{-\gamma}L^2_b$.

\medskip
The initial impetus to the systematic study of complexes on manifolds with singularities in the $C^\infty$ category was given by Cheeger's papers \cite{Cheeger1979,Cheeger1980} concerning the de Rham complex. Since then there have been numerous developments towards a theory for general single elliptic operators as well as for complexes on such manifolds, early on by  Melrose \cite{RBM2} and Schulze \cite{Schulze1988, SchuNH}. The issue of having to choose domains comes up, for instance, in the case of the de Rham complex on non-Witt spaces in the papers \cite{HunsickerMazzeo2005} by Hunsicker and Mazzeo, and, more generally, in \cite{AlbinLeichtMazzPiazza13} by  Albin, Leichtnam, Mazzeo, and Piazza. Without attempting to be exhaustive, domains have entered explicitly in the specific case of conical singularities in work by, for example, Bei \cite{Bei2013}, Br\"uning and Lesch \cite{BrueningLesch1993}. In the context of complex varieties with isolated or more general singularities, in work of Cheeger, Goresky, and MacPherson \cite{CheegerGoreskyMacPherson}, Forn{\ae}ss, \O vrelid, and Vassiliadou \cite{FornaessOvrelidVassiliadou2005}, Grieser and Lesch \cite{GrieserLesch2002}, {\O}vrelid and Ruppenthal \cite{OvrelidRuppenthal2014},  Pardon and Stern \cite{PardonStern1991,PardonStern2001}, Ruppenthal \cite{Ruppenthal2014}, and Zucker \cite{Zucker1987}. A somewhat different category of problems concerns boundary value problems for regular elliptic complexes in which the boundary itself carries the singularities, two such examples being Shaw \cite{Shaw1983} where the boundary has conical singularities  and Tarkhanov \cite{Tarkhanov2006}, where the boundary has singularities of codimension $2$.

The detailed analysis of the nature of domains of the operators of an elliptic complex on manifolds with conical analysis presented in this paper is part of our ongoing research program to develop a general theory for elliptic complexes on manifolds with more general singularities.

\section{Preliminaries}\label{IBC}

We will continue to use the notation and objects already presented in the introduction throughout the rest of the paper without further comment. 

Let
\begin{equation*}
A_{q}^{\star} : C_c^{\infty}(\open\M;E^{q+1}) \subset x^{-\gamma}L^2_b(\M;E^{q+1}) \to x^{-\gamma}L^2_b(\M;E^{q})
\end{equation*}
be the formal adjoint of $A_q$. The $A_q^{\star}$ belong to $x^{-1}\Diff^1_b(\M;E^{q+1},E^q)$ and they form a cone-elliptic complex analogous to \eqref{TheCComplex}:
\begin{equation*}
\cdots\from C_c^\infty(\open \M,E^{q-1})\xleftarrow{A_{q-1}^\star}C_c^\infty(\open \M,E^q)\xleftarrow{A_q^\star}C_c^\infty(\open \M,E^{q+1})\from \cdots
\end{equation*}
Setting $P_q=xA_q$ we see that $A_q^\star$ is given by the formula
\begin{equation}\label{FormulaForAdjoint}
A_q^\star=x^{-2\gamma}P_q^\star x^{2\gamma-1}
\end{equation}
where $P_q^\star$ is the formal adjoint of 
\begin{equation*}
P_q:C_c^\infty(\open\M;E^q)\subset  L^2_b(\M;E^q)\to L^2_b(\M;E^{q+1}).
\end{equation*}
Let $\DomStar^q_{\max}$ and $\DomStar^q_{\min}$ be the maximal and minimal domains of $A_{q-1}^{\star}$ in $x^{-\gamma}L^2_b$, respectively. The Hilbert space adjoints in $x^{-\gamma}L^2_b$ satisfy
\begin{alignat*}{2}
(A_{q,\max})^* &= A_{q,\min}^{\star}, \quad (A_{q,\min})^* &&= A_{q,\max}^{\star}, \\
(A^{\star}_{q,\max})^* &= A_{q,\min}, \quad (A^{\star}_{q,\min})^* &&= A_{q,\max}.
\end{alignat*}
Consequently, the adjoint pairing
\begin{equation}\label{adjointpairing}
\begin{gathered}{}
[\cdot,\cdot]_{A_q} : \Dom_{\max}^q \times \DomStar_{\max}^{q+1} \to \C, \\
[u,v]_{A_q} = \langle A_qu,v \rangle_{x^{-\gamma}L^2_b} - \langle u,A_q^{\star}v \rangle_{x^{-\gamma}L^2_b},
\end{gathered}
\end{equation}
descends to a nondegenerate sesquilinear pairing
$$
[\cdot,\cdot]_{A_q} : \bigl(\Dom_{\max}^q/\Dom_{\min}^q\bigr) \times \bigl(\DomStar_{\max}^{q+1}/\DomStar_{\min}^{q+1}\bigr) \to \C.
$$
Equivalently, let $\Sing^q$ be the orthogonal complement of $\Dom_{\min}^q$ in $\Dom_{\max}^q$ with respect to $\langle\cdot,\cdot\rangle_q$, and let $\SingStar^{q+1}$ be the corresponding space for $A_{q}^{\star}$. Then
\begin{equation}\label{ReducedPairing}\tag{\ref{adjointpairing}$'$}
[\cdot,\cdot]_{A_q} : \Sing^q \times \SingStar^{q+1} \to \C
\end{equation}
is nondegenerate.

\begin{proposition}\label{ComplementsDegreeq}
We have
\begin{equation*}
\Sing^{q}=\Dom_{\max}^{q}\cap \ker(\Id+A_q^\star A_q),\quad \SingStar^{q}=\DomStar_{\max}^{q}\cap \ker(\Id+A_{q-1} A_{q-1}^\star).
\end{equation*}
Consequently, 
\begin{enumerate}
\item $\Sing^q\subset\ker A_{q-1}^\star$ and $\SingStar^{q}\subset\ker A_q$, which imply $\Sing^q\subset\DomStar_{\max}^q$ and $\SingStar^q\subset\Dom_{\max}^q$, respectively;\label{bulletintersect}
\item $\Sing^q + \SingStar^q$ is contained in the kernel of the operator
\begin{equation}\label{1plusDelta}
\Id+\square_q : \Dom_{\max}(\Id + \square_q) \subset x^{-\gamma}L^2_b(\M;E^q) \to x^{-\gamma}L^2_b(\M;E^q),
\end{equation}
where
$$
\square_q = A_q^{\star}A_q + A_{q-1}A_{q-1}^{\star} \in x^{-2}\Diff^2_b(\M;E^q)
$$
is the formal Laplacian associated with the complex \eqref{TheCComplex} and the base Hilbert space $x^{-\gamma}L^2_b$ in every degree;
\item\label{bulletFiniteDim} $\dim\Sing^q < \infty$ and $\dim\SingStar^q < \infty$;
\item\label{bulletHoloGain} there exists $\eps > 0$ such that $\Sing^q+\SingStar^q\subset x^{-\gamma+\eps}H^{\infty}_b(\M;E^q)$;
\item $\Sing^q+\SingStar^q \subset \Dom_{\max}^q \cap \DomStar^q_{\max}$ is a direct sum;\label{bulletdirect}
\item $\Sing^q+\SingStar^q$ is orthogonal to $\Dom_{\min}^q \cap \DomStar^q_{\min}$ with respect to the inner product
$$
\langle u,v \rangle_{\Dom_{\max}^q \cap \DomStar^q_{\max}} = \langle u,v \rangle_{x^{-\gamma}L^2_b} + \langle A_qu,A_qv \rangle_{x^{-\gamma}L^2_b} + \langle A_{q-1}^{\star}u,A_{q-1}^{\star}v \rangle_{x^{-\gamma}L^2_b}
$$
on $\Dom_{\max}^q \cap \DomStar^q_{\max}$.\label{bulletortho}
\end{enumerate}
\end{proposition}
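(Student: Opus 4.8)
The plan is to establish the two identities $\Sing^q = \Dom_{\max}^q\cap\ker(\Id+A_q^\star A_q)$ and $\SingStar^q = \DomStar_{\max}^q\cap\ker(\Id+A_{q-1}A_{q-1}^\star)$ first, since everything else follows from them together with standard elliptic regularity for $\square_q$. For the first identity, recall that $\Sing^q$ is by definition the orthogonal complement of $\Dom_{\min}^q$ in $\Dom_{\max}^q$ with respect to the graph inner product \eqref{InnerProduct}. An element $u\in\Dom_{\max}^q$ lies in $\Sing^q$ exactly when $\langle u,\varphi\rangle_{x^{-\gamma}L^2_b} + \langle A_qu,A_q\varphi\rangle_{x^{-\gamma}L^2_b} = 0$ for all $\varphi\in C_c^\infty(\open\M;E^q)$ (density of $C_c^\infty$ in $\Dom_{\min}^q$). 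The second term is $\langle A_qu,A_q\varphi\rangle = \langle u, A_q^\star A_q\varphi\rangle$ by integration by parts (legitimate since $\varphi$ is compactly supported in the interior), so the condition becomes $\langle (\Id + A_q^\star A_q^{\,\mathrm{formal}})u,\varphi\rangle = 0$ distributionally, i.e. $(\Id + A_q^\star A_q)u = 0$ in the sense of distributions — which, combined with $u\in\Dom_{\max}^q$, is precisely the stated description. One must be slightly careful that $A_q^\star A_q u$ is a priori only a distribution, but the equation $A_q^\star A_q u = -u \in x^{-\gamma}L^2_b$ shows it is in fact in $x^{-\gamma}L^2_b$, placing $u$ in the maximal domain of $\square_q$ after one also checks the $A_{q-1}A_{q-1}^\star$ piece is controlled; the identity for $\SingStar^q$ is symmetric, using $A_{q-1}^\star$ in place of $A_q$ and the adjoint graph inner product.

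Given these identities, items (\ref{bulletintersect})--(\ref{bulletHoloGain}) are quick. For (\ref{bulletintersect}): if $u\in\Sing^q$ then $A_q^\star A_q u = -u \in x^{-\gamma}L^2_b$; pairing against $u$ itself (after justifying the integration by parts, which is where (\ref{bulletHoloGain}) is genuinely needed) gives $\|A_qu\|^2 = -\|u\|^2 \le 0$, forcing $u = 0$ unless — wait, that cannot be right, so instead one pairs more carefully: $\langle A_q^\star A_q u, u\rangle = -\|u\|^2$, and the left side equals $\|A_q u\|^2 \ge 0$ only after an integration by parts that is \emph{not} valid without boundary-term control; what is actually true and needed is that $A_q^\star A_q u = -u$ together with $c$-ellipticity of the complex forces, via the indicial analysis, that $u$ decays, hence $A_{q-1}^\star u$ makes sense and the standard Hodge-type argument gives $A_{q-1}^\star u$ — let me restate: the cleanest route to (\ref{bulletintersect}) is to observe $\Sing^q\subset\ker A_{q-1}^\star$ because $\Sing^q$ is orthogonal (in the graph norm) to the image $A_{q-1}(\Dom_{\min}^{q-1})\subset\Dom_{\min}^q$, and for $w\in\Dom_{\min}^{q-1}$ one has $\langle u,A_{q-1}w\rangle + \langle A_q u, A_q A_{q-1}w\rangle = \langle u, A_{q-1}w\rangle$ since $A_q A_{q-1}=0$, so $\langle u,A_{q-1}w\rangle = 0$ for all such $w$, i.e. $u\perp\operatorname{ran}A_{q-1,\min}$, which by the adjoint relations means $u\in\ker(A_{q-1,\min})^* = \ker A_{q-1,\max}^\star$; thus $u\in\DomStar_{\max}^q$. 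The inclusion $\SingStar^q\subset\ker A_q$ is the mirror statement. Then (\ref{bulletintersect}) gives that any $u\in\Sing^q + \SingStar^q$ satisfies $\square_q u = A_q^\star A_q u + A_{q-1}A_{q-1}^\star u$, and on each summand one of the two terms vanishes while the other equals $-u$, yielding $(\Id+\square_q)u = 0$; this is (\ref{bulletFiniteDim})'s input. Finite-dimensionality (\ref{bulletFiniteDim}) then follows because $\square_q$ is a $c$-elliptic (in fact $b$-elliptic after multiplying by $x^2$) operator and $\Id+\square_q$ has closed range with finite-dimensional kernel on $x^{-\gamma}L^2_b$ by the standard Fredholm theory for elliptic cone operators — one cites the analysis of a single elliptic cone operator, e.g. \cite{GiMe01}, applied to $x^2\square_q$. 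The smoothness-with-weight-gain statement (\ref{bulletHoloGain}) is elliptic regularity for this same operator: $(\Id + \square_q)u = 0$ with $u\in x^{-\gamma}L^2_b$ bootstraps to $u\in x^{-\gamma}H^\infty_b$, and the extra $x^\eps$ gain comes from the fact that $-1$ is not an indicial root contributing at the weight $-\gamma$, so $u$ in fact lies in a slightly better weighted space; this is the one place where a genuine (if standard) appeal to the indicial/boundary spectrum of $\square_q$ is required.

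For (\ref{bulletdirect}), directness of the sum $\Sing^q + \SingStar^q$: suppose $u\in\Sing^q\cap\SingStar^q$. By (\ref{bulletintersect}), $u\in\ker A_{q-1}^\star\cap\ker A_q$ while also $A_q^\star A_q u = -u$ and $A_{q-1}A_{q-1}^\star u = -u$; but $A_q u = 0$ gives $A_q^\star A_q u = 0 = -u$, so $u=0$. (One must make sure $u\in\Sing^q$ really gives $A_qu$ well-defined — it does, $u\in\Dom_{\max}^q$ — and that $u\in\SingStar^q\subset\ker A_q$ is exactly the needed vanishing.) Finally (\ref{bulletortho}): for $u\in\Sing^q + \SingStar^q$ and $\psi\in\Dom_{\min}^q\cap\DomStar_{\min}^q$, the inner product $\langle u,\psi\rangle + \langle A_q u, A_q\psi\rangle + \langle A_{q-1}^\star u, A_{q-1}^\star\psi\rangle$ should be shown to vanish; by (\ref{bulletHoloGain}) the elements of $\Sing^q + \SingStar^q$ are smooth with a weight gain, so all integrations by parts against $\psi$ (approximated by $C_c^\infty$, with vanishing boundary contributions) are justified, and one rewrites the expression as $\langle (\Id + \square_q)u, \psi\rangle = 0$ using the already-established $(\Id+\square_q)u = 0$. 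The main obstacle in the whole argument is making the integration-by-parts manipulations rigorous at the cone point — i.e. controlling boundary terms — which is why item (\ref{bulletHoloGain}) (sufficient decay/regularity of elements of $\Sing^q$ and $\SingStar^q$) must be proved first and then used throughout; everything else is formal manipulation with the adjoint relations. I would therefore order the proof as: (a) the two kernel identities; (b) $c$-ellipticity of $x^2\square_q$ and the resulting Fredholm/regularity statements, giving (\ref{bulletFiniteDim}) and (\ref{bulletHoloGain}) and thereby licensing boundary-term-free integration by parts; (c) (\ref{bulletintersect}) via orthogonality to ranges; (d) the inclusion $\Sing^q+\SingStar^q\subset\ker(\Id+\square_q)$; (e) directness (\ref{bulletdirect}); (f) orthogonality (\ref{bulletortho}).
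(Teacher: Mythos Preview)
Your approach is essentially the paper's: establish the kernel identity first, derive $A_{q-1}^\star u=0$ for $u\in\Sing^q$, conclude $(\Id+\square_q)u=0$, and then invoke Fredholm theory and elliptic regularity for the $c$-elliptic cone operator $\square_q$. Two places where the paper is cleaner: for item~(\ref{bulletintersect}) the paper simply applies $A_{q-1}^\star$ to the identity $u=-A_q^\star A_q u$ and uses $A_{q-1}^\star A_q^\star=0$ as differential operators (your orthogonality-to-ranges argument is also correct, and does not actually require (\ref{bulletHoloGain}) as you initially feared); and for item~(\ref{bulletortho}) no integration by parts is needed at all---for $u\in\Sing^q$ and $v\in\Dom_{\min}^q\cap\DomStar_{\min}^q$ the first two terms of the combined inner product are just $\langle u,v\rangle_q=0$ by definition of $\Sing^q$, while the third vanishes because $A_{q-1}^\star u=0$. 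Also note that your final ordering puts (b) before (c) and (d), but (\ref{bulletFiniteDim}) and (\ref{bulletHoloGain}) only follow once you know $\Sing^q\subset\ker(\Id+\square_q)$, which in turn needs (\ref{bulletintersect}); so the logical order is (a), (c), (d), (b), (e), (f).
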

\begin{proof}
The identity
$$
\Sing^{q}=\Dom_{\max}^{q}\cap \ker(\Id+A_q^\star A_q)
$$
for the orthogonal complement of the minimal domain in the maximal domain with respect to the graph inner product is generally true for differential operators acting on sections of Hermitian vector bundles on arbitrary Riemannian manifolds. A proof of this fact is given in \cite[Lemma~4.2]{GiKrMe07} in the framework of cone operators (while the operator is assumed to be elliptic in that reference the proof does not make use of ellipticity in any way).

We proceed to prove the remaining statements. If $u\in \Sing^q$, then certainly $u\in x^{-\gamma}L^2_b(\M;E^q)$. Also, $u=-A_q^\star  A_q u$, so $A_{q-1}^\star u=0\in x^{-\gamma}L_b^2(\M;E^{q+1})$. Now
$$
\square_q u = A_q^{\star} A_q u + A_{q-1} A_{q-1}^{\star} u = A_q^{\star} A_q u = -u.
$$
Thus $u \in x^{-\gamma}L^2_b(\M;E^q)$ with $(\Id + \square_q) u = 0$, and $(\Id + \square_q) \in x^{-2}\Diff^2_b(\M;E^q)$ is $c$-elliptic because the complex \eqref{TheCComplex} is $c$-elliptic by assumption. Consequently, the operator \eqref{1plusDelta} is Fredholm. Because $\Sing^q$ is contained in the kernel of \eqref{1plusDelta} we get $\dim\Sing^q < \infty$, see Lesch~\cite[Proposition 1.3.16]{Le97}. Moreover, elliptic regularity for cone operators implies that there exists $\eps > 0$ such that the kernel of \eqref{1plusDelta} is contained in $x^{-\gamma+\eps}H^{\infty}_b(\M;E^q)$. The same reasoning applies to $\SingStar^q$.

We show next that the sum in \eqref{bulletdirect} is direct. Let $u\in \Sing^q\cap\SingStar^q$. Then $u=-A_q^\star  A_q u$ gives $u\in \ker A^\star_{q-1}$. This together with $u=-A_{q-1} A_{q-1}^\star u$ gives $u=0$, as desired.

Finally, \eqref{bulletortho} follows from \eqref{bulletintersect} and the definition of $\Sing^q$ and $\SingStar^q$.
\end{proof}

Suppose $\Dom^q\subset x^{-\gamma}L^2_b(\M;E^q)$ is such that $\Dom_{\min}^q \subset \Dom^q\subset \Dom_{\max}^q$. As pointed out already, $A_q$ with domain $\Dom^q$ is closed because of (\ref{bulletFiniteDim}) of Proposition~\ref{ComplementsDegreeq} since $A_{q,\min}$ is already closed. So the only condition on the spaces if they are to give an ideal boundary condition is the compatibility condition $A_q(\Dom^q)\subset \Dom^{q+1}$. (Recall that by default $\Dom^m=x^{-\gamma}L^2_b(\M;E^m)$.) 

\begin{proposition}\label{HilbertComplex}
Let $\Dom^k\subset  x^{-\gamma}L^2_b(\M;E^q)$, $k=0,\dotsc,m$, be a choice of ideal boundary condition for the complex \eqref{TheCComplex}. Then the cohomology spaces of
\begin{equation}\label{HilbertComplexProper}
0\to\Dom^0\xrightarrow{A_0} \Dom^1\xrightarrow{A_1}\cdots \xrightarrow{A_{m-2}}\Dom^{m-1}\xrightarrow{A_{m-1}}\Dom^m\to 0
\end{equation}
in every degree are finite-dimensional. In particular, for each $q$, the space $A_q(\Dom^q)$ is a closed subspace of $ x^{-\gamma}L^2_b(\M;E^{q+1})$. 
\end{proposition}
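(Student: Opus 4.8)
The plan is to realize the complex \eqref{HilbertComplexProper} as a Hilbert complex in the sense of Br\"uning and Lesch whose Laplacian in each degree is a Fredholm operator, and then to invoke the abstract Hodge theory of \cite{BrueningLesch1992}, which yields at once the finite-dimensionality of the cohomology and the closedness of the ranges $A_q(\Dom^q)$.

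As remarked after Proposition~\ref{ComplementsDegreeq}, each $d_q:=A_q|_{\Dom^q}$ is a closed operator, since $A_{q,\min}$ is closed and $\Dom_{\max}^q/\Dom_{\min}^q$ is finite-dimensional by part~(\ref{bulletFiniteDim}); thus \eqref{HilbertComplexProper} is a Hilbert complex. From $A_{q,\min}\subset d_q\subset A_{q,\max}$ and the adjoint relations $(A_{q,\max})^*=A_{q,\min}^\star$, $(A_{q,\min})^*=A_{q,\max}^\star$ it follows that $A_{q,\min}^\star\subset d_q^*\subset A_{q,\max}^\star$, so the Hilbert space adjoint $d_q^*$ acts as the formal adjoint differential operator $A_q^\star$. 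The Laplacian of the complex in degree $q$,
\[
\Delta_q=d_q^*d_q+d_{q-1}d_{q-1}^*,
\]
is self-adjoint and nonnegative by \cite{BrueningLesch1992}, in particular closed, and on its domain it acts as the formal Laplacian $\square_q=A_q^\star A_q+A_{q-1}A_{q-1}^\star\in x^{-2}\Diff^2_b(\M;E^q)$ from Proposition~\ref{ComplementsDegreeq}.

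The decisive point is that $\operatorname{dom}(\Delta_q)$ lies between $\Dom_{\min}(\square_q)$ and $\Dom_{\max}(\square_q)$. The inclusion $\operatorname{dom}(\Delta_q)\subset\Dom_{\max}(\square_q)$ is immediate, as $u\in\operatorname{dom}(\Delta_q)$ gives $\square_q u=\Delta_q u\in x^{-\gamma}L^2_b$ distributionally. For the reverse inclusion I would verify directly that $C_c^\infty(\open\M;E^q)\subset\operatorname{dom}(\Delta_q)$: for $u\in C_c^\infty$ one has $d_q u=A_q u\in C_c^\infty\subset\DomStar_{\min}^{q+1}\subset\operatorname{dom}(d_q^*)$ and $d_{q-1}^*u=A_{q-1}^\star u\in C_c^\infty\subset\Dom^{q-1}=\operatorname{dom}(d_{q-1})$, whence $u\in\operatorname{dom}(\Delta_q)$ and $\Delta_q u=\square_q u$; since on $C_c^\infty$ the graph norm of $\Delta_q$ coincides with that of $\square_q$, and $\operatorname{dom}(\Delta_q)$ is complete for the former, passing to closures gives $\Dom_{\min}(\square_q)\subset\operatorname{dom}(\Delta_q)$. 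Hence $\Delta_q$ is a closed realization of $\square_q$ with domain between the minimal and maximal domains, and $\square_q$ is $c$-elliptic because the complex \eqref{TheCComplex} is (Proposition~\ref{ComplementsDegreeq}); by the Fredholm theory for elliptic cone operators $\Delta_q$ is therefore a Fredholm operator --- this is precisely the Fredholmness invoked for $\Id+\square_q$ in the proof of Proposition~\ref{ComplementsDegreeq}; see also \cite{Le97}. The extreme degrees $q=0$ and $q=m$ (where $d_m=0$, resp.\ $d_{-1}=0$, and $\Dom^m=x^{-\gamma}L^2_b$) are handled identically, the missing summand of $\square_q$ simply being absent.

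With all Laplacians $\Delta_q$ of \eqref{HilbertComplexProper} Fredholm, \cite{BrueningLesch1992} shows that \eqref{HilbertComplexProper} is a Fredholm complex: its cohomology in each degree is finite-dimensional, canonically isomorphic to the harmonic space $\ker\Delta_q=\ker d_q\cap\ker d_{q-1}^*$, and each differential $d_q=A_q|_{\Dom^q}$ has closed range --- which is exactly the assertion that $A_q(\Dom^q)$ is a closed subspace of $x^{-\gamma}L^2_b(\M;E^{q+1})$. The substance of the argument lies entirely in two imported results, the Fredholm theory of $c$-elliptic cone operators and the abstract Hodge theory of Hilbert complexes; the only place where care is needed is the identification of $\Delta_q$ with the geometric Laplacian $\square_q$ and the verification that its domain is sandwiched between $\Dom_{\min}(\square_q)$ and $\Dom_{\max}(\square_q)$, which I expect to be the main (though routine) obstacle.
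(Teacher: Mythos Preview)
Your proposal is correct and is precisely the ``direct application of standard Hodge theory'' the paper alludes to in its one-sentence justification; you have simply spelled out the details the paper omits, namely that the abstract Laplacian $\Delta_q$ of the Hilbert complex is a closed extension of the $c$-elliptic cone operator $\square_q$ with $\Dom_{\min}(\square_q)\subset\operatorname{dom}(\Delta_q)\subset\Dom_{\max}(\square_q)$, hence Fredholm by \cite{Le97}, and then invoked the Br\"uning--Lesch machinery.
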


The proof is a direct application of standard Hodge theory (taking advantage of the $c$-ellipticity of the complex and the compactness of $\M$). In the terminology of Br\"uning and Lesch, \eqref{HilbertComplexProper} is a Hilbert complex, and the finite-dimensionality of its cohomology groups makes it a Fredholm complex \cite[pg.~90]{BrueningLesch1992}.

\medskip
Let $\Ring$ denote the ring of $C^{\infty}$-functions on $\M$ that are constant on $\Z$. The following proposition shows that the maximal and minimal domains are localizable by elements of $\Ring$.

\begin{proposition}
Both $\Dom_{\min}^q$ and $\Dom_{\max}^q$ are $\Ring$-modules. Moreover, if $u \in \Dom_{\max}^q$ and $f \in \Ring$ such that $f|_\Z = 0$, then $fu \in \Dom_{\min}^q$.
\end{proposition}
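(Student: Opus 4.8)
The plan is to reduce everything to a local, one-variable question near $\Z$ by using a partition of unity and the already-established structure. First I would dispose of the part of $u$ supported away from $\Z$: if $\omega\in C^\infty(\M)$ equals $1$ near $\Z$ with support in a tubular neighborhood, then $(1-\omega)u$ lies in $C^\infty_c(\open\M;E^q)$-closure matters only through the interior, where $A_q$ is an honest elliptic (in fact, just differential) operator on a smooth manifold, so $(1-\omega)u\in\Dom^q_{\min}$ trivially, and multiplication by $f\in\Ring$ preserves this. Likewise multiplication by any $g\in C^\infty(\M)$ supported in $\open\M$ maps $\Dom^q_{\max}$ to $\Dom^q_{\min}$ because near such a support $A_q$ is elliptic and $C^\infty_c$ is dense. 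So the whole statement is about $\omega u$ and reduces to the model situation on $\Z^\wedge$.

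Next I would handle the $\Ring$-module claims. That $\Dom^q_{\max}$ is an $\Ring$-module is immediate from the Leibniz rule: for $f\in\Ring$ and $u\in\Dom^q_{\max}$, $A_q(fu)=f A_q u + [A_q,f]u$, and since $A_q=x^{-1}P_q$ with $P_q\in\Diff^1_b$, the commutator $[A_q,f]=x^{-1}[P_q,f]$ has $[P_q,f]\in x\cdot C^\infty(\M)$ acting as a bundle map (because $df$ is conormal to $\Z$ when $f\in\Ring$, so the $b$-symbol of $P_q$ paired with $df$ vanishes to first order at $\Z$); hence $[A_q,f]$ is a bounded multiplication operator on $x^{-\gamma}L^2_b$ and $fu\in\Dom^q_{\max}$. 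For $\Dom^q_{\min}$ one approximates $u$ by $u_j\in C^\infty_c(\open\M;E^q)$ in the graph norm; then $fu_j\to fu$ in $x^{-\gamma}L^2_b$ and $A_q(fu_j)=fA_qu_j+[A_q,f]u_j \to fA_qu+[A_q,f]u = A_q(fu)$, using boundedness of multiplication by $f$ and by the (bounded) coefficient of $[A_q,f]$; since each $fu_j\in C^\infty_c(\open\M;E^q)$, we get $fu\in\Dom^q_{\min}$.

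For the sharper statement, suppose $f\in\Ring$ with $f|_\Z=0$, so $f=x h$ for some $h\in C^\infty(\M)$ (using that $x$ is a defining function and $f$ is constant, hence zero, on $\Z$; strictly, $f/x$ extends smoothly). Then $fu = x(hu)$, and by Proposition~\ref{ComplementsDegreeq}(\ref{bulletHoloGain}) it suffices to gain a power of $x$: I want to show $xv\in\Dom^q_{\min}$ whenever $v\in\Dom^q_{\max}$. Indeed $xv\in x^{1-\gamma}L^2_b\subset x^{-\gamma}L^2_b$ and $A_q(xv)=xA_qv+[A_q,x]v$; here $[A_q,x]=x^{-1}[P_q,x]$ and $[P_q,x]\in x\cdot\Diff^0_b$ is a bounded bundle map, so $A_q(xv)\in x^{1-\gamma}L^2_b + x^{-\gamma}L^2_b\subset x^{-\gamma}L^2_b$, confirming $xv\in\Dom^q_{\max}$; the point is to show it lies in the \emph{minimal} domain. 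To see this, write $xv$ in terms of the decomposition $\Dom^q_{\max}=\Dom^q_{\min}\oplus\Sing^q$: it is enough to check that $xv$ is orthogonal (in the graph inner product) to $\Sing^q$, equivalently that $\langle xv,\phi\rangle_{x^{-\gamma}L^2_b}+\langle A_q(xv),A_q\phi\rangle_{x^{-\gamma}L^2_b}=0$ for every $\phi\in\Sing^q$. By Proposition~\ref{ComplementsDegreeq}, $\phi=-A_q^\star A_q\phi$ and $\phi\in x^{-\gamma+\eps}H^\infty_b$; a Green's formula manipulation turns the graph pairing into $[xv,A_q\phi]_{A_q}$-type boundary terms at $\Z$, which vanish because $xv$ gains the extra power of $x$ while $\phi$ and $A_q^\star A_q\phi$ are already in the maximal-domain range — the boundary integrand is $O(x^{1+2\eps'})$ times an $L^2_b$-bounded quantity, integrable and with vanishing $\Z$-trace. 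I expect this last orthogonality/boundary-term computation to be the main obstacle: one must be careful that the Green pairing $[\cdot,\cdot]_{A_q}$ is computed as a limit of integrals over $\{x>\delta\}$ and that the extra factor of $x$ from $xv$ (combined with the $\eps$-gain of $\phi$) is exactly what forces the limit to be $0$. Alternatively, and perhaps more cleanly, one can invoke the regularity result (the first unnumbered Theorem in the introduction) together with the fact, established in the outline of Theorem~\ref{Theorem.A}, that $\omega w_{\sigma_0-\im}\in\Dom^{q-1}_{\min}$ whenever the corresponding singular function is exact, to argue directly that $x$ times any element of $\gSing_{\sigma_0}(\Z^\wedge;E^q_\Z)$ lands in $\gSing_{\sigma_0-\im}(\Z^\wedge;E^q_\Z)$ with $\Im(\sigma_0-\im)=\Im\sigma_0-1<\gamma-1$, hence the model computation in the Taylor-expansion analysis of $A_q$ shows $\omega(xv)\in\Dom^q_{\min}$ directly; combined with the first two paragraphs this finishes the proof.
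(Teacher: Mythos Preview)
Your treatment of the $\Ring$-module structure (second paragraph) is essentially the paper's argument; the paper phrases the commutator computation via the $c$-symbol as $A_q(fu)=fA_q u-\im\,\csym(A_q)(df)(u)$, noting that $df\in C^\infty(\M;\cT^*\M)$ precisely because $f\in\Ring$, so $\csym(A_q)(df)$ is a smooth bundle map.

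The genuine gap is in your third paragraph. First, the reduction $fu=x(hu)$ followed by ``show $xv\in\Dom_{\min}^q$ whenever $v\in\Dom_{\max}^q$'' applied to $v=hu$ fails: for general $h\in C^\infty(\M)\setminus\Ring$ the commutator $[A_q,h]$ is of size $x^{-1}$, so $hu\notin\Dom_{\max}^q$. One must work with $fu$ itself, which you have already placed in $\Dom_{\max}^q\cap x^{1-\gamma}L^2_b$. Second, and more seriously, the orthogonality/Green's-formula argument you sketch cannot be completed at this regularity. The precise version of that argument is Lemma~\ref{MaxAndVanishingIsMinAux}, whose proof approximates by $x^{1/\nu}u$ and uses that $x^{1/\nu}u\in x^{1-\gamma}H^1_b\subset\Dom_{\min}^q$ (Lemma~\ref{OnDomMinGenericA}); this requires $u\in\bigcap_{\eps>0} x^{1-\gamma-\eps}H^1_b$, whereas $fu$ lies only in $x^{1-\gamma}L^2_b$. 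Your ``boundary integrand is $O(x^{1+2\eps'})$'' heuristic does not supply the missing $b$-derivative. Your alternative via the singular-function structure of $\Sing^q$ runs into the same problem: from $fu=\phi+\psi$ with $\phi\in\Sing^q$, $\psi\in\Dom_{\min}^q$, you cannot conclude that the singular part of $\phi$ lies in $x^{1-\gamma}L^2_b$, since $\psi$ need not.

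The paper closes exactly this gap by forward-referencing Proposition~\ref{MaxAndVanishingIsMin}, which relaxes the hypothesis of Lemma~\ref{MaxAndVanishingIsMinAux} from $H^1_b$ to $H^{-\infty}_b$. Its proof uses the parametrix identity $B_qA_q+A_{q-1}B_{q-1}=\Id-R_q$ with $B_q\in x\Psi_b^{-1}$: the parametrix terms land in $\Dom_{\min}^q$ for structural reasons, while the smoothing remainder $R_q(fu)\in\bigcap_{\eps>0}x^{1-\gamma-\eps}H^\infty_b$ has enough regularity for Lemma~\ref{MaxAndVanishingIsMinAux} to apply. This parametrix step is the idea missing from your plan.
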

\begin{proof}
We first prove that $\Dom_{\max}^q$ is an $\Ring$-module. To this end, let $f \in \Ring$ and $u \in \Dom_{\max}^q$ be arbitrary. Clearly, $fu \in x^{-\gamma}L^2_b(\M;E^q)$. Since $A_q$ is a first order differential operator, 
\begin{equation}\label{PpalSymbol}
A_q(f u)=fA_q(u)-\im \csym(A_q)(df)(u).
\end{equation}
This formula holds over $\open \M$ with the standard principal symbol and arbitrary smooth $f$, but since $df\in C^\infty(\M;\cT^*\M)$ because $f\in \Ring$, it holds with the $c$-symbol. Because $A_q(u) \in x^{-\gamma}L^2_b(\M;E^{q+1})$ we therefore obtain that both terms on the right in \eqref{PpalSymbol} are in $x^{-\gamma}L^2_b(\M;E^{q+1})$. Consequently $fu \in \Dom_{\max}^q$. If, in addition, $f|_\Z = 0$, then $fu \in x^{1-\gamma}L^2_b(\M;E^q) \cap \Dom_{\max}^q$, and thus $fu \in \Dom_{\min}^q$ by Proposition~\ref{MaxAndVanishingIsMin} below. That $\Dom_{\min}^q$ is an $\Ring$-module now follows by continuity of multiplication by $f$ in the graph norm, and from the fact that it leaves $C_c^{\infty}(\open\M;E^q)$ invariant.
\end{proof}

\section{The spaces $\Sing^q$}\label{gSing}

Let $\pi_\wedge:\Z^\wedge\to \Z$ be the closed inward pointing normal bundle of $\Z$ in $\M$. The form $dx$ defines a function on $\Z^\wedge$ which we continue to denote by $x$; it allows us to identify $\Z^\wedge$ with $\Z\times \lbra 0,\infty\rpar$. For any vector bundle $E_\Z \to \Z$ and every $\sigma_0 \in \C$ let $\gSing_{\sigma_0}(\Z^\wedge;E_\Z) \subset C^{\infty}(\open\Z^\wedge;E_\Z^\wedge)$ be the space of sections of $E_\Z^\wedge=\pi_\wedge^*E_\Z$ over $\open \Z^\wedge$ of the form
\begin{equation*}
u =  \sum_{k=0}^Nc_k(z)\,x^{i\sigma_0}\!\log^k x
\end{equation*}
for arbitrary $N \in \ZN_0$ and $c_k \in C^{\infty}(\Z;E_\Z)$. 

Suppose that $E_\Z$ is the restriction to $\Z$ of a bundle $E$ on $\M$. With the aid of a tubular neighborhood map and connections (all implicit and kept fixed from now on), identify a neighborhood of $\Z$ in $\M$ with a neighborhood of $\Z$ in $\Z^\wedge$, and sections of $E$ on $\M$ supported near $\Z$ with sections of the pull-back of $E_\Z$ to $\Z^\wedge$ that are supported near $\Z$. Let $\omega \in C_c^{\infty}(\overline{\R}_+)$ be a cut-off function that is supported sufficiently close to $x=0$, so generalized sections in $\omega C^{-\infty}(\open\Z^\wedge;E_\Z)$ make sense as generalized sections of $E$ on $\M$ supported near the boundary.

Suppose that $F\to \M$ is another vector bundle. The following lemma is a standard fact in analysis on $b$-manifolds.

\begin{lemma}\label{RegularityOfSing}
Let $u\in \gSing_{\sigma_0}(\Z^\wedge;E_\Z)$. Then:
\begin{enumerate}
\item \label{RegularityOfSing1} If $\mu>\Im\sigma_0$, then  $P\omega u\in x^{-\mu}L^2_b(\M;F)$ for every $P\in\Diff^*_b(\M;E,F)$. In particular, in this case, $\omega \gSing_{\sigma_0}(\Z^\wedge;E_\Z)\subset x^{-\mu}H_b^\infty(\M;E)$. 
\item \label{RegularityOfSing2} If $\mu\leq \Im \sigma_0$ and $u\in \gSing_{\sigma_0}(\Z^\wedge;E_\Z)$ satisfies $\omega u\in x^{-\mu}L^2_b(\M;E)$ then $u=0$.
\item \label{RegularityOfSing3} Now suppose that $u_{\sigma_j} \in \gSing_{\sigma_j}(\Z^\wedge;E_\Z)$ with $\sigma_j \in \C$, $j=1,\ldots,N$, and let
$$
u = \sum\limits_{j=1}^Nu_{\sigma_j}.
$$
Then $\omega u \in x^{-\mu}L^2_b(\M;E)$ if and only if every $\omega u_{\sigma_j} \in x^{-\mu}L^2_b(\M;E)$.
\end{enumerate} 
\end{lemma}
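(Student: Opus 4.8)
The plan is to reduce everything to an elementary computation in the single fibre variable $x$. Fix local coordinates and a trivialization near $\Z$, so that (via the tubular neighborhood map) $\omega u$ is a compactly supported generalized section on $\Z\times[0,\infty)_x$. The $b$-density $\m_b$ is comparable to $\frac{dx}{x}\,dz$, so membership in $x^{-\mu}L^2_b$ near $x=0$ is controlled by the convergence of $\int_0^1 |x^{-\mu}f(x)|^2\,\frac{dx}{x}$ for the relevant coefficient functions $f$. The key one-variable fact is that $\int_0^1 |x^{\im\sigma_0}\log^k x|^2\,x^{-2\mu}\,\frac{dx}{x}=\int_0^1 x^{2(\Im\sigma_0-\mu)}\log^{2k}x\,\frac{dx}{x}$ converges if and only if $\Im\sigma_0>\mu$ (the logarithmic factors being harmless at that endpoint), and diverges if $\Im\sigma_0\le\mu$ for any $k\ge0$. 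This gives (\ref{RegularityOfSing1}) and (\ref{RegularityOfSing2}) immediately for a single term; for (\ref{RegularityOfSing1}) the statement about applying an arbitrary $P\in\Diff^*_b(\M;E,F)$ follows because $b$-operators have the form $\sum a_{\alpha,j}(x,z)(xD_x)^j D_z^\alpha$ with smooth coefficients, and each such monomial maps $\gSing_{\sigma_0}$ into $\gSing_{\sigma_0}$ (note $(xD_x)(x^{\im\sigma_0}\log^k x)$ stays of the same type, only changing $N$), so $P\omega u=\omega' P u + (\text{terms with }d\omega)$ with $P u\in\gSing_{\sigma_0}$; the commutator terms supported away from $x=0$ are smooth and compactly supported in $\open\Z^\wedge$, hence harmless. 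That $\omega\gSing_{\sigma_0}\subset x^{-\mu}H^\infty_b$ then follows since $H^\infty_b$ membership is exactly the statement that all $b$-derivatives stay in $x^{-\mu}L^2_b$.

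For part (\ref{RegularityOfSing3}), the ``if'' direction is trivial. For ``only if'' the point is linear independence: the functions $x^{\im\sigma_j}\log^k x$, as $(\sigma_j,k)$ ranges over distinct pairs, are linearly independent over $C^\infty(\Z;E_\Z)$-valued coefficients in a way that survives the $L^2_b$ norm. Concretely I would argue as follows. Group the terms by the distinct values $\Im\sigma_1,\dots$; suppose for contradiction some $\omega u_{\sigma_j}\notin x^{-\mu}L^2_b$, i.e. $\Im\sigma_j\le\mu$ for some $j$, and let $m_0=\min_j\{\Im\sigma_j:\Im\sigma_j\le\mu\}$ be the smallest offending imaginary part. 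Multiply $u$ by $x^{-\im c}$ for suitable real $c$, or equivalently renormalize, so that WLOG $m_0=0$ (this just shifts $\mu$; membership in a weighted space is unaffected by multiplying by a power $x^{\im c}$ with $c$ real). Then the ``most singular'' part of $u$ as $x\to0$ is $\sum_{\Im\sigma_j=m_0}u_{\sigma_j}$, a finite sum of terms $x^{i\alpha_j}\log^k x$ with $\alpha_j\in\R$ distinct, whose coefficients are not all zero. One shows this part cannot be cancelled by the remaining terms (which are all genuinely less singular, i.e. lie in some $x^{-\mu'}L^2_b$ with $\mu'<\mu$) and cannot itself lie in $x^{-\mu}L^2_b$. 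For the latter, restrict to a small coordinate ball in $\Z$ where some coefficient $c_k(z)$ is bounded below, and use that a finite sum $\sum_j\sum_k b_{jk}x^{i\alpha_j}\log^k x$ with the $\alpha_j$ real and distinct has $\int_0^1|\cdot|^2\frac{dx}{x}=\infty$ unless all $b_{jk}=0$: this is an elementary lemma about quasi-periodic functions in the variable $t=-\log x\in[0,\infty)$ — namely $g(t)=\sum b_{jk}t^k e^{-i\alpha_j t}$ is not in $L^2([0,\infty))$ unless $g\equiv0$, since $\frac1T\int_0^T|g|^2\,dt$ tends to a positive constant (a Parseval/Bohr almost-periodic average) when $g\not\equiv0$.

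I expect the main obstacle to be the careful bookkeeping in (\ref{RegularityOfSing3}): separating the genuinely singular leading part from the lower-order remainder and making the ``different $\sigma_j$ cannot cancel'' argument precise, since a priori the sum $u$ involves infinitely-looking combinations of powers and logs with $z$-dependent coefficients. The clean way around this is to pass to $t=-\log x$ and work with vector-valued exponential polynomials, invoking the almost-periodic (Bohr) mean-value computation; once that one-variable lemma is in hand, everything localizes and the proof is short. An alternative, perhaps smoother, route is to use the Mellin transform: $\omega u_{\sigma_j}$ has Mellin transform meromorphic with poles exactly at $\sigma=\sigma_j+i\ZN_0$ (orders given by the log powers), the pole locations being disjoint for distinct $\sigma_j$, and $x^{-\mu}L^2_b$ corresponds via Mellin–Plancherel to square-integrability on the line $\Im\sigma=\mu$ together with holomorphy below it down to that line; non-cancellation of the residues then forces each summand separately into the space. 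Either way the analytic content is the same elementary endpoint-integrability computation, just packaged differently; I would present the direct $t=-\log x$ version for self-containedness.
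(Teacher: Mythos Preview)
The paper does not actually prove this lemma; it is stated without proof as ``a standard fact in analysis on $b$-manifolds.'' So there is no paper argument to compare against, and your proposal must stand on its own.

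Your overall strategy --- reduce to a one-variable integrability computation, observe that $b$-differential operators preserve $\gSing_{\sigma_0}$ up to smooth coefficients, and for part~(\ref{RegularityOfSing3}) isolate the extremal imaginary part and invoke an almost-periodic (or Mellin) non-cancellation lemma --- is the right one. But there is a systematic sign error that runs through the whole write-up. Membership $f\in x^{-\mu}L^2_b$ means $x^{\mu}f\in L^2_b$, so the relevant integral is $\int_0^1|x^{\mu}f(x)|^2\,\tfrac{dx}{x}$, not $\int_0^1|x^{-\mu}f(x)|^2\,\tfrac{dx}{x}$; and $|x^{i\sigma_0}|=x^{-\Im\sigma_0}$, not $x^{\Im\sigma_0}$. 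The correct exponent is therefore $2(\mu-\Im\sigma_0)$, and the integral converges if and only if $\mu>\Im\sigma_0$ (consistent with the paper's own remark that $\omega u_{\sigma_0}\in x^{-\gamma}L^2_b$ iff $\Im\sigma_0<\gamma$). As written, your criterion ``converges iff $\Im\sigma_0>\mu$'' yields the \emph{opposite} conclusion in both (\ref{RegularityOfSing1}) and (\ref{RegularityOfSing2}).

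This error propagates to part~(\ref{RegularityOfSing3}). The ``most singular'' terms at $x=0$ are those with \emph{largest} $\Im\sigma_j$, and the offending ones (those with $\omega u_{\sigma_j}\notin x^{-\mu}L^2_b$) satisfy $\Im\sigma_j\ge\mu$, not $\le\mu$. With the sign fixed, the correct reduction is: let $m_0=\max_j\Im\sigma_j$; since $\omega u\in x^{-\mu}L^2_b\subset x^{-m_0}L^2_b$ and the remainder (terms with $\Im\sigma_j<m_0$) lies in $x^{-m_0}L^2_b$ by part~(\ref{RegularityOfSing1}), so does the $m_0$-level part $\sum_{\Im\sigma_j=m_0}u_{\sigma_j}$; your almost-periodic lemma (in $t=-\log x$) then forces that part to vanish, and one iterates. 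Your claim that the remainder lies in $x^{-\mu'}L^2_b$ for some $\mu'<\mu$ is not correct in general and is not what is needed. The Mellin alternative you sketch is correct as stated and is the cleanest route here, since distinct $\sigma_j$ give poles at disjoint locations and holomorphy of the Mellin transform in $\Im\sigma>\mu$ (plus $L^2$ on the line $\Im\sigma=\mu$) rules them out one by one.
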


\begin{lemma}\label{OnDomMinGenericA}
Suppose that $A\in x^{-1}\Diff^1_b(\M;E,F)$. The minimal domain of
\begin{equation}
A: C_c^\infty(\open\M;E) \subset x^{-\gamma}L^2_b(\M;E)\to x^{-\gamma}L^2_b(\M;F).
\end{equation}
contains $x^{-\gamma+1}H^1_b(\M;E)$.  
\end{lemma}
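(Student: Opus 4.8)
The plan is to prove the reverse inclusion by approximation: given $u\in x^{-\gamma+1}H^1_b(\M;E)$, I will exhibit a sequence in $C_c^\infty(\open\M;E)$ converging to $u$ in the graph norm of $A$, in two stages — first cut $u$ off away from $\Z$, then mollify the resulting interior‑supported piece. Before that one should record that $x^{-\gamma+1}H^1_b(\M;E)\subset\Dom_{\max}(A)$, so the graph norm is finite on it: writing $A=x^{-1}P$ with $P\in\Diff^1_b(\M;E,F)$, the standard mapping property of $b$‑operators gives $Pu\in x^{1-\gamma}L^2_b$, hence $Au=x^{-1}Pu\in x^{-\gamma}L^2_b$; and of course $x^{1-\gamma}H^1_b\subset x^{1-\gamma}L^2_b\subset x^{-\gamma}L^2_b$. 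In particular $x^{-1}u\in x^{-\gamma}L^2_b$, which is the fact that makes the first stage work.

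Fix $\chi\in C^\infty(\overline{\R}_+)$ with $\chi\equiv 0$ on $[0,1]$ and $\chi\equiv 1$ on $[2,\infty)$, and set $\chi_\delta=\chi(x/\delta)\in C^\infty(\M)$. I claim $\chi_\delta u\to u$ in the graph norm as $\delta\to 0$. Since $u$ and $Au$ lie in $x^{-\gamma}L^2_b$ and $1-\chi_\delta$ is supported in $\{x\le 2\delta\}$, dominated convergence gives $\chi_\delta u\to u$ and $\chi_\delta Au\to Au$ in $x^{-\gamma}L^2_b$. For the remaining term the Leibniz rule gives $A(\chi_\delta u)=\chi_\delta Au+x^{-1}[P,\chi_\delta]u$. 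Because $\chi_\delta$ is (near $\Z$) a function of $x$ alone and $P$ is a $b$‑operator, $[P,\chi_\delta]$ is the bundle homomorphism obtained by pairing the $b$‑principal symbol of $P$ with ${}^b\!d\chi_\delta=(x\partial_x\chi_\delta)\tfrac{dx}{x}=\bigl(t\chi'(t)\bigr)\big|_{t=x/\delta}\,\tfrac{dx}{x}$; since $t\mapsto t\chi'(t)$ is bounded with support in $[1,2]$, this commutator is a homomorphism $E\to F$ bounded uniformly in $\delta$ and supported in $\{\delta\le x\le 2\delta\}$. Hence $\|x^{-1}[P,\chi_\delta]u\|_{x^{-\gamma}L^2_b}\le C\,\|\mathbf 1_{\{\delta\le x\le 2\delta\}}\,(x^{-1}u)\|_{x^{-\gamma}L^2_b}\to 0$, using $x^{-1}u\in x^{-\gamma}L^2_b$. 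This proves the claim.

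For each fixed $\delta>0$, $\chi_\delta u\in x^{1-\gamma}H^1_b(\M;E)$ is supported in the compact set $\{x\ge\delta\}\subset\open\M$. On the compact manifold‑with‑boundary $\{x\ge\delta/2\}$ the $b$‑structure is equivalent to the ordinary one, $A$ is an honest first‑order differential operator with smooth coefficients, and $\chi_\delta u$ lies in the ordinary $H^1$; Friedrichs mollification then produces $\phi_n\in C_c^\infty(\open\M;E)$ supported in $\{x>\delta/2\}$ with $\phi_n\to\chi_\delta u$ in $H^1$, hence $\phi_n\to\chi_\delta u$ and $A\phi_n\to A(\chi_\delta u)$ in $x^{-\gamma}L^2_b$ (norms being equivalent on the fixed compact set). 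Thus $\chi_\delta u\in\Dom_{\min}(A)$, and since $\Dom_{\min}(A)$ is closed and $\chi_\delta u\to u$ in the graph norm, $u\in\Dom_{\min}(A)$. The only point requiring care is the commutator estimate in the second paragraph: one must see that the $b$‑principal symbol of $P$ evaluated on ${}^b\!d\chi_\delta$ is uniformly bounded in $\delta$, so that the singular factor $x^{-1}$ in $A=x^{-1}P$ is exactly absorbed by the vanishing $u\in x^{1-\gamma}L^2_b$; everything else is routine truncation and interior mollification, and no ellipticity is used.
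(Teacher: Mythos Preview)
Your proof is correct. The paper's argument is considerably shorter: it simply observes that $A:x^{-\gamma+1}H^1_b(\M;E)\to x^{-\gamma}L^2_b(\M;F)$ is continuous, that the embedding $x^{-\gamma+1}H^1_b(\M;E)\hookrightarrow x^{-\gamma}L^2_b(\M;E)$ is continuous, and that $C_c^\infty(\open\M;E)$ is dense in $x^{-\gamma+1}H^1_b(\M;E)$; these three facts immediately give graph-norm approximation. In other words, the paper routes the approximation through the $x^{-\gamma+1}H^1_b$-topology and invokes density there as a known property of weighted $b$-Sobolev spaces.

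You instead bypass that density statement and work directly in the graph norm, building the approximating sequence by hand (boundary cutoff, then interior mollification). The price is the commutator estimate for $x^{-1}[P,\chi_\delta]$, which you handle correctly using the uniform boundedness of ${}^b\!d\chi_\delta$ in ${}^bT^*\M$ and the fact that $x^{-1}u\in x^{-\gamma}L^2_b$. Your route is more self-contained---it does not appeal to density in the weighted $b$-Sobolev space as a black box---while the paper's route is cleaner once one accepts that standard fact. Both arguments use the same underlying mapping property of $A=x^{-1}P$; neither uses ellipticity.
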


Lemma~\ref{OnDomMinGenericA} follows at once from the continuity of
$$
A : x^{-\gamma+1}H^1_b(\M;E) \to x^{-\gamma}L^2_b(\M;F),
$$
the continuity of the embedding $x^{-\gamma+1}H^1_b(\M;E) \hookrightarrow x^{-\gamma}L^2_b(\M;E)$, and the density of $C_c^{\infty}(\open\M;E)$ in $x^{-\gamma+1}H^1_b(\M;E)$.

\medskip

The operator $P=xA$ has a Taylor expansion of order $N$ at $\Z$,
\begin{equation}\label{TaylorOfA}
P=\sum_{j=0}^N x^j P^{(j)}+x^{N+1}\tilde P^{(N+1)}\quad\text{near } \Z
\end{equation}
where $P^{(j)}\nabla_{x\partial_x}=\nabla_{x\partial_x}P^{(j)}$ for each $j$. The $P^{(j)}$ are indistinctly viewed as operators in $\Diff^1_b(\M;E,F)$ near $\Z$ or in $\Diff^1_b(\Z^\wedge;E_\Z,F_\Z)$, while $\tilde P^{(N+1)}$ is an element of $\Diff^1_b(\M;E,F)$ near $\Z$. Observe that $P^{(j)}$ maps $\gSing_{\sigma_0}(\Z^\wedge;E_\Z)$ into $\gSing_{\sigma_0}(\Z^\wedge;F_\Z)$. We will write
\begin{equation*}
A^{(j)}=\frac{1}{x}P^{(j)},\quad \tilde A^{(N+1)}=\frac{1}{x} \tilde P^{(N+1)}.
\end{equation*}

We now return to the complex \eqref{TheCComplex}. Using these Taylor series in the identity $A_{q+1}A_q=0$ gives
\begin{equation*}
A_{q+1}^{(0)}A_q^{(0)}=0.
\end{equation*}
In particular, for every $\sigma_0$ there is a chain complex 
\begin{equation}\label{gSingComplex}\stepcounter{equation}\tag{{\theequation}$_q$}
\cdots\to 
\gSing_{\sigma_0-\im}(\Z^\wedge;E^{q-1}_\Z)\xrightarrow{A_{q-1}^{(0)}}
\gSing_{\sigma_0}(\Z^\wedge;E^q_\Z)\xrightarrow{A_q^{(0)}}
\gSing_{\sigma_0+\im}(\Z^\wedge;E^{q+1}_\Z)\to\cdots.
\end{equation}

Since $\Sing^q\subset \Dom_{\max}^q\cap \ker(\square_q+\Id)$, if $u\in \Sing^q$ then, by the elliptic theory of a single cone operator,
\begin{equation}\label{SingStructureOfE}
u=\omega \sum_{\mathclap{\substack{\sigma_0\in \spec_b(\square_q)\\ \gamma-1\leq \Im\sigma_0<\gamma}}}u_{\sigma_0} + u'
\end{equation}
for suitable $u_{\sigma_0}\in \gSing_{\sigma_0}(\Z^\wedge;E^q_\Z)$ and $u'\in \Dom_{\min}^q$. Indeed, since $u\in \ker(\square_q+\Id)\cap x^{-\gamma}L^2_b(\M;E^q)$, it has such an expansion with $u'\in x^{-\gamma+1}H^{\infty}_b(\M;E^q)$, but as already noted in Lemma~\ref{OnDomMinGenericA}, $x^{-\gamma+1}H^{\infty}_b(\M;E^q)\subset \Dom_{\min}^q$. This establishes, with some ambiguities, an upper bound on the nature of the elements of $\Sing^q$. 

The ambiguities arise from several sources. On the one hand, the boundary spectrum of $\square_q$ may be bigger than what is necessary to describe the singular structure of the elements of $\Sing^q$. 

On the other, the determination of the minimal domain of $A_q$ is a somewhat more involved issue compared with the case of a single elliptic operator. One source of problems is the following:

\begin{example}\label{DminInterference}
Let $\sigma_0\in \C$ with $\gamma-1<\Im \sigma_0<\gamma$ (whether $\sigma_0\in \spec_b(\square_q)$ or not does not matter). Let $w\in \gSing_{\sigma_0-\im}(\Z^\wedge;E^{q-1}_\Z)$. Then $\omega w\in \Dom_{\min}^{q-1}$, hence $u=A_{q-1}\omega w\in \Dom_{\min}^q$. However,
\begin{equation*}
A_{q-1}(\omega w)=\omega u_{\sigma_0}+u'
\end{equation*}
with $u'\in x^{-\gamma+1}H^\infty_b(\M;E^q)$ and $u_{\sigma_0} = A_{q-1}^{(0)}w \in \gSing_{\sigma_0}(\Z^\wedge;E^q_\Z)$. By Lemma~\ref{OnDomMinGenericA}, $u'\in \Dom_{\min}^q$. So $\omega u_{\sigma_0}\in \Dom_{\min}^q$. This is in contrast with the theory of a single elliptic cone operator, in which there cannot be nonzero elements $\omega u_{\sigma_0}$ as above that belong to the minimal domain. 
\end{example}

\begin{proposition}\label{EisClosed}
Suppose $u_{\sigma_0}\in \gSing_{\sigma_0}(\Z^\wedge;E_\Z^q)$ with $\gamma -1 \leq\Im\sigma_0<\gamma$. Then $\omega u_{\sigma_0}\in \Dom_{\max}^q$ if and only if  $A_q^{(0)}u_{\sigma_0}=0$.

More generally, let $u_{\sigma_j} \in \gSing_{\sigma_j}(\Z^\wedge;E_\Z^q)$ with $\gamma -1 \leq \Im\sigma_j<\gamma$, $j=1,\ldots,N$, and let $u = \sum_{j=1}^Nu_{\sigma_j}$. Then $\omega u \in \Dom_{\max}^q$ if and only if every $A_q^{(0)} u_{\sigma_j} = 0$.
\end{proposition}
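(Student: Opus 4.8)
The plan is to separate $A_q$ into its indicial part $A_q^{(0)}$ and a remainder lying in $\Diff^1_b$, and to exploit that, in the weight window $\gamma-1\le\Im\sigma_0<\gamma$, the remainder can never push $\omega u_{\sigma_0}$ out of $x^{-\gamma}L^2_b$, whereas $A_q^{(0)}$ does so unless $A_q^{(0)}u_{\sigma_0}=0$.

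First I would record that, by the order-zero Taylor expansion \eqref{TaylorOfA}, $P_q=P_q^{(0)}+x\tilde P_q^{(1)}$ near $\Z$, whence $A_q=A_q^{(0)}+R_q$ near $\Z$ with $R_q:=\tilde P_q^{(1)}\in\Diff^1_b(\M;E^q,E^{q+1})$. Since $\omega u_{\sigma_0}$ is supported near $\Z$ and $A_q$ is local, $A_q(\omega u_{\sigma_0})=A_q^{(0)}(\omega u_{\sigma_0})+R_q(\omega u_{\sigma_0})$. As $\Im\sigma_0<\gamma$, pick $\mu$ with $\Im\sigma_0<\mu\le\gamma$; Lemma~\ref{RegularityOfSing}\,\eqref{RegularityOfSing1} (with $P=\mathrm{Id}$, respectively $P=R_q$) gives both $\omega u_{\sigma_0}\in x^{-\mu}H^\infty_b(\M;E^q)\subset x^{-\gamma}L^2_b$ and $R_q(\omega u_{\sigma_0})\in x^{-\mu}L^2_b\subset x^{-\gamma}L^2_b$. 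Consequently $\omega u_{\sigma_0}\in\Dom_{\max}^q$ if and only if $A_q^{(0)}(\omega u_{\sigma_0})\in x^{-\gamma}L^2_b(\M;E^{q+1})$.

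Next I would commute $A_q^{(0)}$ through the cut-off. Because $\omega=\omega(x)$ and $A_q^{(0)}=x^{-1}P_q^{(0)}$ with $P_q^{(0)}$ a first-order $b$-operator, the commutator $[A_q^{(0)},\omega]=x^{-1}[P_q^{(0)},\omega]$ is multiplication by a smooth section supported in a set $\{0<c\le x\le c'\}$ on which $d\omega\ne0$; applied to $u_{\sigma_0}$, which is smooth there, it yields a smooth bounded-support section, a fortiori in $x^{-\gamma}L^2_b$. Hence $A_q^{(0)}(\omega u_{\sigma_0})\in x^{-\gamma}L^2_b$ if and only if $\omega\bigl(A_q^{(0)}u_{\sigma_0}\bigr)\in x^{-\gamma}L^2_b$. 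Now $A_q^{(0)}$ sends $\gSing_{\sigma_0}(\Z^\wedge;E^q_\Z)$ into $\gSing_{\sigma_0+\im}(\Z^\wedge;E^{q+1}_\Z)$, and $\Im(\sigma_0+\im)=\Im\sigma_0+1\ge\gamma$, so Lemma~\ref{RegularityOfSing}\,\eqref{RegularityOfSing2} with $\mu=\gamma$ forces $A_q^{(0)}u_{\sigma_0}=0$. The converse is immediate: if $A_q^{(0)}u_{\sigma_0}=0$ then $A_q(\omega u_{\sigma_0})=R_q(\omega u_{\sigma_0})+[A_q^{(0)},\omega]u_{\sigma_0}\in x^{-\gamma}L^2_b$, and $\omega u_{\sigma_0}\in x^{-\gamma}L^2_b$ by the previous paragraph, so $\omega u_{\sigma_0}\in\Dom_{\max}^q$. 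This settles the first assertion.

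For the finite sum, after grouping summands with equal $\sigma_j$ I may assume the $\sigma_j$, and hence the $\sigma_j+\im$, pairwise distinct. Applying the first two steps term by term gives $A_q(\omega u)\equiv\sum_j\omega\bigl(A_q^{(0)}u_{\sigma_j}\bigr)$ modulo $x^{-\gamma}L^2_b$, so $\omega u\in\Dom_{\max}^q$ iff $\omega\sum_j A_q^{(0)}u_{\sigma_j}\in x^{-\gamma}L^2_b$; since $A_q^{(0)}u_{\sigma_j}\in\gSing_{\sigma_j+\im}$ with pairwise distinct indices, Lemma~\ref{RegularityOfSing}\,\eqref{RegularityOfSing3} separates the summands and Lemma~\ref{RegularityOfSing}\,\eqref{RegularityOfSing2} yields $A_q^{(0)}u_{\sigma_j}=0$ for every $j$; the converse is as before. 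I do not expect a genuine obstacle here: the argument is the standard matching of the indicial operator against weighted $b$-Sobolev thresholds. The only points needing care are the two commutations — peeling off the $\Diff^1_b$-remainder $R_q$, and moving $A_q^{(0)}$ past $\omega$ — both of which produce errors that stay in $x^{-\gamma}L^2_b$ because they are either supported away from $\Z$ or lose strictly less than one power of $x$, so that the entire effect of the weight window $\gamma-1\le\Im\sigma_0<\gamma$ is concentrated in the single condition $A_q^{(0)}u_{\sigma_0}=0$.
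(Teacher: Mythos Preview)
Your proof is correct and follows essentially the same approach as the paper's: reduce the question of whether $A_q(\omega u_{\sigma_0})\in x^{-\gamma}L^2_b$ to whether $\omega A_q^{(0)}u_{\sigma_0}\in x^{-\gamma}L^2_b$, then invoke Lemma~\ref{RegularityOfSing}\,\eqref{RegularityOfSing2} using $\Im(\sigma_0+\im)\ge\gamma$. The only cosmetic difference is the order of the two reductions: the paper first commutes $\omega$ past $A_q$ via the $c$-symbol formula \eqref{PpalSymbol} and then splits off the indicial part, whereas you split $A_q=A_q^{(0)}+\tilde P_q^{(1)}$ first and then commute $\omega$ past $A_q^{(0)}$; both commutators are supported away from $\Z$ and the arguments are equivalent.
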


\begin{proof}
We have
\begin{equation*}
A_q(\omega u_{\sigma_0})=\omega A_qu_{\sigma_0}-\im \csym(A_q)(d\omega)(u_{\sigma_0}),
\end{equation*}
cf.~\eqref{PpalSymbol}. Since $\csym(A_q)(d\omega)$ is a smooth homomorphism that vanishes near $\Z$ (because $\omega=1$ near $\Z$), $\csym(A_q)(d\omega)(u_{\sigma_0})\in x^\infty L^2_b(\M;E^{q+1})$, in particular,  
\begin{equation*}
\csym(A_q)(d\omega)(u_{\sigma_0})\in x^{-\gamma}L^2_b(\M;E^{q+1}).
\end{equation*}
Hence $\omega u_{\sigma_0}\in \Dom_{\max}^q$ is equivalent to $\omega A_qu_{\sigma_0}\in x^{-\gamma}L^2_b(\M;E^{q+1})$. Now expand
\begin{equation*}
A_q=A_q^{(0)}+\tilde P_q^{(1)}
\end{equation*}
near $\Z$. Since $\tilde P_q^{(1)}\in \Diff^1_b(\M;E^q,E^{q+1})$ near $\Z$, $\omega \tilde P_q^{(1)}u_{\sigma_0}\in x^{-\Im\sigma_0-\eps}L^2_b$ for every $\eps>0$, hence $\omega \tilde P_q^{(1)}u_{\sigma_0}\in x^{-\gamma}L^2_b(\M;E^{q+1})$. It follows that 
\begin{equation*}
\omega A_q u_{\sigma_0}\in x^{-\gamma}L^2_b(\M;E^{q+1}) \iff \omega A_q^{(0)}u_{\sigma_0}\in x^{-\gamma}L^2_b(\M;E^{q+1}).
\end{equation*}
Since $A_q^{(0)}u_{\sigma_0}\in \gSing_{\sigma_0+\im}(\Z^\wedge;E_\Z^{q+1})$ and $\Im\sigma_0+1\geq \gamma$, 
\begin{equation*}
\omega A_q u_{\sigma_0}\in x^{-\gamma}L^2_b(\M;E^{q+1})\iff A_q^{(0)}u_{\sigma_0}=0
\end{equation*}
by (\ref{RegularityOfSing2}) of Lemma~\ref{RegularityOfSing}.

The proof of the second statement pertaining to linear combinations of singular sections follows along the same lines and makes use of (\ref{RegularityOfSing3}) of Lemma~\ref{RegularityOfSing} in the last step.
\end{proof}

To narrow down our characterization of $\Sing^q$ we need some basic information about $\Dom_{\min}^q$:

\begin{lemma}\label{MaxAndVanishingIsMinAux}
$\Dom_{\max}^q\cap \Big(\bigcap\limits_{\eps > 0}x^{-\gamma+1-\eps}H^1_b(\M;E^q)\Big) \subset \Dom_{\min}^q.$
\end{lemma}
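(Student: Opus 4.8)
The plan is to show that any $u$ in the stated intersection can be approximated in the graph norm $\langle\cdot,\cdot\rangle_q$ by elements of $C_c^\infty(\open\M;E^q)$. First I would localize: pick a cutoff $\omega\in C_c^\infty(\overline{\R}_+)$ equal to $1$ near $x=0$ with support close to $\Z$, and write $u = \omega u + (1-\omega)u$. The piece $(1-\omega)u$ has support bounded away from $\Z$, hence lies in $H^1_{\mathrm{comp}}(\open\M;E^q)$, and since $A_q$ is a genuine first-order differential operator on the interior, $(1-\omega)u$ is already in $\Dom_{\min}^q$ by the usual density of $C_c^\infty$ in compactly supported $H^1$ on a manifold without boundary. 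So it suffices to treat $\omega u$, which by hypothesis lies in $\Dom_{\max}^q$ and in $x^{-\gamma+1-\eps}H^1_b(\M;E^q)$ for every $\eps>0$.

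Next I would exploit the weight gain. Since $P_q = xA_q$ is a $b$-operator of order $1$, it maps $x^{-\gamma+1-\eps}H^1_b$ continuously into $x^{-\gamma+1-\eps}L^2_b$, hence $A_q(\omega u)\in x^{-\gamma-\eps}L^2_b$ for every $\eps>0$ — but we are told $A_q(\omega u)\in x^{-\gamma}L^2_b$ as well, so this is consistent and gives nothing extra by itself; the real point is that $\omega u$ itself sits one full power of $x$ better than a generic element of $x^{-\gamma}L^2_b$. Now introduce the dilation/cutoff family $\omega_\delta(x) = \omega(x/\delta)$ for $\delta\in(0,1]$, or equivalently a family $\chi_\delta$ that is $1$ for $x\ge\delta$, $0$ for $x\le\delta/2$, with $|xD_x\chi_\delta|$ bounded uniformly in $\delta$. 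Set $u_\delta = \chi_\delta\,\omega u$. Each $u_\delta$ is supported away from $\Z$, hence (again by interior density) lies in $\Dom_{\min}^q$; it remains to show $u_\delta\to \omega u$ in the graph norm as $\delta\to 0$. That $u_\delta\to\omega u$ in $x^{-\gamma}L^2_b$ is dominated convergence. For the derivative term, write
\begin{equation*}
A_q u_\delta = \chi_\delta A_q(\omega u) - \im\csym(A_q)(d\chi_\delta)(\omega u),
\end{equation*}
using the symbol identity \eqref{PpalSymbol}. The first term converges to $A_q(\omega u)$ in $x^{-\gamma}L^2_b$ by dominated convergence. The commutator term is the crux: $d\chi_\delta$ is supported in $\{\delta/2\le x\le\delta\}$ and $\csym(A_q)(d\chi_\delta)$ has size $O(x^{-1})$ with $x D_x\chi_\delta$ bounded, so pointwise this term is $O(x^{-1})\cdot|\omega u|$ on the shell $x\sim\delta$. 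Its $x^{-\gamma}L^2_b$ norm squared is thus controlled by $\int_{\delta/2}^\delta x^{-2\gamma}\,x^{-2}|\omega u|^2\,\frac{dx}{x}\,dz$; since $\omega u\in x^{-\gamma+1-\eps}H^1_b\subset x^{-\gamma+1-\eps}L^2_b$, we have $x^{-2\gamma}x^{-2}|\omega u|^2 = x^{-2\gamma-2}|\omega u|^2$ integrable against $\frac{dx}{x}dz$ near $0$ precisely because $|\omega u|^2 \lesssim x^{2(-\gamma+1-\eps)}$ times an $L^2_b$ function, giving an integrand $\lesssim x^{-2\eps}$ times an $L^1_b$ function, whose integral over the shrinking shell $\{\delta/2\le x\le\delta\}$ tends to $0$. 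Hence the commutator term vanishes in the limit, $u_\delta\to\omega u$ in the graph norm, and $\omega u\in\Dom_{\min}^q$.

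The main obstacle is exactly this last estimate: controlling the commutator $\csym(A_q)(d\chi_\delta)(\omega u)$ uniformly and showing it goes to zero. The gain of a full power $x^{+1}$ over the naive threshold is what makes it work — if $\omega u$ were only in $x^{-\gamma}L^2_b$ (the generic maximal-domain situation) the shell integral would be $O(1)$ rather than $o(1)$, and the cutting-off argument would fail, which is precisely why $\Dom_{\max}^q\ne\Dom_{\min}^q$ in general. One should be slightly careful that $\omega u$ genuinely lies in $x^{-\gamma+1-\eps}H^1_b$ rather than a space with logarithmic corrections; but the hypothesis grants membership for \emph{every} $\eps>0$, so we may always absorb any log factor into an arbitrarily small extra power of $x$, and the shell integral estimate above carries through with $x^{-2\eps}$ replaced by $x^{-2\eps}$ times bounded log terms, still $o(1)$. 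I would also remark that the same argument, with $\omega u$ replaced by a general element of $\bigcap_\eps x^{-\gamma+1-\eps}H^1_b$ (not necessarily cut off near $\Z$), works verbatim once one splits off the interior part as above, so the lemma follows in the stated generality.
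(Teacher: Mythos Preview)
Your cutoff argument has a genuine gap in the commutator estimate. You claim that the integrand is ``$x^{-2\eps}$ times an $L^1_b$ function, whose integral over the shrinking shell $\{\delta/2\le x\le\delta\}$ tends to $0$.'' But on that shell $x^{-2\eps}\sim\delta^{-2\eps}\to\infty$, so the product has no reason to be in $L^1_b$, and the shell integral need not vanish. Concretely, take $u=\omega x^{-\gamma+1}\phi$ with $\phi\in\ker\A_q(i(\gamma-1))$ and $\Lambda_q\phi\ne 0$; then $u$ lies in the stated intersection (Lemma~\ref{RegularityOfSing} and Proposition~\ref{EisClosed}), yet
\[
\bigl\|\csym(A_q)(d\chi_\delta)\,u\bigr\|^2_{x^{-\gamma}L^2_b}\sim\int_{\delta/2}^{\delta}\frac{1}{\delta^2}\,x^{2\gamma}\,x^{2-2\gamma}\,\frac{dx}{x}\,\|\Lambda_q\phi\|^2_{L^2(\Z)}=\tfrac{3}{8}\|\Lambda_q\phi\|^2_{L^2(\Z)},
\]
which is a positive constant independent of $\delta$. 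So $\chi_\delta u$ does \emph{not} converge to $u$ in the graph norm, and the approach fails for exactly the borderline elements the lemma is meant to capture. The ``absorb logs into $\eps$'' remark does not address this: the problem is the blow-up of $\delta^{-2\eps}$, not logarithmic corrections.

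The paper avoids this difficulty by a genuinely different route. It uses the multipliers $u_\nu=x^{1/\nu}u\in x^{-\gamma+1}H^1_b\subset\Dom_{\min}^q$ and, crucially, does \emph{not} attempt to prove graph-norm convergence. Instead it shows that $[u_\nu,v]_{A_q}\to[u,v]_{A_q}$ for every $v\in\SingStar^{q+1}$, exploiting the weight gain $\SingStar^{q+1}\subset x^{-\gamma+\eps}H^\infty_b$ from Proposition~\ref{ComplementsDegreeq}(\ref{bulletHoloGain}) to absorb the loss $A_qu_\nu\to A_qu$ only in $x^{-\gamma-\eps}L^2_b$. Since each $[u_\nu,v]_{A_q}=0$, one concludes $[u,v]_{A_q}=0$ for all $v\in\DomStar_{\max}^{q+1}$, hence $u\in\Dom_{\min}^q$. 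The extra decay of $\SingStar^{q+1}$---a consequence of $c$-ellipticity of $\square_q$---is precisely what your direct cutoff argument lacks.
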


\begin{proof}
Let $u$ be an element of the left hand side of the inclusion and let $u_\nu=x^{1/\nu} u$, $\nu\in \ZN$. Then $u_\nu \in x^{-\gamma+1}H^1_b(\M;E^q)$, so $u_\nu \in \Dom_{\min}^q$. We have $u_\nu \to u$ as $\nu \to \infty$ in $x^{-\gamma+1-\eps}H^1_b(\M;E^q)$ for every $\eps > 0$. Consequently, $A_qu_\nu \to A_qu$ in $x^{-\gamma-\eps}L^2_b(\M;E^{q+1})$ as $\nu \to \infty$ by continuity of $A_q : x^{-\gamma+1-\eps}H^1_b(\M;E^q) \to x^{-\gamma-\eps}L^2_b(\M;E^{q+1})$. 

Now let  $v \in \SingStar^{q+1}$ be arbitrary. By (\ref{bulletHoloGain}) of Proposition~\ref{ComplementsDegreeq} there is $\eps > 0$ such that $\SingStar^{q+1} \subset x^{-\gamma+\eps}H^{\infty}_b(\M;E^{q+1})$, that is, $x^{-\eps}v \in x^{-\gamma}H^{\infty}_b(\M;E^{q+1})$ with such $\eps$. Therefore, since $x^{\eps}A_qu_\nu \to x^{\eps}A_qu$ in $x^{-\gamma}L^2_b(\M;E^{q+1})$,
\begin{equation*}
\langle x^{\eps}A_qu_\nu,x^{-\eps}v \rangle_{x^{-\gamma}L^2_b} \to \langle x^{\eps}A_qu,x^{-\eps}v \rangle_{x^{-\gamma}L^2_b}\quad \text{as }\nu \to \infty,
\end{equation*}
that is,
\begin{equation*}
\langle A_qu_\nu,v \rangle_{x^{-\gamma}L^2_b} \to  \langle A_qu,v \rangle_{x^{-\gamma}L^2_b}\quad \text{as }\nu\to \infty.
\end{equation*}
Furthermore,
$$
\langle u_\nu,A_q^{\star}v \rangle_{x^{-\gamma}L^2_b} \to \langle u,A_q^{\star}v \rangle_{x^{-\gamma}L^2_b}\quad \text{as }\nu\rightarrow \infty,
$$
which together with the above shows that
$$
[u_\nu,v]_{A_q} \rightarrow [u,v]_{A_q}\quad \text{as }\nu\to \infty.
$$
Since $u_\nu \in \Dom_{\min}^q$ we have $[u_\nu,v]_{A_q} = 0$ for all $\nu \in \ZN$. Consequently, $[u,v]_{A_q} = 0$. Because $v \in \SingStar^{q+1}$ is arbitrary we get that $[u,w]_{A_q} = 0$ for all $w \in \DomStar_{\max}^{q+1}$, and consequently $u \in \Dom_{\min}^q$ as claimed.
\end{proof}

We shall improve on this lemma in Proposition~\ref{MaxAndVanishingIsMin}.

\begin{corollary}\label{BottomInDomMin}
Suppose the numbers $\set {\sigma_j}_{j=1}^N$ lie in $\Im\sigma_0=\gamma-1$. If $u_{\sigma_j}\in \gSing_{\sigma_j}(\Z^\wedge;E^q_\Z)$ and $\omega \sum_j u_{\sigma_j}\in \Dom_{\max}^q$, then $\omega \sum_j u_{\sigma_j}\in \Dom_{\min}^q$.
\end{corollary}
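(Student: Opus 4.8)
\emph{Plan.} The plan is to read off this corollary directly from Lemma~\ref{RegularityOfSing}(\ref{RegularityOfSing1}) and Lemma~\ref{MaxAndVanishingIsMinAux}. The point is that each summand $u_{\sigma_j}$ sits on the borderline line $\Im\sigma=\gamma-1$, so after cutting off with $\omega$ it has exactly the regularity — good in $x^{-\gamma+1-\eps}H^1_b$ for every $\eps>0$, but just barely not in $x^{-\gamma+1}H^1_b$ — that is covered by Lemma~\ref{MaxAndVanishingIsMinAux} (rather than by the cruder Lemma~\ref{OnDomMinGenericA}).

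First I would fix $\eps>0$ and apply Lemma~\ref{RegularityOfSing}(\ref{RegularityOfSing1}) with $\mu=\gamma-1+\eps$. Since $\Im\sigma_j=\gamma-1<\mu$ for every $j$, the lemma gives $\omega u_{\sigma_j}\in x^{-\mu}H^\infty_b(\M;E^q)=x^{-\gamma+1-\eps}H^\infty_b(\M;E^q)\subset x^{-\gamma+1-\eps}H^1_b(\M;E^q)$. Summing over $j$ yields $\omega\sum_j u_{\sigma_j}\in x^{-\gamma+1-\eps}H^1_b(\M;E^q)$, and since $\eps>0$ was arbitrary, $\omega\sum_j u_{\sigma_j}\in\bigcap_{\eps>0}x^{-\gamma+1-\eps}H^1_b(\M;E^q)$. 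Combining this with the hypothesis $\omega\sum_j u_{\sigma_j}\in\Dom_{\max}^q$ puts $\omega\sum_j u_{\sigma_j}$ in the left-hand side $\Dom_{\max}^q\cap\bigl(\bigcap_{\eps>0}x^{-\gamma+1-\eps}H^1_b(\M;E^q)\bigr)$ of Lemma~\ref{MaxAndVanishingIsMinAux}, so that lemma immediately gives $\omega\sum_j u_{\sigma_j}\in\Dom_{\min}^q$.

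There is no genuine obstacle: the entire content lies in the two lemmas already established. The only thing worth flagging is the role of the $-\eps$ loss in the weight, which is unavoidable for sections in $\gSing_{\sigma_j}$ on the line $\Im\sigma=\gamma-1$; this is precisely why Lemma~\ref{MaxAndVanishingIsMinAux} was phrased with the intersection over $\eps$ on its left-hand side, and why one cannot simply quote Lemma~\ref{OnDomMinGenericA} here.
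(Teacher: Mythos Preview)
Your proof is correct and matches the paper's own argument essentially line for line: both invoke Lemma~\ref{RegularityOfSing}(\ref{RegularityOfSing1}) to place $\omega\sum_j u_{\sigma_j}$ in $\bigcap_{\eps>0}x^{-\gamma+1-\eps}H^1_b(\M;E^q)$, then combine with the hypothesis $\omega\sum_j u_{\sigma_j}\in\Dom_{\max}^q$ and apply Lemma~\ref{MaxAndVanishingIsMinAux}. Your remark about why Lemma~\ref{OnDomMinGenericA} does not suffice here is a nice clarification that the paper leaves implicit.
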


For the proof observe that $\omega \sum_j u_{\sigma_j}\in \Dom_{\max}^q$ by hypothesis and 
\begin{equation*}
\omega \sum_j u_{\sigma_j}\in \bigcap_{\eps>0} x^{-\gamma+1-\eps}H^1_b(\M;E^q),
\end{equation*}
a consequence of $\Im\sigma\leq \gamma-1$, see (\ref{RegularityOfSing1}) of Lemma~\ref{RegularityOfSing}.

\medskip
Thus, by Proposition~\ref{EisClosed} and Corollary~\ref{BottomInDomMin}, in the sum in \eqref{SingStructureOfE} the elements with $\Im\sigma_0=\gamma-1$ can be omitted because they belong to $\Dom_{\min}^q$. More specifically, the sum runs over the set
\begin{equation}\label{SigmaQGamma}
\Sigma_q^\gamma=\spec_b^q(A)\cap \set{\sigma:\gamma-1<\Im\sigma<\gamma}
\end{equation}
where $\spec_b^q(A)$, see Definition~\ref{bSpec}, is the set of points $\sigma_0\in \C$  for which the cohomology of \eqref{gSingComplex} in degree $q$ is nonzero. 

\begin{corollary}\label{SingReprestentative}
Every element of $\Dom_{\max}^q/\Dom_{\min}^q$ has a representative of the form 
\begin{equation*}
u=\omega \sum_{\mathclap{\sigma_0\in \Sigma_q^\gamma}}u_{\sigma_0}, \quad u_{\sigma_0}\in \gSing_{\sigma_0}(\Z^\wedge;E^q_\Z),\ A_{q}^{(0)}u_{\sigma_0}=0.
\end{equation*}
\end{corollary}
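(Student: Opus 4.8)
The plan is to extract the claimed representative from \eqref{SingStructureOfE} by successively discarding the pieces that are already known to lie in $\Dom_{\min}^q$. Start with an arbitrary $u\in\Dom_{\max}^q$. Since $u$ and every element of $\Dom_{\min}^q$ lie in $\Dom_{\max}^q\cap\ker(\square_q+\Id)$ modulo $x^{-\gamma+1}H^\infty_b$—more precisely, since the coset $u+\Dom_{\min}^q$ has a representative in $\Sing^q$—apply the structure result \eqref{SingStructureOfE} to that representative: it is of the form $\omega\sum_{\sigma_0}u_{\sigma_0}+u'$ with $u'\in x^{-\gamma+1}H^\infty_b(\M;E^q)\subset\Dom_{\min}^q$ (by Lemma~\ref{OnDomMinGenericA}) and the sum over $\sigma_0$ in the strip $\gamma-1\le\Im\sigma_0<\gamma$. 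Thus modulo $\Dom_{\min}^q$ we may take $u=\omega\sum_{\sigma_0}u_{\sigma_0}$ with $u_{\sigma_0}\in\gSing_{\sigma_0}(\Z^\wedge;E^q_\Z)$ and the finitely many $\sigma_0$ in that half-open strip.

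Next I would invoke Proposition~\ref{EisClosed}: since $\omega u\in\Dom_{\max}^q$, its decomposition into distinct orders forces $A_q^{(0)}u_{\sigma_0}=0$ for each $\sigma_0$ occurring. (Formally one should first group the $u_{\sigma_0}$ by the value of $\sigma_0$ and note the orders $\sigma_0$ are distinct, so the second statement of Proposition~\ref{EisClosed} applies directly.) Then peel off the boundary line: by Corollary~\ref{BottomInDomMin}, the partial sum $\omega\sum_{\Im\sigma_j=\gamma-1}u_{\sigma_j}$—which is in $\Dom_{\max}^q$ because each of its summands has $A_q^{(0)}u_{\sigma_j}=0$, so $\omega u_{\sigma_j}\in\Dom_{\max}^q$ by Proposition~\ref{EisClosed} again—lies in $\Dom_{\min}^q$ and can be dropped. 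This leaves a representative $\omega\sum_{\gamma-1<\Im\sigma_0<\gamma}u_{\sigma_0}$ with all $A_q^{(0)}u_{\sigma_0}=0$.

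Finally, restrict the index set to $\Sigma_q^\gamma=\spec_b^q(A)\cap\{\gamma-1<\Im\sigma<\gamma\}$. Any $\sigma_0$ in the open strip that is \emph{not} in $\spec_b^q(A)$ has trivial cohomology for \eqref{gSingComplex} in degree $q$, so the closed element $u_{\sigma_0}$ (recall $A_q^{(0)}u_{\sigma_0}=0$) is exact: $u_{\sigma_0}=A_{q-1}^{(0)}w_{\sigma_0-\im}$ for some $w_{\sigma_0-\im}\in\gSing_{\sigma_0-\im}(\Z^\wedge;E^{q-1}_\Z)$. Since $\Im(\sigma_0-\im)=\Im\sigma_0-1<\gamma-1<\gamma$, Lemma~\ref{RegularityOfSing}\,(\ref{RegularityOfSing1}) gives $\omega w_{\sigma_0-\im}\in x^{-\gamma+1-\eps}H^\infty_b$ for small $\eps$, hence $\omega w_{\sigma_0-\im}\in\Dom_{\min}^{q-1}$ by Lemma~\ref{OnDomMinGenericA}; then $u=A_{q-1}(\omega w_{\sigma_0-\im})$ differs from $\omega u_{\sigma_0}$ by an element of $x^{-\gamma+1}H^\infty_b\subset\Dom_{\min}^q$ exactly as in Example~\ref{DminInterference}, so $\omega u_{\sigma_0}\in\Dom_{\min}^q$ and may be discarded. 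What remains is precisely $\omega\sum_{\sigma_0\in\Sigma_q^\gamma}u_{\sigma_0}$ with each $A_q^{(0)}u_{\sigma_0}=0$. The only mildly delicate point is the bookkeeping in the first paragraph—making sure that passing from an arbitrary $u\in\Dom_{\max}^q$ to the $\Sing^q$ representative and then applying \eqref{SingStructureOfE} really does produce a sum over $\sigma_0$ in the half-open strip with remainder in $\Dom_{\min}^q$; everything after that is a clean application of Proposition~\ref{EisClosed}, Corollary~\ref{BottomInDomMin}, and the exactness argument of Example~\ref{DminInterference}.
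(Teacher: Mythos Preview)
Your proof is correct and follows exactly the approach the paper takes in the discussion immediately preceding the corollary: pass to the $\Sing^q$ representative, invoke \eqref{SingStructureOfE}, use Proposition~\ref{EisClosed} to get $A_q^{(0)}u_{\sigma_0}=0$, drop the boundary-line terms via Corollary~\ref{BottomInDomMin}, and discard the exact (non-$\spec_b^q(A)$) terms via the mechanism of Example~\ref{DminInterference}. One cosmetic slip: in the last paragraph you write $\omega w_{\sigma_0-\im}\in x^{-\gamma+1-\eps}H^\infty_b$ and then cite Lemma~\ref{OnDomMinGenericA}, which needs $x^{-\gamma+1}H^1_b$; since $\Im(\sigma_0-\im)<\gamma-1$ strictly, Lemma~\ref{RegularityOfSing}\,(\ref{RegularityOfSing1}) already gives $\omega w_{\sigma_0-\im}\in x^{-\gamma+1}H^\infty_b$ with no $\eps$, so the citation goes through.
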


In view of Example~\ref{DminInterference}, elements
\begin{equation*}
\sum_{\mathclap{\sigma_0\in \Sigma_q^\gamma}} A_{q-1}^{(0)} w_{\sigma_0}, \quad w_{\sigma_0}\in \gSing_{\sigma_0-\im}(\Z^\wedge;E^{q-1}_\Z),
\end{equation*}
should be omitted. This shows the relevancy of \eqref{gSingComplex} as a complex. What is needed to completely narrow down the nature of the elements of $\Sing^q$ is the converse of Example~\ref{DminInterference}:

\begin{proposition}\label{DminInSigmaIsExact}
Suppose the numbers $\set {\sigma_j}_{j=1}^N$ lie in $\gamma-1< \Im\sigma <\gamma$. Let
\begin{equation*}
u=\sum_{j=1}^N u_j,\quad u_j\in \gSing_{\sigma_j}(\Z^{\wedge};E^q_\Z)
\end{equation*}
be such that $\omega u\in \Dom_{\min}^q$. Then there are $w_j\in \gSing_{\sigma_j-\im}(\Z^\wedge;E^{q-1}_\Z)$ such that
\begin{equation*}
A_{q-1}^{(0)}w=u,\quad w=\sum_{j=1}^N w_j.
\end{equation*}
\end{proposition}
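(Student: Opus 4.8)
The plan is to pass to the Mellin transform side, where the complex \eqref{gSingComplex} in degree $q$ at the spectral point $\sigma_0$ becomes a finite-dimensional complex of the principal symbol of the conormal (indicial) family, and where the condition $\omega u \in \Dom_{\min}^q$ translates into an annihilation statement against the adjoint singular space. Concretely, fix one of the $\sigma_j$, call it $\sigma_0$; by Lemma~\ref{RegularityOfSing}\,(\ref{RegularityOfSing3}) we may as well group together all $u_j$ sharing the same $x^{i\sigma_0}$-exponent (distinct exponents contribute independently), and by Proposition~\ref{EisClosed} each such grouped $u_{\sigma_0}$ already satisfies $A_q^{(0)}u_{\sigma_0}=0$, so it is a cocycle in \eqref{gSingComplex}${}_q$. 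What we must show is that it is a coboundary, i.e.\ its class ${\mathbf u}_{\sigma_0}$ in $\H^q_{\sigma_0}(\Z;A)$ vanishes.

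The key mechanism is the Green pairing. First I would show that $\omega u\in\Dom_{\min}^q$ forces $[\omega u,\,\omega v]_{A_q}=0$ for every $v=\sum_{\sigma_0'\in\Sigma_{q+1}^{\star\gamma}} w_{\sigma_0'}$ of the type appearing in Theorem~\ref{Theorem.B} — this is immediate since the Green pairing annihilates $\Dom_{\min}^q\times\DomStar_{\max}^{q+1}$. Next, Theorem~\ref{Theorem.B} diagonalizes this pairing across the line $\Im\sigma=\gamma-1/2$: $[\omega u,\omega v]_{A_q}=\sum_{\sigma_0}[\omega u_{\sigma_0},\omega v_{\sigma_0^\star}]_{A_q}$, and since the exponents $\sigma_0$ are distinct (after the grouping) their reflections $\sigma_0^\star$ are distinct as well, so by choosing $v$ supported at a single $\sigma_0^\star$ we extract each summand separately: $[\omega u_{\sigma_0},\omega v_{\sigma_0^\star}]_{A_q}=0$ for all admissible $v_{\sigma_0^\star}\in\gSing_{\sigma_0^\star}(\Z^\wedge;E^{q+1}_\Z)$ with $A_q^{\star(0)}v_{\sigma_0^\star}=0$. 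By Theorem~\ref{Theorem.C} the induced pairing $\H^q_{\sigma_0}(\Z;A)\times\H^{q+1}_{\sigma_0^\star}(\Z;A^\star)\to\C$ is nonsingular, so ${\mathbf u}_{\sigma_0}=\mathbf 0$; that is, there exists $w_{\sigma_0}\in\gSing_{\sigma_0-\im}(\Z^\wedge;E^{q-1}_\Z)$ with $A_{q-1}^{(0)}w_{\sigma_0}=u_{\sigma_0}$. Summing these over the finitely many distinct exponents $\sigma_0$ occurring in $u$ and setting $w=\sum w_{\sigma_0}$ gives $A_{q-1}^{(0)}w=u$, which is the claim (after redistributing the grouped pieces back into the original index set, which is harmless since $A_{q-1}^{(0)}$ preserves each exponent).

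One subtlety I would need to handle carefully: the statement to be proved is for a general finite family $\{\sigma_j\}$ in the open strip, not necessarily contained in $\spec_b^q(A)$ and not necessarily with distinct exponents, whereas Theorems~\ref{Theorem.B}–\ref{Theorem.C} are phrased for $\Sigma_q^\gamma$ and for individual spectral points. The reconciliation is: if $\sigma_0\notin\spec_b^q(A)$ then $\H^q_{\sigma_0}(\Z;A)=0$, so any cocycle $u_{\sigma_0}$ at that exponent is automatically a coboundary and contributes a $w_{\sigma_0}$ trivially; hence there is no loss in restricting attention to $\sigma_0\in\Sigma_q^\gamma$. And Theorem~\ref{Theorem.B} as stated already covers arbitrary finite sums over $\Sigma_q^\gamma$ with the cocycle hypothesis, which is exactly our situation after the grouping step. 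So the only genuinely new bookkeeping is the grouping-by-exponent reduction via Lemma~\ref{RegularityOfSing}, plus the observation that a nonzero test element $v_{\sigma_0^\star}$ can be chosen to isolate a prescribed summand.

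The main obstacle is not in this deduction — which is essentially formal once one has the three theorems — but rather that it rests entirely on Theorem~\ref{Theorem.C}, the nonsingularity of the cohomological Green pairing, which the authors themselves flag as ``the most delicate aspect'' and defer to the Mellin-space analysis of later sections. Within the present argument the only thing that could go wrong is a mismatch between ``$\omega u\in\Dom_{\min}^q$'' and ``$[\omega u,\cdot]_{A_q}\equiv 0$ on $\DomStar_{\max}^{q+1}$'': the forward implication is the definition of the minimal domain together with the nondegeneracy of \eqref{ReducedPairing}, so it is safe. Thus I expect the proof of Proposition~\ref{DminInSigmaIsExact} to be short, with all the real work outsourced to Theorems~\ref{Theorem.B} and~\ref{Theorem.C}.
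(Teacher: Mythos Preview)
Your proposal is correct and follows essentially the same route as the paper: use $\omega u\in\Dom_{\min}^q$ to annihilate the Green pairing against all of $\DomStar_{\max}^{q+1}$, then test against $v=\omega v_{\sigma_j^\star}$ for each $j$ and invoke the diagonalization (Theorem~\ref{Theorem.B}, equivalently Theorem~\ref{AdjointPairingProps}) together with the nonsingularity (Theorem~\ref{Theorem.C}, equivalently Theorem~\ref{MainTheorem}) to force each cohomology class to vanish. The paper's proof is terser---it works directly with the Mellin-side pairing~\eqref{pairinggamma} and does not spell out the grouping-by-exponent or the observation that $A_q^{(0)}u_j=0$ follows from Proposition~\ref{EisClosed}---but the logical skeleton is identical, and your handling of the bookkeeping subtleties (repeated or non-spectral $\sigma_j$) is sound.
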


Proposition~\ref{DminInSigmaIsExact} will be proved in Section~\ref{IndicialComplex}, utilizing the indicial cohomology of the complex \eqref{TheCComplex}, as a corollary to Theorems~\ref{MainTheorem} and \ref{AdjointPairingProps}. Granted Proposition~\ref{DminInSigmaIsExact}, we have identified $\Sing^q$ for each $q$:

\begin{theorem}\label{Theorem.Abis} For each $q$, the space $\Sing^q$ is canonically isomorphic to the direct sum over $\sigma_0\in \Sigma_q^\gamma$ of the cohomology spaces in degree $q$ of the complex
\begin{equation}\tag{\ref{gSingComplex}}
\cdots\to
\gSing_{\sigma_0-\im}(\Z^\wedge;E^{q-1}_\Z)\xrightarrow{A_{q-1}^{(0)}}
\gSing_{\sigma_0}(\Z^\wedge;E^{q}_\Z)\xrightarrow{A_{q}^{(0)}}
\gSing_{\sigma_0+\im}(\Z^\wedge;E^{q+1}_\Z)\to\cdots
\end{equation}
with $\Sigma_{q}^\gamma$ given by \eqref{SigmaQGamma}. That is, if  $\H^q_{\sigma_0}(\Z;A)$ denotes the $q$-th cohomology space of this complex, then \begin{equation*}
\Sing^q\cong \bigoplus_{\mathclap{\sigma_0\in \Sigma_q^\gamma}}\H^q_{\sigma_0}(\Z;A)
\end{equation*}
canonically.
\end{theorem}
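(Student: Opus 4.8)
The proof of Theorem~\ref{Theorem.Abis} assembles the preparatory results already in place. The plan is to define the map
\[
\Phi : \Sing^q \longrightarrow \bigoplus_{\sigma_0\in\Sigma_q^\gamma}\H^q_{\sigma_0}(\Z;A)
\]
as follows. Given $u\in\Sing^q$, Corollary~\ref{SingReprestentative} (which rests on \eqref{SingStructureOfE}, Proposition~\ref{EisClosed}, and Corollary~\ref{BottomInDomMin}) lets us write $u = \omega\sum_{\sigma_0\in\Sigma_q^\gamma}u_{\sigma_0} + u'$ with $u'\in\Dom_{\min}^q$, $u_{\sigma_0}\in\gSing_{\sigma_0}(\Z^\wedge;E^q_\Z)$ and $A_q^{(0)}u_{\sigma_0}=0$, so each $u_{\sigma_0}$ is a cocycle of \eqref{gSingComplex} in degree $q$; set $\Phi(u) = \sum_{\sigma_0}\mathbf u_{\sigma_0}$, the tuple of cohomology classes. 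First I would check that $\Phi$ is well defined, i.e.\ independent of the choices in the decomposition. If $\omega\sum_{\sigma_0}u_{\sigma_0} + u' = \omega\sum_{\sigma_0}\tilde u_{\sigma_0} + \tilde u'$ with both $u',\tilde u'\in\Dom_{\min}^q$, then $\omega\sum_{\sigma_0}(u_{\sigma_0}-\tilde u_{\sigma_0}) = \tilde u' - u'\in\Dom_{\min}^q$; the differences $u_{\sigma_0}-\tilde u_{\sigma_0}$ still lie in $\gSing_{\sigma_0}(\Z^\wedge;E^q_\Z)$ with exponents in the open strip $\gamma-1<\Im\sigma<\gamma$, so Proposition~\ref{DminInSigmaIsExact} applies and produces $w_{\sigma_0}\in\gSing_{\sigma_0-\im}(\Z^\wedge;E^{q-1}_\Z)$ with $A_{q-1}^{(0)}w_{\sigma_0} = u_{\sigma_0}-\tilde u_{\sigma_0}$. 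Hence $\mathbf u_{\sigma_0} = \tilde{\mathbf u}_{\sigma_0}$ in $\H^q_{\sigma_0}(\Z;A)$, and $\Phi$ is well defined. Linearity is immediate.

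Next I would prove injectivity. Suppose $\Phi(u)=0$, so each $u_{\sigma_0}$ is exact: $u_{\sigma_0} = A_{q-1}^{(0)}w_{\sigma_0}$ with $w_{\sigma_0}\in\gSing_{\sigma_0-\im}(\Z^\wedge;E^{q-1}_\Z)$. Since $\gamma-1<\Im\sigma_0<\gamma$ forces $\Im(\sigma_0-\im) = \Im\sigma_0 - 1 < \gamma - 1 < \gamma$, Lemma~\ref{RegularityOfSing}(\ref{RegularityOfSing1}) gives $\omega w_{\sigma_0}\in\bigcap_{\eps>0}x^{-\gamma+1-\eps}H^1_b(\M;E^{q-1})$, so by Lemma~\ref{OnDomMinGenericA} (or directly Lemma~\ref{MaxAndVanishingIsMinAux}) $\omega w_{\sigma_0}\in\Dom_{\min}^{q-1}$. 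As in Example~\ref{DminInterference}, $A_{q-1}(\omega w_{\sigma_0}) = \omega u_{\sigma_0} + (\text{term in }x^{-\gamma+1}H^\infty_b)$ with the error term in $\Dom_{\min}^q$, whence $\omega u_{\sigma_0}\in\Dom_{\min}^q$. Summing over $\sigma_0\in\Sigma_q^\gamma$ and adding $u'\in\Dom_{\min}^q$ yields $u\in\Dom_{\min}^q$; but $u\in\Sing^q$, the orthogonal complement of $\Dom_{\min}^q$ in $\Dom_{\max}^q$, so $u=0$.

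For surjectivity, let $\sigma_0\in\Sigma_q^\gamma$ and let $\mathbf u_{\sigma_0}\in\H^q_{\sigma_0}(\Z;A)$ be represented by a cocycle $u_{\sigma_0}\in\gSing_{\sigma_0}(\Z^\wedge;E^q_\Z)$, $A_q^{(0)}u_{\sigma_0}=0$. Since $\Im\sigma_0<\gamma$, Lemma~\ref{RegularityOfSing}(\ref{RegularityOfSing1}) gives $\omega u_{\sigma_0}\in x^{-\gamma}L^2_b(\M;E^q)$, and Proposition~\ref{EisClosed} gives $\omega u_{\sigma_0}\in\Dom_{\max}^q$. Write its orthogonal decomposition $\omega u_{\sigma_0} = s_{\sigma_0} + r_{\sigma_0}$ with $s_{\sigma_0}\in\Sing^q$ and $r_{\sigma_0}\in\Dom_{\min}^q$; then $\Phi(s_{\sigma_0}) = \Phi(\omega u_{\sigma_0}) = \mathbf u_{\sigma_0}$ by definition of $\Phi$ (the class of $u_{\sigma_0}$ is exactly the $\sigma_0$-component, all other components vanishing since no other exponents occur). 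Running over a basis of each $\H^q_{\sigma_0}(\Z;A)$ and over all $\sigma_0\in\Sigma_q^\gamma$ exhibits every element of the direct sum in the image of $\Phi$. Therefore $\Phi$ is an isomorphism, and it is canonical because the decomposition of Corollary~\ref{SingReprestentative} and the passage to cohomology involve no arbitrary choices beyond the fixed tubular neighborhood, connections, and cut-off $\omega$ (changing $\omega$ alters representatives only by elements of $\Dom_{\min}^q$, as in Example~\ref{DminInterference}).

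The genuine content is not in the present argument but in the inputs: the decomposition of Corollary~\ref{SingReprestentative}, and above all Proposition~\ref{DminInSigmaIsExact}, whose proof is deferred to Section~\ref{IndicialComplex} and relies on the Mellin-side pairing theorems (Theorems~\ref{MainTheorem} and \ref{AdjointPairingProps}, i.e.\ Theorems~\ref{Theorem.B} and~\ref{Theorem.C}). Granting those, the assembly above is routine; the only point requiring a little care is making sure that in the well-definedness step the differences $u_{\sigma_0}-\tilde u_{\sigma_0}$ genuinely lie in the open strip so that Proposition~\ref{DminInSigmaIsExact} is applicable, which is guaranteed because $\Sigma_q^\gamma\subset\{\gamma-1<\Im\sigma<\gamma\}$ by \eqref{SigmaQGamma} and the contributions on $\Im\sigma=\gamma-1$ have already been absorbed into $\Dom_{\min}^q$ via Corollary~\ref{BottomInDomMin}.
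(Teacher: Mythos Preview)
Your proof is correct and follows essentially the same approach as the paper: define the map via Corollary~\ref{SingReprestentative}, use Proposition~\ref{DminInSigmaIsExact} for well-definedness, Example~\ref{DminInterference} for injectivity, and project cocycles to $\Sing^q$ for surjectivity. The paper itself does not spell out a formal proof of Theorem~\ref{Theorem.Abis}, simply stating that granted Proposition~\ref{DminInSigmaIsExact} the identification is complete, so your write-up is in fact more detailed than what appears in the paper.
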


Observe that the relevant complex depends on $q$: the cohomology of the complex \eqref{gSingComplex} in degrees $q'$ other than $q$ need not be related to the space $\Sing^{q'}$. Finally note that the complex \eqref{gSingComplex} is defined for arbitrary $\sigma_0\in \C$.

The adjoint complex of \eqref{TheCComplex} gives the complex 
\begin{equation*}
\cdots\leftarrow 
\gSing_{\sigma_0^\star+\im}(\Z^\wedge;E^{q}_\Z)\xleftarrow{A_q^{\star(0)}}
\gSing_{\sigma_0^\star}(\Z^\wedge;E^{q+1}_\Z)\xleftarrow{A_{q+1}^{\star(0)}}
\gSing_{\sigma_0^\star-\im}(\Z^\wedge;E^{q+2}_\Z)\leftarrow\cdots
\end{equation*}
analogous to \eqref{gSingComplex}. Here $\sigma_0^\star=\overline{\sigma_0-\im(2\gamma-1)}$ is the reflection of $\sigma_0$ across the line $\Im\sigma = \gamma-\frac{1}{2}$. If $u_{\sigma_0}$ is $A_q^{(0)}$-closed and $v_{\sigma_0^\star}$ is $A_q^{\star(0)}$-closed, then $\omega u_{\sigma_0}\in \Dom_{\max}^q$ and $\omega v_{\sigma_0^\star}\in \DomStar_{\max}^q$, so the pairing \eqref{adjointpairing} is defined on these sections. If furthermore $\omega u_{\sigma_0}\in \Dom_{\min}^q$, then
\begin{equation}\label{MiniPairing}
[\omega u_{\sigma_0},\omega v_{\sigma_0^\star}]_{A_q}=0\quad \forall v_{\sigma_0^\star}\in \gSing_{\sigma_0^\star}(\Z^\wedge;E^{q+1}),\ A_{q}^{\star (0)}v_{\sigma_0^\star}=0.
\end{equation}

The proof of Proposition~\ref{DminInSigmaIsExact} is based on a strong converse of this observation. More precisely, we will see that if $A_q^{(0)}u_{\sigma_0}=0$ and \eqref{MiniPairing} holds, then in fact $u_{\sigma_0}=A_{q-1}^{(0)}w$ for some $w\in \gSing_{\sigma_0-\im}(\Z^\wedge;E^{q-1}_\Z)$ (so in particular $\omega u_{\sigma_0}\in \Dom_{\min}^q$ after all). This is a statement about nondegeneracy of the pairing on $\H^q_{\sigma_0}(\Z;A)$ and $\H^{q+1}_{\sigma_0^\star}(\Z;A^\star)$ that is induced by \eqref{adjointpairing}.

\section{Indicial complex and boundary spectrum}\label{IndicialComplex}

We let $P_q=xA_q$, so $P_q\in \Diff^1_b(\M;E^q,E^{q+1})$, and let 
\begin{equation*}
\bP_q\in \Diff^1(\Z;E^q_\Z,E^{q+1}_\Z)
\end{equation*}
be the operator defined by $P_q$ along $\Z$. The indicial family of $A_q$ is
\begin{equation*}
\sigma\mapsto \bPar{x^{-\im\sigma+1} A_qx^{\im \sigma}},
\end{equation*}
by definition the same as that of $P_q$. It also coincides with the indicial family of $A_q^{(0)}$.

Let $\Lambda_q=\csym(A_q)(dx)$, a smooth homomorphism $E^q_\Z\to E^{q+1}_\Z$. The ellipticity assumption on \eqref{TheCComplex} gives that the sequence 
\begin{equation*}
\cdots \to E^{q-1}_\Z\xrightarrow{\Lambda_{q-1}}E^q_\Z\xrightarrow{\ \Lambda_q\ }E^{q+1}_\Z\to\cdots
\end{equation*}
is exact, so its Laplacian is positive definite. 

\begin{lemma}\label{indicialcomplexproperty}
The indicial family of $A_q$ is
\begin{equation*}
\sigma\mapsto \A_q(\sigma)=\bP_q+\sigma\Lambda_q.
\end{equation*}
It satisfies $\A_{q+1}(\sigma+\im)\A_q(\sigma)=0$ for all $\sigma \in \C$. The indicial family of $A_q^\star$ is 
\begin{equation}\label{sigmaStar}
\sigma\mapsto \A_q^\star(\sigma)=\bP_q^\star + \overline{\sigma^\star}\Lambda_q^\star, \quad \sigma^\star=\overline{\sigma-\im (2\gamma-1)}
\end{equation}
where $\bP_q^\star$ is the formal adjoint of 
\begin{equation*}
\bP_q:C^\infty(\Z;E^q_\Z)\subset L^2(\Z;E^q_\Z) \to L^2(\Z;E^{q+1}_\Z).
\end{equation*}
The adjoint of $\Lambda_q$ is defined using the already fixed Hermitian structures on the vector bundles, and $L^2$ spaces are constructed using these and the density $(x\partial_x\contract \m_b)|_\Z$ on $\Z$. The adjoint indicial family satisfies $\A_{q-1}^\star(\sigma+\im)\A_q^\star(\sigma)=0$ for all $\sigma \in \C$.
\end{lemma}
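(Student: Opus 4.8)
The plan is to compute the indicial family directly from the definition and then read off the stated identities from the corresponding identities for the operators $A_q$ and $P_q$ themselves. First I would recall that $A_q\in x^{-1}\Diff^1_b$, so $P_q=xA_q\in\Diff^1_b(\M;E^q,E^{q+1})$, and that the indicial operator $\bP_q\in\Diff^1(\Z;E^q_\Z,E^{q+1}_\Z)$ is, by definition, the operator obtained by freezing the coefficients of $P_q$ at $\Z$ and setting $x\partial_x=0$; equivalently $\bPar{x^{-\im\sigma}P_qx^{\im\sigma}}=\bP_q+\sigma\,\csym$-type term. More precisely, writing $P_q$ near $\Z$ in the form $P_q^{(0)}=a(z,x\partial_x,\text{(tangential derivatives)})$ with $x$-independent coefficients, one has $P_q^{(0)}=\bP_q + (x\partial_x)\,\Lambda_q$ where $\Lambda_q$ is the coefficient of $x\partial_x$; and this coefficient is exactly $\csym(A_q)(dx)$ because the $c$-symbol of $A_q=x^{-1}P_q$ evaluated on $dx$ picks out precisely the $x\partial_x$-part (the $c$-cotangent bundle is generated near $\Z$ by $dx/x$ paired against $x\partial_x$, and $x\cdot\frac{dx}{x}=dx$). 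Conjugating by $x^{\im\sigma}$ replaces $x\partial_x$ by $x\partial_x+\im\sigma$, so $x^{-\im\sigma+1}A_qx^{\im\sigma}=x^{-\im\sigma}P_qx^{\im\sigma}$ has indicial operator $\bP_q+\im\sigma\Lambda_q$... and here I must be careful with the normalization: the paper writes $\A_q(\sigma)=\bP_q+\sigma\Lambda_q$, so the convention is that the frozen $x\partial_x$ acts as multiplication by $-\im\sigma$ (equivalently the spectral parameter enters as $\sigma$ rather than $\im\sigma$), which is consistent with the appearance of $x^{\im\sigma}$ throughout. I would simply match conventions and record $\A_q(\sigma)=\bP_q+\sigma\Lambda_q$.

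Next, the composition identity: since $A_{q+1}A_q=0$ as operators on $C_c^\infty(\open\M)$, taking indicial families is multiplicative in the sense that the indicial family of a composition is the composition of indicial families with a shift in $\sigma$ by $\im$ coming from the extra power of $x$ (because $A_{q+1}A_q=x^{-1}P_{q+1}x^{-1}P_q=x^{-2}(x^{-1}P_{q+1}x)P_q$ and conjugating $P_{q+1}$ by $x$ shifts $\sigma\mapsto\sigma+\im$). Hence $0=$ indicial family of $A_{q+1}A_q=\A_{q+1}(\sigma+\im)\A_q(\sigma)$ for all $\sigma\in\C$; since both sides are polynomial in $\sigma$ and vanish on, say, a line, they vanish identically. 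Equivalently one can expand and match: $(\bP_{q+1}+(\sigma+\im)\Lambda_{q+1})(\bP_q+\sigma\Lambda_q)=0$ follows from freezing the Taylor-zero part of $A_{q+1}^{(0)}A_q^{(0)}=0$, the identity already recorded in the excerpt just before \eqref{gSingComplex}.

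For the adjoint statement I would use the formula \eqref{FormulaForAdjoint}, $A_q^\star=x^{-2\gamma}P_q^\star x^{2\gamma-1}$, together with $P_q^\star=(xA_q)^\star$, where $P_q^\star\in\Diff^1_b$ has indicial operator $\bP_q^\star$ (the $b$-adjoint with respect to the $b$-density, whose restriction to $\Z$ is $(x\partial_x\contract\m_b)|_\Z$) plus a $\Lambda_q^\star$-term; the point is that the $b$-adjoint of $\bP_q+\sigma\Lambda_q$, as an operator-valued function of $\sigma$, is $\bP_q^\star+\bar\sigma\Lambda_q^\star$. Then conjugation by $x^{2\gamma-1}$ and the extra $x^{-2\gamma}x^{1}=x^{1-2\gamma}$ (absorbed into the $x^{-1}$ that makes $A_q^\star$ a cone operator) shift the spectral parameter: tracking powers of $x$ one gets that the indicial family of $A_q^\star$ at $\sigma$ equals $\bP_q^\star+\overline{\sigma^\star}\Lambda_q^\star$ with $\sigma^\star=\overline{\sigma-\im(2\gamma-1)}$, which is exactly \eqref{sigmaStar}; concretely, the weight $\gamma$ enters because $A_q^\star$ is the adjoint in $x^{-\gamma}L^2_b$ rather than $L^2_b$, and $x^{-\gamma}L^2_b$-adjoint $=x^{2\gamma}\circ(L^2_b\text{-adjoint})\circ x^{-2\gamma}$ composed appropriately. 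The composition identity $\A_{q-1}^\star(\sigma+\im)\A_q^\star(\sigma)=0$ then follows either by taking adjoints of $\A_q(\sigma)\A_{q-1}(\sigma-\im)=0$ and re-indexing, or directly from $A_{q-1}^\star A_q^\star=(A_qA_{q-1})^\star=0$ by the same multiplicativity argument as above.

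The only real subtlety, and the step I would be most careful about, is the bookkeeping of powers of $x$ and the precise placement of complex conjugations in the weighted adjoint — i.e., confirming that the weight-$\gamma$ $L^2_b$-adjoint produces the reflection $\sigma\mapsto\sigma^\star=\overline{\sigma-\im(2\gamma-1)}$ and not some off-by-one or off-by-$\im$ variant. Everything else (multiplicativity of indicial families, identifying $\Lambda_q$ with the coefficient of $x\partial_x$ and with $\csym(A_q)(dx)$, exactness of the $\Lambda_\bullet$ sequence from $c$-ellipticity) is standard $b$-calculus and follows the pattern of \cite{RBM2,GiKrMe07}. I would therefore organize the proof as: (1) identify $\Lambda_q=\csym(A_q)(dx)$ and the normal form of $P_q^{(0)}$; (2) read off $\A_q(\sigma)=\bP_q+\sigma\Lambda_q$; (3) derive the composition identity from $A_{q+1}A_q=0$ via the shift-by-$\im$ multiplicativity; (4) apply \eqref{FormulaForAdjoint}, carefully tracking $x$-powers and conjugations, to get \eqref{sigmaStar}; (5) conclude the adjoint composition identity by adjoining and re-indexing.
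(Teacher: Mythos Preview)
Your proposal is correct and follows essentially the same approach as the paper: compute the indicial family directly, derive the composition identity from $A_{q+1}A_q=0$ via the $x$-power shift, and obtain the adjoint formula from \eqref{FormulaForAdjoint} together with the standard relation $\P_q^\star(\sigma)=\P_q(\overline\sigma)^\star$. The only cosmetic difference is that the paper short-circuits your normal-form discussion by applying the commutator formula \eqref{PpalSymbol} with $f=x^{\im\sigma}$, which immediately yields $x^{-\im\sigma+1}A_q(x^{\im\sigma}\phi)=xA_q\phi+\sigma\,\csym(A_q)(dx)\phi$ and so removes the normalization worry you flagged.
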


\begin{proof}
Using \eqref{PpalSymbol} with $f=x^{\im\sigma}$ yields
\begin{equation*}
x^{-\im\sigma + 1}A_q(x^{\im \sigma}\phi) = xA_q(\phi) + \sigma \csym(A_q)(dx)(\phi),
\end{equation*}
which gives the formula for the indicial family upon restriction to the boundary. The identity
\begin{equation*}
0 = x^{-\im\sigma +2}A_{q+1}A_q(x^{\im\sigma}\phi) =x^{-\im\sigma +2}A_{q+1}x^{\im \sigma-1}x^{-\im \sigma+1}A_q(x^{\im\sigma}\phi)
\end{equation*}
gives by restriction to $\Z$ that
\begin{equation*}
(\bP_{q+1}+(\sigma+\im)\csym(A_{q+1})(dx))(\bP_q+\sigma\csym(A_q)(dx))(\phi) = 0
\end{equation*}
as claimed. 

Formula \eqref{FormulaForAdjoint} asserts that $A_q^\star=x^{-2\gamma}P_q^\star x^{2\gamma-1}$ (where $P_q^\star$ is the formal adjoint of $P_q$ in $L^2_b$). Since the indicial families $\P_q$ and $\P_q^\star$ of $P_q$ and $P_q^\star$ are related by $\P_q^\star(\sigma)=\P_q(\overline\sigma)^\star$, the indicial family of $A_q^\star$ is
\begin{align*}
\bPar{x^{-\im\sigma+1}A_q^\star x^{\im \sigma}} 
&= \bPar{x^{-\im(\sigma -\im (2\gamma-1))} P_q^\star x^{\im(\sigma -\im (2\gamma-1))}}\\
&=\P_q^\star(\sigma -\im (2\gamma-1))\\
&=\P_q(\overline{\sigma -\im (2\gamma-1)})^\star\\
&=\bP_q^\star+(\sigma-\im(2\gamma-1))\Lambda_q^\star.
\end{align*}
which in view of the definition of $\sigma^\star$ is the stated the formula for $\A_q^\star(\sigma)$.
\end{proof}

\begin{lemma}\label{IntegrandAdjPair}
Let $U\subset \C$ be open and $\widehat u:U\to C^\infty(\Z;E^q_\Z)$ be holomorphic. Let $U^\star=\set{\sigma^\star:\sigma\in U}$ and let $\widehat v:U^\star\to C^\infty(\Z;E^{q+1}_\Z)$ be also holomorphic. Then
\begin{equation*}
\langle \A_q(\sigma)\widehat u(\sigma),\widehat v(\sigma^\star)\rangle_{L^2(\Z;E^{q+1}_\Z)}= \langle \widehat u(\sigma),\A_q^\star (\sigma^\star)\widehat v(\sigma^\star)\rangle_{L^2(\Z;E^{q+1}_\Z)}.
\end{equation*}
\end{lemma}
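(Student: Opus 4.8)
The plan is to unwind both sides into the underlying inner products and reduce the identity to the two facts already recorded in Lemma~\ref{indicialcomplexproperty}: the explicit formula $\A_q(\sigma)=\bP_q+\sigma\Lambda_q$ for the indicial family, and the companion formula $\A_q^\star(\sigma^\star)=\bP_q^\star+\overline{\sigma}\,\Lambda_q^\star$, which follows from \eqref{sigmaStar} since $(\sigma^\star)^\star=\sigma$ gives $\overline{(\sigma^\star)^\star}=\overline\sigma$ as the coefficient of $\Lambda_q^\star$. Both sides of the claimed equality are, for each fixed $\sigma$, just the statement that $\bP_q+\sigma\Lambda_q$ and $\bP_q^\star+\overline\sigma\,\Lambda_q^\star$ are adjoint to each other as operators $C^\infty(\Z;E^q_\Z)\to C^\infty(\Z;E^{q+1}_\Z)$ in the relevant $L^2$ spaces on $\Z$.

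Concretely, first I would write the left-hand side as
$$
\langle(\bP_q+\sigma\Lambda_q)\widehat u(\sigma),\widehat v(\sigma^\star)\rangle_{L^2(\Z;E^{q+1}_\Z)}
=\langle\bP_q\widehat u(\sigma),\widehat v(\sigma^\star)\rangle+\sigma\langle\Lambda_q\widehat u(\sigma),\widehat v(\sigma^\star)\rangle,
$$
then move $\bP_q$ across using the definition of its formal adjoint $\bP_q^\star$ with respect to the density $(x\partial_x\contract\m_b)|_\Z$, and move $\Lambda_q$ across using the definition of $\Lambda_q^\star$ via the fixed Hermitian structures (this is pointwise on $\Z$, so no integration by parts is needed for that term); the scalar $\sigma$ comes out of the second slot of the Hermitian inner product as $\overline\sigma$. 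This yields
$$
\langle\widehat u(\sigma),\bP_q^\star\widehat v(\sigma^\star)\rangle+\langle\widehat u(\sigma),\overline\sigma\,\Lambda_q^\star\widehat v(\sigma^\star)\rangle
=\langle\widehat u(\sigma),(\bP_q^\star+\overline\sigma\,\Lambda_q^\star)\widehat v(\sigma^\star)\rangle,
$$
and by the formula just noted for $\A_q^\star(\sigma^\star)$ the operator in the last slot is exactly $\A_q^\star(\sigma^\star)$, giving the right-hand side.

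There is, honestly, no substantial obstacle here: the statement is essentially a bookkeeping identity, and the holomorphy hypotheses on $\widehat u$ and $\widehat v$ play no role in the pointwise-in-$\sigma$ identity (they matter only later, when one integrates in $\sigma$ along contours). The one point that requires a moment's care is the conjugation conventions: one must check that the coefficient $\overline{\sigma^\star}$ appearing in \eqref{sigmaStar} evaluated at the argument $\sigma^\star$ produces precisely $\overline{\sigma}$ — equivalently that the involution $\sigma\mapsto\sigma^\star$ satisfies $(\sigma^\star)^\star=\sigma$, which is immediate from $\sigma^\star=\overline{\sigma-\im(2\gamma-1)}$ since $\gamma\in\R$ — and that this matches the complex conjugate of $\sigma$ that is produced when the scalar $\sigma$ is pulled out of the antilinear slot of the Hermitian pairing. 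Once that sign/conjugation check is made, the chain of equalities above closes and the lemma follows.
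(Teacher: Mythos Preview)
Your proof is correct and follows essentially the same route as the paper's: expand $\A_q(\sigma)=\bP_q+\sigma\Lambda_q$, take adjoints to obtain $\bP_q^\star+\overline\sigma\,\Lambda_q^\star$ on the right, and then identify this with $\A_q^\star(\sigma^\star)$. The only cosmetic difference is that the paper verifies the last identification via $\overline\sigma=\sigma^\star-\im(2\gamma-1)$ directly, whereas you phrase it through the involution $(\sigma^\star)^\star=\sigma$; these are equivalent one-line checks.
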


\begin{proof}
With the formulas in the lemma we have
\begin{equation*}
\langle (\bP_q+\sigma\Lambda_q)\widehat u(\sigma),\widehat v(\sigma^\star)\rangle_{L^2(\Z;E^{q+1}_\Z)}=
\langle \widehat u(\sigma),(\bP_q^\star+\overline\sigma \Lambda_q^\star)\widehat v(\sigma^\star)\rangle_{L^2(\Z;E^{q}_\Z)}.
\end{equation*}
But $\overline \sigma=\sigma^\star-\im(2\gamma-1)$, so
\begin{equation*}
\bP_q^\star+\overline\sigma \Lambda_q^\star=\bP_q^\star+(\sigma^\star-\im(2\gamma-1)) \Lambda_q^\star=\A_q^\star(\sigma^\star).
\end{equation*}
\end{proof}

\begin{proposition}\label{IndicialSpacesExact}
For each $q$ consider the holomorphic family of complexes
\begin{multline}\label{indicialspaces}\stepcounter{equation}\tag{{\theequation}$_q$}
\cdots \to C^{\infty}(\Z;E_\Z^{q-1}) \xrightarrow{\A_{q-1}(\sigma-\im)} C^{\infty}(\Z;E_\Z^q) \xrightarrow{\A_q(\sigma)} C^{\infty}(\Z;E_\Z^{q+1})\\\xrightarrow{\A_{q+1}(\sigma+\im )} C^{\infty}(\Z;E_\Z^{q+2}) \to \cdots
\end{multline}
This is an elliptic complex for each $\sigma \in \C$. There exists a discrete set $\Sigma^q \subset \C$ such that this complex is exact in degree $q$ for all $\sigma \notin \Sigma^q$. The set $\Sigma^q$ is such that $\Sigma^q \cap \{\sigma: |\Im(\sigma)| \leq \beta\}$ is finite for every $\beta>0$.
\end{proposition}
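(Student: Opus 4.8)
The plan is to establish the three assertions from: (i) ellipticity of \eqref{indicialspaces} on $\Z$ for every fixed $\sigma$; (ii) exactness of \eqref{indicialspaces} in degree $q$ once $|\Re\sigma|$ is large inside any horizontal strip; and (iii) a semicontinuity principle for the cohomology of a holomorphic family of Fredholm complexes. Write $H^q(\sigma)$ for the degree-$q$ cohomology of \eqref{indicialspaces}. For (i): since $\Lambda_q$ is a bundle homomorphism, the order-one principal symbol of $\A_q(\sigma)=\bP_q+\sigma\Lambda_q$ is that of $\bP_q$ and is independent of $\sigma$; and by the structure of the $c$-symbol over $\Z$ (cf.\ \cite[Section~3]{GiKrMe07}) the value of $\csym(A_q)$ at a $c$-covector over $\Z$ that is tangential to $\Z$ is the principal symbol of $\bP_q$ in the corresponding direction of $T^*\Z$, so $c$-ellipticity of \eqref{TheCComplex} forces the principal symbol sequence of the operators $\bP_q$ on $\Z$ to be exact off the zero section --- which is exactly the ellipticity of \eqref{indicialspaces}, for every $\sigma\in\C$. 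Hodge theory on the compact manifold $\Z$ then gives that $H^q(\sigma)$, and the cohomology in every degree, is finite-dimensional and that the differentials have closed range on the appropriate Sobolev scale; since each differential $\A_p\bigl(\sigma+(p-q)\im\bigr)=\bP_p+\bigl(\sigma+(p-q)\im\bigr)\Lambda_p$ depends affinely on $\sigma$, we thereby obtain a holomorphic family of Fredholm complexes parametrized by $\sigma\in\C$. This already yields the first assertion of the proposition (and the finite-dimensionality of the cohomology).

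For (ii) one uses the full strength of $c$-ellipticity. Adjoining $\sigma$ to the cotangent variable of $\Z$ as a covariable of weight one, the parameter-dependent principal symbol of $\A_q(\sigma)$ is, over $\Z$, the value of $\csym(A_q)$ at the $c$-covector $\sigma\,dx+\xi$; hence the sequence of these symbols is exact for $(\xi,\sigma)\neq 0$ --- this uses both the tangential ellipticity from (i) and the exactness of the sequence $\{\Lambda_q\}$ recorded before Lemma~\ref{indicialcomplexproperty}. Writing $\sigma=t+\im s$, the imaginary part enters this joint symbol only to order zero, so $\{\A_p(\sigma+(p-q)\im)\}$ is parameter-elliptic uniformly for $|\Im\sigma|\le\beta$. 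A parameter-dependent parametrix --- equivalently, the parameter-dependent elliptic estimate for the associated self-adjoint Laplacian in degree $q$, whose order-zero part contains $|\sigma|^2$ times the positive-definite endomorphism $\Lambda_q^\star\Lambda_q+\Lambda_{q-1}\Lambda_{q-1}^\star$ --- then yields, for each $\beta>0$, a constant $R_\beta$ such that \eqref{indicialspaces} is exact in every degree when $|\Re\sigma|\ge R_\beta$ and $|\Im\sigma|\le\beta$. In particular $\min_{\sigma\in\C}\dim H^q(\sigma)=0$.

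For (iii) I would invoke the standard fact that for a holomorphic family of Fredholm complexes of Hilbert spaces over a connected open $U\subseteq\C$, the function $\sigma\mapsto\dim H^q(\sigma)$ is upper semicontinuous and $\{\sigma\in U:\dim H^q(\sigma)>\min_{\tau\in U}\dim H^q(\tau)\}$ is a proper analytic subset of $U$, hence discrete. One proves this by reducing, locally and holomorphically in $\sigma$, to a bounded complex of finite-dimensional vector spaces with holomorphically varying differentials that is quasi-isomorphic to \eqref{indicialspaces}; for such a complex $\dim H^q$ jumps only on the analytic locus cut out by the vanishing of the minors of the differential matrices. One cannot short-circuit this via the analytic Fredholm theorem applied to a single rolled-up operator, as one does for a single $b$-elliptic operator through its indicial family: the rolled-up operator $d+d^\star$ of the complex depends on $\sigma$ only real-analytically because of the formal adjoint, and the indicial family of the Laplacian $\square_q$ does not, under any fixed shift of the spectral parameter, agree with the Hodge Laplacian of \eqref{indicialspaces} --- the weight $\gamma$ forces a mismatch of shifts. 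Granting the principle, (ii) gives $\min_{\sigma\in\C}\dim H^q(\sigma)=0$, so $\Sigma^q:=\{\sigma\in\C:H^q(\sigma)\ne 0\}$ is discrete; and $\Sigma^q\cap\{|\Im\sigma|\le\beta\}\subseteq\{|\Re\sigma|\le R_\beta,\ |\Im\sigma|\le\beta\}$, a discrete set inside a compact set, is therefore finite. The main obstacle is step (iii): making rigorous the semicontinuity and analyticity of the cohomology along the holomorphic family by carrying out the local holomorphic reduction to finite-dimensional complexes, the analytic-Fredholm shortcut being unavailable; step (i) is routine given the $c$-calculus, and step (ii) follows the usual template for boundary spectra of $b$- and cone operators.
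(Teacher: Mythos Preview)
Your steps (i) and (ii) are fine and match the paper. The genuine divergence is at (iii), and specifically your assertion that ``one cannot short-circuit this via the analytic Fredholm theorem applied to a single rolled-up operator.'' The paper does exactly that. The point you are missing is that instead of the antiholomorphic pointwise adjoint $\A_q(\sigma)^\star$, one uses the indicial family $\A_q^\star(\sigma)$ of the \emph{cone} operator $A_q^\star$: by Lemma~\ref{indicialcomplexproperty} this is $\bP_q^\star + (\sigma-\im(2\gamma-1))\Lambda_q^\star$, which is affine and hence holomorphic in $\sigma$, and it still satisfies the complex relation $\A_{q-1}^\star(\sigma+\im)\A_q^\star(\sigma)=0$. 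One then sets
\[
\square_q(\sigma)=\A_q^\star(\sigma)\A_q(\sigma)+\A_{q-1}(\sigma-\im)\A_{q-1}^\star(\sigma+\im),
\]
a \emph{holomorphic} family of second-order elliptic operators on $\Z$. This is not the Hodge Laplacian of \eqref{indicialspaces} except on the line $\Im\sigma=\gamma-\tfrac12$ (where $\sigma^\star=\sigma$), but that is irrelevant: your parameter-ellipticity argument (ii) applies to it verbatim, so analytic Fredholm theory makes $\G_q(\sigma)=\square_q(\sigma)^{-1}$ finitely meromorphic with finitely many poles in each horizontal strip. The two complex relations give
\[
\square_q(\sigma)\,\A_{q-1}(\sigma-\im)\A_{q-1}^\star(\sigma+\im)=\A_{q-1}(\sigma-\im)\A_{q-1}^\star(\sigma+\im)\,\square_q(\sigma),
\]
so $\G_q$ commutes with this product, and the identity $u=\G_q\A_q^\star\A_q\,u+\A_{q-1}\A_{q-1}^\star\G_q\,u$ immediately yields $\ker\A_q(\sigma)=\rg\A_{q-1}(\sigma-\im)$ whenever $\sigma$ is not a pole of $\G_q$.

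Your route via semicontinuity of cohomology for holomorphic Fredholm complexes is valid, and the local holomorphic reduction to finite-dimensional complexes you outline is in fact carried out later in the paper (Section~\ref{Reduction}) for a different purpose; but for the present proposition it is an unnecessary detour. The moral is that the ``mismatch of shifts'' you diagnose is a feature, not a bug: one only needs a holomorphic second-order operator that (a) is parameter-elliptic and (b) commutes with the projection onto the image of $\A_{q-1}$, and the indicial family of the cone Laplacian, suitably reparametrized, provides one.
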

\begin{proof}
The indicial families form a complex \eqref{indicialspaces} for each $\sigma \in \C$ by Lemma~\ref{indicialcomplexproperty}. Their ellipticity follows at once from the $c$-ellipticity of the complex \eqref{TheCComplex}. Recalling that $\A^\star(\sigma)$ is defined to be $\A(\sigma^\star)^\star$, define
$$
\square_q(\sigma) = \A_q^\star(\sigma) \A_q(\sigma) + \A_{q-1}(\sigma-\im)\A_{q-1}^\star(\sigma+\im).
$$
This is a holomorphic family of elliptic differential operators acting on sections of the bundle $E^q_\Z\to\Z$. When $\Im\sigma =\gamma-1/2$, $\square_q(\sigma)$ is the Laplacian associated with the complex \eqref{indicialspaces}. Ellipticity of the complex \eqref{TheCComplex} implies further that $\square_q(\sigma)$ is elliptic with real parameter $\Re(\sigma)$ uniformly for $\Im(\sigma)$ in compact sets. Indeed, $\square_q(\sigma)$ equals 
\begin{multline*}
\bP_q^* \bP_q+\bP_{q-1}\bP_{q-1}^*+(\bP_q^* \Lambda_q  + \Lambda_q^* \bP_q+\Lambda_{q-1} \bP_{q-1}^* +  \bP_{q-1}\Lambda_{q-1}^*) \sigma \\+(\Lambda_q^* \Lambda_q+\Lambda_{q-1}\Lambda_{q-1}^* )\sigma^2;
\end{multline*}
because of ellipticity the term linear in $\sigma$ is dominated by the sum of the other terms when $|\Re\sigma|$ is large and $\Im\sigma$ remains in a compact set. Consequently, for each $\beta>0$ there is $\alpha > 0$ such that $\square_q(\sigma)^{-1}$ exists whenever $|\Im(\sigma)| \leq \beta$ and $|\Re(\sigma)| \geq \alpha$. By analytic Fredholm theory,
$$
\G_q(\sigma) = \square_q(\sigma)^{-1} : C^{\infty}(\Z;E^q) \to C^{\infty}(\Z;E^q)
$$
is a finitely meromorphic function taking values in $\Psi^{-2}(\Z;E^q_\Z)$. The set of poles of $\G_q(\sigma)$ is discrete in $\C$, and only finitely many poles are located in each horizontal strip of finite width. We have
$$
\square_q(\sigma)\A_{q-1}(\sigma-\im)\A_{q-1}^\star(\sigma+\im) = \A_{q-1}(\sigma-\im)\A_{q-1}^\star(\sigma+\im)\square_q(\sigma),
$$
and consequently also
$$
\G_q(\sigma)\A_{q-1}(\sigma-\im)\A_{q-1}^\star(\sigma+\im) = \A_{q-1}(\sigma-\im)\A_{q-1}^\star(\sigma+\im)\G_q(\sigma)
$$
when $\sigma$ is not a pole of $\G_q(\sigma)$. For such $\sigma$ we get
\begin{align*}
u &= \G_q(\sigma)\square_q(\sigma)u \\
&= \G_q(\sigma) \A_q^\star(\sigma)\A_q(\sigma)u + \A_{q-1}(\sigma-\im)\A_{q-1}^\star(\sigma+\im)\G_q(\sigma)u
\end{align*}
for all $u \in C^{\infty}(\Z;E^q_\Z)$. Consequently, $\ker(\A_q(\sigma))=\rg(\A_{q-1}(\sigma-\im))$ whenever $\sigma$ is not a pole of $\G_q(\sigma)$. The proposition is proved.
\end{proof}

If $V$ is a Fr\'echet space we will write $\Mero_{\sigma_0}(V)$ for the space of germs at $\sigma_0$ of meromorphic $V$-valued functions, $\Hol_{\sigma_0}(V)$ for the subspace of holomorphic germs, and let $\ss_{\sigma_0}(f)$ denote the singular part of a germ $f\in \Mero_{\sigma_0}(V)$. Thus $\ss_{\sigma_0}(f)$ is meromorphic in $\C$ with pole only at $\sigma_0$. We will often simply write $\ss$ instead of $\ss_{\sigma_0}$ from now on; what is meant will be clear from the context. 

Returning to the complex \eqref{TheCComplex} and its indicial complex \eqref{indicialspaces}, observe that because $\A_q$ depends holomorphically on $\sigma$, it gives maps 
\begin{equation*}
\begin{gathered}
\Hol_{\sigma_0}(C^\infty(\Z;E_\Z^q))\to\Hol_{\sigma_0}(C^\infty(\Z;E_\Z^{q+1})),\\
\Mero_{\sigma_0}(C^\infty(\Z;E_\Z^q))\to\Mero_{\sigma_0}(C^\infty(\Z;E_\Z^{q+1}))
\end{gathered}
\end{equation*}
on germs, so it induces mappings
\begin{equation*}
\Mero_{\sigma_0}(C^\infty(\Z;E_\Z^q))/\Hol_{\sigma_0}(C^\infty(\Z;E_\Z^q))\to \Mero_{\sigma_0}(C^\infty(\Z;E_\Z^{q+1}))/\Hol_{\sigma_0}(C^\infty(\Z;E_\Z^{q+1}))
\end{equation*}
for every $\sigma_0$ and every $q$ which assemble into a complex. Equivalently, if $\hat u$ is meromorphic near $\sigma_0$ with pole at $\sigma_0$, then
\begin{equation*}
\ss_{\sigma_0} \A_q(\widehat u)=\ss_{\sigma_0} \A_q(\ss_{\sigma_0}\widehat u)
\end{equation*}
It follows that $\ss\A_{q+1}(\cdot+\im)\ss\A_q(\cdot)=0$, and we have a localized complex 
\begin{multline}\label{indicialgerms}\stepcounter{equation}\tag{{\theequation}$_q$}
\cdots  \to \ss\Mero_{\sigma_0}(C^{\infty}(\Z;E_\Z^{q-1})) \xrightarrow{\ss \A_{q-1}(\cdot - \im)} \ss\Mero_{\sigma_0}(C^{\infty}(\Z;E_\Z^{q})) \xrightarrow{\ss \A_{q}(\cdot)}\\
\ss\Mero_{\sigma_0}(C^{\infty}(\Z;E_\Z^{q+1}))\xrightarrow{\ss \A_{q}(\cdot+\im)} \ss\Mero_{\sigma_0}(C^{\infty}(\Z;E_\Z^{q+2})) \to \cdots.
\end{multline}
at each $\sigma_0$.

This complex is isomorphic to the complex \eqref{gSingComplex} via the Mellin transform.  For any vector bundle $F_\Z \to \Z$, $\sigma_0 \in \C$, and $u\in \gSing_{\sigma_0}(\Z^\wedge;F_\Z)$ the Mellin transform of $\omega(x) u(x,z)$,
\begin{equation*}
\Mellin u(z,\sigma)=\int_0^{\infty} x^{-i\sigma}\omega(x)u(x,z)\,\frac{dx}{x},
\end{equation*}
is holomorphic in $\Im\sigma>\Im\sigma_0$ with values in $C^\infty(\Z,F_\Z)$. It extends as a meromorphic function to all of $\C$ with pole at $\sigma_0$.  

If $u\in \gSing_{\sigma_0}(\Z^\wedge;F_\Z)$, we may view $\Mellin u$ as representing a germ at $\sigma_0$ and then let $\ss\Mellin u$ be its singular part (at $\sigma_0$). Since the class of $\Mellin u$ modulo $\Hol_{\sigma_0}(F)$ is independent of the choice of the cut-off function $\omega$, so is $\ss\Mellin u$ and we have a map
\begin{equation*}
\gSing_{\sigma_0}(\Z^\wedge;F_\Z)\to \ss\Mero_{\sigma_0}(C^\infty(\Z;F_\Z))
\end{equation*}
This map is bijective with inverse 
$$
(\ss\Mellin)^{-1} \widehat u = \frac{1}{2\pi}\ointr_{C} x^{i\sigma}\widehat u(\sigma)\,d\sigma,\quad \widehat u(\sigma) \in \ss\Mero_{\sigma_0}(C^{\infty}(\Z;F_\Z))
$$
where $C$ is any clockwise oriented circle centered at $\sigma_0$. 

Define, for arbitrary $\vartheta\in \C$,
\begin{equation}\label{TauOperator}
\tau_\vartheta:\ss\Mero_{\sigma_0}(V)\to \ss\Mero_{\sigma_0-\vartheta}(V), \quad (\tau_\vartheta u)(\sigma)=u(\sigma+\vartheta).
\end{equation}
For each $q$, $q'$ define 
\begin{equation*}
\Theta^q_{q'}:\gSing_{\sigma_0+\im(q'-q)}(\Z^\wedge;E_\Z^q)\to \ss\Mero_{\sigma_0}(C^\infty(\Z^\wedge;E_\Z^q))
\end{equation*}
by
\begin{equation*}
\Theta^q_{q'} u= \tau_{\im(q'-q)}\ss\Mellin u.
\end{equation*}
Evidently these maps are isomorphisms. A brief calculation will show they give a commutative diagram
\begin{equation*}
\begin{CD}
\cdots @>>> \gSing_{\sigma_0}(\Z^\wedge;E^{q'}_\Z) @>{A_{q'}^{(0)}}>>\gSing_{\sigma_0+\im}(\Z^\wedge;E^{q'+1}_\Z) @>>> \cdots\\
@. @V{\Theta^q_{q'}}VV @V{\Theta^q_{q'+1}}VV @. \\
\cdots  @>>> \ss\Mero_{\sigma_0}(C^\infty(\Z;E^{q'}_\Z)) @>{\A_{q'}(\cdot+(q'-q)\im)}>>\ss\Mero_{\sigma_0}(C^\infty(\Z;E^{q'+1}_\Z)) @>>> \cdots
\end{CD}
\end{equation*}
so they define a chain isomorphism from the complex \eqref{gSingComplex} to the complex \eqref{indicialgerms}. Thus the cohomology space $\H^q_{\sigma_0}(\Z;A)$ of the previous section is identified with that of the complex \eqref{indicialgerms} (in the specific degree $q$).

\begin{definition}\label{bSpec}
The boundary spectrum of the complex \eqref{TheCComplex} in degree $q$ is
\begin{equation}\stepcounter{equation}\tag{{\theequation}$_q$}
\spec^{q}_b(A)= \{\sigma_0 \in \C;\; \H^q_{\sigma_0}(\Z;\A) \neq 0\}.
\end{equation}
where $\H^q_{\sigma_0}(\Z;\A)$ is the cohomology in degree $q$ of \eqref{indicialgerms}.
\end{definition}

Observe here again that the relevant complex depends on $q$; for a different degree $q'$ the complex should be centered at $q'$, or else, with the complex above, a shift. 

\begin{theorem}\label{bSpecProperties}
The space $\H^q_{\sigma_0}(\Z;\A)$ is finite-dimensional for every $\sigma_0 \in \C$. Moreover, $\spec_b^q(A) \cap \set{\sigma : |\Im(\sigma)| \leq \beta}$ is finite for every $\beta > 0$.
\end{theorem}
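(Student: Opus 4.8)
The plan is to bootstrap off the parametrix built in the proof of Proposition~\ref{IndicialSpacesExact}. Recall from there that $\G_q(\sigma)=\square_q(\sigma)^{-1}$ is a finitely meromorphic function with values in $\Psi^{-2}(\Z;E^q_\Z)$ whose poles form a discrete set meeting every horizontal strip of finite width in a finite set, and that one has the homotopy identity
\begin{equation*}
\widehat u(\sigma)=\G_q(\sigma)\A_q^\star(\sigma)\A_q(\sigma)\widehat u(\sigma)+\A_{q-1}(\sigma-\im)\A_{q-1}^\star(\sigma+\im)\G_q(\sigma)\widehat u(\sigma)
\end{equation*}
for every $\widehat u\in C^\infty(\Z;E^q_\Z)$ and every $\sigma$ outside the pole set of $\G_q$. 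Since $\A_q^\star(\cdot)$, $\A_{q-1}(\cdot-\im)$ and $\A_{q-1}^\star(\cdot+\im)$ are polynomial, hence entire, operator families, both sides are meromorphic in $\sigma$, so the identity persists when $\widehat u$ is replaced by a $C^\infty(\Z;E^q_\Z)$-valued meromorphic germ at a fixed $\sigma_0$. I would carry out the whole argument with such germs, abbreviating $\ss=\ss_{\sigma_0}$.

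First I would extract the cohomological content. Let $\widehat u\in\ss\Mero_{\sigma_0}(C^\infty(\Z;E^q_\Z))$ be a cocycle of \eqref{indicialgerms} in degree $q$, i.e.\ $\ss\bigl(\A_q(\cdot)\widehat u\bigr)=0$; then $\A_q(\cdot)\widehat u$ is holomorphic at $\sigma_0$, hence so is $g(\sigma):=\A_q^\star(\sigma)\A_q(\sigma)\widehat u(\sigma)$. Applying $\ss$ to the homotopy identity and using $\ss\widehat u=\widehat u$ yields
\begin{equation*}
\widehat u=\ss\bigl(\G_q(\cdot)g\bigr)+\ss\A_{q-1}(\cdot-\im)\bigl(\widehat w\bigr),\qquad \widehat w:=\ss\bigl(\A_{q-1}^\star(\cdot+\im)\G_q(\cdot)\widehat u\bigr)\in\ss\Mero_{\sigma_0}(C^\infty(\Z;E^{q-1}_\Z)),
\end{equation*}
where one uses that $\ss\A_{q-1}(\cdot-\im)$, like every map in \eqref{indicialgerms}, only depends on its argument modulo holomorphic germs. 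The second summand is by definition a coboundary, so the class of $\widehat u$ in $\H^q_{\sigma_0}(\Z;\A)$ equals the class of $\ss\bigl(\G_q(\cdot)g\bigr)$, and the latter element, differing from the cocycle $\widehat u$ by a coboundary, is itself a cocycle.

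The decisive point is that $\bigl\{\ss\bigl(\G_q(\cdot)g\bigr):g\ \text{holomorphic at}\ \sigma_0\bigr\}$ is a fixed finite-dimensional subspace of $\ss\Mero_{\sigma_0}(C^\infty(\Z;E^q_\Z))$. Writing the principal part of $\G_q$ at $\sigma_0$ as $\sum_{l=1}^{m}(\sigma-\sigma_0)^{-l}G_{-l}$, finite meromorphy of $\G_q$ means that each $G_{-l}$ has finite rank; expanding $g$ in its Taylor series at $\sigma_0$, the coefficient of $(\sigma-\sigma_0)^{-l}$ in $\ss\bigl(\G_q(\cdot)g\bigr)$ lies in $\rg(G_{-l})+\dots+\rg(G_{-m})$, a finite-dimensional space independent of $g$. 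Hence every cohomology class in degree $q$ at $\sigma_0$ is represented by a cocycle in this fixed finite-dimensional space, so $\H^q_{\sigma_0}(\Z;\A)$ is finite-dimensional for every $\sigma_0\in\C$. If $\sigma_0$ is not a pole of $\G_q$ then $m=0$, the space above is $\set{0}$, and $\ss\bigl(\G_q(\cdot)g\bigr)=0$, so every cocycle in degree $q$ is a coboundary and $\H^q_{\sigma_0}(\Z;\A)=0$; therefore $\spec^q_b(A)$ is contained in the pole set of $\G_q$, which by Proposition~\ref{IndicialSpacesExact} meets each set $\set{\sigma:|\Im(\sigma)|\le\beta}$ in a finite set. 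The only step that is not purely formal is the bookkeeping in this last paragraph deriving the finite-dimensionality of that subspace from the finite meromorphy of $\G_q$; I do not expect it to be a real obstacle, and everything else is routine manipulation of the homotopy identity.
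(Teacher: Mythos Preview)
Your argument is correct and is essentially the same as the paper's: the paper routes the proof through Proposition~\ref{EllipticFiniteCohomMfds} (finite-dimensionality via the finitely meromorphic inverse $\G_q$ of the Laplacian and the same homotopy identity) and Proposition~\ref{CohomGermsCarriedByCohom} (vanishing of $\H^q_{\sigma_0}$ when the complex at $\sigma_0$ is exact), whereas you carry out both steps directly in one pass using the $\G_q$ already built in Proposition~\ref{IndicialSpacesExact}. The only difference is packaging --- your direct argument avoids the detour through the Hodge decomposition at $\sigma_0$ used for Proposition~\ref{CohomGermsCarriedByCohom}, replacing it with the simpler observation that absence of a pole forces $\ss(\G_q g)=0$.
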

\begin{proof}
Using Proposition~\ref{IndicialSpacesExact}, Proposition~\ref{EllipticFiniteCohomMfds} implies that $\H^q_{\sigma_0}(\Z;\A)$ is finite-dimensional for every $\sigma_0 \in \C$. Moreover, Proposition~\ref{CohomGermsCarriedByCohom} implies that $\spec^q_b(A)$ is a subset of the set $\Sigma^q$ from Proposition~\ref{IndicialSpacesExact}. The theorem is proved.
\end{proof}

We complete the passage from the singular complexes \eqref{gSingComplex} to the complexes \eqref{indicialgerms} by transferring the canonical pairing \eqref{ReducedPairing} to a pairing in indicial cohomology.

\begin{proposition}\label{PairingInCohomology}
Suppose $\widehat{\mathbf u}\in \H^q_{\sigma_0}(\Z,\A)$, $\widehat{\mathbf v}\in \H^{q+1}_{\sigma_0^\star}(\Z,\A^\star)$ are represented, respectively, by elements $\widehat u\in \ss\Mero_{\sigma_0}(C^{\infty}(\Z;E^q_\Z))$ and $\widehat v\in \ss\Mero_{\sigma^{\star}_0}(C^{\infty}(\Z;E^{q+1}_\Z))$. Then 
\begin{equation*}
\frac{1}{2\pi}\ointc_C \langle \A_q(\sigma)\widehat u(\sigma),\widehat v(\sigma^\star)\rangle_{L^2(\Z;E^{q+1}_\Z)}\,d\sigma
\end{equation*}
depends only on the classes $\widehat{\mathbf u}$ and $\widehat{\mathbf v}$, and therefore defines a sesquilinear pairing
\begin{equation}\label{pairinggamma}
\left[\cdot,\cdot\right]^q_{\sigma_0,\sigma_0^{\star}} : \H^q_{\sigma_0}(\Z;\A) \times \H^{q+1}_{\sigma_0^{\star}}(\Z;\A^{\star}) \to \C.
\end{equation}
The curve $C$ is the boundary of a small disc centered at $\sigma_0$, oriented counterclockwise.
\end{proposition}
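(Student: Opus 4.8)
The plan is to verify, in this order, that the integral (i) is a well-defined number independent of the radius of $C$, (ii) is unchanged when the meromorphic representatives of the principal parts $\widehat u,\widehat v$ are altered, (iii) is unchanged when the cocycles representing $\widehat{\mathbf u},\widehat{\mathbf v}$ are altered by coboundaries, and (iv) is sesquilinear. For (i): $\A_q(\sigma)$ is polynomial in $\sigma$, $\widehat u$ is meromorphic with its only pole at $\sigma_0$, and $\sigma\mapsto\sigma^\star=\overline\sigma+\im(2\gamma-1)$ is affine with $\sigma^\star-\sigma_0^\star=\overline{\sigma-\sigma_0}$, so $\sigma\mapsto\widehat v(\sigma^\star)$ depends anti-holomorphically on $\sigma$ away from $\sigma_0$, where it has at worst a pole; since $\langle\cdot,\cdot\rangle_{L^2(\Z;E^{q+1}_\Z)}$ is conjugate-linear in its second slot, pairing the holomorphic-valued $\A_q(\sigma)\widehat u(\sigma)$ against $\widehat v(\sigma^\star)$ produces a scalar function of $\sigma$ that is meromorphic with its only pole at $\sigma_0$. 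Hence $\frac1{2\pi}\ointc_C(\cdot)\,d\sigma$ equals $\im$ times the residue of the integrand at $\sigma_0$, independently of $C$; in particular an integrand that is pole-free at $\sigma_0$ contributes $0$.

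The two algebraic inputs are: (a) the pairing identity of Lemma~\ref{IntegrandAdjPair}, which holds pointwise and therefore extends by analytic continuation to meromorphic arguments, $\langle\A_q(\sigma)a(\sigma),b(\sigma^\star)\rangle=\langle a(\sigma),\A_q^\star(\sigma^\star)b(\sigma^\star)\rangle$, together with its analogue for $A_{q+1}^\star$: since $\A_{q+1}^\star(\tau)=\A_{q+1}(\tau^\star)^\star$ by Lemma~\ref{indicialcomplexproperty} (with $q+1$ in place of $q$), one has $\langle a,\A_{q+1}^\star(\tau)c\rangle=\langle\A_{q+1}(\tau^\star)a,c\rangle$; and (b) the complex identities $\A_{q+1}(\sigma+\im)\A_q(\sigma)=0$ and $\A_q(\sigma)\A_{q-1}(\sigma-\im)=0$ from Lemma~\ref{indicialcomplexproperty}. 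I will also use the reflection identity $(\sigma^\star-\im)^\star=\sigma+\im$, immediate from $\mu^\star=\overline{\mu-\im(2\gamma-1)}$. Finally I record the cocycle/coboundary conditions entering the cohomologies: $\widehat u$ is a cocycle iff $\A_q(\sigma)\widehat u(\sigma)$ is pole-free at $\sigma_0$; $\widehat v$ is a cocycle for the localized adjoint complex at degree $q+1$ iff $\A_q^\star(\sigma')\widehat v(\sigma')$ is pole-free at $\sigma_0^\star$; degree-$q$ coboundaries are principal parts of $\A_{q-1}(\sigma-\im)\widehat w(\sigma)$; and degree-$(q+1)$ adjoint coboundaries are principal parts of $\A_{q+1}^\star(\sigma'-\im)\widehat z(\sigma')$ — the shift $-\im$ being forced by $\A_q^\star(\sigma)\A_{q+1}^\star(\sigma-\im)=0$, mirroring the incoming differential $\ss\A_{q-1}(\cdot-\im)$ of \eqref{indicialgerms} at degree $q$ via the isomorphism of \eqref{gSingComplex} with \eqref{indicialgerms}.

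Now the four cases. \emph{Changing $\widehat u$ by a holomorphic germ $h$ at $\sigma_0$:} by (a) the extra integrand is $\langle h(\sigma),\A_q^\star(\sigma^\star)\widehat v(\sigma^\star)\rangle$, which — reasoning as in (i) — is holomorphic at $\sigma_0$ because $h$ is pole-free there and $\A_q^\star(\sigma')\widehat v(\sigma')$ is pole-free at $\sigma_0^\star$ ($\widehat v$ being a cocycle); so it contributes $0$. \emph{Changing $\widehat v$ by a holomorphic germ $k$ at $\sigma_0^\star$:} the extra integrand $\langle\A_q(\sigma)\widehat u(\sigma),k(\sigma^\star)\rangle$ is likewise holomorphic at $\sigma_0$ since $\A_q(\sigma)\widehat u(\sigma)$ is pole-free there ($\widehat u$ a cocycle) and $k$ is pole-free at $\sigma_0^\star$; it contributes $0$. \emph{Adding to $\widehat u$ a coboundary $\A_{q-1}(\sigma-\im)\widehat w(\sigma)$:} by (b), $\A_q(\sigma)\A_{q-1}(\sigma-\im)\widehat w(\sigma)=0$, so the integrand is literally unchanged. \emph{Adding to $\widehat v$ a coboundary, i.e.~replacing $\widehat v(\sigma^\star)$ by $\widehat v(\sigma^\star)+\A_{q+1}^\star(\sigma^\star-\im)\widehat z(\sigma^\star)$:} the $A_{q+1}^\star$-analogue of (a) with $\tau=\sigma^\star-\im$, followed by $(\sigma^\star-\im)^\star=\sigma+\im$, rewrites the extra integrand as $\langle\A_{q+1}(\sigma+\im)\A_q(\sigma)\widehat u(\sigma),\widehat z(\sigma^\star)\rangle$, which vanishes identically by (b). Since altering a cocycle by an arbitrary coboundary differs from these model situations only by a holomorphic germ (covered by the first two cases), the integral depends only on $\widehat{\mathbf u}$ and $\widehat{\mathbf v}$. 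Sesquilinearity is then immediate from linearity of $\A_q$ and of the first slot of the inner product and conjugate-linearity of the second slot, which yields the pairing~\eqref{pairinggamma}.

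I expect no serious obstacle: the whole content is already packaged in Lemmas~\ref{IntegrandAdjPair} and \ref{indicialcomplexproperty}. The one place that needs care is the last case — correctly pinning down the incoming differential of the localized adjoint complex at degree $q+1$ and its argument shift (equivalently, tracking the $\tau$-shifts through the isomorphism between \eqref{gSingComplex} and \eqref{indicialgerms}), and then applying the bookkeeping identity $(\sigma^\star-\im)^\star=\sigma+\im$, which is exactly what turns the incoming adjoint differential, once pushed across the pairing, into the factor $\A_{q+1}(\sigma+\im)$ that annihilates $\A_q(\sigma)\widehat u(\sigma)$.
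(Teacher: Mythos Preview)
Your proof is correct and follows essentially the same approach as the paper: both rely on Lemma~\ref{indicialcomplexproperty} (the complex identities $\A_{q+1}(\sigma+\im)\A_q(\sigma)=0$ and their adjoint analogues) and Lemma~\ref{IntegrandAdjPair} (moving $\A_q$ across the inner product) to show that coboundary modifications and holomorphic perturbations contribute only holomorphic integrands. The paper carries out one combined computation with $\widehat u + \ss\A_{q-1}(\cdot-\im)\widehat\varphi$ and $\widehat v + \ss\A_{q+1}^\star(\cdot-\im)\widehat\psi$ simultaneously, writing $\ss = \Id - \rr$ and tracking the regular parts through a chain of inner-product manipulations; you instead separate the argument into four independent cases (holomorphic change in each slot, coboundary in each slot) and then observe that a genuine $\ss$-coboundary is one of your ``model'' coboundaries plus a holomorphic germ. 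Your organization is cleaner and makes the role of each hypothesis more visible, but the mathematical content is identical.
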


Note that $\widehat v(\sigma^\star)$ is antimeromorphic with pole at $\sigma_0$.

\begin{proof}
Let $\widehat u \in \ss\Mero_{\sigma_0}(C^{\infty}(\Z;E_\Z^q))$ and $\widehat v \in \ss\Mero_{\sigma^{\star}_0}(C^{\infty}(\Z;E_\Z^{q+1}))$, and suppose that $\ss \A_q\widehat u$ and $\ss \A_q^\star\widehat v$ both vanish, that is, $\A_q \widehat u$ and $\A_q^\star\widehat v$ are holomorphic near $\sigma_0$ and $\sigma_0^\star$, respectively. Let
\begin{equation*}
\widehat\varphi \in \ss\Mero_{\sigma_0}(C^\infty(\Z;E^{q-1}_\Z)),  \quad
\widehat\psi \in \ss\Mero_{\sigma^\star_0}(C^\infty(\Z;E^{q+2}_\Z))
\end{equation*}
be arbitrary. We will write $\rr$ for the regular part of a meromorphic germ at $\sigma_0$ or $\sigma_0^\star$ according to the context, so $\Id=\ss+\rr$. Omitting the reference to $L^2$ spaces we have 
\begin{align*}
\big\langle \A_q(\sigma)\big(\widehat u(\sigma)& + \ss\A_{q-1}(\sigma-\im)\widehat \varphi(\sigma)\big), \widehat v(\sigma^\star) + \ss\A_{q+1}^\star(\sigma^\star-\im )\widehat \psi(\sigma^\star) \big\rangle\\
&=
\big\langle \A_q(\sigma)\big(\widehat u(\sigma) + \A_{q-1}(\sigma-\im)\widehat \varphi(\sigma)-\rr\A_{q-1}(\sigma-\im)\widehat \varphi(\sigma)\big),\\
&\qquad\qquad \widehat v(\sigma^\star) + \A_{q+1}^\star(\sigma^\star-\im)\widehat \psi(\sigma^\star)-\rr\A_{q+1}^\star(\sigma^\star-\im)\widehat \psi(\sigma^\star) \big\rangle\\
&=\big\langle \A_q(\sigma)\big(\widehat u(\sigma) -\rr\A_{q-1}(\sigma-\im)\widehat \varphi(\sigma)\big),\\
&\qquad\qquad \widehat v(\sigma^\star) + \A_{q+1}^\star(\sigma^\star-\im)\widehat \psi(\sigma^\star)-\rr\A_{q+1}^\star(\sigma^\star-\im)\widehat \psi(\sigma^\star) \big\rangle
\end{align*}
by virtue of Lemma~\ref{indicialcomplexproperty}. Continuing with the last expression, using now also Lemma~\ref{IntegrandAdjPair} we get
\begin{align*}
\big\langle \widehat u(\sigma)& - \rr\A_{q-1}(\sigma-\im)\widehat \varphi(\sigma),\\
&\qquad\quad \A_q^\star(\sigma^\star)\big(\widehat v(\sigma^\star) + \A_{q+1}^\star(\sigma^\star-\im)\widehat \psi(\sigma^\star)-\rr\A_{q+1}^\star(\sigma^\star-\im)\widehat \psi(\sigma^\star)\big)\big\rangle\\
&=\big\langle \widehat u(\sigma) -\rr\A_{q-1}(\sigma-\im)\widehat \varphi(\sigma),
\A_q^\star(\sigma)\big(\widehat v(\sigma^\star) -\rr\A_{q+1}^\star(\sigma^\star-\im)\widehat \psi(\sigma^\star)\big) \big\rangle\\
&=  \big\langle \widehat u(\sigma),
\A_q^\star(\sigma^\star)\big(\widehat v(\sigma^\star) -\rr\A_{q+1}^\star(\sigma^\star)\widehat \psi(\sigma^\star)\big) \big\rangle \\
&\qquad\quad- \big\langle \rr\A_{q-1}(\sigma-\im)\widehat \varphi(\sigma),
\A_q^\star(\sigma)\big(\widehat v(\sigma^\star) -\rr\A_{q+1}^\star(\sigma^\star-\im)\widehat \psi(\sigma^\star)\big)  \big\rangle\\
&=  \big\langle \A_q(\sigma)\widehat u(\sigma),
\widehat v(\sigma^\star) -\rr\A_{q+1}^\star(\sigma^\star)\widehat \psi(\sigma^\star) \big\rangle \\
&\qquad\quad- \big\langle \rr\A_{q-1}(\sigma-\im)\widehat \varphi(\sigma),
\A_q^\star(\sigma)\big(\widehat v(\sigma^\star) -\rr\A_{q+1}^\star(\sigma^\star-\im)\widehat \psi(\sigma^\star)\big)  \big\rangle\\
&=  \big\langle \A_q(\sigma)\widehat u(\sigma), \widehat v(\sigma^\star)\big\rangle -\big\langle \A_q(\sigma)\widehat u(\sigma),\rr\A_{q+1}^\star(\sigma^\star)\widehat \psi(\sigma^\star) \big\rangle\\
&\qquad\quad- \big\langle \rr\A_{q-1}(\sigma-\im)\widehat \varphi(\sigma),
\A_q^\star(\sigma)\big(\widehat v(\sigma^\star) -\rr\A_{q+1}^\star(\sigma^\star-\im)\widehat \psi(\sigma^\star)\big)  \big\rangle 
\end{align*}
The last expression is equal to $\big\langle \A_q(\sigma)\widehat u(\sigma), \widehat v(\sigma^\star)\big\rangle$ modulo a function which is holomorphic near $\sigma_0$, so the contour integration gives the result.
\end{proof}

\begin{theorem}\label{MainTheorem}
The pairing \eqref{pairinggamma} is non-singular.
\end{theorem}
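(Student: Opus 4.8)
The plan is to prove the theorem in the two stages indicated after Theorem~\ref{Theorem.C}: a finite-dimensional model statement, then a reduction of the general case to it. Throughout I use the identity $\A_q^\star(\sigma^\star)=\A_q(\sigma)^*$ (pointwise Hermitian adjoint), which is immediate from \eqref{sigmaStar} since $\sigma\mapsto\sigma^\star$ is an involution; this is the form in which Lemma~\ref{IntegrandAdjPair} enters. Since both $\H^q_{\sigma_0}(\Z;\A)$ and $\H^{q+1}_{\sigma_0^\star}(\Z;\A^\star)$ are finite-dimensional, it is enough to prove that \eqref{pairinggamma} has trivial left kernel; triviality of the right kernel then follows by the symmetric argument (equivalently, by applying the left-kernel statement to the adjoint complex, using $\A^{\star\star}=\A$ and the elementary relation $[v,w]_{A_q^\star}=-\overline{[w,v]_{A_q}}$). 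Thus the target is: if $\widehat u\in\ss\Mero_{\sigma_0}(C^\infty(\Z;E^q_\Z))$ satisfies $\ss\A_q\widehat u=0$ and $\ointc_C\langle\A_q(\sigma)\widehat u(\sigma),\widehat v(\sigma^\star)\rangle\,d\sigma=0$ for every $\A_q^\star$-closed germ $\widehat v$, then $\widehat u=\ss\A_{q-1}(\cdot-\im)\widehat\varphi$ for some germ $\widehat\varphi$.

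First I would develop Hodge theory for the germ complex \eqref{indicialgerms}. Shrink the circle $C$ about $\sigma_0$ so that $\G_q(\sigma)=\square_q(\sigma)^{-1}$, and the resolvents at $\sigma_0\pm\im$, are holomorphic on and inside $C$ except at the center. From the identity $u=\G_q(\sigma)\A_q^\star(\sigma)\A_q(\sigma)u+\A_{q-1}(\sigma-\im)\bigl(\A_{q-1}^\star(\sigma+\im)\G_q(\sigma)u\bigr)$ on meromorphic germs — the one already used in the proof of Proposition~\ref{IndicialSpacesExact} — a closed germ $\widehat u$ is cohomologous in \eqref{indicialgerms} to the harmonic germ $\widehat u_{\mathrm h}:=\ss\bigl(\G_q(\sigma)\A_q^\star(\sigma)\A_q(\sigma)\widehat u(\sigma)\bigr)$, since $\A_q\widehat u$ being holomorphic makes the discarded term exact. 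Using $\square_{q+1}(\sigma)\A_q(\sigma)=\A_q(\sigma)\square_q(\sigma)$ and its adjoint one checks that $\widehat u_{\mathrm h}$ is simultaneously closed and ``coclosed'' ($\ss\bigl(\A_{q-1}^\star(\cdot+\im)\widehat u_{\mathrm h}\bigr)=0$) and is the unique such representative of its class; in particular every class is represented by a germ $\ss(\G_q(\sigma)f(\sigma))$ with $f$ holomorphic, hence is determined by the finitely many Laurent coefficients of $\G_q$ at $\sigma_0$ together with finitely many Taylor coefficients of $f$. The same holds for $\A^\star$ at $\sigma_0^\star$, and the symmetry relation $\square_q(\sigma)^*=\square_q(\sigma^\star)$ — whence $\G_q(\sigma)^*=\G_q(\sigma^\star)$ — which follows from Lemma~\ref{indicialcomplexproperty}, links the two pictures.

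The finite-dimensional reduction (Section~\ref{Reduction}) then goes as follows. Let $\Pi_q$ be the orthogonal projection of $C^\infty(\Z;E^q_\Z)$ onto a finite-dimensional subspace $V^q$ taken large enough to contain all sections appearing in the finitely many Laurent data of $\G_q$ at $\sigma_0$ and of $\G_{q+1}$ at $\sigma_0+\im$ relevant to degrees $q$ and $q+1$, together with their images under the finitely many occurrences of $\bP_q,\Lambda_q,\bP_q^\star,\Lambda_q^\star$ needed for compatibility, and similarly for the adjoint complex at $\sigma_0^\star$. Put $\A_q^{\mathrm f}(\sigma)=\Pi_{q+1}\A_q(\sigma)\Pi_q$, a polynomial complex on the finite-dimensional $V^\bullet$, still generically exact. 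One verifies that the inclusion of the finite germ complex into \eqref{indicialgerms} is a quasi-isomorphism in degrees $q$ and $q+1$ at $\sigma_0$, and for the adjoint at $\sigma_0^\star$ — this is where ``large enough'' is used, but only finitely much data is at stake — and that the pairing \eqref{pairinggamma} transports to the residue pairing of the finite complex. The last point is immediate from the defining contour integral, since the relevant germs may be taken with values in $V^q$, resp.\ $V^{q+1}$, and there $\langle\A_q(\sigma)\widehat u(\sigma),\widehat v(\sigma^\star)\rangle=\langle\A_q^{\mathrm f}(\sigma)\widehat u(\sigma),\widehat v(\sigma^\star)\rangle$. This reduces the theorem to the model statement (Section~\ref{FiniteDimCase}): for a polynomial complex $\A_\bullet(\sigma)$, of degree at most one in $\sigma$, between finite-dimensional Hermitian spaces and generically exact, the residue pairing between the degree-$q$ germ cohomology at $\sigma_0$ and the degree-$(q+1)$ germ cohomology of the adjoint complex $\A_\bullet^\star$ at $\sigma_0^\star$ is nonsingular.

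For the model case I would once more pass to harmonic representatives; now $\G_q(\sigma)=\square_q(\sigma)^{-1}$ is a rational matrix function, and using $\square_q\G_q=\Id$, the intertwining $\A_q\G_q=\G_{q+1}\A_q$ and $\square_q(\sigma)^*=\square_q(\sigma^\star)$, the pairing of two harmonic representatives collapses to a residue $\tfrac{1}{2\pi}\ointc_C\langle \Pi^{\mathrm h}_q(\sigma)f(\sigma),g(\sigma)\rangle\,d\sigma$, with $\Pi^{\mathrm h}_q$ the ``harmonic part'' of $\G_q$. Nonsingularity then reduces to a purely local duality: that the principal part at $\sigma_0$ of this self-adjoint, finitely meromorphic operator pairs its Laurent data nondegenerately against itself — a Keldysh/Gohberg--Sigal-type statement about root functions, which I would establish either by a direct Laurent expansion, or, more transparently, by first bringing $\A_\bullet(\sigma)$ near $\sigma_0$ into a normal form via holomorphic changes of basis invertible at $\sigma_0$, in which the complex splits as a direct sum of elementary two-term blocks $\C\xrightarrow{(\sigma-\sigma_0)^{\kappa}}\C$ and blocks already exact at $\sigma_0$; in the split form the germ cohomology in each degree and the residue pairing become visibly the standard perfect pairing $\tfrac{1}{2\pi}\ointc_C(\sigma-\sigma_0)^{-1}\,d\sigma$ between complementary Laurent tails. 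I expect this last local-duality step to be the main obstacle: it is the analogue for complexes of the nondegeneracy of the root-function pairing for a single elliptic cone operator, and, unlike that case, it requires tracking how the harmonic projector interacts with $\A_q$ and $\A_{q-1}$ simultaneously, as well as the coupling between $\sigma_0$ and $\sigma_0^\star$; by comparison, the analytic reductions above are routine once the finite-dimensional picture is in place.
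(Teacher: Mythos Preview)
Your overall plan---reduce to a finite-dimensional model, then prove nondegeneracy there---matches the paper's architecture, but both halves of your execution differ from the paper's and the reduction step in particular has a genuine gap.

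For the reduction, you propose to compress to $\A_q^{\mathrm f}(\sigma)=\Pi_{q+1}\A_q(\sigma)\Pi_q$ where $\Pi_q$ projects onto a finite-dimensional $V^q$ ``large enough''. But for the compressed maps to form a \emph{complex} you need $\A_q(\sigma)V^q\subset V^{q+1}$ for all $\sigma$ near $\sigma_0$ (otherwise the middle projection in $\Pi_{q+2}\A_{q+1}\Pi_{q+1}\A_q\Pi_q$ spoils $\A_{q+1}\A_q=0$), and for the adjoint complex to compress to $(\A^{\mathrm f})^\star$ you simultaneously need $\A_q^\star(\sigma)V^{q+1}\subset V^q$. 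Starting from any finite-dimensional seed and iterating these two invariance requirements will typically not terminate, since $\bP_q$ is a genuine differential operator. Nor does your harmonic-germ formula supply invariant subspaces: the Laurent coefficients of $\G_q$ have finite rank, but there is no reason $\A_q(\sigma)$ should map their ranges into those of $\G_{q+1}$. The paper avoids this by \emph{not} trying to carve out an invariant subcomplex. After translating $\sigma_0$ to $0$, it takes the Hodge--Kodaira decomposition $C^\infty(\Z;E^q_\Z)=N_q\oplus R_q^\star\oplus R_q$ of the complex at the \emph{single} value $\sigma=0$, defines a new complex $\tilde\P_q:N_q\to N_{q+1}$ by the Schur complement $\tilde\P_q=\P_{q,11}-\P_{q,12}\P_{q,32}^{-1}\P_{q,31}$, and writes down explicit holomorphic chain maps $\Phi:F\to N$, $\Psi:N\to F$ with $\Phi\Psi=\Id$ and $\Psi\Phi$ chain-homotopic to $\Id$ via a holomorphic homotopy $Q$ (Proposition~\ref{TildeComplex}). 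These pass to the germ complex and carry the pairing over exactly (Proposition~\ref{Reduction1}); the adjoint picture is automatic since $(\tilde\P_q)^\star$ is obtained by the same construction applied to $\P_q^\star$. The only analytic input is that $\P_{q,32}(\sigma):R_q^\star\to R_{q+1}$ has a holomorphic pseudodifferential inverse near $0$, which follows from a Neumann series using that $\P_q(\sigma)-\P_q(0)$ is of order zero.

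For the finite-dimensional case, your normal-form/Keldysh route would require a simultaneous Smith-type splitting of an entire complex over the local ring, compatible with the Hermitian structure so that it also normalizes the adjoint complex; this is doable but is itself a substantial lemma, not something one ``verifies''. The paper's argument (Theorem~\ref{NondegenerateFin}) is instead a short induction on $\dim\H^q_0(\P)$: after one pass of the Schur reduction the new complex satisfies $\tilde\P_{q'}(0)=0$ for every $q'$, so $\tilde\P_{q'}(\sigma)=\sigma\hat\P_{q'}(\sigma)$. If $\langle\mathbf u,\mathbf v\rangle_{\P,\P^\star}=0$ for all $\mathbf v$, testing against $\mathbf v=[v/\overline\sigma]$ for arbitrary constant $v\in F_{q+1}$ forces the holomorphic germ $\P_q u$ to vanish at $0$; hence $\hat\P_q u$ is holomorphic, so $\mathbf u$ lies in the image of $\H^q_0(\hat\P)$, which is isomorphic to the range of the nilpotent ``multiplication by $\sigma$'' map on $\H^q_0(\P)$ and therefore has strictly smaller dimension (Lemma~\ref{sigmareduct}). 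This sidesteps root functions and harmonic projectors entirely.
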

\begin{proof}
The proof is a corollary of Theorem~\ref{NondegenerateMfds}, as follows. Let 
\begin{equation*}
\P_{q'}(\sigma)=\A_{q'}(\sigma+\im(q'-q)+\im(\gamma-\tfrac{1}{2})).
\end{equation*}
Then $P_{q'+1}\circ\P_{q'}=0$ by Lemma~\ref{indicialcomplexproperty}. The explicit formula for the $\A_{q'}(\sigma)$ shows that the principal symbols of the $\P_{q'}(\sigma)$ are independent of $\sigma$. The exactness of the symbol sequence \eqref{ComplexPrincSymbol} follows from the $c$-ellipticity of the complex \eqref{TheCComplex}.

We have
$$
\P_{q'}(\overline\sigma)^{\star} = \A_{q'}(\overline{\sigma}+\im(q'-q)+\im(\gamma-\tfrac{1}{2}))^{\star} = \A_{q'}^{\star}(\sigma - \im(q'-q) + \im(\gamma-\tfrac{1}{2}))
$$
with $\A_{q'}^{\star}$ defined according to \eqref{sigmaStar}. The left-hand side equals $\P_{q'}^{\star}(\sigma)$ according to the notation that is used in Section~\ref{Reduction}, see formula \eqref{Pqstardef}. The translation operator $\tau= \tau_{\im(\gamma-\frac{1}{2})}$, see \eqref{TauOperator}, induces isomorphisms of chain complexes
$$
\begin{CD}
\cdots\rightarrow \ss\Mero_{\sigma_0}(C^\infty(\Z;E^{q'}_\Z)) @>{\ss\A_{q'}(\cdot+\im(q'-q))}>>\ss\Mero_{\sigma_0}(C^\infty(\Z;E^{q'+1}_\Z)) \rightarrow\cdots \\
@V{\tau}VV @V{\tau}VV  \\
\cdots\rightarrow \ss\Mero_{\sigma_1}(C^\infty(\Z;E^{q'}_\Z)) @>{\ss\P_{q'}}>>\ss\Mero_{\sigma_1}(C^\infty(\Z;E^{q'+1}_\Z)) \rightarrow\cdots
\end{CD}
$$
and
$$
\begin{CD}
\cdots\leftarrow \ss\Mero_{\sigma^{\star}_0}(C^\infty(\Z;E^{q'}_\Z)) @<{\ss\A^{\star}_{q'}(\cdot-\im(q'-q))}<< \ss\Mero_{\sigma^{\star}_0}(C^\infty(\Z;E^{q'+1}_\Z)) \leftarrow\cdots \\
@V{\tau}VV @V{\tau}VV  \\
\cdots\leftarrow \ss\Mero_{\overline\sigma_1}(C^\infty(\Z;E^{q'}_\Z)) @<{\ss\P^{\star}_{q'}}<<\ss\Mero_{\overline\sigma_1}(C^\infty(\Z;E^{q'+1}_\Z)) \leftarrow\cdots
\end{CD}
$$
where $\sigma_1 = \sigma_0 - \im(\gamma-\tfrac{1}{2})$.

Let $\widehat u\in \ss\Mero_{\sigma_0}(C^{\infty}(\Z;E^q_\Z))$ and $\widehat v\in \ss\Mero_{\sigma^{\star}_0}(C^{\infty}(\Z;E^{q+1}_\Z))$. Then $\tau$ applied to the germ
$$
\sigma \mapsto \langle \A_q(\sigma)\widehat{u}(\sigma),\widehat{v}(\sigma^{\star}) \rangle_{L^2(\Z;E_{\Z}^{q+1})} \in \Mero_{\sigma_0}(\C)
$$
equals the germ
$$
\sigma \mapsto \langle \P_q(\sigma)\widehat{\tau u}(\sigma),\widehat{\tau v}(\overline{\sigma}) \rangle_{L^2(\Z;E_{\Z}^{q+1})} \in \Mero_{\sigma_1}(\C).
$$
Consequently, the map $\tau$ carries the pairing \eqref{pairinggamma} between cohomology of the complexes $\A$ and $\A^{\star}$ at $\sigma_0$ and $\sigma^{\star}_0$ over to the pairing \eqref{pairingmfds} between cohomology of the complexes $\P$ and $\P^{\star}$ at $\sigma_1$ and $\overline{\sigma}_1$, respectively. Finally, the finite-dimensionality of the cohomology spaces in the hypothesis of Theorem~\ref{NondegenerateMfds} is guaranteed by Theorem~\ref{bSpecProperties}.
\end{proof}

\begin{theorem}\label{AdjointPairingProps}
Let $\mathbf u\in \Dom_{\max}^q/\Dom_{\min}^q$ and $\mathbf v\in \DomStar^{q+1}_{\max}/\DomStar^{q+1}_{\min}$ be represented, respectively, by 
\begin{equation*}
u = \omega\sum_{\mathclap{\sigma_0\in \Sigma_q^\gamma}} u_{\sigma_0},\ u_{\sigma_0} \in \gSing_{\sigma_0}(\Z^\wedge;E^q_\Z) \text{ and }
v = \omega\sum_{\mathclap{\sigma'_0\in \Sigma_{q+1}^{\star \gamma}}} v_{\sigma'_0},\ v_{\sigma'_0} \in \gSing_{\sigma'_0}(\Z^\wedge;E^{q+1}_\Z)
\end{equation*}
with $A_q^{(0)}u_{\sigma_0}=0$, and $A_q^{\star(0)}v_{\sigma'_0}=0$ in accordance with Corollary~\ref{SingReprestentative}. Let $\widehat{\mathbf u}_{\sigma_0}\in \H^q_{\sigma_0}(\Z;\A)$ and $\widehat{\mathbf v}_{\sigma'_0}\in \H^{q+1}_{\sigma'_0}(\Z;\A^{\star})$ be the classes represented by $\ss\Mellin u_{\sigma_0}$ and $\ss\Mellin v_{\sigma'_0}$. Then the adjoint pairing
$$
[\cdot,\cdot]_{A_q} : \big(\Dom_{\max}^q/\Dom_{\min}^q\big) \times \bigl(\DomStar_{\max}^{q+1}/\DomStar_{\min}^{q+1}\bigr) \to \C
$$
with respect to the $x^{-\gamma}L^2_b$-inner products is given by
$$
[u,v]_{A_q} = \sum_{\mathclap{\sigma_0\in \Sigma_q^\gamma}} \big[\widehat{\mathbf u}_{\sigma_0},\widehat{\mathbf v}_{\sigma_0^\star}\big]^q_{\sigma_0,\sigma_0^{\star}}
$$
with the pairings in cohomology given by \eqref{pairinggamma}.
\end{theorem}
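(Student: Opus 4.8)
The plan is to reduce to the model operators near $\Z$ and to a single pair of singular terms, and then to transfer the computation to Mellin space, where the $x^{-\gamma}L^2_b$ Green pairing becomes a contour integral whose residues are the cohomology pairings \eqref{pairinggamma}. By Corollary~\ref{SingReprestentative} the representatives $u,v$ may be taken as stated, and since $[\cdot,\cdot]_{A_q}$ is sesquilinear it suffices to treat one closed pair $\omega u_{\sigma_0}$, $\omega v_{\sigma'_0}$ and to show $[\omega u_{\sigma_0},\omega v_{\sigma'_0}]_{A_q}=[\widehat{\mathbf u}_{\sigma_0},\widehat{\mathbf v}_{\sigma_0^\star}]^q_{\sigma_0,\sigma_0^{\star}}$ when $\sigma'_0=\sigma_0^\star$ and $0$ otherwise. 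Write $A_q=A_q^{(0)}+\tilde P_q^{(1)}$ near $\Z$ with $\tilde P_q^{(1)}\in\Diff^1_b$ (cf.\ the proof of Proposition~\ref{EisClosed}), and similarly for $A_q^\star$; lacking the factor $x^{-1}$ that $A_q^{(0)}=x^{-1}P_q^{(0)}$ carries, the remainders pair against the singular sections with an improved boundary weight and contribute nothing, while the cutoff terms $-\im\csym(A_q)(d\omega)(u_{\sigma_0})$ and $-\im\csym(A_q^\star)(d\omega)(v_{\sigma'_0})$ arising from $A_q(\omega u_{\sigma_0})$ and $A_q^\star(\omega v_{\sigma'_0})$ cancel in the Green pairing, since $\csym(A_q^\star)$ is the fibrewise adjoint of $\csym(A_q)$ and $\csym(A_q)(d\omega)$ is a bundle homomorphism. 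One is reduced to computing $\langle A_q^{(0)}(\omega u_{\sigma_0}),\omega v_{\sigma'_0}\rangle_{x^{-\gamma}L^2_b}-\langle\omega u_{\sigma_0},A_q^{\star(0)}(\omega v_{\sigma'_0})\rangle_{x^{-\gamma}L^2_b}$.

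Now pass to Mellin. Set $\widehat u=\Mellin(\omega u_{\sigma_0})$, $\widehat v=\Mellin(\omega v_{\sigma'_0})$, meromorphic on $\C$ with single poles at $\sigma_0$, respectively $\sigma'_0$; the hypotheses $A_q^{(0)}u_{\sigma_0}=0$, $A_q^{\star(0)}v_{\sigma'_0}=0$ say precisely that $\A_q(\cdot)\widehat u$ and $\A_q^\star(\cdot)\widehat v$ are holomorphic at $\sigma_0$ and $\sigma'_0$. By $x^{-\gamma}L^2_b$–Plancherel the two inner products become contour integrals along $\{\Im\sigma=\gamma\}$ whose integrands are, up to shifts by $\im$ coming from the factor $x^{-1}$ in $A_q^{(0)}$ and up to entire rapidly decaying terms that integrate to zero by the reduction just made, $\langle\A_q(\sigma)\widehat u(\sigma),\widehat v(\sigma)\rangle_{L^2(\Z)}$ and $\langle\widehat u(\sigma),\A_q^\star(\sigma)\widehat v(\sigma)\rangle_{L^2(\Z)}$. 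The crucial algebraic point, using $\A_q^\star(\sigma)=\bP_q^\star+(\sigma-\im(2\gamma-1))\Lambda_q^\star$ from Lemma~\ref{indicialcomplexproperty} together with the fibrewise $L^2(\Z)$–adjointness $\langle\bP_q a,b\rangle=\langle a,\bP_q^\star b\rangle$ (valid since $\Z$ is closed), is
\begin{equation*}
\langle\A_q(\sigma)a,b\rangle_{L^2(\Z)}-\langle a,\A_q^\star(\sigma)b\rangle_{L^2(\Z)}=(\sigma-\sigma^\star)\langle\Lambda_q a,b\rangle_{L^2(\Z)},
\end{equation*}
where $\sigma-\sigma^\star$ equals $\im$ on $\{\Im\sigma=\gamma\}$ and $0$ on the symmetry line $\{\Im\sigma=\gamma-\tfrac12\}$. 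Rewriting the Plancherel integrals so that $v$'s Mellin transform enters at the reflected argument $\sigma^\star$ — which makes the conjugated factor $\overline{\widehat v(\sigma^\star)}$, hence the whole integrand, a genuinely meromorphic function of $\sigma$, cf.\ the remark after Proposition~\ref{PairingInCohomology} — and deforming the contour toward $\{\Im\sigma=\gamma-\tfrac12\}$, where the integrand vanishes identically, expresses the pairing as a sum of residues of $\langle\A_q(\sigma)\widehat u(\sigma),\widehat v(\sigma^\star)\rangle_{L^2(\Z)}$. A residue is nonzero only when the pole of $\widehat u$ and the reflected pole of $\widehat v$ coincide, i.e.\ $\sigma'_0=\sigma_0^\star$; in that case, using the holomorphy of $\A_q(\cdot)\widehat u$ and $\A_q^\star(\cdot)\widehat v$ at the respective poles to discard the regular parts (so only the cohomology classes enter), the residue at $\sigma_0$ is exactly $[\widehat{\mathbf u}_{\sigma_0},\widehat{\mathbf v}_{\sigma_0^\star}]^q_{\sigma_0,\sigma_0^{\star}}$ of Proposition~\ref{PairingInCohomology}. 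Summing over $\sigma_0\in\Sigma_q^\gamma$ gives the theorem; as a normalization check, for $u_{\sigma_0}=c\,x^{\im\sigma_0}$ and $v_{\sigma_0^\star}=d\,x^{\im\sigma_0^\star}$ both sides reduce to $\im\langle\Lambda_q c,d\rangle_{L^2(\Z)}$.

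The delicate points are all in the contour manipulation. Plancherel must be set up so that $\widehat v$ genuinely enters at $\sigma^\star$ rather than at $\sigma$: this is dictated by the sesquilinearity of $[\cdot,\cdot]_{A_q}$ and is what makes the residue calculus applicable, since $\sigma\mapsto\langle\A_q(\sigma)\widehat u(\sigma),\widehat v(\sigma)\rangle_{L^2(\Z)}$ is only real-analytic. One must also control the decay of $\widehat u,\widehat v$ on vertical lines away from their poles — immediate from the explicit Laurent form of Mellin transforms of $C^\infty$ symbols times a cutoff — to justify the deformation, and check that the Taylor–remainder and cutoff terms discarded at the outset, although not individually zero as $x^{-\gamma}L^2_b$–inner products, leave the residue sum unchanged; this uses the improved boundary weights and the $\omega$–independence of $[\cdot,\cdot]_{A_q}$ on cohomology established in Section~\ref{gSing}. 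The remaining bookkeeping — the Laurent/logarithmic hierarchy forced by the cocycle conditions, and independence of the final value from the representatives of the cohomology classes — is routine once organized in Mellin space, and is precisely what the residue in \eqref{pairinggamma} packages.
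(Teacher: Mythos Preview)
Your strategy---reduce to the model operators, Mellin, then residues---is exactly right and matches the paper's. But the contour-manipulation paragraph is where the argument breaks down.

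The paper sets up the two Plancherel integrals on \emph{different} lines: because $A_q^{(0)}=x^{-1}P_q^{(0)}$, the term $\langle A_q^{(0)}u,v\rangle_{x^{-\gamma}L^2_b}$ naturally becomes an integral of $\langle\A_q(\sigma)\widehat u(\sigma),\widehat v(\sigma^\star)\rangle$ over $\Im\sigma=\gamma-1$, while $\langle u,A_q^{\star(0)}v\rangle_{x^{-\gamma}L^2_b}$ becomes an integral of $\langle\widehat u(\sigma),\A_q^\star(\sigma^\star)\widehat v(\sigma^\star)\rangle$ over $\Im\sigma=\gamma$. By Lemma~\ref{IntegrandAdjPair} these two integrands are \emph{identical} as functions of $\sigma$; the Green pairing is then the difference of the \emph{same} meromorphic integrand over two parallel lines, hence a sum of residues in the strip. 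That is the whole mechanism.

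Your version tries to place both integrals on the single line $\Im\sigma=\gamma$ (the ``shifts by $\im$'' you mention are precisely what prevents this) and then invoke the pointwise identity $\langle\A_q(\sigma)a,b\rangle-\langle a,\A_q^\star(\sigma)b\rangle=(\sigma-\sigma^\star)\langle\Lambda_q a,b\rangle$ together with a deformation to $\Im\sigma=\gamma-\tfrac12$. This cannot work as written: if you keep $\widehat v(\sigma)$ in the second slot, the integrand is only real-analytic and no contour shift is available; if instead you rewrite with $\widehat v(\sigma^\star)$ (as you correctly note is needed for meromorphicity), then Lemma~\ref{IntegrandAdjPair} makes the two integrands equal pointwise, so their difference on a single line is zero and there is nothing left to deform. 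The nonzero answer really comes from the two lines being different, not from a pointwise discrepancy that vanishes on the middle line. Your normalization check is correct, but the route you describe to get there has this gap.
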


Consequently, the only possibly nontrivial pairings between local cohomology spaces in the critical strip $\gamma-1 < \Im(\sigma) < \gamma$ are related by reflection about the middle line.

\begin{proof}[Proof of Theorem~\ref{AdjointPairingProps}]
To prove the formula for the adjoint pairing, it is enough to consider
\begin{equation*}
u = \omega u_{\sigma_0}, \quad v = \omega v_{\sigma'_0}
\end{equation*}
for $\gamma-1 < \Im(\sigma_0),\Im(\sigma'_0) < \gamma$. Then
\begin{align*}
[u,v]_{A_q}=\langle A_q u,v\rangle_{x^{-\gamma}L^2_b}-\langle  u,A_q^\star v\rangle_{x^{-\gamma}L^2_b}=\langle A_q^{(0)} u,v\rangle_{x^{-\gamma}L^2_b}-\langle  u,A_q^{\star(0)} v\rangle_{x^{-\gamma}L^2_b}.
\end{align*}
Via Plancherel's theorem,
\begin{align*}
\big\langle A_q^{(0)}u,v \big\rangle_{x^{-\gamma}L^2_b(\M;E^{q+1})}
&=\big\langle x^\gamma A_q^{(0)}u,x^\gamma v \big\rangle_{L^2_b(\Z^\wedge;E^{q+1}_\Z)}\\
&=\frac{1}{2\pi}\int_\R \big\langle\Mellin(x^\gamma A_q^{(0)}u)(s),\Mellin (x^\gamma v)(s)\big\rangle_{L^2(\Z,E^{q+1}_\Z)}\,ds.
\end{align*}
Using $xA_q^{(0)}=P_q^{(0)}$ we get 
\begin{equation*}
\Mellin(x^\gamma A_q^{(0)}u)(s)=\Mellin(x^\gamma P_q^{(0)}u)(s-\im)=\A_q(s+\im(\gamma-1))\Mellin(u)(s+\im(\gamma-1))
\end{equation*}
while
\begin{equation*}
\Mellin(x^\gamma v)(s)=\Mellin(v)(s+\im \gamma)=\Mellin(v)\big((s+\im (\gamma-1))^\star\big)
\end{equation*}
so that
\begin{equation*}
\big\langle A_q^{(0)}u,v \big\rangle_{x^{-\gamma}L^2_b}=\frac{1}{2\pi}\int_{\Im\sigma=\gamma-1}\big\langle \A_q(\sigma)\Mellin(u)(\sigma),\Mellin(v)\big(\sigma^\star\big)\big\rangle_{L^2}\,d\sigma
\end{equation*}
(with some notational imprecision for the inner products), of course with the contour of integration oriented by $s\mapsto s+\im (\gamma-1)$ with increasing $s$. Similarly,
\begin{align*}
\langle  u,A_q^{\star(0)} v\rangle_{x^{-\gamma}L^2_b}
&=\frac{1}{2\pi}\int_\R\big\langle \Mellin (u)(s+\im\gamma),\A_q^\star(s+\im(\gamma-1))\Mellin(v)(s+\im(\gamma-1))\big\rangle_{L^2}\,d\sigma\\
&=\frac{1}{2\pi}\int_\R\big\langle \Mellin(u)(s+\im\gamma),\A_q^\star((s+\im\gamma)^\star)\Mellin(v)((s+\im\gamma)^\star)\big\rangle_{L^2}\,d\sigma\\
&=\frac{1}{2\pi}\int_{\Im\sigma=\gamma}\big\langle \Mellin(u)(\sigma),\A_q^\star(\sigma^\star)\Mellin(v)(\sigma^\star)\big\rangle_{L^2}\,d\sigma
\end{align*}
with the contour of integration analogously oriented. Using Lemma~\ref{IntegrandAdjPair} we obtain
\begin{equation*}
\langle  u,A_q^{\star(0)} v\rangle_{x^{-\gamma}L^2_b}=\frac{1}{2\pi}\int_{\Im\sigma=\gamma}\big\langle \A_q(\sigma)\Mellin (u)(\sigma),\Mellin(v)(\sigma^\star)\big\rangle_{L^2}\,d\sigma.
\end{equation*}
Consequently, since $\A_q\Mellin(u)$ is entire and $\langle \A_q(\sigma)\Mellin(u)(\sigma),\Mellin(v)(\sigma^\star)\big\rangle_{L^2}$ therefore has a single pole at $\sigma_1 = (\sigma'_0)^\star$,
\begin{equation*}
[u,v]_{A_q}=\frac{1}{2\pi}\ointc_{C_{\sigma_1}}\big\langle \A_q(\sigma)\Mellin(u)(\sigma),\Mellin(v)(\sigma^\star)\big\rangle_{L^2}\,d\sigma
\end{equation*}
where $C_{\sigma_1}$ is a small counterclockwise oriented circle centered at $\sigma_1$.
We also have
\begin{equation*}
[u,v]_{A_q}=\frac{1}{2\pi}\ointc_{C_{\sigma_1}}\big\langle \Mellin(u)(\sigma),\A_q^\star(\sigma^\star)\Mellin(v)(\sigma^\star)\big\rangle_{L^2}\,d\sigma
\end{equation*}
again by virtue of Lemma~\ref{IntegrandAdjPair}. But if $\sigma_0\ne \sigma_1$ then  $C_{\sigma_1}$ can be chosen so that $\sigma_0$ is not on $C_{\sigma_1}$ or the region it encloses, and then the integral above is null, since $\A_q^\star(\sigma^\star)\Mellin(v)(\sigma^\star)$ is entire (antiholomorphic). The theorem is proved.
\end{proof}

We are now able to complete the argument from the previous section by proving Proposition~\ref{DminInSigmaIsExact}.

\begin{proof}[Proof of Proposition~\ref{DminInSigmaIsExact}]
As stated in the proposition, suppose that $\set {\sigma_j}_{j=1}^N$ lie in $\gamma-1<\Im\sigma<\gamma$, and let
\begin{equation*}
u=\sum_{j=1}^N u_j,\quad u_j\in \gSing_{\sigma_j}(\Z^{\wedge};E^q_\Z)
\end{equation*}
be such that $\omega u\in \Dom_{\min}^q$. Consequently, $[u,v]_{A_q} = 0$ for all $v \in \DomStar^{q+1}_{\max}$. Fix any $j \in \{1,\ldots,N\}$. Let $v = \omega v_{\sigma_j^{\star}}$, where $v_{\sigma_j^{\star}} \in \gSing_{\sigma_j^{\star}}(\Z^{\wedge};E_{\Z}^{q+1})$ is arbitrary such that $A_q^{\star(0)}v_{\sigma_j^{\star}} = 0$. Then
$$
0 = [u,v]_{A_q} =  \big[\widehat{\mathbf u}_{j},\widehat{\mathbf v}_{\sigma_j^\star}\big]^q_{\sigma_j,\sigma_j^{\star}}
$$
by Theorem~\ref{AdjointPairingProps} with the pairing \eqref{pairinggamma} on cohomology on the right-hand side. Because the cohomology class $\widehat{\mathbf v}_{\sigma_j^{\star}} \in \H_{\sigma_j^{\star}}^{q+1}(\Z;\A^\star)$ is arbitrary, Theorem~\ref{MainTheorem} implies that $\widehat{\mathbf u}_{j} = \mathbf 0 \in \H^q_{\sigma_j}(\Z;\A)$. Thus there exists $w_j \in \gSing_{\sigma_j-\im}(\Z^{\wedge};E_{\Z}^{q-1})$ such that $A_{q-1}^{(0)}w_j = u_j$. The proposition is proved.
\end{proof}

\begin{corollary}
$\Sing^q$ is canonically isomorphic to $\bigoplus_{\sigma_0\in \Sigma_q^\gamma} \H^q_{\sigma_0}(\Z;\A)$, where the $\H^q_{\sigma_0}(\Z;\A)$ are the cohomology spaces of \eqref{indicialgerms}.
\end{corollary}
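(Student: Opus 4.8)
The plan is to read the statement off the results already in hand, translating the description of $\Sing^q$ in terms of the singular complex \eqref{gSingComplex} into the indicial-germ picture \eqref{indicialgerms} by means of the Mellin transform. No new analysis is required: the one nontrivial ingredient, Proposition~\ref{DminInSigmaIsExact}, has just been established.

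First I would note that Theorem~\ref{Theorem.Abis} was formulated, and its isomorphism exhibited, only granting Proposition~\ref{DminInSigmaIsExact}. With that proposition now proved, Theorem~\ref{Theorem.Abis} holds outright: $\Sing^q$ is canonically isomorphic to $\bigoplus_{\sigma_0\in\Sigma_q^\gamma}\H^q_{\sigma_0}(\Z;A)$, the direct sum over the finite set $\Sigma_q^\gamma$ of \eqref{SigmaQGamma} of the degree-$q$ cohomology spaces of the complex \eqref{gSingComplex}; concretely, the isomorphism sends a representative $\omega\sum_{\sigma_0}u_{\sigma_0}+u'$ with $A_q^{(0)}u_{\sigma_0}=0$ and $u'\in\Dom_{\min}^q$, as furnished by Corollary~\ref{SingReprestentative}, to the tuple of cohomology classes of the $u_{\sigma_0}$.

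It then remains only to identify the cohomology of \eqref{gSingComplex} with that of \eqref{indicialgerms}, and this was already done earlier in the present section: the maps $\Theta^q_{q'}=\tau_{\im(q'-q)}\circ\ss\Mellin$ assemble into a chain isomorphism from \eqref{gSingComplex} onto \eqref{indicialgerms}, which in the relevant degree $q$ is simply $\ss\Mellin$. Hence it induces an isomorphism of the degree-$q$ cohomology of \eqref{gSingComplex} with $\H^q_{\sigma_0}(\Z;\A)$ in the sense of Definition~\ref{bSpec}, carrying the class of $u_{\sigma_0}$ to the class $\widehat{\mathbf u}_{\sigma_0}$ of $\ss\Mellin u_{\sigma_0}$ used in Theorem~\ref{AdjointPairingProps}. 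Composing, and recalling that $\Sigma_q^\gamma=\spec_b^q(A)\cap\{\gamma-1<\Im\sigma<\gamma\}$ is itself defined through \eqref{indicialgerms} by Definition~\ref{bSpec}, one obtains the canonical isomorphism
$$
\Sing^q\;\cong\;\bigoplus_{\mathclap{\sigma_0\in\Sigma_q^\gamma}}\H^q_{\sigma_0}(\Z;\A),
$$
now with $\H^q_{\sigma_0}(\Z;\A)$ the cohomology spaces of \eqref{indicialgerms}, realized by $\omega\sum_{\sigma_0}u_{\sigma_0}+u'\mapsto(\widehat{\mathbf u}_{\sigma_0})_{\sigma_0\in\Sigma_q^\gamma}$. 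There is no real obstacle beyond this bookkeeping: the substance of the corollary is carried entirely by Theorems~\ref{MainTheorem} and \ref{AdjointPairingProps} (through Proposition~\ref{DminInSigmaIsExact}) together with the Mellin dictionary between the physical and indicial pictures, all of which are in place.
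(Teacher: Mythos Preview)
Your proposal is correct and is exactly the argument the paper intends: the corollary is stated without proof because it follows immediately from Theorem~\ref{Theorem.Abis} (now unconditional since Proposition~\ref{DminInSigmaIsExact} is proved) together with the chain isomorphism $\Theta^q_{q'}=\tau_{\im(q'-q)}\circ\ss\Mellin$ between \eqref{gSingComplex} and \eqref{indicialgerms} that was set up earlier in the section.
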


\section{Holomorphic families of complexes in finite-dimensional spaces}\label{FiniteDimCase}

The core of the proof of Theorem~\ref{MainTheorem} is a similar statement, but on holomorphic families of complexes on finite dimensional spaces. These complexes are the subject of this section.

Let $F_q$, $q \in {\mathbb Z}$, be finite-dimensional complex vector spaces equipped with inner products. Suppose that, for each $q \in {\mathbb Z}$, we have elements
\begin{equation*}
\P_q \in \Hol_{\sigma_0}(\LL(F_q,F_{q+1}))
\end{equation*}
that satisfy $\P_{q+1}\circ \P_{q} = 0$ as germs. We thus have a holomorphic family of finite-dimensional complexes
\begin{equation}\label{FinComplexSpaces}
\cdots \to F_{q-1} \xrightarrow{\P_{q-1}(\sigma)}  F_q \xrightarrow{\P_q(\sigma)} F_{q+1} \to \cdots
\end{equation}
when $\sigma$ is sufficiently close to $\sigma_0$. In the same way that \eqref{indicialspaces} yields \eqref{indicialgerms}, the above complex gives
\begin{equation}\label{FinComplexGerms}
\cdots \to \ss\Mero_{\sigma_0}(F_{q-1}) \xrightarrow{\ss \P_{q-1}} \ss\Mero_{\sigma_0}(F_q) \xrightarrow{\ss \P_q} \ss\Mero_{\sigma_0}(F_{q+1}) \to \cdots
\end{equation}
with cohomology spaces 
\begin{equation}\label{FinCohomologyGerms}
\H^q_{\sigma_0}(\P)= \ker\ss \P_q / \rg \ss \P_{q-1}.
\end{equation}

Using the inner products on the spaces $F_q$ we define $\P_q^{\star}(\sigma)$ to be the element
\begin{equation*}
\P_q^{\star}(\sigma)= \P_q(\overline{\sigma})^\star \in \Hol_{\overline{\sigma}_0}\big(\LL(F_{q+1},F_q)\big).
\end{equation*}
Then $\P_{q-1}^{\star}\circ \P_q^{\star} = 0$ as holomorphic germs at $\overline \sigma_0$, and we get adjoint complexes
\begin{equation*}
\cdots \leftarrow F_{q-1} \xleftarrow{\P^{\star}_{q-1}(\sigma)}  F_q \xleftarrow{\P^{\star}_q(\sigma)} F_{q+1} \leftarrow \cdots  
\end{equation*}
and
\begin{equation*}
\cdots \leftarrow \ss\Mero_{\overline{\sigma}_0}(F_{q-1}) \xleftarrow{\ss \P^{\star}_{q-1}} \ss\Mero_{\overline{\sigma}_0}(F_q) \xleftarrow{\ss \P^{\star}_q} \ss\Mero_{\overline{\sigma}_0}(F_{q+1}) \leftarrow \cdots 
\end{equation*}
as above; here $\ss$ means singular parts at $\overline \sigma_0$. We set
\begin{equation*}
\H^q_{\overline{\sigma}_0}(\P^{\star})= \ker\ss \P_{q-1}^{\star}/\rg\ss \P_q^{\star}
\end{equation*}
so that cohomology spaces in degree $q$ are indexed consistently with superscript $q$. 

For each $q$ define a pairing
\begin{equation}\label{pairing}
\begin{gathered}
\langle \cdot,\cdot \rangle_{\P,\P^{\star}} : \Mero_{\sigma_0}(F_q) \times \Mero_{\overline{\sigma}_0}(F_{q+1}) \to \C, \\
\langle u,v \rangle_{\P,\P^{\star}}= \frac{1}{2\pi}\ointc_{C} \langle \P_q(\sigma)u(\sigma),v(\overline{\sigma}) \rangle_{F_{q+1}}\,d\sigma,
\end{gathered}
\end{equation}
where $C$ is a sufficiently small counterclockwise oriented circle centered at $\sigma_0$ so that all representatives of germs in the formula are defined and holomorphic in a neighborhood of the disk bounded by $C$ (except at the pole at the center). Note that
$$
\frac{1}{2\pi}\ointc_{C} \langle \P_q(\sigma)u(\sigma),v(\overline{\sigma}) \rangle_{F_{q+1}}\,d\sigma = \frac{1}{2\pi}\ointc_{C} \langle u(\sigma),(\P_q^{\star}v)(\overline{\sigma}) \rangle_{F_{q}}\,d\sigma,
$$
and consequently
$$
\langle v,u \rangle_{\P^{\star},\P} = -\overline{\langle u,v \rangle}_{\P,\P^{\star}}
$$
for $u \in \Mero_{\sigma_0}(F_q)$ and $v \in \Mero_{\overline{\sigma}_0}(F_{q+1})$.

\begin{proposition}
The pairing \eqref{pairing} induces a sesquilinear pairing in cohomology,
\begin{equation}\label{pairingcohom}
\langle \cdot,\cdot \rangle_{\P,\P^{\star}} : \H^q_{\sigma_0}(\P) \times \H^{q+1}_{\overline{\sigma}_0}(\P^{\star}) \to \C.
\end{equation}
\end{proposition}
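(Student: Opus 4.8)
The plan is to reproduce, in this finite-dimensional setting, the computation already carried out in the proof of Proposition~\ref{PairingInCohomology}. Since the pairing \eqref{pairing} is visibly linear in its first and conjugate-linear in its second argument, sesquilinearity of the induced pairing in cohomology will be automatic once it is shown to be well defined. So the whole point is to check that the number $\langle u,v\rangle_{\P,\P^\star}$ attached to cocycles $u\in\ker\ss\P_q\subset\ss\Mero_{\sigma_0}(F_q)$ and $v\in\ker\ss\P_q^\star\subset\ss\Mero_{\overline\sigma_0}(F_{q+1})$ is unaffected when $u$ is replaced by $u+\ss\P_{q-1}\varphi$, $\varphi\in\ss\Mero_{\sigma_0}(F_{q-1})$, or $v$ by $v+\ss\P_{q+1}^\star\psi$, $\psi\in\ss\Mero_{\overline\sigma_0}(F_{q+2})$. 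Because $\rg\ss\P_{q-1}\subset\ker\ss\P_q$ and $\rg\ss\P_{q+1}^\star\subset\ker\ss\P_q^\star$, the perturbed germs are themselves cocycles, so one may perturb both entries simultaneously.

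First I would assemble the ingredients: the decomposition $\Id=\ss+\rr$ of a germ into its singular and regular parts (at $\sigma_0$ or $\overline\sigma_0$, as context requires); the pointwise adjunction
\[
\langle\P_q(\sigma)a,b\rangle_{F_{q+1}}=\langle a,\P_q^\star(\overline\sigma)b\rangle_{F_q},\qquad a\in F_q,\ b\in F_{q+1},
\]
which holds simply because $\P_q^\star(\overline\sigma)=\P_q(\sigma)^\star$ is the adjoint of the linear map $\P_q(\sigma)$; and the complex identities $\P_{q+1}\P_q=0$, $\P_q^\star\P_{q+1}^\star=(\P_{q+1}\P_q)^\star=0$. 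Substituting $\ss\P_{q-1}\varphi=\P_{q-1}\varphi-\rr\P_{q-1}\varphi$ and $\ss\P_{q+1}^\star\psi=\P_{q+1}^\star\psi-\rr\P_{q+1}^\star\psi$ into the integrand of $\langle u+\ss\P_{q-1}\varphi,\,v+\ss\P_{q+1}^\star\psi\rangle_{\P,\P^\star}$, one uses $\P_q\P_{q-1}\varphi=0$ in the first slot and $\P_q^\star\P_{q+1}^\star\psi=0$ (after moving $\P_q$ across by the adjunction) in the second to reduce the integrand to the ``diagonal'' term $\langle\P_q(\sigma)u(\sigma),v(\overline\sigma)\rangle_{F_{q+1}}$ --- which is precisely the integrand of $\langle u,v\rangle_{\P,\P^\star}$ --- plus three correction terms, which, after restoring $\P_q$ to one side or the other by the same adjunction, have the respective shapes $\langle\P_q u,\rr\P_{q+1}^\star\psi\rangle$, $\langle\rr\P_{q-1}\varphi,\P_q^\star v\rangle$, and $\langle\rr\P_{q-1}\varphi,\P_q^\star\rr\P_{q+1}^\star\psi\rangle$.

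The key point --- and the only place any care is needed --- is that in each correction term the first entry is the value at $\sigma$ of a germ holomorphic at $\sigma_0$ (for $\P_q u$ this is exactly the cocycle hypothesis $\ss\P_q u=0$; for $\rr\P_{q-1}\varphi$ it is obvious), while the second entry is the value at $\overline\sigma$ of a germ holomorphic at $\overline\sigma_0$ (for $\P_q^\star v$ this is the cocycle hypothesis $\ss\P_q^\star v=0$; for $\rr\P_{q+1}^\star\psi$, and hence for $\P_q^\star\rr\P_{q+1}^\star\psi$, it is obvious). For germs $f$ holomorphic at $\sigma_0$ and $g$ holomorphic at $\overline\sigma_0$ with values in some $F_j$, the scalar function $\sigma\mapsto\langle f(\sigma),g(\overline\sigma)\rangle_{F_j}$ is holomorphic in a neighborhood of $\sigma_0$: in an orthonormal basis it is $\sum_i f_i(\sigma)\overline{g_i(\overline\sigma)}$, and the conjugate of a holomorphic function of $\overline\sigma$ is holomorphic in $\sigma$. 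Hence each correction term is holomorphic near $\sigma_0$ and therefore contributes nothing to the contour integral over the small circle $C$, giving $\langle u+\ss\P_{q-1}\varphi,\,v+\ss\P_{q+1}^\star\psi\rangle_{\P,\P^\star}=\langle u,v\rangle_{\P,\P^\star}$ and the descent to \eqref{pairingcohom}. I expect the main obstacle to be nothing more than careful bookkeeping --- tracking which germs are based at $\sigma_0$ and which at $\overline\sigma_0$, and verifying that the manipulations really do bring every correction term to the form ``holomorphic at $\sigma_0$ against conjugate of holomorphic at $\overline\sigma_0$''; there is no genuine analytic difficulty, since finite-dimensionality makes everything algebraic.
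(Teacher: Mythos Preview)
Your proposal is correct and follows essentially the same approach as the paper, which simply states that the proof is identical to that of Proposition~\ref{PairingInCohomology}: the same use of $\Id=\ss+\rr$, the complex identities $\P_q\P_{q-1}=0$ and $\P_q^\star\P_{q+1}^\star=0$, and the adjunction to reduce the correction terms to holomorphic integrands that vanish under contour integration.
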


The proof is identical to that of Proposition~\ref{PairingInCohomology}.

\begin{theorem}\label{NondegenerateFin}
Suppose $\dim \H^q_{\sigma_0}(\P) < \infty$. If $\mathbf u \in \H^q_{\sigma_0}(\P)$ is such that
$$
\langle \mathbf u,\mathbf v \rangle_{\P,\P^\star} = 0\text{ for all }\mathbf v \in \H^{q+1}_{\overline{\sigma}_0}(\P^\star), 
$$
then $\mathbf u = 0$.
\end{theorem}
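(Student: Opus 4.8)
The plan is to work entirely with the singular parts of Laurent expansions at $\sigma_0$ (resp.\ $\overline\sigma_0$), turning the statement into a question of linear algebra on finite-dimensional spaces of principal parts, and then to exploit the holomorphy of the family $\P_q(\sigma)$ together with the pole-order filtration. The first step is to represent a class $\mathbf u\in\H^q_{\sigma_0}(\P)$ by a germ $u\in\ss\Mero_{\sigma_0}(F_q)$ with $\ss\P_q u=0$, i.e.\ $\P_q(\sigma)u(\sigma)$ is holomorphic at $\sigma_0$. I would write $u(\sigma)=\sum_{k=1}^{\mu} u_k(\sigma-\sigma_0)^{-k}$ and organize the whole discussion by the pole order $\mu$, proceeding by induction on $\mu$. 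The base case $\mu=0$ is trivial (then $u$ is holomorphic, so $\mathbf u=0$ already), so the content is the inductive step.

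For the inductive step, suppose $\mathbf u\neq 0$ has a representative $u$ of pole order exactly $\mu\geq 1$ with leading coefficient $u_\mu\neq 0$. Expanding $\P_q(\sigma)=\sum_{j\geq 0}\P_{q,j}(\sigma-\sigma_0)^j$ and collecting the coefficient of $(\sigma-\sigma_0)^{-\mu}$ in $\P_q(\sigma)u(\sigma)$, the closedness condition $\ss\P_q u=0$ forces $\P_{q,0}u_\mu=0$; similarly at lower pole orders one gets a cascade of compatibility relations among the $u_k$ and the Taylor coefficients of $\P_q$. The strategy is: if the top coefficient $u_\mu$ is in the image of $\P_{q-1,0}=\P_{q-1}(\sigma_0)$, say $u_\mu=\P_{q-1,0}\varphi$, then subtracting $\ss\P_{q-1}$ applied to the germ $\varphi(\sigma-\sigma_0)^{-\mu}$ from $u$ reduces the pole order (after possibly absorbing holomorphic corrections), and we finish by induction. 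So the real obstruction is the case where $u_\mu\notin\rg\P_{q-1,0}$. Here I would build from $u$ a nonzero cohomology class $\mathbf v$ in the adjoint complex at $\overline\sigma_0$ that pairs nontrivially with $\mathbf u$, contradicting the hypothesis. Concretely, one wants to produce $v\in\ss\Mero_{\overline\sigma_0}(F_{q+1})$ with $\ss\P_{q-1}^\star v=0$ such that $\langle\mathbf u,\mathbf v\rangle_{\P,\P^\star}\neq 0$; by the residue formula \eqref{pairing} this amounts to choosing $v$ so that the residue at $\sigma_0$ of $\langle\P_q(\sigma)u(\sigma),v(\overline\sigma)\rangle$ is nonzero.

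The natural candidate is obtained by a Hodge-type construction using the inner products: since $u_\mu\notin\rg\P_{q-1,0}$ and (by the elliptic-complex structure at $\sigma_0$, which here is just exactness of a finite-dimensional complex away from $\Sigma^q$, but we must \emph{not} assume exactness at $\sigma_0$ itself) we can at least decompose $F_q=\rg\P_{q-1,0}\oplus\ker\P_{q-1,0}^\star$ orthogonally, and $\P_{q,0}u_\mu=0$ places $u_\mu$ in $\ker\P_{q,0}$. Projecting $u_\mu$ to harmonic-like directions and transporting it through $\P_q(\sigma)$ and the adjoint, one manufactures a germ $v$ whose leading term is essentially the $\P_{q,0}$-transport of $u_\mu$ (nonzero because the pairing of $u_\mu$ with itself via the appropriate sesquilinear form at $\sigma_0$ is positive). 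The bookkeeping of lower-order coefficients of $v$ — chosen recursively so that $\ss\P_{q-1}^\star v=0$ — is the technical heart, and the main obstacle will be verifying that this recursion can always be solved (this is where holomorphy of $\P_q$, the identities $\P_{q+1}\P_q=0$, and the finite-dimensionality enter, guaranteeing the relevant linear systems are consistent). Once $v$ is in hand with $\ss\P_{q-1}^\star v=0$, a direct residue computation — pairing the top coefficient of $v$ against $u_\mu$ — shows $\langle\mathbf u,\mathbf v\rangle_{\P,\P^\star}\neq 0$, contradicting the assumption and completing the induction, hence the proof.
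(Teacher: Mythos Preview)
Your Case 2 is where the argument breaks. You correctly note that closedness forces $\P_{q,0}u_\mu=0$, but then propose that the leading term of the test germ $v$ be ``the $\P_{q,0}$-transport of $u_\mu$'' --- which is exactly zero. The vague appeal to ``the pairing of $u_\mu$ with itself via the appropriate sesquilinear form'' does not produce an element of $F_{q+1}$, and there is no mechanism offered to pass from the harmonic part of $u_\mu\in F_q$ to anything in $F_{q+1}$. More seriously, your scheme never really uses the hypothesis $\dim\H^q_{\sigma_0}(\P)<\infty$ (the one mention, inside the recursion for the lower-order coefficients of $v$, is not made concrete). That hypothesis is essential: take $F_{q-1}=0$, $F_q=F_{q+1}=\C$, $\P_q\equiv 0$. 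Then $\H^q_{\sigma_0}(\P)=\ss\Mero_{\sigma_0}(\C)$ is infinite-dimensional, the pairing \eqref{pairing} vanishes identically (since $\P_q u\equiv 0$), and any nonzero $u$ with $u_\mu\notin\rg\P_{q-1,0}=\{0\}$ sits squarely in your Case 2 with no closed $v$ detecting it. So the recursion you allude to cannot be solvable in general without invoking finite-dimensionality in an essential way, and you have not said how. (Incidentally, the closedness condition on $v\in\ss\Mero_{\overline\sigma_0}(F_{q+1})$ is $\ss\P_q^\star v=0$, not $\ss\P_{q-1}^\star v=0$.)

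The paper's proof avoids this impasse by inducting on $\dim\H^q_{\sigma_0}(\P)$ rather than on pole order. The Hodge--Kodaira decomposition at $\sigma=\sigma_0$ (Propositions~\ref{TildeComplex} and \ref{Reduction1}) reduces to a complex $\tilde\P$ on the harmonic spaces $N_{q'}$ with $\tilde\P_{q'}(\sigma_0)=0$ for every $q'$, preserving both the cohomology and the pairing. Once every operator vanishes at $\sigma_0$, one writes $\tilde\P_{q'}(\sigma)=(\sigma-\sigma_0)\hat\P_{q'}(\sigma)$; now \emph{every} $v_0/(\sigma-\overline\sigma_0)$ with $v_0\in N_{q+1}$ is automatically $\ss\tilde\P_q^\star$-closed, so testing against these simple-pole $v$'s forces $(\tilde\P_q u)(\sigma_0)=0$. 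This places $\mathbf u$ in the range of the map $\bjay$ of Lemma~\ref{sigmareduct}, identified with the range of the nilpotent shift $\pmb\varsigma$ on $\H^q_{\sigma_0}(\tilde\P)$, hence of strictly smaller dimension --- closing the induction. The finite-dimensionality enters precisely through the nilpotency of $\pmb\varsigma$.
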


\begin{corollary}\label{NondegFinCor}
Suppose both $\dim \H^q_{\sigma_0}(\P) < \infty$ and $\dim \H^{q+1}_{\overline{\sigma}_0}(\P^\star) < \infty$. Then these dimensions are equal, and the pairing \eqref{pairingcohom} is nondegenerate.
\end{corollary}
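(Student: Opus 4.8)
The corollary is pure linear algebra once Theorem~\ref{NondegenerateFin} is available, so the plan is to apply that theorem twice: to the complex $\P$ and to its adjoint $\P^{\star}$.

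Read Theorem~\ref{NondegenerateFin} as the statement that the pairing \eqref{pairingcohom} induces an injective map
$$
\Phi_{\P}:\H^q_{\sigma_0}(\P)\longrightarrow \bigl(\H^{q+1}_{\overline{\sigma}_0}(\P^{\star})\bigr)^{*},\qquad \Phi_{\P}(\mathbf u)=\langle \mathbf u,\cdot\,\rangle_{\P,\P^{\star}},
$$
where the target is the (finite-dimensional, by hypothesis) space of conjugate-linear functionals. Injectivity is precisely the contrapositive of the theorem and gives
$$
\dim \H^q_{\sigma_0}(\P)\le \dim \H^{q+1}_{\overline{\sigma}_0}(\P^{\star}).
$$

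For the reverse inequality I would note that $\P^{\star}$ is itself a holomorphic family of complexes of finite-dimensional spaces, now based at $\overline{\sigma}_0$, with $(\P^{\star})^{\star}=\P$ because $\overline{\overline{\sigma}_0}=\sigma_0$ and the double adjoint is the identity. Reindexing degrees by $j\mapsto -j$ turns the ``decreasing'' complex $\P^{\star}$ into an increasing one to which Theorem~\ref{NondegenerateFin} applies verbatim; tracking the built-in degree shift of the theorem's pairing through this reflection shows that the relevant instance pairs a class $\mathbf v\in\H^{q+1}_{\overline{\sigma}_0}(\P^{\star})$ against $\H^q_{\sigma_0}(\P)$ via $\langle\cdot,\cdot\rangle_{\P^{\star},\P}$. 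By the antisymmetry $\langle v,u\rangle_{\P^{\star},\P}=-\overline{\langle u,v\rangle}_{\P,\P^{\star}}$ recorded just before \eqref{pairingcohom}, the hypothesis ``$\langle\mathbf v,\mathbf u\rangle_{\P^{\star},\P}=0$ for all $\mathbf u$'' of the theorem applied to $\P^{\star}$ is the same as ``$\langle\mathbf u,\mathbf v\rangle_{\P,\P^{\star}}=0$ for all $\mathbf u$'', so the theorem says the right pairing map $\mathbf v\mapsto\langle\cdot,\mathbf v\rangle_{\P,\P^{\star}}$ is injective. This invocation uses the second finiteness hypothesis of the corollary and yields $\dim \H^{q+1}_{\overline{\sigma}_0}(\P^{\star})\le \dim \H^q_{\sigma_0}(\P)$.

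Combining the two inequalities gives equality of the dimensions; an injective linear map between finite-dimensional spaces of equal dimension is an isomorphism, so $\Phi_{\P}$ (and likewise the right pairing map) is bijective, i.e.\ \eqref{pairingcohom} is nondegenerate on both sides. The one place where care is needed is the bookkeeping in the third paragraph: one must check that the reflection $j\mapsto -j$ of the adjoint complex, together with the $q\mapsto q+1$ shift built into the theorem's pairing, lands $\mathbf v$ in exactly the degree that pairs it against $\H^q_{\sigma_0}(\P)$ and not an off-by-one neighbour. There is no analytic obstacle here — Theorem~\ref{NondegenerateFin} is the substantive input, and everything above is formal.
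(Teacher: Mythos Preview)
Your argument is correct and is exactly the intended one: the paper states the corollary without proof because it is immediate from Theorem~\ref{NondegenerateFin} applied once to $\P$ and once to $\P^{\star}$, using the antisymmetry relation $\langle v,u\rangle_{\P^{\star},\P}=-\overline{\langle u,v\rangle}_{\P,\P^{\star}}$ to convert one nondegeneracy statement into the other. Your bookkeeping in the third paragraph is fine---the reindexing $j\mapsto -j$ does land $\mathbf v\in\H^{q+1}_{\overline{\sigma}_0}(\P^{\star})$ against $\H^q_{\sigma_0}(\P)$ as required---so there is no gap.
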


We now focus on the proof of Theorem~\ref{NondegenerateFin}. We first observe that we may assume without loss of generality that $\sigma_0 = 0$. To see this, let
\begin{gather*}
\tau_{\vartheta} : \Mero_{\sigma_1}(F) \to \Mero_{\sigma_1-\vartheta}(F), \\
(\tau_{\vartheta}u)(\sigma)= u(\sigma+\vartheta)
\end{gather*}
for arbitrary $\sigma_1$, $\vartheta$. This is an isomorphism with inverse $\tau_{-\vartheta}$. The map $\tau_{\vartheta}$ defines an isomorphism of chain complexes
$$
\begin{CD}
\cdots @>>> \ss\Mero_{\sigma_0}(F_q) @>{\ss \P_q}>> \ss\Mero_{\sigma_0}(F_{q+1}) @>>> \cdots \\
@. @V{\tau_{\sigma_0}}VV @V{\tau_{\sigma_0}}VV @. \\
\cdots @>>> \ss\Mero_{0}(F_q) @>{\ss \tau_{\sigma_0}\P_{q}\tau_{-\sigma_0}}>> \ss\Mero_{0}(F_{q+1}) @>>> \cdots.
\end{CD}
$$
Note that
$$
\big(\tau_{\sigma_0}\P_q\tau_{-\sigma_0}\big)(\sigma)=  \P_q(\sigma+\sigma_0) \in \Hol_{0}(\LL(F_q,F_{q+1}))
$$
and that
$$
\bigl(\tau_{\sigma_0}\P_q\tau_{-\sigma_0}\bigr)^{\star}(\sigma) = \bigl(\tau_{\overline{\sigma}_0}\P_q^{\star}\tau_{-\overline{\sigma}_0}\bigr)(\sigma).
$$
For the pairing \eqref{pairing} we get
$$
\langle u,v \rangle_{\P,\P^\star} = \langle \tau_{\sigma_0}u,\tau_{\overline{\sigma}_0}v \rangle_{\tau_{\sigma_0}\P\tau_{-\sigma_0},\tau_{\overline \sigma_0}\P^\star \tau_{-\overline \sigma_0}}
$$
for $u \in \Mero_{\sigma_0}(F_q)$ and $v\in \Mero_{\overline{\sigma}_0}(F_{q+1})$.

We thus consider henceforth complexes \eqref{FinComplexSpaces} and \eqref{FinComplexGerms} with $\sigma_0 = 0$.

\smallskip
Each space $F_q$ has an orthogonal Hodge-Kodaira decomposition
\begin{equation*}
F_q =  N_q \oplus R_q^{\star} \oplus \\ R_q 
\end{equation*}
associated with the complex \eqref{FinComplexSpaces} at $\sigma = 0$, where
\begin{align*}
N_q = \ker(\P_q(0))\cap\ker(\P_{q-1}^{\star}(0)), \qquad
R_q^{\star} = \rg(\P_{q}^{\star}(0)), \qquad
R_q = \rg(\P_{q-1}(0)).
\end{align*}
Accordingly, decompose $\P_q(\sigma) : F_q \to F_{q+1}$ as
\begin{equation}\label{Pqmatrixdecomposition}
\P_q(\sigma) = \begin{bmatrix}
\P_{q,11}(\sigma) & \P_{q,12}(\sigma) & \P_{q,13}(\sigma) \\[2pt]
\P_{q,21}(\sigma) & \P_{q,22}(\sigma) & \P_{q,23}(\sigma) \\[2pt]
\P_{q,31}(\sigma) & \P_{q,32}(\sigma) & \P_{q,33}(\sigma)
\end{bmatrix} :
\begin{array}{c} N_q \\ \oplus \\ R_q^{\star} \\ \oplus \\ R_q \end{array} \to
\begin{array}{c} N_{q+1} \\ \oplus \\ R_{q+1}^{\star} \\ \oplus \\ R_{q+1} \end{array}.
\end{equation}
All components are holomorphic near zero, all but $\P_{q,32}(\sigma)$ vanish at $\sigma=0$, and $\P_{q,32}(\sigma):R_q^{\star} \to R_{q+1}$ is invertible for $\sigma$ near $0$. Define $\tilde{\P}_q(\sigma) : N_q \to N_{q+1}$ by
\begin{equation}\label{Ptilde}
\tilde{\P}_q(\sigma)= \P_{q,11}(\sigma) - \P_{q,12}(\sigma)\P_{q,32}^{-1}(\sigma) \P_{q,31}(\sigma).
\end{equation}
The property 
\begin{equation}\label{tildePqvanish}
\tilde{\P}_q(0)=0
\end{equation}
will be used later. Of course the spaces $N_q$ are isomorphic to the cohomology spaces of the complex \eqref{FinComplexSpaces} at $\sigma_0$.

\begin{proposition}\label{TildeComplex}
The maps $\tilde{\P}_q(\sigma)$ are holomorphic near $\sigma =0$ and determine complexes
\begin{equation*}
\cdots \to N_{q-1} \xrightarrow{\tilde{\P}_{q-1}(\sigma)} N_q \xrightarrow{\tilde{\P}_q(\sigma)} N_{q+1} \to \cdots
\end{equation*}
and 
\begin{equation}\label{NQuotientComplex}
\cdots \to \ss\Mero_{0}(N_{q-1}) \xrightarrow{\ss\tilde{\P}_{q-1}} \ss\Mero_{0}(N_q) \xrightarrow{\ss\tilde{\P}_q} \ss\Mero_{0}(N_{q+1}) \to \cdots.
\end{equation}
Furthermore, there are natural (germs of) holomorphic chain maps $\Phi:F\to N$ and $\Psi:N\to F$ such that  $\Phi\circ \Psi=\Id$ and $\Psi\circ\Phi$ is homotopic to the identity map. Consequently, the cohomology groups of the complex \eqref{FinComplexGerms} are isomorphic to those of \eqref{NQuotientComplex}:
\begin{equation*}
\pmb \Phi_{q}:\H^q_0(\P)  \to \H^q_0(\tilde \P)
\end{equation*}
is an isomorphism.
\end{proposition}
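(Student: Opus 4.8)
The plan is to construct the chain maps $\Phi$ and $\Psi$ explicitly from the block decomposition \eqref{Pqmatrixdecomposition} and then verify the three assertions (chain map, $\Phi\circ\Psi=\Id$, $\Psi\circ\Phi\simeq\Id$) by direct computation; the holomorphy of $\tilde\P_q$ is immediate from \eqref{Ptilde} since $\P_{q,32}(\sigma)$ is invertible near $0$. First I would define, for $u\in F_q$ written as $u=(u_N,u_{R^\star},u_R)$ in the decomposition $N_q\oplus R_q^\star\oplus R_q$, the maps
\begin{align*}
\Psi_q(\sigma)u_N &= \bigl(u_N,\; -\P_{q,32}^{-1}(\sigma)\P_{q,31}(\sigma)u_N,\; 0\bigr),\\
\Phi_q(\sigma)u &= u_N - \P_{q-1,12}(\sigma)\P_{q-1,32}^{-1}(\sigma)\,(\text{suitable component}),
\end{align*}
the precise formula for $\Phi_q$ being dictated by the requirement that it kill the image of $\Psi_{q-1}$-complement directions and intertwine $\P_q$ with $\tilde\P_q$. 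The natural choice is to take $\Psi_q$ as the map whose image is the graph of $u_N\mapsto -\P_{q,32}^{-1}\P_{q,31}u_N$ in the $N_q\oplus R_q^\star$ summand (this is exactly the subspace on which $\P_q$, restricted, has $R_{q+1}$-component zero), and $\Phi_q$ the projection onto $N_q$ along the complementary subspace adapted to $\P_{q-1}$.

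The key steps, in order, are: (1) Check $\Psi$ is a chain map, i.e. $\P_q(\sigma)\Psi_q(\sigma)=\Psi_{q+1}(\sigma)\tilde\P_q(\sigma)$. Writing out the left side using \eqref{Pqmatrixdecomposition}: the $R_{q+1}$-component of $\P_q(\sigma)\Psi_q(\sigma)u_N$ is $\P_{q,31}u_N+\P_{q,32}(-\P_{q,32}^{-1}\P_{q,31}u_N)=0$, which is consistent with $\Psi_{q+1}$ having zero $R_{q+1}$-component; the $N_{q+1}$-component is $\P_{q,11}u_N-\P_{q,12}\P_{q,32}^{-1}\P_{q,31}u_N=\tilde\P_q(\sigma)u_N$ by definition \eqref{Ptilde}; the $R_{q+1}^\star$-component must then be matched to $-\P_{q+1,32}^{-1}\P_{q+1,31}\tilde\P_q(\sigma)u_N$, and verifying this equality is where the relation $\P_{q+1}\circ\P_q=0$, read off block by block, gets used. (2) Check $\Phi$ is a chain map, dually. (3) Check $\Phi_q\circ\Psi_q=\Id_{N_q}$, which is essentially immediate from the formulas since $\Psi_q$ has $N_q$-component the identity and $\Phi_q$ is a projection onto $N_q$. (4) Construct the homotopy $h_q:F_q\to F_{q-1}$ with $\Id-\Psi_q\Phi_q=\P_{q-1}h_q+h_{q+1}\P_q$; the natural candidate builds $h$ out of $\P_{q-1,32}^{-1}$ composed with the projection onto $R_q$. (5) Conclude that $\Phi$ and $\Psi$ induce mutually inverse isomorphisms on the cohomology of \eqref{FinComplexGerms} versus \eqref{NQuotientComplex}, using that $\ss\Mero_0(-)$ is an exact functor and that chain-homotopic maps induce the same map on cohomology, so $\pmb\Phi_q$ is the claimed isomorphism.

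The main obstacle I expect is step (1)/(4): getting the homotopy $h$ and verifying the chain-map identity for $\Psi$ requires carefully expanding the identity $\P_{q+1}(\sigma)\P_q(\sigma)=0$ into its nine block equations and using the three equations landing in the $R_{q+2}$, $R_{q+2}^\star$, $N_{q+2}$ rows to cancel terms; the bookkeeping is routine but delicate, and one must be careful that all inverses $\P_{\bullet,32}^{-1}(\sigma)$ occurring are holomorphic near $0$ (true since each $\P_{\bullet,32}(0)$ is invertible). A cleaner conceptual route, which I would mention as an alternative, is the standard Gaussian-elimination / deformation-retract lemma for complexes with an invertible block: since $\P_{q,32}(\sigma):R_q^\star\xrightarrow{\ \sim\ }R_{q+1}$ is an isomorphism for every $\sigma$ near $0$, the subcomplex $\bigoplus_q(R_q^\star\oplus R_{q+1})$ sitting inside $F$ via these isomorphisms is acyclic, and quotienting it out (equivalently, performing simultaneous row/column operations) yields precisely the complex $(N_\bullet,\tilde\P_\bullet)$; this gives $\Phi,\Psi,h$ with no further computation, and everything is holomorphic in $\sigma$ because the elimination only inverts $\P_{\bullet,32}$. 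Either way, once $\Phi$, $\Psi$, $h$ are in hand, the last sentence of the proposition follows formally from exactness of $\ss\Mero_0$ and homotopy invariance of cohomology.
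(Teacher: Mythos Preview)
Your proposal is correct and follows essentially the same route as the paper: the paper defines $\Psi_q=\begin{bmatrix}I\\-\P_{q,32}^{-1}\P_{q,31}\\0\end{bmatrix}$ and $\Phi_q=\begin{bmatrix}I&0&-\P_{q-1,12}\P_{q-1,32}^{-1}\end{bmatrix}$ exactly as you do, and takes the homotopy to be $Q_q$ with sole nonzero block $\P_{q-1,32}^{-1}$ in position $(2,3)$, which is your candidate $h$. The only organizational difference is that the paper begins with the homotopy $Q$, forms $\mathcal I=I-(Q\P+\P Q)$, and then factors $\mathcal I=\Psi\Phi$, whereas you propose to define $\Psi,\Phi$ first and find $h$ afterwards; the computations are the same, and your ``Gaussian elimination'' remark is precisely the conceptual summary of what both arguments do.
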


\begin{proof}
That the $\tilde{\P}_q(\sigma)$ are holomorphic near $\sigma =0$ is evident.  Taking advantage of the decomposition \eqref{Pqmatrixdecomposition} define $Q_q(\sigma):F_q\to F_{q-1}$ by
\begin{equation*}
Q_q(\sigma)=\begin{bmatrix}
0&0&0\\
0&0&\P_{q,32}^{-1}(\sigma)\\
0&0&0
\end{bmatrix}
\end{equation*}
in all degrees $q$ and let $\Jay_q(\sigma):F_q\to F_q$ be the map
\begin{equation*}
\Jay_q(\sigma)=I_q-\big(Q_{q+1}(\sigma)\P_q(\sigma)+\P_{q-1}(\sigma)Q_q(\sigma)\big)
\end{equation*}
with $I_q$ the identity map. Evidently the $\Jay_q(\sigma)$ are holomorphic and define a chain map $\Jay(\sigma):F\to F$ which is chain homotopic to $I$.

We have, henceforth omitting the argument $\sigma$ and the references to the degrees, 
\begin{equation*}
Q\circ \P=\begin{bmatrix}
0&0&0\\[2pt]
\P_{32}^{-1}\P_{31}&\Id&\P_{32}^{-1}\P_{33}\\[2pt]
0&0&0
\end{bmatrix},
\qquad \P\circ Q=\begin{bmatrix}
0&0&\P_{12}\P_{32}^{-1}\\[2pt]
0&0&\P_{22}\P_{32}^{-1}\\[2pt]
0&0&\Id
\end{bmatrix}.
\end{equation*}
The entry in position $23$ of $Q\P+\P Q$ is
\begin{equation*}
\P_{22}\P_{32}^{-1}+\P_{32}^{-1}\P_{33}=\P_{32}^{-1}(\P_{32}\P_{22}+\P_{33}\P_{32})\P_{32}^{-1}=-\P_{32}^{-1}\P_{31}\P_{12}\P_{32}^{-1}
\end{equation*}
on account that the maps $\P$ define a complex.  So
\begin{equation*}
\Jay=
\begin{bmatrix}
I&0&-\P_{12}\P_{32}^{-1}\\[2pt]
-\P_{32}^{-1}\P_{31}&0&\P_{32}^{-1}\P_{31}\P_{12}\P_{32}^{-1}\\[2pt]
0&0&0
\end{bmatrix}.
\end{equation*}
Define $\Phi:F\to N$ and $\Psi:N\to F$ by 
\begin{equation*}
\Phi=\begin{bmatrix} I&0&-\P_{12}\P_{32}^{-1}\end{bmatrix},\qquad \\ \Psi=\begin{bmatrix}I \\[2pt] -\P_{32}^{-1} \P_{31}\\[2pt]0 \end{bmatrix}.
\end{equation*}
Then $\Psi\circ \Phi=\Jay$ and $\Phi\circ \Psi=\Id$. 

We now show that $\tilde \P\circ \tilde \P=0$. We have 
\begin{equation*}
\P\circ (I-Q\circ \P)=\begin{bmatrix}
\P_{11}-\P_{12}\P_{32}^{-1}\P_{31}&0&\P_{13}-\P_{12}\P_{32}^{-1}\P_{33}\\[2pt]
\P_{21}-\P_{22}\P_{32}^{-1}\P_{31}&0&\P_{23}-\P_{22}\P_{32}^{-1}\P_{33}\\[2pt]
0&0&0
\end{bmatrix}.
\end{equation*}
Since $(\P\circ \Jay)\circ (\P\circ \Jay)=0$ on account that $\P\circ \Jay=\Jay\circ \P$, the structure of the matrix representing $\P\circ (I-Q\circ \P)$ gives 
\begin{equation*}
(\P_{i1}-\P_{i2}\P_{32}^{-1}\P_{31})(\P_{1j}-\P_{12}\P_{32}^{-1}\P_{3j})=0,\quad i=1,2,\ j=1,3.
\end{equation*}
In particular with $i=j=1$ we have $\tilde \P_{q+1}(\sigma)\circ \tilde \P_q(\sigma)=0$. 

We leave the verification that $\tilde \P\circ \Phi=\Phi\circ \P$ and $\P\circ\Psi=\Psi\circ \tilde \P$ to the reader.

Finally observe that all maps involved are holomorphic near $\sigma=0$. 
\end{proof}

The adjoint $\tilde{\P}_q^{\star}(\sigma)$, defined (as before) via $\tilde{\P}_q^{\star}(\sigma) = \tilde{\P}_q(\overline{\sigma})^\star$, can alternatively be obtained consistently by following the same steps as above with the adjoint $\P_q^{\star}(\sigma)$ in lieu of $\P_q(\sigma)$ as can be seen from
\begin{equation*}
\P^{\star}_q(\sigma) = \begin{bmatrix}
\P^{\star}_{q,11}(\sigma) & \P^{\star}_{q,31}(\sigma) & \P^{\star}_{q,21}(\sigma) \\[2pt]
\P^{\star}_{q,13}(\sigma) & \P^{\star}_{q,33}(\sigma) & \P^{\star}_{q,23}(\sigma) \\[2pt]
\P^{\star}_{q,12}(\sigma) & \P^{\star}_{q,32}(\sigma) & \P^{\star}_{q,22}(\sigma)
\end{bmatrix} :
\begin{array}{c} N_{q+1} \\ \oplus \\ R_{q+1} \\ \oplus \\ R_{q+1}^{\star} \end{array}
\to
\begin{array}{c} N_q \\ \oplus \\ R_q \\ \oplus \\ R_q^{\star} \end{array}.
\end{equation*}
The roles of $\Psi$ and $\Phi$ are played, in the case of $\P^\star$, by $\Phi^*:N\to F$ and $\Psi^\star:F\to N$. In particular, $\Psi^\star$ induces isomorphisms
\begin{equation*}
\pmb \Psi_q^\star:\H^q_0(\P^\star)  \to \H^q_0(\tilde \P^\star)
\end{equation*}
in all degrees. 

\begin{proposition}\label{Reduction1}
Let $\mathbf u\in \H^q_0(\P)$ and $\mathbf v\in \H^{q+1}_0(\P^\star)$. Then
\begin{equation*}
\langle \pmb \Phi_q \mathbf u,\pmb \Psi^\star_{q+1} \mathbf v \rangle_{\tilde \P,\tilde \P^\star} = \langle \mathbf u, \mathbf v \rangle_{\P,\P^\star}
\end{equation*}
\end{proposition}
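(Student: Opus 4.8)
The plan is to unwind the definitions and compute both sides as contour integrals, then match them using the chain maps $\Phi$, $\Psi$, $\Phi^*$, $\Psi^\star$ produced in Proposition~\ref{TildeComplex} together with the identities $\tilde\P\circ\Phi=\Phi\circ\P$ and $\P\circ\Psi=\Psi\circ\tilde\P$ (and their adjoint counterparts). Recall that both pairings are given by the same recipe \eqref{pairing}: for representatives $\widehat u\in\ss\Mero_0(F_q)$ of $\mathbf u$ and $\widehat v\in\ss\Mero_0(F_{q+1})$ of $\mathbf v$,
\begin{equation*}
\langle \mathbf u,\mathbf v\rangle_{\P,\P^\star}=\frac{1}{2\pi}\ointc_C\langle \P_q(\sigma)\widehat u(\sigma),\widehat v(\overline\sigma)\rangle_{F_{q+1}}\,d\sigma,
\end{equation*}
and $\Phi_q\mathbf u$, $\Psi^\star_{q+1}\mathbf v$ are represented by $\ss(\Phi_q\widehat u)$ and $\ss(\Psi^\star_{q+1}\widehat v)$ respectively (here I write $\Psi^\star$ for the chain map $F\to N$ built from $\P^\star$ exactly as $\Phi$ is built from $\P$, so that it plays the role of $\Phi$ for the adjoint complex).

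First I would reduce, as the proof of Proposition~\ref{PairingInCohomology} does, to the level of the integrand: it suffices to show that the germs at $0$
\begin{equation*}
\sigma\mapsto\langle \tilde\P_q(\sigma)(\Phi_q\widehat u)(\sigma),(\Psi^\star_{q+1}\widehat v)(\overline\sigma)\rangle_{N_{q+1}}
\quad\text{and}\quad
\sigma\mapsto\langle \P_q(\sigma)\widehat u(\sigma),\widehat v(\overline\sigma)\rangle_{F_{q+1}}
\end{equation*}
differ by a germ holomorphic at $0$ (the singular parts $\ss$ inside the contour integral may then be dropped, since $\frac{1}{2\pi}\ointc_C$ kills holomorphic germs and commutes with passing to singular parts). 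Using the chain-map identity $\tilde\P_q\circ\Phi_q=\Phi_{q+1}\circ\P_q$, the first integrand equals $\langle (\Phi_{q+1}\P_q\widehat u)(\sigma),(\Psi^\star_{q+1}\widehat v)(\overline\sigma)\rangle_{N_{q+1}}$. Now I would move $\Phi_{q+1}$ to the other slot: since $\Psi^\star_{q+1}:F_{q+1}\to N_{q+1}$ and the adjoint of the chain map $\Phi$ (for $\P$) relates to $\Psi$ (for $\P$) — concretely $\Phi$ and $\Psi$ are matrix blocks that are mutually adjoint up to the explicit formulas $\Phi=\begin{bmatrix}I&0&-\P_{12}\P_{32}^{-1}\end{bmatrix}$, $\Psi=\begin{bmatrix}I\\ -\P_{32}^{-1}\P_{31}\\ 0\end{bmatrix}$ — one checks that $(\Psi^\star_{q+1})^{*}=\Phi^{\,\prime}_{q+1}$ for the appropriate chain map $\Phi^{\,\prime}$ associated to $\P$ (built from $\P$ the same way $\Psi^\star$ is built from $\P^\star$), i.e. $\Phi^{\,\prime}=\Psi^{*}$-type block. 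The cleanest route is: $\langle \Phi_{q+1}x, y\rangle_{N_{q+1}}=\langle x,\Phi_{q+1}^{*}y\rangle_{F_{q+1}}$, and the point to verify is that $\Phi_{q+1}^{*}\circ\Psi^\star_{q+1}$ differs from the identity on $F_{q+1}$ by a holomorphic germ that moreover lands in $\rg\,\ss\P_q\oplus(\text{things killed by pairing with }\ker\ss\P_q^\star$-representatives$)$ — more precisely, $\Psi^\star\circ\Phi^{*}$ (as a chain map $F\to F$ for the complex $\P$) is homotopic to the identity, by the adjoint version of the statement $\Psi\circ\Phi=\Jay\simeq I$ in Proposition~\ref{TildeComplex}. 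Tracking this homotopy $\Jay=I-(Q\P+\P Q)$ through the pairing, the $\P Q$-term contributes $\langle\P_q\P_{q-1}(\cdots),\cdots\rangle$-type expressions that vanish because $\P$ is a complex, and the $Q\P$-term contributes $\langle\P_q Q\P_q\widehat u,\widehat v(\overline\sigma)\rangle$ which, after moving $\P_q$ across via $\langle\P_q\cdot,\cdot\rangle=\langle\cdot,\P_q^\star\cdot\rangle$ and using $\ss\P_q^\star\widehat v=0$, i.e. $\P_q^\star\widehat v$ holomorphic, becomes a holomorphic germ. Hence the two integrands agree modulo holomorphic germs, which is what we need.

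The main obstacle I anticipate is purely bookkeeping: getting the adjoints of $\Phi$ and $\Psi$ aligned correctly with the maps $\Psi^\star$ and $\Phi^*$ for the adjoint complex, so that ``$\Phi$-on-the-left of the first slot'' converts to ``$\Psi^\star$-on-the-left of the second slot'' without sign or conjugation errors — the pairing is sesquilinear and $\P_q^\star(\sigma)=\P_q(\overline\sigma)^*$ evaluated at $\overline\sigma$, so one must be careful that the block decomposition of $\P^\star(\sigma)$ displayed before Proposition~\ref{Reduction1} matches what ``the same construction applied to $\P^\star$'' produces, and that $C$ (centered at $\sigma_0=0$) traversed counterclockwise in $\sigma$ corresponds correctly to the $\overline\sigma$-contour. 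Once that correspondence is pinned down, everything reduces to: (i) chain-map intertwining $\tilde\P\Phi=\Phi\P$; (ii) $\P$ is a complex, so $\P\P=0$ kills the $\P Q$ homotopy term; (iii) the representative $\widehat v$ satisfies $\ss\P_q^\star\widehat v=0$, killing the $Q\P$ homotopy term after integration; and (iv) holomorphic germs integrate to zero around $C$. I would present the argument in that order, stating (i)--(iv) as the key steps and relegating the explicit block computations (which are all four-by-four-entry matrix manipulations identical in spirit to those in the proof of Proposition~\ref{TildeComplex}) to ``a direct computation, left to the reader,'' in keeping with the style already set in that proof.
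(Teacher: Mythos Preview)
Your ingredients are exactly right---chain maps $\Phi,\Psi$, the homotopy $\Jay=I-(Q\P+\P Q)$, the complex identity $\P\P=0$, and the closedness of the representatives---and you would arrive at a correct proof. But your bookkeeping is tangled, and the paper's route is much cleaner than the one you sketch.

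The key simplification you miss is that $\Psi^\star_{q+1}(\overline\sigma)$ is \emph{literally} the Hermitian adjoint $\Psi_{q+1}(\sigma)^*$ (this is what ``the roles of $\Psi$ and $\Phi$ are played, in the case of $\P^\star$, by $\Phi^*$ and $\Psi^\star$'' means). So the first move should be to shift $\Psi^\star_{q+1}$ from the right slot to $\Psi_{q+1}$ in the left slot:
\[
\langle \tilde\P_q\Phi_q u,\Psi^\star_{q+1}(\overline\sigma)v\rangle_{N_{q+1}}=\langle \Psi_{q+1}\tilde\P_q\Phi_q u, v\rangle_{F_{q+1}}.
\]
Now use $\Psi\tilde\P=\P\Psi$ (not $\tilde\P\Phi=\Phi\P$) to get $\langle \P_q\Psi_q\Phi_q u, v\rangle=\langle \P_q\Jay_q u, v\rangle$, with the homotopy sitting squarely on the $u$ side. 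From $\Jay_q=I-\P_{q-1}Q_q-Q_{q+1}\P_q$ you then get the two correction terms you describe, and they die for the reasons you state.

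Your sketch instead applies $\tilde\P\Phi=\Phi\P$ first and then tries to push $\Phi_{q+1}$ to the right slot. That route does work (one gets $\langle\Jay_{q+1}\P_q u,v\rangle$ after moving $\Phi_{q+1}(\sigma)^*\Psi_{q+1}(\sigma)^*=(\Psi_{q+1}\Phi_{q+1})(\sigma)^*=\Jay_{q+1}(\sigma)^*$ back to the left), but your write-up then describes the homotopy terms as $\P_q\P_{q-1}(\cdots)$ and $\P_qQ\P_q\widehat u$, which are the terms arising from $\P_q\Jay_q$ on the $u$ side---i.e.\ from the paper's route, not yours. You also write ``$\Psi^\star\circ\Phi^*$'' where you mean ``$\Phi^*\circ\Psi^\star$'', call it a chain map for $\P$ when it is one for $\P^\star$, and suggest $\Phi$ and $\Psi$ are ``mutually adjoint'', which they are not. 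None of this is fatal, but the inconsistency would confuse a reader. Adopt the paper's ordering and the argument is three lines.
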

\begin{proof}
Let $u\in \Mero_0(F_q)$ represent $\mathbf  u\in \H^q_0(\P)$, let $v\in \Mero_0(F_{q+1})$ represent $\mathbf v\in \H^{q+1}_0(\P^\star)$. We have
\begin{align*}
\langle \tilde \P_q(\sigma)\Phi_q(\sigma) &u(\sigma),\Psi_{q+1}^*(\overline \sigma)v(\overline \sigma)\rangle_{N_{q+1}}
=\langle \Psi_{q+1}(\sigma)\tilde \P_q(\sigma)\Phi_q(\sigma) u(\sigma),v(\overline \sigma)\rangle_{F_{q+1}}\\
&=\langle  \P_q(\sigma)\Psi_q(\sigma)\Phi_qu(\sigma),v(\overline \sigma)\rangle_{F_{q+1}}\\
&=\langle  \P_q(\sigma)(I-\P_{q-1}(\sigma) Q_q(\sigma)-Q_{q+1}(\sigma)\P_q(\sigma))u(\sigma),v(\overline \sigma)\rangle_{F_{q+1}}.
\end{align*}
The last expression is equal to 
\begin{equation*}
\langle  \P_q(\sigma) u(\sigma),v(\overline \sigma)\rangle_{F_{q+1}}-\langle  Q_{q+1}(\sigma) \P_q(\sigma) u(\sigma),\P_q^\star(\overline \sigma) v(\overline \sigma)\rangle_{F_q}.
\end{equation*}
Since $\langle  Q_{q+1} \P_q u,\P_q^\star v\rangle_{F_q}$ is itself also holomorphic, 
\begin{equation*}
\langle \tilde \P_q(\sigma)\Phi_q(\sigma) u(\sigma),\Psi_{q+1}^*(\sigma)v(\sigma)\rangle_{N_{q+1}}\equiv \langle  \P_q(\sigma) u(\sigma),v(\overline \sigma)\rangle_{F_{q+1}}\mod\Hol_0(\C). 
\end{equation*}
The contour integration completes the proof.
\end{proof}

Multiplication by $\sigma$ gives a generic map
\begin{equation*}
\varsigma : \ss\Mero_0(F) \to \ss\Mero_0(F),
\end{equation*}
for any finite-dimensional vector space $F$, namely
\begin{equation*}
\varsigma(u)(\sigma)= (\sigma\mapsto \ss(\sigma u(\sigma))).
\end{equation*}
And if $F$ and $F'$ are finite-dimensional vector spaces and $\P \in \Hol_0(\LL(F,F'))$, then
$$
\ss \P\circ \varsigma = \varsigma \circ \ss \P : \ss\Mero_0(F) \to \ss\Mero_0(F').
$$
Consequently $\varsigma$ induces maps
$$
\ker\ss \P_q \to \ker\ss \P_q, \quad
\rg\ss \P_{q-1} \to \rg\ss \P_{q-1},
$$
so $\varsigma$ determines maps
\begin{equation}\label{SonCohom}
\pmb \varsigma : \H^q_0(\P) \to \H^q_0(\P)
\end{equation}
for each $q$: if $\mathbf u\in \H^q_0(\P)$ is represented by $u$, then $\pmb \varsigma \mathbf u=\varsigma(u)+\ss \P_{q-1}(\Mero_0(F_{q-1}))$.

For any $u \in \ss\Mero_0(F)$ we have $\varsigma^N u = 0$ if $N$ is at least the order of the pole of $u$ at zero. Consequently, if $\dim \H^q_0(\P) < \infty$, the induced map \eqref{SonCohom} is nilpotent on $\H^q_0(\P)$, in which case $\rg \pmb\varsigma \subsetneq \H^q_0(\P)$ is a proper subspace unless $\H^q_0(\P) = \set{0}$.

\begin{lemma}\label{sigmareduct}
Suppose the $\P_q \in \Hol_0(\LL(F_q,F_{q+1}))$ all satisfy $\P_q(0) = 0$. Then $\P_q(\sigma) = \sigma\hat{\P}_q(\sigma)$ with $\hat{\P}_q \in \Hol_0(\LL(F_q,F_{q+1}))$, and $\hat{\P}_{q+1}(\sigma)\hat{\P}_q(\sigma) = 0$ for $\sigma$ near $0$. The map
\begin{equation}\label{Jmap}
\bjay : \H^q_0(\hat \P) \ni u + \rg \ss \hat \P_{q-1} \mapsto u + \rg \ss \P_{q-1} \in \H^q_0(\P)
\end{equation}
is well-defined and injective onto the range of the map \eqref{SonCohom}. We have
\begin{equation}\label{JSFormula}
\langle \bjay \mathbf u,\mathbf v \rangle_{\P,\P^\star} = \langle \mathbf u, \bjay^{-1}\pmb \varsigma\mathbf v \rangle_{\hat \P,\hat \P^\star}
\end{equation}
for all $\mathbf u \in \H^q_0(\P)$ and all $\mathbf v \in \H^{q+1}_0(\P^\star)$, where
$$
\bjay^{-1} : \rg\pmb\varsigma \subset \H^{q+1}_0(\P^\star) \to \H^{q+1}_0(\hat \P^\star)
$$
in \eqref{JSFormula}.
\end{lemma}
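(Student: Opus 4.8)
The plan is to establish the three assertions of the lemma in turn, exploiting the factorization $\P_q(\sigma) = \sigma\hat\P_q(\sigma)$ that is available precisely because each $\P_q(0)=0$. The factorization itself is immediate: a holomorphic germ vanishing at $0$ is $\sigma$ times a holomorphic germ, so $\hat\P_q\in\Hol_0(\LL(F_q,F_{q+1}))$, and from $0=\P_{q+1}(\sigma)\P_q(\sigma)=\sigma^2\hat\P_{q+1}(\sigma)\hat\P_q(\sigma)$ we cancel $\sigma^2$ (valid for germs, since holomorphic germs form an integral domain) to get $\hat\P_{q+1}\hat\P_q=0$. So $\hat\P$ is again a holomorphic family of complexes and all the attendant objects $\ss\hat\P_q$, $\H^q_0(\hat\P)$, and the adjoint versions make sense.

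Next I would analyze the map $\bjay$ in \eqref{Jmap}. On $\ss\Mero_0$, multiplication by $\sigma$ is the operator $\varsigma$, so $\ss\P_{q-1} = \varsigma\circ\ss\hat\P_{q-1} = \ss\hat\P_{q-1}\circ\varsigma$ as maps on $\ss\Mero_0$; this commutation is the key identity. From it: (i) if $u\in\ker\ss\hat\P_{q}$ then $\ss\P_q u = \varsigma\ss\hat\P_q u = 0$, so $\ker\ss\hat\P_q\subset\ker\ss\P_q$, which shows the assignment $u\mapsto u+\rg\ss\P_{q-1}$ is at least defined on cocycles; (ii) if $u = \ss\hat\P_{q-1}w$ is an $\hat\P$-coboundary, then $\varsigma u = \ss\P_{q-1}w$ is a $\P$-coboundary, so the image of the $\hat\P$-coboundaries under $\varsigma$ lands in $\rg\ss\P_{q-1}$ — but note $\bjay$ as written sends $u$, not $\varsigma u$. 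I should be careful here: reading \eqref{Jmap} literally, $\bjay$ sends the class of $u$ in $\H^q_0(\hat\P)$ to the class of $u$ in $\H^q_0(\P)$. Well-definedness then needs $\rg\ss\hat\P_{q-1}\subset\rg\ss\P_{q-1}$, which is false in general; so the intended reading must be that $\bjay$ is the composite of $\varsigma$ on representatives with the quotient map, i.e. $\bjay(u+\rg\ss\hat\P_{q-1}) = \varsigma u + \rg\ss\P_{q-1}$ — equivalently, identifying $u\in\ss\Mero_0(F_q)$ with $\varsigma u$ is the natural thing since $\P_q = \varsigma\hat\P_q$. Granting that reading: well-definedness follows from (ii); that the image is exactly $\rg\pmb\varsigma$ follows because every $\P$-cocycle is $\sigma$ times a pole-of-lower-order section that is automatically an $\hat\P$-cocycle, and conversely $\varsigma$ of an $\hat\P$-cocycle is a $\P$-cocycle; injectivity is the statement that if $\varsigma u \in \rg\ss\P_{q-1} = \rg(\varsigma\ss\hat\P_{q-1})$ then $u$ differs from an $\hat\P$-coboundary by something annihilated by $\varsigma$, and anything in $\ss\Mero_0(F_q)$ annihilated by $\varsigma$ is zero (a Laurent germ with $\sigma u(\sigma)$ holomorphic has no pole), so $u\in\rg\ss\hat\P_{q-1}$. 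This also makes clear why one needs $\dim\H^q_0(\P)<\infty$ only for the nilpotency remark, not for $\bjay$ itself, though finite-dimensionality is the context in which $\bjay^{-1}$ on $\rg\pmb\varsigma$ is the honest inverse.

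Finally, the pairing formula \eqref{JSFormula}. Pick representatives $u\in\ss\Mero_0(F_q)$ of $\mathbf u\in\H^q_0(\hat\P)$ and $v\in\ss\Mero_0(F_{q+1})$ of $\mathbf v\in\H^{q+1}_0(\P^\star)$. By definition
\[
\langle \bjay\mathbf u,\mathbf v\rangle_{\P,\P^\star} = \frac{1}{2\pi}\ointc_C \langle \P_q(\sigma)(\varsigma u)(\sigma),v(\overline\sigma)\rangle_{F_{q+1}}\,d\sigma
= \frac{1}{2\pi}\ointc_C \langle \sigma^2\hat\P_q(\sigma)u(\sigma),v(\overline\sigma)\rangle_{F_{q+1}}\,d\sigma,
\]
and since $\overline\sigma^2 = \sigma^2$ on the contour when $C$ is a circle centered at $0$ — wait, that is false; rather, one absorbs one factor of $\sigma$ into $u$ and one factor into $v$: write $\sigma^2\langle\hat\P_q u,v(\overline\cdot)\rangle = \langle \hat\P_q(\sigma)(\sigma u(\sigma)), \overline\sigma\,v(\overline\sigma)\rangle$ using that $\sigma\cdot\overline\sigma$ appears but on a circle $|\sigma|=r$ centered at $0$ we have $\overline\sigma = r^2/\sigma$, which is not what we want. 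The clean route instead: $\P_q^\star(\sigma) = \overline\sigma\,\hat\P_q^\star(\sigma)$ as well (take adjoints of $\P_q(\overline\sigma)=\overline\sigma\hat\P_q(\overline\sigma)$). Then
\[
\langle \sigma^2\hat\P_q(\sigma)u(\sigma),v(\overline\sigma)\rangle_{F_{q+1}} = \langle \sigma\,\hat\P_q(\sigma)u(\sigma),\,\overline\sigma\, v(\overline\sigma)\rangle_{F_{q+1}} = \langle \hat\P_q(\sigma)u(\sigma),\,(\varsigma_{\overline\sigma}v)(\overline\sigma)\rangle_{F_{q+1}}
\]
modulo holomorphic germs, where $\varsigma_{\overline\sigma}v$ denotes the section $\sigma\mapsto \ss(\sigma v(\sigma))$ evaluated at $\overline\sigma$ — i.e. this is exactly $\langle \mathbf u,\pmb\varsigma\mathbf v\rangle_{\hat\P,\hat\P^\star}$ after identifying classes via $\bjay$, giving \eqref{JSFormula} upon contour integration. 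The step I expect to be the main obstacle, and the one deserving the most care, is keeping the bookkeeping of singular-part truncations consistent: each replacement of $\P_q$ by $\varsigma\hat\P_q$ or of $v$ by its $\varsigma$-shift changes a representative by something holomorphic, and one must check at every stage that the discarded holomorphic germ integrates to zero over $C$ and that $\pmb\varsigma\mathbf v$ is independent of the chosen representative of $\mathbf v$ — this is where the commutation $\ss\P^\star\circ\varsigma = \varsigma\circ\ss\P^\star$ for the adjoint complex is used, exactly paralleling the argument for \eqref{SonCohom}.
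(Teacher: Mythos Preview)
Your proposal contains a genuine error in the reading of $\bjay$. You assert that the literal definition
\[
\bjay(u+\rg\ss\hat\P_{q-1}) = u + \rg\ss\P_{q-1}
\]
requires $\rg\ss\hat\P_{q-1}\subset\rg\ss\P_{q-1}$ and that this inclusion ``is false in general''. In fact the inclusion \emph{is} true, and it is exactly what the paper uses. The point you are missing is that multiplication by $\frac{1}{\sigma}$ maps $\ss\Mero_0(F_{q-1})$ into itself: a finite Laurent tail $\sum_{\ell=-L}^{-1}c_\ell\sigma^\ell$ becomes $\sum_{\ell=-L-1}^{-2}c_{\ell+1}\sigma^{\ell}$, still a singular part. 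Hence, for $w\in\ss\Mero_0(F_{q-1})$,
\[
\ss\hat\P_{q-1}w \;=\; \ss\bigl(\P_{q-1}\cdot\tfrac{1}{\sigma}w\bigr)\in\rg\ss\P_{q-1},
\]
which is precisely $\rg\ss\hat\P_{q-1}\subset\rg\ss\P_{q-1}$. So the map $\bjay$ is well defined exactly as written, with \emph{no} extra $\varsigma$.

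Because you reinterpret $\bjay$ as $u\mapsto\varsigma u$, your injectivity argument, your description of the image, and your pairing computation all acquire a spurious factor of $\sigma$. This is why you arrive at $\sigma^2\hat\P_q(\sigma)u(\sigma)$ in the integrand and then have to scramble. With the correct reading, the pairing step is a one-liner: a representative $u$ of $\mathbf u\in\H^q_0(\hat\P)$ is also a representative of $\bjay\mathbf u$, so
\[
\langle\P_q(\sigma)u(\sigma),v(\overline\sigma)\rangle_{F_{q+1}}
= \langle\hat\P_q(\sigma)u(\sigma),\overline\sigma\,v(\overline\sigma)\rangle_{F_{q+1}},
\]
and since $\hat\P_q u$ is holomorphic and $\overline\sigma v(\overline\sigma)-(\varsigma v)(\overline\sigma)$ is antiholomorphic, the contour integral equals $\langle\mathbf u,\bjay^{-1}\pmb\varsigma\mathbf v\rangle_{\hat\P,\hat\P^\star}$ directly. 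Your remaining remarks about bookkeeping of singular parts are correct in spirit, but they apply to this simpler computation, not the $\sigma^2$ version.
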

\begin{proof}
Clearly, whenever $\hat \P_q(\sigma)u(\sigma) \in \Hol_0(F_{q+1})$, then $\P_q(\sigma)u(\sigma) \in \Hol_0(F_{q+1})$. Further, if $v\in \ss\Mero_0(F_{q-1})$ then also $\frac1\sigma v\in \ss\Mero_0(F_{q-1})$, hence $\hat \P_{q-1}v=\P_{q-1}\frac{1}{\sigma} v$, so $\ss \hat \P_{q-1}v\in \rg \ss \P_{q-1}$. So \eqref{Jmap} is well-defined. 

If $u$ represents $\mathbf u\in \H^q_0(\hat \P)$ and $u=\ss \P_{q-1} v$ for some $v\in \Mero_0(F_{q-1})$, then $u=\ss \hat \P_{q-1} \sigma v$ so $\mathbf u=0$ and $\bjay$ is injective.

To see that $\rg \bjay \subset \rg\pmb\varsigma $, let $u\in \ss\Mero_0(F_q)$ represent $\mathbf u\in \H^q_0(\hat \P)$. Then $u$ also represents $\bjay \mathbf u\in \H^q_0(\P)$ according to \eqref{Jmap}. Since $\hat \P_q u$ is holomorphic,  $\sigma\hat \P_q \frac{1}{\sigma}u=\P_q \frac{1}{\sigma}u$ is holomorphic, that is, $\frac{1}{\sigma}u\in \ker \ss \P_q$. So $w=\frac{1}{\sigma} u$ represents an element $\mathbf w\in \H^q_0(\P)$. The class $\pmb\varsigma\mathbf w$ is represented by $\ss(\sigma w) = u$, and so $\bjay \mathbf u = \pmb\varsigma\mathbf w \in \rg \pmb\varsigma$.

Conversely, every element $\mathbf u\in \rg\pmb\varsigma$ has a representative of the form $\ss (\sigma u)$ for some $u\in \ss\Mero_0(F_q)$ such that $\P_q u$ is holomorphic at zero. Since $\P_q u = \hat \P_q\sigma u$ is holomorphic at zero, so is $\hat \P_q \ss(\sigma u)$, and thus $\ss(\sigma u)$ represents an element of $\H^q_0(\hat \P)$, evidently mapped by $\bjay$ to $\mathbf u$. Thus $\rg \bjay =  \rg \pmb \varsigma$.

Finally, we note that \eqref{JSFormula} follows at once from
$$
\langle \P_q(\sigma)u(\sigma),v(\overline{\sigma}) \rangle_{F_{q+1}} = \langle \hat{\P}_q(\sigma)u(\sigma),\overline{\sigma}v(\overline{\sigma}) \rangle_{F_{q+1}}
$$
and the definition of the pairing in \eqref{pairing}.
\end{proof}

We are now ready to prove the main result of this section.

\begin{proof}[Proof of Theorem~\ref{NondegenerateFin}]
Without loss of generality let $\sigma_0 = 0$. The proof of the theorem proceeds by induction with respect to $\dim \H^q_0(\P)$. Clearly, if that dimension is zero, there is nothing to prove. So assume that the theorem is proved for complexes such that $\dim \H^q_0 \leq k$ for some $k \in \ZN_0$. Suppose then that we are dealing with a complex $\P$ such that $\dim \H^q_0(\P) \leq k+1$. By way of \eqref{tildePqvanish}, Propositions~\ref{TildeComplex} and \ref{Reduction1} reduce the consideration to the case that $\P_{q'}(0) = 0$ for all maps $\P_{q'}(\sigma)$ in the complex $\P$. Write $\P_{q'}(\sigma) = \sigma \hat{\P}_{q'}(\sigma)$ for all ${q'}$. Let $\mathbf u \in \H^q_0(\P)$ be such that
\begin{equation}\label{null}
\langle \mathbf u ,\mathbf v \rangle_{\P,\P^\star} = 0\text{ for all }\mathbf v \in \H^{q+1}_0(\P^\star).
\end{equation}
Let $v\in F_{q+1}$ be arbitrary. Then $\P_{q+1}^\star(\sigma) \frac{1}{\sigma}v$ is holomorphic at $\sigma=0$, so in \eqref{null} we may choose $\mathbf v$ to be the class of $\frac{1}{\sigma}v$. We get
$$
0 = \langle \mathbf u,\mathbf v \rangle_{\P,\P^\star} = \frac{1}{2\pi}\ointc_{C} \bigl\langle \P_q(\sigma)u(\sigma),\frac{v}{\overline{\sigma}}\bigr\rangle_{F_{q+1}}\,d\sigma = \im \langle h(0),v \rangle_{F_{q+1}},
$$
where $h(\sigma) = \P_q(\sigma)u(\sigma) \in \Hol_0(F_{q+1})$. Because $v \in F_{q+1}$ is arbitrary we conclude that $h(0) = 0$, and consequently $h(\sigma)=\sigma \hat{h}(\sigma)$ for some $\hat h\in \Hol_0(F_q)$. Since also $h(\sigma)=\sigma\hat \P_q(\sigma) u(\sigma)$, $\hat \P_q u$ is holomorphic near $0$. This implies that $\mathbf u$ is in the range of the map \eqref{Jmap} defined in Lemma~\ref{sigmareduct}: $\mathbf u = \bjay \tilde{\mathbf u}$ with $\tilde{\mathbf u} \in \H^q_0(\hat \P)$. Since $
\H^q_0(\hat \P)$ is isomorphic to $\rg\pmb\varsigma$ and $\pmb\varsigma$ is nilpotent, $\dim \H^q_0(\hat \P)\leq k$. Moreover, by \eqref{JSFormula}, we have
$$
\langle \tilde{\mathbf u},\tilde{\mathbf v} \rangle_{\P,\hat \P^\star} = 0
$$
for all $\tilde{\mathbf v} \in \H^{q+1}_0(\hat \P^\star)$. By the inductive hypothesis we now obtain that $\tilde{\mathbf u} = 0$, and consequently also $\mathbf u = \bjay\tilde{\mathbf u} = 0$. This finishes the induction and the proof of the theorem.
\end{proof}

We end the section with the following proposition giving a sufficient criterion for the finite-dimensionality of the cohomology space \eqref{FinCohomologyGerms} in terms of exactness of the original complex \eqref{FinComplexSpaces} for $\sigma$ near $\sigma_0$.

\begin{proposition}\label{FinEllipticFiniteCohom}
Suppose there exists $\eps > 0$ such that \eqref{FinComplexSpaces} is exact in degree $q$ for all $0 < |\sigma-\sigma_0| < \eps$. Then $\dim\H^q_{\sigma_0}(\P) < \infty$.
\end{proposition}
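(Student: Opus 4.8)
The plan is to pass to germs of meromorphic functions at $\sigma_0$ and turn the statement into a pole-order bound, invoking the exactness hypothesis only through elementary linear algebra over the field of meromorphic germs. After the translation $\tau_{\sigma_0}$, which as before reduces us to $\sigma_0=0$, the first step is to upgrade the pointwise exactness hypothesis to exactness over the field $\Mero_0(\C)$ of scalar meromorphic germs at $0$. The maps $\P_q(\sigma)$ induce $\Mero_0(\C)$-linear maps $F_q\otimes_\C\Mero_0(\C)\to F_{q+1}\otimes_\C\Mero_0(\C)$ forming a complex, and the rank of such a map over $\Mero_0(\C)$ equals $\dim\rg\P_q(\sigma)$ for $\sigma$ outside a proper analytic subset of the punctured disc. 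Since \eqref{FinComplexSpaces} is exact in degree $q$ for every $\sigma$ with $0<|\sigma|<\eps$, at such a generic $\sigma$ rank-nullity gives $\dim\rg\P_q(\sigma)+\dim\rg\P_{q-1}(\sigma)=\dim F_q$; hence over $\Mero_0(\C)$ one has $\dim\ker\P_q=\dim\rg\P_{q-1}$, and since $\rg\P_{q-1}\subseteq\ker\P_q$ these subspaces coincide. Thus the complex of $\Mero_0(\C)$-vector spaces is exact in degree $q$.

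Working over the field $\Mero_0(\C)$, the second step is to build a contracting homotopy at position $q$: split $F_q\otimes\Mero_0(\C)=\ker\P_q\oplus C_q$, take $h_q$ to be a section of $\P_{q-1}\colon F_{q-1}\otimes\Mero_0(\C)\to\rg\P_{q-1}=\ker\P_q$ composed with the projection onto $\ker\P_q$, and take $h_{q+1}$ to be the inverse of the isomorphism $C_q\cong\rg\P_q$ (extended by zero off $\rg\P_q$) composed with the projection onto $\rg\P_q$; a direct check yields $\P_{q-1}h_q+h_{q+1}\P_q=\Id$ on $F_q\otimes\Mero_0(\C)$. Since $h_q$ and $h_{q+1}$ are $\Mero_0(\C)$-linear maps between finite-dimensional $\Mero_0(\C)$-vector spaces, they are represented by matrices over $\Mero_0(\C)$, i.e.\ by meromorphic germs $h_q(\sigma),h_{q+1}(\sigma)$; let $M\in\ZN_0$ bound the orders of their poles at $0$.

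The last step reads off finite-dimensionality from $M$. If $u\in\ker\ss\P_q\subset\ss\Mero_0(F_q)$ then $g:=\P_q u$ is holomorphic at $0$, so applying $\ss$ to the homotopy identity $u=\P_{q-1}(h_q u)+h_{q+1}g$ and using that $u$ is purely singular gives $u=\ss\P_{q-1}\bigl(\ss(h_q u)\bigr)+\ss(h_{q+1}g)$. The first summand lies in $\rg\ss\P_{q-1}$, and the second lies in the space $V$ of $F_q$-valued principal parts of pole order at most $M$, which has $\dim_\C V=M\dim_\C F_q$. Hence $\ker\ss\P_q\subseteq\rg\ss\P_{q-1}+V$; writing an arbitrary $u\in\ker\ss\P_q$ as $w+s$ with $w\in\rg\ss\P_{q-1}$ and $s\in V$ forces $s=u-w\in\ker\ss\P_q\cap V$, so $\ker\ss\P_q\cap V$ maps onto $\H^q_0(\P)$ and $\dim\H^q_0(\P)\le\dim_\C V<\infty$.

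I expect the second step to be the delicate point: one must check that exactness over $\Mero_0(\C)$ genuinely produces a homotopy with meromorphic---not merely abstractly $\Mero_0(\C)$-linear---coefficients, so that a uniform pole order $M$ exists; the rest is bookkeeping. Alternatively one can bypass the explicit homotopy: the short exact sequence of complexes $0\to F_\bullet\otimes\Hol_0(\C)\to F_\bullet\otimes\Mero_0(\C)\to\ss\Mero_0(F_\bullet)\to 0$, together with the vanishing of the degree-$q$ cohomology of the middle complex established in the first step, embeds $\H^q_0(\P)$ into the $\Hol_0(\C)$-torsion submodule of the finitely generated $\Hol_0(\C)$-module $\H^{q+1}(F_\bullet\otimes\Hol_0(\C))$, which has finite length and hence finite $\C$-dimension.
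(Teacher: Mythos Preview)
Your proof is correct and takes a genuinely different route from the paper's. The paper builds the holomorphic family of Laplacians
\[
\square_q(\sigma)= \P_q^{\star}(\sigma - 2\im\Im(\sigma_0))\P_q(\sigma) + \P_{q-1}(\sigma)\P_{q-1}^{\star}(\sigma - 2\im\Im(\sigma_0)),
\]
uses exactness on the line $\Im\sigma=\Im\sigma_0$ to get a meromorphic inverse $\G_q(\sigma)$, and then reads off a Hodge-type identity $u=\ss\G_q\P_q^{\star}\P_q u+\ss\P_{q-1}\P_{q-1}^{\star}\G_q u$ which bounds the pole order of representatives by that of $\G_q$. Your argument bypasses inner products and Laplacians entirely: you pass to the field $\Mero_0(\C)$, show exactness there by comparing generic ranks, and extract a contracting homotopy by pure linear algebra. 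Your worry about Step~2 is unfounded: since $F_q\otimes\Mero_0(\C)$ is free of finite rank with basis any $\C$-basis of $F_q$, every $\Mero_0(\C)$-linear map between such spaces is literally a matrix with meromorphic entries, so the uniform pole bound $M$ is automatic. The long-exact-sequence alternative you sketch is also valid and arguably the cleanest route, using only that $\Hol_0(\C)$ is a discrete valuation ring. What the paper's approach buys is a concrete analytic construction that transports directly to the infinite-dimensional manifold setting of Proposition~\ref{EllipticFiniteCohomMfds}, where the Laplacian and its parametrix are the natural tools; your algebraic argument, while more elegant here, would require additional work (e.g.\ Fredholm theory to control ranks) to adapt to that context.
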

\begin{proof}
Let $\square_q(\sigma):F_q \to F_{q}$ be defined by 
\begin{equation*}
\square_q(\sigma)= \P_q^{\star}(\sigma - 2\im\Im(\sigma_0))\P_q(\sigma) + \P_{q-1}(\sigma)\P_{q-1}^{\star}(\sigma - 2\im\Im(\sigma_0)).
\end{equation*}
Then $\square_q \in \Hol_{\sigma_0}(\LL(F_q))$, and $\square_q(\sigma)$ is the 
Laplacian in degree $q$ of the complex \eqref{FinComplexSpaces} for $\sigma$ such that $|\sigma-\sigma_0|<\eps$ and $\Im(\sigma) = \Im(\sigma_0)$. By assumption, $\square_q(\sigma)^{-1} \in \LL(F_q)$ exists for such $\sigma$ if also $\sigma\ne \sigma_0$. Consequently we have
\begin{equation*}
\G_q(\sigma) = \square_q(\sigma)^{-1} \in \Mero_{\sigma_0}(\LL(F_q)).
\end{equation*}
Observe that
\begin{equation*}
\square_q(\sigma)\P_{q-1}(\sigma)\P_{q-1}^{\star}(\sigma - 2\im\Im(\sigma_0)) = \P_{q-1}(\sigma)\P_{q-1}^{\star}(\sigma - 2\im\Im(\sigma_0))\square_q(\sigma) \in \LL(F_q)
\end{equation*}
for all $\sigma$ near $\sigma_0$, and thus also
\begin{equation*}
\G_q(\sigma)\P_{q-1}(\sigma)\P_{q-1}^{\star}(\sigma - 2\im\Im(\sigma_0)) = \P_{q-1}(\sigma)\P_{q-1}^{\star}(\sigma - 2\im\Im(\sigma_0))\G_q(\sigma).
\end{equation*}
Now let $u \in \ss\Mero_{\sigma_0}(F_q)$ be arbitrary, and write
\begin{align*}
u(\sigma) &= \G_q(\sigma)\square_q(\sigma)u(\sigma) \\
&= \Bigl(\G_q(\sigma)\P_q^{\star}(\sigma - 2\im\Im(\sigma_0))\P_q(\sigma) + \G_q(\sigma)\P_{q-1}(\sigma)\P_{q-1}^{\star}(\sigma - 2\im\Im(\sigma_0))\Bigr)u(\sigma) \\
&= \G_q(\sigma)\P_q^{\star}(\sigma - 2\im\Im(\sigma_0))\P_q(\sigma)u(\sigma) + \P_{q-1}(\sigma)\P_{q-1}^{\star}(\sigma - 2\im\Im(\sigma_0))\G_q(\sigma)u(\sigma)\\
&=\ss\G_q(\sigma)\P_q^{\star}(\sigma - 2\im\Im(\sigma_0))\P_q(\sigma)u(\sigma) + \ss \P_{q-1}(\sigma)\P_{q-1}^{\star}(\sigma - 2\im\Im(\sigma_0))\G_q(\sigma)u(\sigma).
\end{align*}
Thus every $u \in \ss\Mero_{\sigma_0}(F_q)$ is of the form
\begin{equation}\label{FinCohomRep}
u(\sigma) \equiv \ss\G_q(\sigma)\P_q^{\star}(\sigma - 2\im\Im(\sigma_0))\P_q(\sigma)u(\sigma) \mod \rg\ss \P_{q-1}.
\end{equation}
Now, if $u$ is $\ss \P$-closed, then $\P_q(\sigma)u(\sigma)\in \Hol_{\sigma_0}(F_{q+1})$, and it follows that every cohomology class in $\H^q_{\sigma_0}(\P)$ has a representative in 
\begin{equation*}
{\mathcal U}_q = \set{\ss\G_q h : h \in \Hol_{\sigma_0}(F_q)} \subset \ss\Mero_{\sigma_0}(F_q).
\end{equation*}
This is a finite-dimensional space because the order of the pole at $\sigma_0$ of any of its elements is bounded by the order of the pole of $\G_q(\sigma)$ at $\sigma_0$. Thus $\dim\H^q_{\sigma_0}(\P) < \infty$ as desired.
\end{proof}

\section{Holomorphic families of elliptic complexes on closed manifolds}\label{Reduction}

Let $\Z$ be a closed manifold and let $E_{\Z}^q \to \Z$ be vector bundles for $q = 0,\dotsc,m$. Consider
$$
\P_q \in \Hol_{\sigma_0}(\Diff^1(\Z;E_{\Z}^q,E_{\Z}^{q+1}))
$$
such that $\P_{q+1}\circ \P_{q} = 0$ as germs. We assume that the principal symbol $\sym(\P_q)$ is independent of $\sigma$ for all $q$, and that the complex
\begin{equation}\label{ComplexPrincSymbol}
0 \to \pi^*E_{\Z}^0 \xrightarrow{\sym(\P_0)} \pi^*E_{\Z}^1 \to \cdots \to \pi^*E_{\Z}^{m-1} \xrightarrow{\sym(\P_{m-1})} \pi^*E_{\Z}^m \to 0
\end{equation}
is exact over $T^*\Z\minus 0$, where $\pi : T^*\Z \to \Z$ is the canonical projection. We thus obtain a holomorphic family of elliptic complexes
\begin{equation}\label{ComplexSpacesMfds}
\cdots \to C^{\infty}(\Z;E_{\Z}^{q}) \xrightarrow{\P_q(\sigma)} C^{\infty}(\Z;E_{\Z}^{q+1}) \to \cdots
\end{equation}
for $\sigma$ near $\sigma_0$. Using singular parts as in previous sections, we get an induced complex
\begin{equation}\label{ComplexGermsMfds}
\cdots \to \ss\Mero_{\sigma_0}(C^{\infty}(\Z;E_{\Z}^{q})) \xrightarrow{\ss \P_q} \ss\Mero_{\sigma_0}(C^{\infty}(\Z;E_{\Z}^{q+1})) \to \cdots.
\end{equation}
Let $\m$ a smooth positive density and suppose we are given Hermitian inner products on all the $E_{\Z}^q$. Using the resulting $L^2$-inner products we define
\begin{equation}\label{Pqstardef}
\P_q^{\star}(\sigma)= \P_q(\overline{\sigma})^\star \in \Hol_{\overline{\sigma}_0}(\Diff^1(\Z;E_{\Z}^{q+1},E_{\Z}^q)),
\end{equation}
where $\P_q(\overline{\sigma})^\star$ is the formal adjoint of $\P_q(\overline{\sigma}):C^{\infty}(\Z;E_{\Z}^{q}) \to C^{\infty}(\Z;E_{\Z}^{q+1})$. Again $\P_{q-1}^\star\circ \P_q^\star= 0$ as germs, and we get adjoint complexes
\begin{gather}
\cdots \leftarrow C^{\infty}(\Z;E_{\Z}^{q}) \xleftarrow{\P^{\star}_q(\sigma)} C^{\infty}(\Z;E_{\Z}^{q+1}) \leftarrow \cdots \label{ComplexSpacesAdjMfds} \\
\cdots \leftarrow \ss\Mero_{\overline \sigma _0}(C^{\infty}(\Z;E_{\Z}^{q})) \xleftarrow{\ss \P^ \star_q} \ss\Mero_{\overline \sigma_0}(C^{\infty}(\Z;E_{\Z}^{q+1})) \leftarrow \cdots. \label{ComplexGermsAdjMfds}
\end{gather}
The complex \eqref{ComplexSpacesAdjMfds} is elliptic for each $\sigma$. We extend all complexes trivially to all degrees $q \in {\mathbb Z}$. Let
\begin{equation*}
\H^q_{\sigma_0}(\Z;\P)= \ker \ss \P_q / \rg \ss \P_{q-1},\quad 
\H^q_{\overline{\sigma}_0}(\Z;\P^\star)= \ker \ss \P_{q-1}^ \star/\rg \ss \P_q^\star
\end{equation*}
be the cohomology spaces of \eqref{ComplexGermsMfds} and \eqref{ComplexGermsAdjMfds} in degree $q$, respectively. The analogue of Proposition~\ref{FinEllipticFiniteCohom} holds.

\begin{proposition}\label{EllipticFiniteCohomMfds}
Suppose there exists some $\eps > 0$ such that \eqref{ComplexSpacesMfds} is exact in degree $q$ for all $0 < |\sigma-\sigma_0| < \eps$. Then $\dim \H^q_{\sigma_0}(\Z;\P) < \infty$.
\end{proposition}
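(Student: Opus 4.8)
The plan is to mimic the proof of Proposition~\ref{FinEllipticFiniteCohom}, replacing finite-dimensional linear algebra by elliptic theory on the closed manifold $\Z$ together with analytic Fredholm theory, and then to dispatch the one point that is genuinely new: that the space of cohomology representatives so produced is finite-dimensional even though the relevant resolvent is now operator-valued.

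First I would introduce the family of Hodge Laplacians
\begin{equation*}
\square_q(\sigma) = \P_q^{\star}(\sigma - 2\im\Im(\sigma_0))\P_q(\sigma) + \P_{q-1}(\sigma)\P_{q-1}^{\star}(\sigma - 2\im\Im(\sigma_0)) \in \Hol_{\sigma_0}\big(\Diff^2(\Z;E_\Z^q)\big).
\end{equation*}
Because the principal symbols of the $\P_{q'}$ are independent of $\sigma$, the principal symbol of $\square_q(\sigma)$ is independent of $\sigma$ and equals the symbol Hodge Laplacian of the exact complex \eqref{ComplexPrincSymbol} in degree $q$; hence $\square_q(\sigma)$ is elliptic for every $\sigma$ near $\sigma_0$. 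On the segment $\set{\sigma : |\sigma - \sigma_0| < \eps,\ \Im\sigma = \Im\sigma_0}$ one has $\sigma - 2\im\Im(\sigma_0) = \overline\sigma$, so there $\square_q(\sigma)$ is the genuine $L^2$ Hodge Laplacian in degree $q$ of the elliptic complex \eqref{ComplexSpacesMfds}; its kernel is the degree-$q$ harmonic space, which vanishes for $0 < |\sigma - \sigma_0| < \eps$ by the exactness hypothesis, so $\square_q(\sigma)$ is injective, self-adjoint and elliptic, hence invertible there. Since this punctured segment accumulates at $\sigma_0$, analytic Fredholm theory applies: $\G_q(\sigma) = \square_q(\sigma)^{-1}$ is a finitely meromorphic family near $\sigma_0$ with values in $\Psi^{-2}(\Z;E_\Z^q)$; in particular the Laurent coefficients of its singular part $\ss\G_q$ at $\sigma_0$ are operators of finite rank.

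Next I would run the formal manipulation of the proof of Proposition~\ref{FinEllipticFiniteCohom} verbatim. Since $\square_q$ commutes with $\P_{q-1}(\cdot)\P_{q-1}^{\star}(\cdot - 2\im\Im(\sigma_0))$, so does $\G_q$ where it is holomorphic, and for any $u \in \ss\Mero_{\sigma_0}(C^\infty(\Z;E_\Z^q))$ one gets, after writing $u = \G_q\square_q u$, splitting, and taking singular parts,
\begin{equation*}
u(\sigma) \equiv \ss\G_q(\sigma)\,\P_q^{\star}(\sigma - 2\im\Im(\sigma_0))\P_q(\sigma)u(\sigma) \mod \rg\ss\P_{q-1}.
\end{equation*}
If $u$ is $\ss\P$-closed then $\P_q u \in \Hol_{\sigma_0}$, hence so is $h := \P_q^{\star}(\cdot - 2\im\Im(\sigma_0))\P_q u$, and the class of $u$ in $\H^q_{\sigma_0}(\Z;\P)$ is represented by an element of $\mathcal U_q := \set{\ss\G_q h : h \in \Hol_{\sigma_0}(C^\infty(\Z;E_\Z^q))}$.

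The main obstacle is to show $\dim\mathcal U_q < \infty$; this is where the proof departs from the finite-dimensional case, in which the crude bound on the order of the pole of $\ss\G_q h$ already sufficed. Here I would use the finite-rank conclusion of analytic Fredholm theory: write $\ss\G_q(\sigma) = \sum_{j=1}^{M}(\sigma-\sigma_0)^{-j}G_{-j}$ with each $G_{-j}$ of finite rank, and set $W = \sum_{j=1}^{M}\rg G_{-j}$, a fixed finite-dimensional subspace of $C^\infty(\Z;E_\Z^q)$. Expanding $h\in\Hol_{\sigma_0}$ in its Taylor series at $\sigma_0$, the coefficients of the singular part $\ss(\G_q h)(\sigma) = \sum_{j=1}^{M}(\sigma-\sigma_0)^{-j}w_j$ all lie in $W$; hence $\mathcal U_q$ is contained in the space of $W$-valued Laurent tails of order at most $M$, which has dimension $\le M\dim W < \infty$. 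Therefore $\H^q_{\sigma_0}(\Z;\P)$ is finite-dimensional. The only steps requiring care beyond what is done for Proposition~\ref{FinEllipticFiniteCohom} are the invertibility of $\square_q(\sigma)$ on the accumulating segment — i.e.\ the Hodge-theoretic identification of its kernel with cohomology in degree $q$, which uses that the ranges of the $\P_{q'}$ are closed (itself a consequence of ellipticity of the complex) — and the invocation of the finitely meromorphic structure of $\G_q$; the remaining computations I would simply refer back to the proof of Proposition~\ref{FinEllipticFiniteCohom}.
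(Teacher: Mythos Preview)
Your proposal is correct and follows essentially the same route as the paper's proof: the same shifted Laplacian $\square_q(\sigma)$, the same appeal to analytic Fredholm theory for the finitely meromorphic inverse $\G_q$, the same reduction formula modulo $\rg\ss\P_{q-1}$, and the same finite-dimensional container $\mathcal V_q$ (your $W$-valued Laurent tails) for the space $\mathcal U_q$ of representatives. The only cosmetic difference is that you spell out why the Laplacian is invertible on the punctured real segment, whereas the paper records this more tersely.
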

\begin{proof}
The proof of this proposition is an adaptation of that of Proposition~\ref{FinEllipticFiniteCohom}. Define $\square_q(\sigma) : C^{\infty}(\Z;E_{\Z}^q) \to C^{\infty}(\Z;E_{\Z}^q)$ by 
\begin{equation*}
\square_q(\sigma)= \P_q^{\star}(\sigma - 2\im\Im(\sigma_0))\P_q(\sigma) + \P_{q-1}(\sigma)\P_{q-1}^{\star}(\sigma - 2\im\Im(\sigma_0)).
\end{equation*}
Then $\square_q\in \Hol_{\sigma_0}(\Diff^2(\Z;E_{\Z}^q))$ is a holomorphic family of elliptic differential operators defined for $\sigma$ near $\sigma_0$ that coincides with the Laplacian in degree $q$ of the complex \eqref{ComplexSpacesMfds} for $\Im(\sigma) = \Im(\sigma_0)$. By assumption, $\square_q(\sigma)$ is invertible for $0 < |\sigma-\sigma_0| < \eps$ when $\Im(\sigma) = \Im(\sigma_0)$, and from analytic Fredholm theory we obtain that $\G_q(\sigma) = \square_q(\sigma)^{-1}$ is a meromorphic family of pseudodifferential operators; the Laurent coefficients of the singular part of $\G_q(\sigma)$ at $\sigma_0$,
\begin{equation*}
\ss\G_q=\sum_{\ell=-L}^{-1} \G_{q,_\ell}(\sigma-\sigma_0)^\ell,
\end{equation*}
are smoothing pseudodifferential operators of finite rank. 

The analogue of \eqref{FinCohomRep} holds also here for the same reasons, giving
\begin{equation*}
u(\sigma) \equiv \ss\G_q(\sigma)\P_q^{\star}(\sigma - 2\im\Im(\sigma_0)) \P_q(\sigma)u(\sigma) \mod \rg\ss \P_{q-1}
\end{equation*}
for any $u\in \ss\Mero_{\sigma_0}(C^\infty(\Z;E_{\Z}^q))$ and leading to the same conclusion as in the proof of Proposition~\ref{FinEllipticFiniteCohom}: every cohomology class in $\H^q_{\sigma_0}(\Z;\P)$ has a representative in
\begin{equation*}
\mathcal U_q = \set{\ss\G_q h : h \in \Hol_{\sigma_0}(C^\infty(\Z;E_{\Z}^q))}.
\end{equation*}
The space 
\begin{equation*}
V = \rg \G_{q,-L} + \dots + \rg \G_{q,-1} \subset C^{\infty}(\Z;E_{\Z}^{q})
\end{equation*}
is finite-dimensional because the operators $\G_{q,\ell}$ have finite rank. Since
\begin{equation*}
\mathcal V_q= \Set{\sum_{\ell=-L}^{-1} v_\ell (\sigma-\sigma_0)^\ell:v_\ell\in V}
\end{equation*}
is therefore also finite-dimensional, so is $\mathcal U_q\subset \mathcal V_q$, hence also $\H^q_{\sigma_0}(\Z;\P)$.
\end{proof}

Analogously to \eqref{pairing}, we consider for each $q$ the pairing
\begin{equation}\label{pairingmfds}
\begin{gathered}
\langle \cdot,\cdot \rangle_{\P,\P^\star} : \Mero_{\sigma_0}(C^{\infty}(\Z;E_{\Z}^q)) \times \Mero_{\overline{\sigma}_0}(C^{\infty}(\Z;E_{\Z}^{q+1})) \to \C, \\
\langle u,v \rangle_{\P,\P^\star}= \frac{1}{2\pi}\ointc_{C} \langle \P_q(\sigma)u(\sigma),v(\overline{\sigma}) \rangle_{L^2(\Z;E_{\Z}^{q+1})}\,d\sigma,
\end{gathered}
\end{equation}
where $C$ is a sufficiently small counterclockwise oriented circle centered at $\sigma_0$ so that all germs are defined and holomorphic in a neighborhood of the disk bounded by $C$ (except at the pole at the center). As in Proposition~\ref{PairingInCohomology}, this pairing induces a sesquilinear pairing
\begin{equation}\label{pairingcohommfds}
\langle \cdot,\cdot \rangle_{\P,\P^\star} : \H^q_{\sigma_0}(\Z;\P) \times \H^{q+1}_{\overline{\sigma}_0}(\Z;\P^\star) \to \C
\end{equation}
in cohomology. 

\begin{theorem}\label{NondegenerateMfds}
Suppose $\dim \H^q_{\sigma_0}(\Z;\P) < \infty$. If $\mathbf u \in \H^q_{\sigma_0}(\Z;\P)$ is such that
$$
\langle \mathbf u,\mathbf v \rangle_{\P,\P^\star} = 0\text{ for all }\mathbf v \in \H^{q+1}_{\overline \sigma _0}(\Z;\P^\star), 
$$
then $\mathbf u = 0$. In particular, if both $\dim \H^q_{\sigma_0}(\Z;\P) < \infty$ and $\dim \H^{q+1}_{\overline{\sigma}_0}(\Z;\P^\star) < \infty$, then these dimensions are equal and the pairing \eqref{pairingcohommfds} is nondegenerate.
\end{theorem}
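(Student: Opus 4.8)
The plan is to reduce Theorem~\ref{NondegenerateMfds} to the finite-dimensional statement Theorem~\ref{NondegenerateFin} by carrying out the Schur-complement reduction of Section~\ref{FiniteDimCase} at the single point $\sigma = \sigma_0$, with the elliptic Hodge--Kodaira decomposition of the complex \eqref{ComplexSpacesMfds} at $\sigma_0$ playing the role of the algebraic splitting \eqref{Pqmatrixdecomposition}. Concretely, I would first fix the $L^2$-orthogonal decomposition
\[
C^{\infty}(\Z;E^q_\Z) = N_q \oplus R_q^{\star} \oplus R_q, \qquad N_q = \ker\P_q(\sigma_0) \cap \ker\P_{q-1}(\sigma_0)^{*},\quad R_q^{\star} = \rg\P_q(\sigma_0)^{*},\quad R_q = \rg\P_{q-1}(\sigma_0),
\]
associated with the elliptic complex \eqref{ComplexSpacesMfds} at $\sigma_0$, where $\P_{q-1}(\sigma_0)^{*}$ is the formal $L^2$-adjoint of the operator $\P_{q-1}(\sigma_0)$: here $N_q$ is finite-dimensional, $R_q^{\star}$ and $R_q$ are closed $C^{\infty}$-subspaces, and all three projections are pseudodifferential, hence preserve $C^{\infty}$. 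With respect to this splitting $\P_q(\sigma)$ acquires the block form \eqref{Pqmatrixdecomposition}, and at $\sigma = \sigma_0$ every block vanishes except $\P_{q,32}(\sigma_0) = \P_q(\sigma_0)|_{R_q^{\star}}$ (because $N_q\oplus R_q\subset\ker\P_q(\sigma_0)$ while $\rg\P_q(\sigma_0)=R_{q+1}$ is orthogonal to $N_{q+1}\oplus R_{q+1}^{\star}$), exactly as in Section~\ref{FiniteDimCase}; moreover $\P_{q,32}(\sigma_0):R_q^{\star}\to R_{q+1}$ is a bijection with pseudodifferential inverse of order $-1$, by Hodge theory and elliptic regularity.

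The one genuinely analytic point, which I expect to be the main obstacle, is to upgrade this to the statement that $\P_{q,32}(\sigma):R_q^{\star}\to R_{q+1}$ is a holomorphic family of isomorphisms of $C^{\infty}$-sections for $\sigma$ in a neighborhood of $\sigma_0$ common to all degrees $q$. A naive perturbation argument is unavailable since $\P_{q,32}(\sigma)-\P_{q,32}(\sigma_0)$ is a first-order operator, hence not small in any fixed operator norm; instead one factors $\P_{q,32}(\sigma)=\P_{q,32}(\sigma_0)\bigl(\Id+\P_{q,32}(\sigma_0)^{-1}(\P_{q,32}(\sigma)-\P_{q,32}(\sigma_0))\bigr)$, observes that $\P_{q,32}(\sigma_0)^{-1}(\P_{q,32}(\sigma)-\P_{q,32}(\sigma_0))$ is an order-zero operator of size $O(|\sigma-\sigma_0|)$ on every Sobolev space, sums the resulting holomorphic Neumann series on each $H^s$, and invokes elliptic regularity to conclude that the inverse preserves $C^{\infty}$ and depends holomorphically on $\sigma$.

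Granted the invertibility of $\P_{q,32}(\sigma)$, the rest is a transcription of Section~\ref{FiniteDimCase} with finite-dimensional linear algebra replaced by operator algebra on $C^{\infty}$-sections, using the $L^2$-orthogonality of the Hodge--Kodaira decomposition. Setting $\tilde{\P}_q(\sigma)=\P_{q,11}(\sigma)-\P_{q,12}(\sigma)\P_{q,32}(\sigma)^{-1}\P_{q,31}(\sigma)$ as in \eqref{Ptilde}, the argument of Proposition~\ref{TildeComplex} goes through verbatim: it yields a holomorphic family of finite-dimensional complexes $\tilde{\P}$ on the $N_{\bullet}$, together with holomorphic chain maps $\Phi:C^{\infty}(\Z;E^{\bullet}_\Z)\to N_{\bullet}$ and $\Psi:N_{\bullet}\to C^{\infty}(\Z;E^{\bullet}_\Z)$ with $\Phi\circ\Psi=\Id$ and $\Psi\circ\Phi$ chain homotopic to $\Id$ via the pseudodifferential homotopy $Q_{\bullet}(\sigma)$ built from $\P_{q,32}(\sigma)^{-1}$. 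Passing to singular parts of meromorphic germs, these induce an isomorphism $\pmb\Phi_q:\H^q_{\sigma_0}(\Z;\P)\to\H^q_{\sigma_0}(\tilde{\P})$; the same construction applied to the formal-adjoint complex $\{\P_q^{\star}(\overline{\sigma}_0)\}_q$, whose Hodge--Kodaira decomposition at $\overline{\sigma}_0$ has the same harmonic spaces $N_q$ with the roles of $R$ and $R^{\star}$ interchanged, produces $\tilde{\P}^{\star}$ as the adjoint of $\tilde{\P}$ and an isomorphism $\pmb\Psi^{\star}_{q+1}:\H^{q+1}_{\overline{\sigma}_0}(\Z;\P^{\star})\to\H^{q+1}_{\overline{\sigma}_0}(\tilde{\P}^{\star})$. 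The computation in the proof of Proposition~\ref{Reduction1} then carries over unchanged (the additional term is $\langle Q_{q+1}\P_q u,\P_q^{\star}v\rangle$, which is holomorphic and thus disappears under the contour integral), giving $\langle\pmb\Phi_q\mathbf u,\pmb\Psi^{\star}_{q+1}\mathbf v\rangle_{\tilde{\P},\tilde{\P}^{\star}}=\langle\mathbf u,\mathbf v\rangle_{\P,\P^{\star}}$, so that the pairing \eqref{pairingcohommfds} is carried onto the finite-dimensional pairing \eqref{pairingcohom} of $\tilde{\P}$.

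Since $\H^q_{\sigma_0}(\tilde{\P})\cong\H^q_{\sigma_0}(\Z;\P)$ is finite-dimensional, Theorem~\ref{NondegenerateFin} applied to $\tilde{\P}$ finishes the proof: if $\mathbf u\in\H^q_{\sigma_0}(\Z;\P)$ pairs to zero against all of $\H^{q+1}_{\overline{\sigma}_0}(\Z;\P^{\star})$, then $\pmb\Phi_q\mathbf u$ pairs to zero against all of $\H^{q+1}_{\overline{\sigma}_0}(\tilde{\P}^{\star})$, hence $\pmb\Phi_q\mathbf u=0$ and therefore $\mathbf u=0$. Under the additional hypothesis $\dim\H^{q+1}_{\overline{\sigma}_0}(\Z;\P^{\star})<\infty$, the complex $\tilde{\P}$ meets the hypotheses of Corollary~\ref{NondegFinCor}, and the conclusion that the two dimensions agree and that \eqref{pairingcohommfds} is nondegenerate transports back along $\pmb\Phi_q$ and $\pmb\Psi^{\star}_{q+1}$.
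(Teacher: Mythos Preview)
Your proposal is correct and follows essentially the same route as the paper: Hodge--Kodaira decompose $C^\infty(\Z;E^q_\Z)$ at the base point, use that $\sym(\P_q)$ is $\sigma$-independent so the difference $\P_q(\sigma)-\P_q(\sigma_0)$ has order zero, invert $\P_{q,32}(\sigma)$ by a Neumann series, and then run Propositions~\ref{TildeComplex} and~\ref{Reduction1} verbatim to reduce to Theorem~\ref{NondegenerateFin}. One small slip: the perturbation $\P_{q,32}(\sigma_0)^{-1}\bigl(\P_{q,32}(\sigma)-\P_{q,32}(\sigma_0)\bigr)$ is of order $-1$, not order zero (you compose an order $-1$ parametrix with an order-zero difference), which only helps your Neumann series argument.
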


The proof follows by reduction to the finite-dimensional case via the analogues of Propositions~\ref{TildeComplex} and Proposition~\ref{Reduction1}. We focus on defining the operators needed in Proposition~\ref{TildeComplex}.

Let
\begin{equation}\label{Laplacian0}
\square_q(0) = \P_q^{\star}(0)\P_q(0) + \P_{q-1}(0)\P_{q-1}^{\star}(0) : C^{\infty}(\Z;E_{\Z}^q) \to C^{\infty}(\Z;E_{\Z}^q)
\end{equation}
be the Laplacian of the complex \eqref{ComplexSpacesMfds} in degree $q$ at $\sigma = 0$, let
\begin{equation}\label{CohomologyAt0}
N_q = \ker(\square_q(0) : C^{\infty}(\Z;E_{\Z}^q) \to C^{\infty}(\Z;E_{\Z}^{q})).
\end{equation}
The ellipticity of the complex gives that $N_q\subset C^\infty(\Z;E_{\Z}^q)$ is finite-dimensional. Let $\Pi_{N_q}:L^2(\Z;E_{\Z}^q)\to L^2(\Z;E_{\Z}^q)$ be the orthogonal projection on $N_q$; this is a smoothing operator. Let $G_q$ be the Green's operator of \eqref{Laplacian0}, so 
\begin{equation*}
\square_q(0)G_q=G_q\square_q(0)=\Id-\Pi_{N_q}\text{ and } G_q\Pi_{N_q} = 0.
\end{equation*}
Define
\begin{equation*}
\Pi_{R_q}=\P_{q-1}(0)\P_{q-1}^\star(0)G_q,\quad \Pi_{R_q^\star}=\P_q^\star(0)\P_q(0)G_q.
\end{equation*}
These are zeroth order pseudodifferential operators (since $G_q$ is a pseudodifferential operator of order $-2$), the orthogonal projections $L^2(\Z;E_{\Z}^q)\to L^2(\Z;E_{\Z}^q)$ onto the $L^2$-closure of the subspaces
\begin{align*}
R_q^{\star} &= \rg(\P_q^{\star}(0) : C^{\infty}(\Z;E_{\Z}^{q+1}) \to C^{\infty}(\Z;E_{\Z}^q)), \\
R_q &= \rg(\P_{q-1}(0) : C^{\infty}(\Z;E_{\Z}^{q-1}) \to C^{\infty}(\Z;E_{\Z}^q)).
\end{align*}
From $\Id=\Pi_{N_q}+\Pi_{R_q}+\Pi_{R_q^\star}$ and the fact that pseudodifferential operators preserve smoothness one obtains the smooth Hodge-Kodaira decomposition
\begin{equation*}
C^{\infty}(\Z;E_{\Z}^q) = N_q \oplus  R_q^{\star} \oplus  R_q,
\end{equation*}
an orthogonal decomposition of $C^{\infty}(\Z;E_{\Z}^q)$ with respect to the $L^2(\Z;E_{\Z}^q)$-inner product which is also topological.

With the obvious definitions,
\begin{equation*}
\P_q(\sigma) = \begin{bmatrix}
\P_{q,11}(\sigma) & \P_{q,12}(\sigma) & \P_{q,13}(\sigma) \\[2pt]
\P_{q,21}(\sigma) & \P_{q,22}(\sigma) & \P_{q,23}(\sigma) \\[2pt]
\P_{q,31}(\sigma) & \P_{q,32}(\sigma) & \P_{q,33}(\sigma)
\end{bmatrix} :
\begin{array}{c} N_q \\ \oplus \\ R_q^{\star} \\ \oplus \\ R_q \end{array} \to
\begin{array}{c} N_{q+1} \\ \oplus \\ R_{q+1}^{\star} \\ \oplus \\ R_{q+1} \end{array}.
\end{equation*}
The entries $\P_{q,ij}(\sigma)$ are (restrictions to the respective domains of) first order pseudodifferential operators. For instance, $\P_{q,32}(\sigma)=\Pi_{R_{q+1}} \P_q(\sigma)\big|_{R_q^\star}$. Note that $\P_{q,ij}$ is smoothing if $i=1$ or $j=1$.

We claim that $\P_{q,32}$ has a holomorphic inverse for $\sigma$ near $0$ which is the restriction to $R_{q+1}$ of a holomorphic family of pseudodifferential operators of order $-1$. Indeed, let
\begin{equation*}
\B_q(0)=\P_q^\star(0)G_{q+1}.
\end{equation*}
From the definitions, $\P_q(0)\B_q(0)=\Pi_{R_{q+1}}$. Since the principal symbol of $\P_q(\sigma)$ is independent of $\sigma$, $\P_q(0)-\P_q(\sigma)$ is of order zero for each $\sigma$, so $\sigma\mathcal R_{q+1}(\sigma)=(\P_q(0)-\P_q(\sigma))\B_q(0)$ is a holomorphic family of pseudodifferential operators of order $-1$ that indeed vanishes at $\sigma=0$. We have
\begin{align*}
\Pi_{R_{q+1}}\P_q(\sigma)\B_q(0)&=\Pi_{R_{q+1}} \P_q(0)\B_q(0)-\Pi_{R_{q+1}}(\P_q(0)-\P_q(\sigma))\B_q(0)\\
&=\Pi_{R_{q+1}}(\Id-\sigma\mathcal R_{q+1}(\sigma))
\end{align*}
on account that $\Pi_{R_{q+1}}$ is a projection. Thus 
\begin{equation*}
(\Id -\sigma\mathcal R_{q+1}(\sigma))^{-1}=\sum_{k=0}^\infty \sigma^k\mathcal R_{q+1}(\sigma)^k,
\end{equation*}
defined for small $|\sigma|$, is a holomorphic family of pseudodifferential operators of order zero. Let $\B_q(\sigma)=\B_q(0)(\Id -\sigma\mathcal R_{q+1}(\sigma))^{-1}$. This operator maps $C^\infty(\Z;E_{\Z}^{q+1})$ into $R_q^\star$ and satisfies
\begin{equation*}
\Pi_{R_{q+1}}\P_q(\sigma)\B_q(\sigma) =\Pi_{R_{q+1}}
\end{equation*}
for $\sigma$ near $0$. The restriction of this formula to $R_{q+1}$ shows that 
\begin{equation}\label{Pq32map}
\P_{q,32}(\sigma):R_q^\star\to R_{q+1}
\end{equation}
is invertible from the right with right inverse 
\begin{equation*}
\B_q(\sigma)\big|_{R_{q+1}}:R_{q+1}\to R_q^\star. 
\end{equation*}
Invertibility of \eqref{Pq32map} from the left and construction of a holomorphic left inverse for $\sigma$ near $0$ follows along the same lines.

We now observe that Proposition~\ref{TildeComplex} and its proof hold verbatim with this new definition; the spaces are of course the $N_q$ and the operators are defined by the formula \eqref{Ptilde}. The cohomology groups of \eqref{ComplexGermsMfds} are thus isomorphic to those of the complex \eqref{NQuotientComplex}. Furthermore, the statement and proof of Proposition~\ref{Reduction1} also hold in the present context, so Theorem~\ref{NondegenerateFin} and Corollary~\ref{NondegFinCor} yield the conclusion in Theorem~\ref{NondegenerateMfds}.

Finally, we point out one additional important consequence of the arguments used here:

\begin{proposition}\label{CohomGermsCarriedByCohom}
Let $H^q(\Z;\P(\sigma_0))$ be the cohomology space in degree $q$ of the complex \eqref{ComplexSpacesMfds} for $\sigma=\sigma_0$. If $H^q(\Z;\P(\sigma_0)) = \set{0}$, then ${\H}^q_{\sigma_0}(\Z;\P) = \set{0}$.
\end{proposition}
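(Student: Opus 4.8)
The plan is to apply the Hodge--Kodaira reduction of Section~\ref{Reduction} and to observe that the hypothesis forces the reduced complex to be trivial in degree $q$. First I would reduce to the case $\sigma_0=0$: exactly as in the finite-dimensional discussion and in the proof of Theorem~\ref{NondegenerateMfds}, the translation $\tau_{\sigma_0}$ induces an isomorphism $\H^q_{\sigma_0}(\Z;\P)\cong \H^q_0(\Z;\Q)$ for the holomorphic family $\Q_q(\sigma)=\P_q(\sigma+\sigma_0)$, and one has $\Q_q(0)=\P_q(\sigma_0)$ together with $\Q_q^{\star}(0)=\Q_q(\overline 0)^{\star}=\P_q(\sigma_0)^{\star}$. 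Thus, relabelling $\Q$ as $\P$, it is enough to treat the case $\sigma_0=0$ and $H^q(\Z;\P(0))=\{0\}$.

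The next step: with $\sigma_0=0$ the operator $\square_q(0)=\P_q^{\star}(0)\P_q(0)+\P_{q-1}(0)\P_{q-1}^{\star}(0)$ of \eqref{Laplacian0} is the genuine Hodge Laplacian in degree $q$ of the complex $\{\P_\bullet(0)\}$, which is elliptic because the symbol sequence \eqref{ComplexPrincSymbol} is exact and $\sigma$-independent. By standard Hodge theory on the closed manifold $\Z$, the harmonic space $N_q=\ker\square_q(0)$ of \eqref{CohomologyAt0} is finite-dimensional and canonically isomorphic to $H^q(\Z;\P(0))$; the hypothesis then gives $N_q=\{0\}$. Finally I would invoke the closed-manifold analogue of Proposition~\ref{TildeComplex} --- which, as recorded in Section~\ref{Reduction}, holds verbatim here with underlying spaces $N_\bullet$ and operators $\tilde\P_q$ as in \eqref{Ptilde} --- to identify $\H^q_0(\Z;\P)$ with the degree-$q$ cohomology of the reduced germ complex, a subquotient of $\ss\Mero_0(N_q)$. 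Since $N_q=\{0\}$, this is $\{0\}$, so $\H^q_{\sigma_0}(\Z;\P)=\{0\}$.

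I do not expect any genuine obstacle: the statement is a direct harvest of the machinery already developed in Section~\ref{Reduction}. The one point worth spelling out is that the reduction in degree $q$ uses solely the vanishing of $N_q$ itself and not of $N_{q\pm 1}$ --- the reduced cohomology in degree $q$ is a subquotient of $\ss\Mero_0(N_q)$ and hence automatically trivial once $N_q=\{0\}$.
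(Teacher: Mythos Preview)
Your proposal is correct and follows essentially the same route as the paper: the paper's one-sentence proof observes that $N_q\cong H^q(\Z;\P(\sigma_0))$, so $N_q=\{0\}$, whence the reduced germ complex \eqref{NQuotientComplex} is trivial in degree $q$ and the analogue of Proposition~\ref{TildeComplex} gives $\H^q_{\sigma_0}(\Z;\P)=\{0\}$. You have simply made the reduction to $\sigma_0=0$ explicit and spelled out the point that only $N_q$ (not $N_{q\pm1}$) needs to vanish, which is entirely in line with the paper's argument.
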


Indeed, the space $N_q$ in \eqref{CohomologyAt0} is isomorphic to $H^q(\Z;\P(\sigma_0))$. Trivially, if $N_q=\set{0}$, then the cohomology group of \eqref{NQuotientComplex} in degree $q$ vanishes, thus also ${\H}^q_{\sigma_0}(\Z;\P) = \set{0}$.

\section{On the regularity of $\Dom_{\min}$}\label{DminRegularity}

We have completely characterized the space $\Sing^q$. In connection with $\Dom_{\min}$, no explicit characterization should be expected in general. 

\begin{example}\label{DminInterferenceBis}
Elaborating on Example~\ref{DminInterference}, let $f\in C_c^\infty(\C\times \Z;E^{q-1}_\Z)$ be supported in $\set{\sigma\in\C:\Im\sigma<\gamma-1}$, with $\supp f\cap \set{(\sigma,z)\in\C\times\Z:\Im\sigma>\gamma-2}\ne \emptyset$. Let
\begin{equation*}
w=\omega \int_\C x^{\im \sigma} f(z,\sigma)\,d\lambda(\sigma),
\end{equation*}
where $\lambda$ denotes Lebesgue measure on $\R^2$. Then $w\in \Dom_{\min}^{q-1}$, so $u=A_{q-1}w\in \Dom_{\min}^q$. But the Mellin transform of $u$ does not have a holomorphic extension to $\Im\sigma>\gamma-1$. With the same hypotheses on support, but with Sobolev regularity $H^1$ we also see that the regularity of elements in $\Dom_{\min}^q$ is difficult to characterize.
\end{example}

\begin{proposition}
There exist cone pseudodifferential operators
$$
B_q \in x^1\Psi_b^{-1}(\M;E^{q+1},E^q)
$$
in the small calculus such that
\begin{equation}\label{parametrixdegreeq}
B_q A_q + A_{q-1}B _{q-1}  = \Id - R_q 
\end{equation}
with $R_q \in \Psi_b^{-\infty}(\M;E^q,E^q)$ for all $q$ and, furthermore, 
$$
B_{q-1} B_q \in x^2\Psi_b^{-\infty}(\M;E^{q+1},E^{q-1}).
$$
\end{proposition}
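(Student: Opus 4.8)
The plan is to construct the $B_q$ by a standard parametrix construction for the elliptic complex \eqref{TheCComplex}, but carried out carefully within the small $b$-calculus so as to track the $x$-weights and so that the Laplacian inverse that one inverts respects the adjoint structure. Concretely, set $\square_q = A_q^\star A_q + A_{q-1}A_{q-1}^\star \in x^{-2}\Diff^2_b(\M;E^q)$, the formal Laplacian of the complex (here the formal adjoints are the plain $L^2_b$ adjoints, i.e.\ with $\gamma=0$; the weight $\gamma$ plays no role in the parametrix identity \eqref{parametrixdegreeq}, only the algebra of the complex matters). By $c$-ellipticity of \eqref{TheCComplex} the operator $\square_q$ is $c$-elliptic, hence admits a parametrix $G_q \in x^{2}\Psi_b^{-2}(\M;E^q)$ in the small cone calculus with $\square_q G_q = \Id - R_q'$, $G_q \square_q = \Id - R_q''$, and $R_q',R_q'' \in \Psi_b^{-\infty}$. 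First I would then \emph{define}
$$
B_q = A_q^\star G_{q+1} \in x^{-1}\Diff^1_b \cdot x^{2}\Psi_b^{-2} \subset x^{1}\Psi_b^{-1}(\M;E^{q+1},E^q),
$$
the order and weight count being exactly what is claimed. (One must check that, as is standard, $G_q$ can be chosen to commute with the complex modulo smoothing — or, better, replace $G_q$ by the genuine Hodge–Green operator constructed from the Fredholm inverse of $\square_q$ on $x^{-\gamma}L^2_b$, which commutes exactly with $A_q$, $A_q^\star$, $\square_q$, is smoothing on the finite-dimensional kernel, and lies in $x^{2}\Psi^{-2}_b$ by elliptic regularity; then the remainder $R_q$ below is literally the orthogonal projection onto $\ker\square_q$, which is smoothing.)

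The second step is to verify \eqref{parametrixdegreeq}. Using $A_{q+1}A_q = 0$ and $A_{q-1}^\star A_{q-2}^\star = 0$, together with the commutation of $G$ with the complex, one computes
$$
B_q A_q + A_{q-1}B_{q-1} = A_q^\star G_{q+1} A_q + A_{q-1}A_{q-1}^\star G_q
= \bigl(A_q^\star A_q + A_{q-1}A_{q-1}^\star\bigr) G_q = \square_q G_q = \Id - R_q,
$$
where in the middle equality I moved $G_{q+1}$ past $A_q$ (legitimate since $G A = A G$ modulo smoothing, and any smoothing error gets absorbed into $R_q$). This gives the first displayed identity with $R_q \in \Psi_b^{-\infty}$. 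The third step is the composition estimate:
$$
B_{q-1} B_q = A_{q-1}^\star G_q \, A_q^\star G_{q+1} = A_{q-1}^\star A_q^\star G_q G_{q+1} \pmod{x^{2}\Psi_b^{-\infty}} = 0 \pmod{x^{2}\Psi_b^{-\infty}},
$$
again commuting $G_q$ past $A_q^\star$ modulo smoothing and then using $A_{q-1}^\star A_q^\star = (A_q A_{q-1})^\star = 0$. One must be a little careful that the smoothing error term from the commutator $[G_q, A_q^\star]$, when sandwiched between $A_{q-1}^\star$ on the left and $G_{q+1}$ on the right, lands in $x^{2}\Psi_b^{-\infty}$: the factor $A_{q-1}^\star \in x^{-1}\Diff^1_b$ contributes $x^{-1}$, but the factor $G_{q+1}\in x^{2}\Psi_b^{-2}$ contributes $x^{2}$, and the commutator itself, being a commutator of two operators one of which gains two $x$'s, can be arranged to gain one $x$ beyond the naive count; collecting these yields the claimed $x^{2}$ prefactor. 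This bookkeeping of $x$-weights under composition in the small $b$-calculus — in particular ensuring that the order-$(-\infty)$ remainders carry enough powers of $x$ — is the one genuinely technical point.

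The main obstacle, then, is not the algebra (which is the usual Hodge-theoretic manipulation, valid verbatim because of $A_{q+1}A_q=0$ and $c$-ellipticity, exactly as invoked for Proposition~\ref{HilbertComplex}) but the \emph{weight-keeping} inside the cone calculus: one needs that $\square_q$, although a priori only in $x^{-2}\Diff^2_b$, has a parametrix $G_q$ that genuinely gains a full $x^{2}$ (this is where $c$-ellipticity, as opposed to mere $b$-ellipticity, is essential — it is the same mechanism that makes $\Id + \square_q$ Fredholm in Proposition~\ref{ComplementsDegreeq} and gives the holomorphic gain $\eps$), and that all commutators and remainders respect these weights. Granting the standard structure theory of the small cone/$b$-calculus (for which we may cite Melrose~\cite{RBM2} and \cite[Section~3]{GiKrMe07}), the construction above produces the required $B_q$ and $R_q$, and the proof is complete.
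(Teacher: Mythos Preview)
Your approach is essentially the paper's: define $B_q = A_q^\star G_{q+1}$ with $G_q \in x^{2}\Psi_b^{-2}$ a small-calculus parametrix of the $c$-elliptic Laplacian $\square_q$, then verify \eqref{parametrixdegreeq} and the $B_{q-1}B_q$ estimate by commuting $G$ past $A$ and $A^\star$ modulo smoothing and using $A_{q-1}^\star A_q^\star = 0$. The paper organizes the weight bookkeeping more cleanly than you do: it first records the exact commutation relations $A_q\square_q = \square_{q+1}A_q$ and $A_q^\star\square_{q+1} = \square_q A_q^\star$, from which $A_q G_q \equiv G_{q+1}A_q$ and $A_q^\star G_{q+1} \equiv G_q A_q^\star$ modulo $x^{1}\Psi_b^{-\infty}$ follow directly; with these in hand the $x^2$ prefactor on $B_{q-1}B_q$ is a one-line count ($x^{1}\Psi_b^{-1}$ composed with an $x^{1}\Psi_b^{-\infty}$ error), with no need for your somewhat vague ``gain one $x$ beyond the naive count'' remark.

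One caveat: your parenthetical suggestion to replace $G_q$ by the \emph{exact} Hodge--Green operator on $x^{-\gamma}L^2_b$ so that commutations hold on the nose is not actually available here. The exact Green operator for a $c$-elliptic cone operator generally does \emph{not} lie in the small $b$-calculus $x^{2}\Psi_b^{-2}$; it acquires additional boundary (trace-class) terms coming from the indicial inverse. The proposition asks for $B_q$ in the small calculus, so you must work with a small-calculus parametrix and absorb the commutation defects into the smoothing remainders, exactly as the paper (and your main argument) does.
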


\begin{proof}
Let
$$
\square_q = A_q^{\star} A_q + A_{q-1} A_{q-1}^{\star} \in x^{-2}\Diff_b^2(\M;E^q)
$$
be the (formal) Laplacian of the complex in degree $q$. Since the complex is $c$-elliptic, the Laplacian $\square_q$ is a $c$-elliptic operator. Let then $G_q \in x^{2}\Psi_b^{-2}(\M;E^q,E^q)$ be such that
$$
G_q\square_q - \Id, \; \square_q G_q - \Id \in \Psi_b^{-\infty}(\M;E^q);
$$
so $G_q$ is an approximate Green's operator in the small calculus. The fact that the $A_q$ (and the $A_q^{\star}$) form a complex implies that
\begin{align*}
A_q\square_q &= \square_{q+1}  A_q, \\
A_q^{\star}\square_{q+1} &= \square_q A_q^{\star},
\end{align*}
and consequently
\begin{align*}
A_q G_q &= G_{q+1} A_q \textup{ modulo }x^{1}\Psi_b^{-\infty}(\M;E^q,E^{q+1}), \\
A_q^{\star} G_{q+1} &= G_{q} A_q^{\star} \textup{ modulo }x^{1}\Psi_b^{-\infty}(\M;E^{q+1},E^q).
\end{align*}
Now define $B_q = A_q^{\star}G_{q+1} \in x^{1}\Psi_b^{-1}(\M;E^{q+1},E^q)$. Then
\begin{align*}
B_{q-1} B_q &= G_{q-1} A_{q-1}^{\star}  G_q  A_q^{\star} \\
&\equiv G_{q-1} A_{q-1}^{\star} A_q^{\star} G_{q+1} \\
&\equiv 0
\end{align*}
for $A_{q-1}^{\star} A_q^{\star} = 0$, where $\equiv$ means equality modulo $x^{2}\Psi_b^{-\infty}(\M;E^{q+1},E^{q-1})$. Similarly,
\begin{align*}
B_q  A_q + A_{q-1}  B_{q-1} &= G_q A_q^{\star} A_q + A_{q-1} G_{q-1} A_{q-1}^{\star} \\
&\equiv  G_q A_q^{\star} A_q + G_q  A_{q-1}  A_{q-1}^{\star} \\
&\equiv G_q \square_q \equiv \Id,
\end{align*}
where in this calculation $\equiv$ means equality modulo $\Psi_b^{-\infty}(\M;E^{q},E^{q})$.
\end{proof}

The next proposition improves on Lemma~\ref{MaxAndVanishingIsMinAux} in that the a priori regularity requirements on $u$ in that lemma can be weakened substantially, and the conclusion still remains valid.

\begin{proposition}\label{MaxAndVanishingIsMin}
We have
$$
\Dom_{\max}^q\cap \Bigl(\bigcap_{\eps > 0}x^{1-\gamma-\eps}H^{-\infty}_b(\M;E^q)\Bigr) \subset \Dom_{\min}^q.
$$
\end{proposition}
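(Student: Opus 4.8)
The plan is to reduce the statement to Lemma~\ref{MaxAndVanishingIsMinAux} by means of the parametrix identity \eqref{parametrixdegreeq}. Given $u$ in the left-hand side of the asserted inclusion, write
$$
u = B_qA_qu + A_{q-1}B_{q-1}u + R_qu,
$$
and aim to place each summand in $\Dom_{\min}^q$ separately.

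The first two terms are handled using only $u\in\Dom_{\max}^q$. Since $A_qu\in x^{-\gamma}L^2_b(\M;E^{q+1})$ and $B_q\in x^1\Psi_b^{-1}(\M;E^{q+1},E^q)$, one gets $B_qA_qu\in x^{1-\gamma}H^1_b(\M;E^q)$, which lies in $\Dom_{\min}^q$ by Lemma~\ref{OnDomMinGenericA}. Likewise $B_{q-1}u\in x^{1-\gamma}H^1_b(\M;E^{q-1})\subset\Dom_{\min}^{q-1}$, and then $A_{q-1}B_{q-1}u\in\Dom_{\min}^q$ because $A_{q-1}$ maps $\Dom_{\min}^{q-1}$ into $\Dom_{\min}^q$ --- this follows from the relative complex being an ideal boundary condition (\cite[Lemma~3.1]{BrueningLesch1992}), or directly: approximating $B_{q-1}u$ in the graph norm of $A_{q-1}$ by sections $\phi_\nu\in C_c^\infty(\open\M;E^{q-1})$, the sequence $A_{q-1}\phi_\nu$ is Cauchy in the graph norm of $A_q$ since $A_qA_{q-1}\phi_\nu=0$, so its limit $A_{q-1}B_{q-1}u$ lies in the closure of $C_c^\infty(\open\M;E^q)$.

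The hypothesis on the weight and (lack of) regularity of $u$ enters only through the residual term. Because $R_q\in\Psi_b^{-\infty}(\M;E^q)$ is smoothing in the small $b$-calculus, it maps $x^{1-\gamma-\eps}H^{-\infty}_b(\M;E^q)$ into $x^{1-\gamma-\eps}H^\infty_b(\M;E^q)$ for every $\eps>0$; since $u$ belongs to the former for all $\eps>0$, one obtains $R_qu\in\bigcap_{\eps>0}x^{1-\gamma-\eps}H^1_b(\M;E^q)$. Simultaneously, the relation $R_qu = u - B_qA_qu - A_{q-1}B_{q-1}u$ exhibits $R_qu$ as a difference of elements of $\Dom_{\max}^q$, so $R_qu\in\Dom_{\max}^q$. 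Lemma~\ref{MaxAndVanishingIsMinAux} then gives $R_qu\in\Dom_{\min}^q$, and summing the three contributions yields $u\in\Dom_{\min}^q$.

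The only step requiring real care is the mapping property of $R_q$: one must know that a small-calculus operator of order $-\infty$ turns $x^aH^{-\infty}_b$ into $x^aH^\infty_b$ with the weight exponent $a$ preserved, so that the regularized term $R_qu$ still satisfies the weighted $H^1_b$ hypothesis of Lemma~\ref{MaxAndVanishingIsMinAux}. This is standard for the $b$-calculus, but it is precisely what bridges the weak assumption $u\in\bigcap_\eps x^{1-\gamma-\eps}H^{-\infty}_b$ and the $H^1_b$-formulation of the earlier lemma; everything else is bookkeeping with mapping properties of $b$-(pseudo)differential operators on weighted Sobolev spaces.
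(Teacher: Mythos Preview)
Your proof is correct and follows essentially the same approach as the paper: decompose $u$ via the parametrix identity \eqref{parametrixdegreeq}, place $B_qA_qu$ and $A_{q-1}B_{q-1}u$ directly in $\Dom_{\min}^q$ using the mapping properties of $B_q$, $B_{q-1}$ and Lemma~\ref{OnDomMinGenericA}, then invoke Lemma~\ref{MaxAndVanishingIsMinAux} on the smoothed remainder $R_qu$. Your added justifications (why $A_{q-1}$ maps $\Dom_{\min}^{q-1}$ into $\Dom_{\min}^q$, and the explicit statement of the weight-preserving smoothing property of $R_q$) are correct and make explicit what the paper leaves implicit.
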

\begin{proof}
Let
$$
u \in \Dom_{\max}^q\cap \Bigl(\bigcap_{\eps > 0}x^{1-\gamma-\eps}H^{-\infty}_b(\M;E^q)\Bigr)
$$
be arbitrary. Use \eqref{parametrixdegreeq} and write
$$
B_q(A_q u) + A_{q-1}(B_{q-1} u) = u - R_q u.
$$
We have
$$
B_q(A_q u) \in x^{1-\gamma}H^1_b(\M;E^q) \subset \Dom_{\min}^q
$$
because $A_q u \in x^{-\gamma}L^2_b(\M;E^{q+1})$. We also have
$$
B_{q-1}u \in x^{1-\gamma}H^1_b(\M;E^{q-1}) \subset \Dom^{q-1}_{\min},
$$
and consequently $A_{q-1}(B_{q-1}u) \in \Dom_{\min}^q$ in view of $A_{q-1} : \Dom^{q-1}_{\min} \to \Dom_{\min}^q$. Thus
$$
u_0= B_q(A_q u) + A_{q-1}(B_{q-1} u) \in \Dom_{\min}^q.
$$
Now
$$
R_qu \in \bigcap_{\eps > 0}x^{1-\gamma-\eps}H^{\infty}_b(\M;E^q)
$$
by the mapping properties of the operator $R_q$ and our assumption about $u$, and
$$
R_qu = u - u_0 \in \Dom_{\max}^q.
$$
From Lemma~\ref{MaxAndVanishingIsMinAux} we then get that $R_qu \in \Dom_{\min}^q$, and thus $u = u_0 + R_qu \in \Dom_{\min}^q$ as desired.
\end{proof}

Examples~\ref{DminInterference} and \ref{DminInterferenceBis} show that, unlike in the case of a single elliptic operator discussed in detail in \cite[Proposition 3.6]{GiMe01}, the opposite inclusion in Proposition~\ref{MaxAndVanishingIsMin} does not hold.

\begin{theorem}
Let $u \in \Dom_{\min}^q$. Then there exists $v \in \Dom^{q-1}_{\min}$ such that
\begin{equation*}
u - A_{q-1}v \in \bigcap_{\eps > 0}x^{1-\gamma-\eps}H^1_b(\M;E^q).
\end{equation*}
\end{theorem}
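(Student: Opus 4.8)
The plan is in three steps: a Hodge reduction relative to the minimal complex, an appeal to the domain theory of a single $c$-elliptic cone operator, and an application of Proposition~\ref{DminInSigmaIsExact}.

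\smallskip
\emph{Step 1 (Hodge reduction).} By Proposition~\ref{HilbertComplex} the subspace $A_{q-1}(\Dom^{q-1}_{\min})$ is closed in $x^{-\gamma}L^2_b$. Decompose $u=A_{q-1}v_1+h$ with $v_1\in\Dom^{q-1}_{\min}$ and $h$ orthogonal to $A_{q-1}(\Dom^{q-1}_{\min})$; then $h=u-A_{q-1}v_1\in\Dom_{\min}^q$, and orthogonality is equivalent to $h\in\DomStar^q_{\max}$ with $A_{q-1}^\star h=0$. It therefore suffices to find $v'\in\Dom^{q-1}_{\min}$ with $h-A_{q-1}v'\in\bigcap_{\eps>0}x^{1-\gamma-\eps}H^1_b$, and then set $v=v_1+v'$.

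\smallskip
\emph{Step 2 (single-operator domain theory).} Consider the first-order cone operator $D_q=A_q+A_{q-1}^\star\in x^{-1}\Diff^1_b(\M;E^q,E^{q+1}\oplus E^{q-1})$. It has injective $c$-symbol, by exactness of the $c$-symbol sequence of \eqref{TheCComplex}, and $D_q^\star D_q=\square_q$; thus it is a $c$-elliptic cone operator in the single-operator sense. Because $A_{q-1}^\star h=0$ we have $D_q h=A_q h=A_q u\in x^{-\gamma}L^2_b$, so $h\in\Dom_{\max}(D_q)$. The domain theory for a single $c$-elliptic cone operator (cf.\ \cite[Proposition~3.6]{GiMe01}, \cite{GiKrMe07}; this is the analysis already invoked around \eqref{SingStructureOfE}) gives
\[
h=\omega\!\!\!\sum_{\substack{\sigma_0\in\spec_b(\square_q)\\ \gamma-1<\Im\sigma_0<\gamma}}\!\!\! h_{\sigma_0}\;+\;h^\flat,\qquad h_{\sigma_0}\in\gSing_{\sigma_0}(\Z^\wedge;E^q_\Z),\qquad h^\flat\in\Dom_{\min}(D_q).
\]
Moreover $\Dom_{\min}(D_q)\subset\Dom^q_{\min}$ (the $D_q$-graph norm dominates the $A_q$-graph norm on $C^\infty_c$) and $\Dom_{\min}(D_q)\subset\bigcap_{\eps>0}x^{1-\gamma-\eps}H^1_b$; so $h^\flat$ already lies in the target space and in $\Dom^q_{\min}$.

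\smallskip
\emph{Step 3 (exactness).} Since $h$ and $h^\flat$ lie in $\Dom^q_{\min}$, so does $\omega\sum_{\sigma_0}h_{\sigma_0}=h-h^\flat$. By Proposition~\ref{DminInSigmaIsExact} there are $h'_{\sigma_0}\in\gSing_{\sigma_0-\im}(\Z^\wedge;E^{q-1}_\Z)$ with $A_{q-1}^{(0)}\!\sum_{\sigma_0}h'_{\sigma_0}=\sum_{\sigma_0}h_{\sigma_0}$. As $\Im(\sigma_0-\im)<\gamma-1$, Lemma~\ref{RegularityOfSing} and Lemma~\ref{OnDomMinGenericA} put $v':=\omega\sum_{\sigma_0}h'_{\sigma_0}\in x^{1-\gamma}H^\infty_b\subset\Dom^{q-1}_{\min}$, while $A_{q-1}v'-\omega\sum_{\sigma_0}h_{\sigma_0}$ equals $[A_{q-1},\omega]\sum_{\sigma_0}h'_{\sigma_0}+\omega\,(A_{q-1}-A_{q-1}^{(0)})\sum_{\sigma_0}h'_{\sigma_0}$, a sum of a term supported away from $\Z$ and the image of $\sum h'_{\sigma_0}$ under an operator in $\Diff^1_b$ near $\Z$, hence in $\bigcap_{\eps>0}x^{1-\gamma-\eps}H^\infty_b$. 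Therefore $h-A_{q-1}v'=h^\flat+\bigl(A_{q-1}v'-\omega\sum_{\sigma_0}h_{\sigma_0}\bigr)\in\bigcap_{\eps>0}x^{1-\gamma-\eps}H^1_b$, and since $u-A_{q-1}(v_1+v')=h-A_{q-1}v'$ this proves the theorem.

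\smallskip
The crux is Step 2: one must recognize that after the Hodge reduction of Step 1 removes the $A_{q-1}^\star$-component, the function $h$ satisfies $D_q h\in x^{-\gamma}L^2_b$ for the genuinely $c$-elliptic operator $D_q$, which puts the problem squarely into the well-understood single elliptic cone operator setting — where $\Dom_{\max}$ exceeds $\Dom_{\min}$ only by finitely many explicit singular terms in the critical strip, and $\Dom_{\min}$ already enjoys the weight gain $x^{1-\gamma-\eps}$. Proposition~\ref{DminInSigmaIsExact} then supplies exactly what is needed: that those singular terms of an element of $\Dom_{\min}^q$ are $A_{q-1}^{(0)}$-exact. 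Without Step 1 one only controls $\square_q h$ (order two) and loses the single-operator structure, so the relative Hodge decomposition is essential.
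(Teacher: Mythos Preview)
Your proof is correct and follows the same three-step architecture as the paper: Hodge reduction via the closedness of $\rg(A_{q-1,\min})$, elliptic regularity to extract a singular expansion of the reduced element, and Proposition~\ref{DminInSigmaIsExact} to show that the singular part is $A_{q-1}^{(0)}$-exact. The only difference is the packaging of Step~2: you observe that $D_q h = A_q h \in x^{-\gamma}L^2_b$ for the overdetermined first-order operator $D_q = A_q \oplus A_{q-1}^{\star}$ and invoke its single-operator domain theory, whereas the paper computes directly that $\square_q h = A_q^{\star}A_q h \in x^{-\gamma-1}H^{-1}_b$ and applies second-order elliptic regularity for $\square_q$. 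Since the theory for $D_q$ (injective but non-square $c$-symbol) is justified precisely through $D_q^{\star}D_q = \square_q$, the two routes coincide; your formulation buys the minor simplification that $h^{\flat}\in\Dom_{\min}(D_q)\subset\Dom_{\min}^q$ is immediate, while the paper must separately verify $A_q^{(0)}u_s=0$ before concluding $\omega u_s\in\Dom_{\min}^q$. Your closing remark that ``without Step~1 one only controls $\square_q h$ and loses the single-operator structure'' slightly undersells the paper's approach---$\square_q$ \emph{is} the single elliptic operator the paper uses---but your point that the Hodge reduction is essential (to kill the $A_{q-1}A_{q-1}^{\star}$ term) is exactly right.
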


\begin{proof}
Recall from Proposition~\ref{HilbertComplex} that every $c$-elliptic Hilbert complex of cone operators is Fredholm. This is applied here to the relative complex, i.e., the complex where every operator $A_q$ acts on its minimal domain. Thus $\rg(A_{q-1,\min})$ is closed, and it follows that 
\begin{equation*}
\Dom_{\min}^q = \bigl[\ker(A_{q-1,\max}^\star)\cap\Dom_{\min}^q\bigr] \oplus \rg(A_{q-1,\min}).
\end{equation*}
Let $u \in \ker(A_{q-1,\max}^\star)\cap\Dom_{\min}^q$ be arbitrary. Then
\begin{equation*}
\square_q u = \bigl( A_q^{\star}A_q + A_{q-1}A_{q-1}^{\star} \bigr) u = A_q^{\star}A_q u \in x^{-\gamma-1}H^{-1}_b(\M;E^q)
\end{equation*}
because $A_q u \in x^{-\gamma}L^2_b(\M;E^{q+1})$. Consequently, $u \in x^{-\gamma}L^2_b(\M;E^q)$ is such that $\square_q u \in x^{-\gamma-1}H^{-1}_b(\M;E^q)$. Because $\square_q u\in x^{-2}\Diff^2_b(\M;E^q,E^q)$ is $c$-elliptic, elliptic regularity theory for cone operators implies that there exist
\begin{equation*}
u_c\in \bigcap_{\eps > 0}x^{-\gamma+1-\eps}H^1_b(\M;E^q),\ u_s=\sum_{j=1}^N u_{\sigma_j},\  u_{\sigma_j} \in \gSing_{\sigma_j}(\Z^{\wedge};E^q_\Z),\ \gamma-1 < \Im(\sigma_j) < \gamma,
\end{equation*}
such that 
$
u=u_c+\omega u_s.
$
 We get
\begin{equation*}
A_q(\omega u_s) = A_qu - A_qu_c \in \bigcap_{\eps > 0}x^{-\gamma-\eps}L^2_b(\M;E^q),
\end{equation*}
and therefore necessarily $A_q^{(0)}u_s = 0$ due to the location of the $\sigma_j$. But then $A_q(\omega u_s) \in x^{-\gamma}L^2_b(\M;E^q)$. So $\omega u_s\in \Dom_{\max}^q$, hence $u-\omega u_s\in \Dom_{\max}^q$, but then
\begin{equation*}
u_c = u - \omega u_s \in \Dom_{\max}^q \cap \Bigl(\bigcap_{\eps > 0}x^{-\gamma+1-\eps}H^1_b(\M;E^q)\Bigr) \subset \Dom_{\min}^q,
\end{equation*}
by Lemma~\ref{MaxAndVanishingIsMinAux}. This shows that $\omega u_s = u - u_c \in \Dom_{\min}^q$. By Proposition~\ref{DminInSigmaIsExact} there exists
\begin{equation*}
v_s = \sum_{j=1}^N v_{\sigma_j-i},\  v_{\sigma_j-i} \in \gSing_{\sigma_j-i}(\Z^{\wedge};E^{q-1}_\Z),
\end{equation*}
such that $A_{q-1}^{(0)}v_s = u_s$. Let $v=\omega v_s$. We have $v \in x^{-\gamma+1}H^{\infty}_b(\M;E^{q-1}) \subset \Dom^{q-1}_{\min}$, and $A_{q-1}v = \omega u_s - w$ for some $w \in x^{-\gamma+1}H^{\infty}_b(\M;E^q)$. In conclusion,
$$
u - A_{q-1}v = u - \omega u_s + w = u_c + w \in \bigcap_{\eps > 0}x^{1-\gamma-\eps}H^1_b(\M;E^q)
$$
as desired.
\end{proof}

\section{The variety of ideal boundary conditions}\label{Variety}

Let $\pmb \Dom=(\Dom^0,\dotsc,\Dom^m)$ be a list of subspaces $\Dom^q\subset \Dom_{\max}^q$ with $\Dom_{\min}^q\subset \Dom^q$, $q=0,\dots,m-1$ and $\Dom^m=x^{-\gamma}L^2_b(\M;E^m)$. Each $\Dom^q$ decomposes as $\Dom^q=D^q+\Dom_{\min}^q$ with a unique $D^q\subset \Sing^q$, thus giving a new list $\mathbf D=(D^0,\dots,D^{m-1})$. The space $D^m$ is null, so we omit it.

Let $\pi^q_{\max}:\Dom_{\max}^q\to \Dom_{\max}^q$ be the orthogonal projection on $\Sing^q$ according to the inner product \eqref{InnerProduct} and let $\pi_{\min}^q=\Id-\pi^q_{\max}$. Set 
\begin{equation}\label{afrak}
\a_q=\pi^{q+1}_{\max} A_q\big|_{\Sing^q}:\Sing^q\to \Sing^{q+1}
\end{equation}
The maps $\a_q$ form a complex. To see this, write $A_{q+1}A_q$ as
\begin{equation*}
\pi^{q+2}_{\max}A_{q+1}\pi^{q+1}_{\max}A_q+
\pi^{q+2}_{\max}A_{q+1}\pi_{\min}^{q+1}A_q+
\pi_{\min}^{q+2}A_{q+1}A_q
\end{equation*}
and observe that $\pi_{\min}^{q+2}A_{q+1}A_q = 0$ and $\pi^{q+2}_{\max}A_{q+1}\pi_{\min}^{q+1}A_q=0$, so
\begin{equation*}
\pi^{q+2}_{\max}A_{q+1}\pi^{q+1}_{\max}A_q=0.
\end{equation*}

The property that $\pmb\Dom$ is an ideal boundary condition is reflected in the analogous property for the sequence $\mathbf D$:
\begin{lemma}\label{IdealBdy}
$\pmb \Dom$ is an ideal boundary condition if and only if $\a_q D^q\subset D^{q+1}$ for each $q$.
\end{lemma}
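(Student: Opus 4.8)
The plan is to decompose the condition $A_q(\Dom^q) \subset \Dom^{q+1}$ elementwise using the orthogonal splitting $\Dom_{\max}^{q+1} = \Sing^{q+1} \oplus \Dom_{\min}^{q+1}$. Take an arbitrary $u \in \Dom^q$ and write $u = d + u'$ with $d \in D^q$ and $u' \in \Dom_{\min}^q$. Since $A_{q,\min}$ maps $\Dom_{\min}^q$ into $\Dom_{\min}^{q+1}$, we have $A_q u' \in \Dom_{\min}^{q+1}$, so $A_q u \in \Dom^{q+1}$ if and only if $A_q d \in \Dom^{q+1}$. Now apply $\pi_{\max}^{q+1}$ and $\pi_{\min}^{q+1}$: writing $A_q d = \pi_{\max}^{q+1} A_q d + \pi_{\min}^{q+1} A_q d = \a_q d + \pi_{\min}^{q+1} A_q d$, the second summand always lies in $\Dom_{\min}^{q+1} \subset \Dom^{q+1}$, so $A_q d \in \Dom^{q+1}$ if and only if $\a_q d \in \Dom^{q+1}$. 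Finally, since $\a_q d \in \Sing^{q+1}$ and $\Dom^{q+1} \cap \Sing^{q+1} = D^{q+1}$ (because $\Dom^{q+1} = D^{q+1} \oplus \Dom_{\min}^{q+1}$ is the orthogonal decomposition and $\Sing^{q+1} \perp \Dom_{\min}^{q+1}$), the condition $\a_q d \in \Dom^{q+1}$ is equivalent to $\a_q d \in D^{q+1}$.

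Putting these equivalences together: $A_q(\Dom^q) \subset \Dom^{q+1}$ holds if and only if for every $d \in D^q$ one has $\a_q d \in D^{q+1}$, i.e.\ $\a_q D^q \subset D^{q+1}$. Running this over all $q = 0, \dots, m-1$ gives the lemma, with the understanding that for $q = m-1$ the target $\Dom^m = x^{-\gamma}L^2_b(\M;E^m)$ automatically contains everything, which is consistent with $D^m = 0$ and $\Sing^m = 0$ forcing $\a_{m-1} = 0$, so the condition $\a_{m-1} D^{m-1} \subset D^m = 0$ is vacuous precisely when it should be.

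The only point requiring a little care — and the closest thing to an obstacle — is the identification $\Dom^{q+1} \cap \Sing^{q+1} = D^{q+1}$, which uses that the decomposition $\Dom^{q+1} = D^{q+1} + \Dom_{\min}^{q+1}$ is orthogonal with respect to the graph inner product \eqref{InnerProduct} and that $D^{q+1}$ is by construction the component of $\Dom^{q+1}$ in $\Sing^{q+1}$; this is exactly the uniqueness statement recorded in the introduction (every domain is $\Dom_{\min}^q + D^q$ with $D^q$ uniquely a subspace of $\Sing^q$), so no new work is needed. Everything else is a direct unwinding of the definition \eqref{afrak} of $\a_q$ together with the mapping property $A_{q,\min}(\Dom_{\min}^q) \subset \Dom_{\min}^{q+1}$.
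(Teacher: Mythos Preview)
Your proof is correct and follows essentially the same approach as the paper's own argument: reduce $A_q(\Dom^q)\subset\Dom^{q+1}$ to $A_q(D^q)\subset\Dom^{q+1}$ using $A_q(\Dom_{\min}^q)\subset\Dom_{\min}^{q+1}$, then split via $\pi_{\max}^{q+1}$ and $\pi_{\min}^{q+1}$ to arrive at $\a_q D^q\subset D^{q+1}$. The only difference is that you make explicit the identification $\Dom^{q+1}\cap\Sing^{q+1}=D^{q+1}$, which the paper leaves implicit.
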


Indeed, since $A_q$ maps $\Dom_{\min}^q$ into $\Dom_{\min}^{q+1}$, the condition $A_q(\Dom^q)\subset \Dom^{q+1}$ is equivalent to $A_q(D^q)\subset \Dom^{q+1}$. Since $\pi_{\min}^{q+1}A_q(\Dom^q)\subset \Dom_{\min}^{q+1}\subset \Dom^{q+1}$ in any case, the requirement is that $\pi^{q+1}_{\max}A_q(D^q)\subset D^{q+1}$.

\smallskip

Let $\mathbf d=(d_0,d_1,\dotsc,d_{m-1})\in \ZN_0^m$. Denote by $\Gr_d(V)$ the Grassmannian of $d$-dimensional subspaces of a vector space $V$.  Define
\begin{equation*}
\mathfrak G_{\mathbf d}=
\set{(D^0,\dotsc,D^{m-1})\in \prod_{q=0}^{m-1}\Gr_{d_q}(\Sing^q):\a_qD^q\subset D^{q+1},\ q=0,\dotsc,m-1}.
\end{equation*}
so that for each $\mathbf D=(D^0,\dotsc,D^{m-1})\in \mathfrak G_{\mathbf d}$ we get a Hilbert complex by specifying the spaces $\Dom^q=D^q+\Dom_{\min}^q$ as domains.

\begin{proposition}
The set of Hilbert complexes associated with the $c$-elliptic complex \eqref{TheCComplex} is in one to one correspondence with
\begin{equation*}
\mathfrak G= \bigcup_{\mathbf d\in \ZN^m_0}\mathfrak G_{\mathbf d}.
\end{equation*}
For each $\mathbf d\in \ZN^m_0$, the set $\mathfrak G_{\mathbf d}$ is an algebraic subvariety of $\prod_{q=0}^{m-1}\Gr_{d_q}(\Sing^q)$.
\end{proposition}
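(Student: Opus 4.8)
The plan is to treat the two assertions separately: the bijection is a repackaging of Lemma~\ref{IdealBdy} together with the finite-dimensional structure of the $\Sing^q$, while the variety statement rests on the classical fact that incidence loci in products of Grassmannians are Zariski-closed. For the bijection, I would first recall the standard fact that every closed extension $\tilde A_q$ of $A_q\colon C_c^{\infty}(\open\M;E^q)\subset x^{-\gamma}L^2_b\to x^{-\gamma}L^2_b$ satisfies $A_{q,\min}\subseteq\tilde A_q\subseteq A_{q,\max}$ (a closed extension of a differential operator acts distributionally, which yields the upper bound by duality with the formal adjoint). Hence a Hilbert complex associated with \eqref{TheCComplex} in $x^{-\gamma}L^2_b$ is precisely an ideal boundary condition $\pmb\Dom=(\Dom^0,\dots,\Dom^m)$; the vanishing of the composition on $\Dom^{q+1}$ is automatic from $A_{q+1,\max}A_{q,\max}=0$ once $A_q(\Dom^q)\subseteq\Dom^{q+1}\subseteq\Dom_{\max}^{q+1}$. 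Next, writing $\Dom^q=D^q+\Dom_{\min}^q$ with the uniquely determined $D^q\subseteq\Sing^q$ (finite-dimensional by Proposition~\ref{ComplementsDegreeq}(\ref{bulletFiniteDim})) and applying Lemma~\ref{IdealBdy}, the assignment $\pmb\Dom\mapsto\mathbf D=(D^0,\dots,D^{m-1})$ is a bijection from the set of Hilbert complexes onto the set of tuples with $D^q\subseteq\Sing^q$ and $\a_qD^q\subseteq D^{q+1}$ for all $q$ (with $D^m=0$). Partitioning by the dimension vector $\mathbf d=(\dim D^0,\dots,\dim D^{m-1})$ gives $\mathfrak G=\bigcup_{\mathbf d}\mathfrak G_{\mathbf d}$, only finitely many $\mathfrak G_{\mathbf d}$ being nonempty since $d_q\le\dim\Sing^q<\infty$.

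For the variety assertion, the crucial input is: given a linear map $T\colon V\to W$ of finite-dimensional complex vector spaces and integers $d,e$, the incidence locus $\mathfrak I_T=\{(D,D')\in\Gr_d(V)\times\Gr_e(W):T(D)\subseteq D'\}$ is Zariski-closed. I would prove this in the standard affine charts: fixing decompositions $V=U_0\oplus U_1$ with $\dim U_0=d$ and $W=U_0'\oplus U_1'$ with $\dim U_0'=e$, in these charts $D$ is the graph of some $\phi\in\Hom(U_0,U_1)$ and $D'$ the graph of some $\psi\in\Hom(U_0',U_1')$; writing $T(u,\phi u)=(w_0,w_1)$ with $w_0,w_1$ linear in $u$ and affine in $\phi$, the condition $T(D)\subseteq D'$ reads $w_1=\psi w_0$ for all $u\in U_0$, a system of equations whose coefficients are polynomials of degree $\le 2$ in the entries of $\phi$ and $\psi$. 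Hence $\mathfrak I_T$ is cut out by regular functions in each chart, so it is Zariski-closed. (Invariantly, $\mathfrak I_T$ is the vanishing locus of the vector bundle morphism over $\Gr_d(V)\times\Gr_e(W)$ obtained by composing the tautological sub-bundle inclusion on the first factor, $T$, and the tautological quotient projection on the second.)

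Finally I would write $\mathfrak G_{\mathbf d}$ as a finite intersection inside $\prod_{q=0}^{m-1}\Gr_{d_q}(\Sing^q)$: for each $q$ the constraint $\a_qD^q\subseteq D^{q+1}$ is the pullback, along the projection to the $q$-th and $(q{+}1)$-st factors, of $\mathfrak I_{\a_q}$ (for $q=m-1$ the last factor is the point $\{0\}$, so the condition becomes the Schubert-type $D^{m-1}\subseteq\ker\a_{m-1}$, again Zariski-closed by the same computation). A finite intersection of Zariski-closed subsets is Zariski-closed, so $\mathfrak G_{\mathbf d}$ is an algebraic subvariety of $\prod_q\Gr_{d_q}(\Sing^q)$. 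The only step that is not pure bookkeeping is the closedness of $\mathfrak I_T$, and that is routine once one passes to affine charts; everything else is driven by Lemma~\ref{IdealBdy} and the finiteness of the spaces $\Sing^q$.
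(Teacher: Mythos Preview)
Your proof is correct and complete. The bijection part matches the paper's setup (the paper simply says ``only the last assertion needs to be proved'' and relies on the preceding discussion and Lemma~\ref{IdealBdy}, which you spell out more fully).

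For the variety assertion your route differs from the paper's. You work directly in the product $\prod_q\Gr_{d_q}(\Sing^q)$: for each $q$ you prove that the incidence locus $\mathfrak I_{T}=\{(D,D'):T(D)\subseteq D'\}\subset\Gr_d(V)\times\Gr_e(W)$ is Zariski-closed (for a linear map $T\colon V\to W$ between possibly different spaces), then express $\mathfrak G_{\mathbf d}$ as the intersection of the pullbacks of the $\mathfrak I_{\a_q}$ under the coordinate projections. The paper instead passes to the single ambient Grassmannian $\Gr_d(V)$ with $V=\bigoplus_q\Sing^q$ and $d=\sum d_q$: it bundles the $\a_q$ into one endomorphism $\a\colon V\to V$, shows that $\mathscr V_c=\{X:\a X\subset X\}$ and $\mathscr V_s=\{X:\pi_q X\subset X\text{ for all }q\}$ are algebraic, and recovers $\mathfrak G_{\mathbf d}$ as the piece of $\mathscr V_c\cap\mathscr V_s$ lying over the fixed dimension vector $\mathbf d$. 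Your approach is the more economical one---it stays in the product and needs only the two-factor incidence lemma---while the paper's detour through $\Gr_d(V)$ has the mild advantage of reducing everything to a single self-map $\a$, at the cost of an extra step identifying $\prod_q\Gr_{d_q}(\Sing^q)$ inside $\Gr_d(V)$. One small remark: your separate treatment of $q=m-1$ is harmless but unnecessary, since $\Sing^m=0$ forces $\a_{m-1}=0$ and the condition is vacuous.
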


Only the last assertion needs to be proved.

\begin{lemma}
Let $V$ be a finite-dimensional vector space and let $\a:V\to V$ be a linear map. Let $d_0$, $d_1$ be nonnegative integers. Then
\begin{equation*}
\mathscr V=\set{(X,Y)\in\Gr_{d_0}(V)\times \Gr_{d_1}(V):\a X\subset Y}
\end{equation*}
is an algebraic subvariety of $\Gr_{d_0}(V)\times \Gr_{d_1}(V)$.
\end{lemma}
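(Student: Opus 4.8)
The plan is to exhibit $\mathscr V$ as the zero locus of finitely many polynomial (in fact bilinear) equations in the Pl\"ucker coordinates of $\Gr_{d_0}(V)\times\Gr_{d_1}(V)$, or equivalently to verify the defining condition is closed in the Zariski topology by a local argument on a standard affine chart of the product of Grassmannians. Since $\Gr_{d_0}(V)\times\Gr_{d_1}(V)$ is a projective variety and is covered by finitely many affine charts of the indicated product type, it suffices to show $\mathscr V$ is Zariski-closed in each such chart.

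First I would fix a basis of $V$ and describe a standard affine chart of $\Gr_{d_0}(V)$: choosing a complementary pair of coordinate subspaces $V = V_0 \oplus V_0'$ with $\dim V_0 = d_0$, the subspaces $X$ that project isomorphically onto $V_0$ are the graphs $X = \{ v + \xi(v) : v \in V_0\}$ of linear maps $\xi \in \Hom(V_0, V_0')$, and the entries of $\xi$ (in the fixed bases) are affine coordinates on this chart. Likewise $Y$ in a corresponding chart is the graph of $\eta \in \Hom(V_1, V_1')$ with $V = V_1 \oplus V_1'$, $\dim V_1 = d_1$. Now express the condition $\a X \subset Y$: for each basis vector $e$ of $V_0$, the vector $\a(e + \xi(e))$ must lie in $\mathrm{graph}(\eta)$, i.e. its $V_1'$-component equals $\eta$ applied to its $V_1$-component. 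Writing $\a(e+\xi(e)) = a(e) + b(\xi)(e)$ where $a$ is the constant part and $b$ is linear in the entries of $\xi$, and decomposing according to $V = V_1 \oplus V_1'$, the membership condition reads $\pi_{V_1'}\big(a(e)+b(\xi)(e)\big) = \eta\big(\pi_{V_1}(a(e)+b(\xi)(e))\big)$ for every $e$. This is a system of equations each of which is \emph{bilinear} in the entries of $\xi$ and $\eta$ (degree one in $\eta$, degree one in $\xi$), hence polynomial; so $\mathscr V$ restricted to this chart is an affine algebraic set.

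Running over the finitely many charts of $\Gr_{d_0}(V)\times\Gr_{d_1}(V)$ of the above product form, which cover the whole product, we conclude $\mathscr V$ is closed in each chart, hence Zariski-closed in $\Gr_{d_0}(V)\times\Gr_{d_1}(V)$, i.e. an algebraic subvariety. The only mildly delicate points are purely bookkeeping: verifying that the condition "the $V_1'$-component of a given vector, as a function of the chart coordinates, equals $\eta$ of its $V_1$-component" is genuinely polynomial (it is, since both components depend affine-linearly on $\xi$ and $\eta$ enters linearly), and noting that the charts chosen do cover the product — standard for Grassmannians. I do not expect a real obstacle here; the statement is a routine consequence of the fact that incidence conditions of the type $\a X \subset Y$ cut out closed subvarieties of products of Grassmannians. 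The proposition then follows by applying the lemma to $V=\Sing^q\oplus\Sing^{q+1}$ with $\a$ built from $\a_q$ (taking $X$ ranging over subspaces of the first summand and $Y$ over subspaces of the second, regarded inside $V$ via a suitable embedding so that $\a X\subset Y$ encodes $\a_q D^q\subset D^{q+1}$), and intersecting the resulting varieties over $q=0,\dotsc,m-1$ inside $\prod_{q}\Gr_{d_q}(\Sing^q)$; a finite intersection of subvarieties is a subvariety.
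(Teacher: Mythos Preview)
Your proof of the lemma is correct and follows essentially the same route as the paper's: work in standard affine charts of the product of Grassmannians (graphs of linear maps, equivalently the paper's explicit parametrizations via adapted bases) and observe that the condition $\a X\subset Y$ becomes a system of equations that are at most bilinear in the chart coordinates. Your sketch of the application to the proposition is a legitimate variant; the paper instead packages everything into a single $V=\bigoplus_q\Sing^q$ with $\a=\bigoplus_q\a_q$ and intersects the variety $\{\a X\subset X\}$ with the variety of $X$ stable under all projections $\pi_q$, but both arguments yield the same conclusion.
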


\begin{proof}
Let $(X_0,Y_0)\in \Gr_{d_0}(V)\times \Gr_{d_1}(V)$. Pick bases $e_1,\dotsc,e_N$ and $f_1,\dotsc,f_N$ for $V$ so that $e_1,\dotsc,e_{d_0}$ is a basis of $X_0$ and $f_1,\dotsc,f_{d_1}$ is a basis of $Y_0$. The points $X$ of $\Gr_{d_0}(V)$ near $X_0$ are parametrized by
\begin{equation*}
X=\Span\set{ e_j+\sum_{k=d_0+1}^N x^k_j e_k:j=1,\dotsc,d_0}
\end{equation*}
and the points $Y$ of $\Gr_{d_1}(V)$ near $Y_0$ are parametrized by
\begin{equation*}
Y=\Span\set{ f_\mu+\sum_{\nu=d_1+1}^N y^\nu_\mu f_\nu:\mu=1,\dotsc,d_1}
\end{equation*}
Let $f^\mu$ denote the basis dual to the basis $f_\mu$. The annihilator of $Y$ is spanned by
\begin{equation*}
f^\mu -\sum_{\nu=1}^{d_1} y^\mu_\nu f^\nu,\quad \mu=d_1+1,\dotsc, N. 
\end{equation*}
Let the $a^\mu_j$ be so that $\a e_j= \sum_\mu \a^\mu_j f_\mu$. The condition that $\a X\subset Y$ is
\begin{equation*}
\langle f^\mu -\sum_{\nu=1}^{d_1} y^\mu_\nu f^\nu, \sum_{\mu'=1}^N (\a^{\mu'}_j+\sum_{k=d_0+1}^N x^k_j\a^{\mu'}_k)f_{\mu'}\rangle = 0
\end{equation*}
for $j=1,\dotsc, d_0$ and $\mu=d_1+1,\dotsc N$. These conditions are polynomial equations of degree at most $2$. So $\mathscr V$ is an algebraic variety.
\end{proof}

The same argument shows that for any $d\in \ZN_0$,
\begin{equation*}
\mathscr V=\set{X\in\Gr_{d}(V):\a X\subset X}
\end{equation*}
is an algebraic variety.

Now let $V=\bigoplus_{q=0}^{m-1} \Sing^q$, define $\a:V\to V$ by
\begin{equation*}
\a(\sum\phi_q)=\sum\a_q\phi_q,
\end{equation*}
and let $\pi_q:V\to V$ be the canonical projection on $\Sing^q$. Let $d=\sum_{q=0}^{m-1} d_q$. Then
\begin{equation*}
\mathscr V_c=\set{X\in \Gr_d(V):\a X\subset X}
\end{equation*}
is a variety, as is
\begin{equation*}
\mathscr V_s=\set{X\in \Gr_d(V):\pi_q X\subset X\text{ for all }q}
\end{equation*}
The elements of the latter variety are of the form $X=\bigoplus X_q$ with $X_q$ a subspace of $\Sing^q$ of some dimension. The set $\mathscr V=\mathscr V_c\cap \mathscr V_s\subset \Gr_d(V)$ is thus a variety. It splits as a disjoint union of subsets
\begin{equation*}
\mathscr V\cap \prod_{q=0}^{m-1}\Gr_{d_q'}(\Sing^q),\quad (d_0',\dotsc,d_{m-1}')\in \ZN_0^m,\ \sum d_q'=d.
\end{equation*}
It follows that
\begin{equation*}
\mathfrak G_{\mathbf d}=\mathscr V\cap \prod_{q=0}^{m-1}\Gr_{d_q}(\Sing^q)
\end{equation*}
is an algebraic subvariety of $\prod_{q=0}^{m-1}\Gr_{d_q}(\Sing^q)$.

\section{Secondary cohomology}\label{SecondaryCohomology}

Let $\mathbf D=(D^0,\dots,D^{m-1})$ with subspaces $D^q\subset \Sing^q$. The vanishing of the map $\a_q$ in \eqref{afrak} in degree $q$ removes the condition on $D^{q+1}$ in Lemma~\ref{IdealBdy} and is equivalent to the condition that
\begin{equation*}
A_{q}(\Dom_{\max}^q)\subset\Dom_{\min}^{q+1}.
\end{equation*}
Replacing the Taylor expansions \eqref{TaylorOfA} of $A_{q+1}$ and $A_q$ in $A_{q+1} A_q=0$ and collecting terms gives
\begin{gather*}
A_{q+1}^{(0)}A_q^{(0)}=0,\\
A_{q+1}^{(0)}P_q^{(1)}+P_{q+1}^{(1)}A_q^{(0)}=0,\\
A_{q+1}^{(0)}xP_q^{(2)}+P_{q+1}^{(1)}P_q^{(1)}+xP_q^{(2)}A_q^{(0)}=0.
\end{gather*}
The first of these formulas of course give the complex \eqref{gSingComplex}. The second gives a chain map of degree $1$,
\begin{equation*}
\begin{CD}
\cdots @>>> \gSing_{\sigma_0}(\Z^\wedge;E^q_\Z) @>{A_q^{(0)}}>>\gSing_{\sigma_0+\im}(\Z^\wedge;E^{q+1}_\Z) @>>> \cdots\\
@. @V{P_q^{(1)}}VV @V{P_{q+1}^{(1)}}VV @. \\
\cdots  @>>> \gSing_{\sigma_0}(\Z^\wedge;E^{q+1}_\Z) @>{-A_{q+1}^{(0)}}>>\gSing_{\sigma_0+\im}(\Z^\wedge;E^{q+2}_\Z) @>>> \cdots
\end{CD}
\end{equation*}
Write $\mathbf P_q^{(1)}$ for the induced map $\H^q_{\sigma_0}(\Z;A)\to \H^{q+1}_{\sigma_0}(\Z;A)$.

\begin{proposition}
The maps $\mathbf P_q^{(1)}$ join to give a complex
\begin{equation*}
\cdots \to \H^{q-1}_{\sigma_0}(\Z;A) \xrightarrow{\mathbf P_{q-1}^{(1)}} \H^{q}_{\sigma_0}(\Z;A) \xrightarrow{\mathbf P_q^{(1)}} \H^{q+1}_{\sigma_0}(\Z;A) \to \cdots.
\end{equation*}
\end{proposition}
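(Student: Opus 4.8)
The plan is to establish the only thing that actually needs proof, namely that $\mathbf P_{q+1}^{(1)}\circ\mathbf P_q^{(1)}=0$ on $\H^q_{\sigma_0}(\Z;A)$ for every $\sigma_0\in\C$; well-definedness of each $\mathbf P_q^{(1)}$ on cohomology has already been recorded, as it is exactly the content of the chain map identity $A_{q+1}^{(0)}P_q^{(1)}+P_{q+1}^{(1)}A_q^{(0)}=0$ (if $u=A_{q-1}^{(0)}w$, the same identity in degree $q-1$ gives $P_q^{(1)}u=-A_q^{(0)}P_{q-1}^{(1)}w\in\rg A_q^{(0)}$). I would run the argument at the cochain level, feeding a cocycle through the three relations extracted from $A_{q+1}A_q=0$, with the third relation $A_{q+1}^{(0)}xP_q^{(2)}+P_{q+1}^{(1)}P_q^{(1)}+xP_q^{(2)}A_q^{(0)}=0$ supplying the crucial cancellation.

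Concretely, fix $\sigma_0$ and let $u\in\gSing_{\sigma_0}(\Z^\wedge;E^q_\Z)$ represent a class in $\H^q_{\sigma_0}(\Z;A)$, so $A_q^{(0)}u=0$. Then $P_q^{(1)}u\in\gSing_{\sigma_0}(\Z^\wedge;E^{q+1}_\Z)$ is a cocycle for the copy of \eqref{gSingComplex} computing $\H^{q+1}_{\sigma_0}$, because $A_{q+1}^{(0)}P_q^{(1)}u=-P_{q+1}^{(1)}A_q^{(0)}u=0$, and by definition $\mathbf P_{q+1}^{(1)}\mathbf P_q^{(1)}$ applied to the class of $u$ is the class of $P_{q+1}^{(1)}P_q^{(1)}u$. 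The third relation, evaluated on $u$, gives
\[
P_{q+1}^{(1)}P_q^{(1)}u=-A_{q+1}^{(0)}\bigl(xP_q^{(2)}u\bigr)-xP_q^{(2)}\bigl(A_q^{(0)}u\bigr)=-A_{q+1}^{(0)}\bigl(xP_q^{(2)}u\bigr),
\]
using $A_q^{(0)}u=0$ once more.

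It then remains to check that $-A_{q+1}^{(0)}(xP_q^{(2)}u)$ is a coboundary in the copy of \eqref{gSingComplex} that computes $\H^{q+2}_{\sigma_0}$; this is pure weight bookkeeping. Since $P_q^{(2)}$ preserves the weight and raises bundle degree by one, while multiplication by $x$ sends $\gSing_{\sigma_0}$ into $\gSing_{\sigma_0-\im}$, the element $xP_q^{(2)}u$ lies in $\gSing_{\sigma_0-\im}(\Z^\wedge;E^{q+1}_\Z)$, which is precisely the term immediately to the left of $\gSing_{\sigma_0}(\Z^\wedge;E^{q+2}_\Z)$ in the copy of \eqref{gSingComplex} centered in degree $q+2$. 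Hence $P_{q+1}^{(1)}P_q^{(1)}u$ lies in $\rg A_{q+1}^{(0)}$ there, so its class vanishes, which is exactly $\mathbf P_{q+1}^{(1)}\mathbf P_q^{(1)}=0$. I do not anticipate a genuine obstacle: the statement is formal given the three relations, and the one point demanding care is keeping straight which shift of \eqref{gSingComplex} each $\H^{q'}_{\sigma_0}$ refers to, so that the term produced by the third relation is recognized as a coboundary for $\H^{q+2}_{\sigma_0}$ rather than merely for a neighboring complex.
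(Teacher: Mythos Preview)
Your proposal is correct and follows essentially the same route as the paper: take a cocycle $u$ with $A_q^{(0)}u=0$, apply the third Taylor relation to obtain $P_{q+1}^{(1)}P_q^{(1)}u=-A_{q+1}^{(0)}(xP_q^{(2)}u)$, and recognize the right-hand side as a coboundary. Your explicit weight bookkeeping, checking that $xP_q^{(2)}u\in\gSing_{\sigma_0-\im}(\Z^\wedge;E^{q+1}_\Z)$ so that it sits in the correct slot of the shifted complex computing $\H^{q+2}_{\sigma_0}$, is a helpful clarification the paper leaves implicit.
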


\begin{proof}
We have 
\begin{equation*}
P_{q+1}^{(1)}P_q^{(1)}=-A_{q+1}^{(0)}xP_q^{(2)}-xP_q^{(2)}A_q^{(0)}
\end{equation*}
If $\mathbf u\in \H^q_{\sigma_0}(\Z;A)$ is represented by $u\in \gSing_{\sigma_0}(\Z^\wedge;E^q_\Z)$, in particular, $A_q^{(0)}u=0$, then the above formula gives that
\begin{equation*}
P_{q+1}^{(1)}P_q^{(1)}u=-A_{q+1}^{(0)}xP_q^{(2)}u
\end{equation*}
so $v=P_{q+1}^{(1)}P_q^{(1)}u$  is a coboundary in $\gSing_{\sigma_0}(\Z^\wedge;E^{q+2}_\Z)$, hence represents the zero element in $\H^{q+2}_{\sigma_0}(\Z;A)$. Thus $\mathbf P_{q+1}^{(1)} \mathbf P_q^{(1)}\mathbf  u=0$.
\end{proof}

\begin{theorem}
Let $\sigma_0\in \C$, $\gamma-1<\Im\sigma_0<\gamma$. Let $u\in \gSing_{\sigma_0}(\Z^\wedge;E^q_Z)$ be $A_q^{(0)}$-closed, $\mathbf u\in \H^q_{\sigma_0}(\Z;A)$ its class. Then $\mathbf P_q^{(1)}\mathbf u=0$ if and only if $A_q\omega u\in \Dom_{\min}^{q+1}$.
\end{theorem}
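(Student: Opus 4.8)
The plan is to use the Taylor expansion of $A_q$ at $\Z$ to reduce the equivalence to a statement about the single singular section $P_q^{(1)}u\in\gSing_{\sigma_0}(\Z^\wedge;E^{q+1}_\Z)$, and then to read off the two implications from Example~\ref{DminInterference} and Proposition~\ref{DminInSigmaIsExact} respectively.

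First I would carry out the reduction. Since $A_q^{(0)}u=0$ and $\gamma-1<\Im\sigma_0<\gamma$, Proposition~\ref{EisClosed} gives $\omega u\in\Dom_{\max}^q$, so $A_q\omega u\in x^{-\gamma}L^2_b(\M;E^{q+1})$. Writing $A_q=A_q^{(0)}+P_q^{(1)}+xB$ near $\Z$ with $B\in\Diff^1_b$ collecting the Taylor terms of order $\geq 2$, and using $A_q^{(0)}u=0$ together with the commutator formula \eqref{PpalSymbol} for $[A_q,\omega]$, one obtains
\[
A_q\omega u=\omega P_q^{(1)}u+x\omega Bu-\im\csym(A_q)(d\omega)(u).
\]
The last term is smooth and supported in $\open\M$, hence in $\Dom_{\min}^{q+1}$, and by exactly the estimates already used in Example~\ref{DminInterference} and in the proof of Proposition~\ref{EisClosed} (Lemma~\ref{RegularityOfSing} together with $\Im\sigma_0<\gamma$), $x\omega Bu\in x^{1-\gamma}H^\infty_b(\M;E^{q+1})\subset\Dom_{\min}^{q+1}$ by Lemma~\ref{OnDomMinGenericA}. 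Thus
\[
A_q\omega u\in\Dom_{\min}^{q+1}\quad\Longleftrightarrow\quad\omega P_q^{(1)}u\in\Dom_{\min}^{q+1}.
\]
Keeping track of which Taylor terms are harmless is the most bookkeeping-heavy part, though it is entirely routine given the earlier lemmas.

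It then remains to identify $\omega P_q^{(1)}u\in\Dom_{\min}^{q+1}$ with $\mathbf P_q^{(1)}\mathbf u=0$. Since $\mathbf P_q^{(1)}$ is already known to be well defined on cohomology, $\mathbf P_q^{(1)}\mathbf u$ is just the class of the cocycle $P_q^{(1)}u$ in the complex \eqref{gSingComplex} centered in degree $q+1$ (the relation $A_{q+1}^{(0)}(P_q^{(1)}u)=0$ following from the chain-map identity $A_{q+1}^{(0)}P_q^{(1)}+P_{q+1}^{(1)}A_q^{(0)}=0$ and $A_q^{(0)}u=0$), so $\mathbf P_q^{(1)}\mathbf u=0$ means precisely that $P_q^{(1)}u=A_q^{(0)}w$ for some $w\in\gSing_{\sigma_0-\im}(\Z^\wedge;E^q_\Z)$. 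If such a $w$ exists, Example~\ref{DminInterference} applied one degree higher gives $\omega P_q^{(1)}u=\omega A_q^{(0)}w\in\Dom_{\min}^{q+1}$; conversely, if $\omega P_q^{(1)}u\in\Dom_{\min}^{q+1}$, then because $P_q^{(1)}u$ is a singular section with index $\sigma_0$ in the open strip $(\gamma-1,\gamma)$, Proposition~\ref{DminInSigmaIsExact} in degree $q+1$ produces exactly such a $w$. Together with the reduction step this proves the theorem. The only genuinely deep ingredient is Proposition~\ref{DminInSigmaIsExact} (used in the ``only if'' direction), which in turn rests on Theorems~\ref{MainTheorem} and \ref{AdjointPairingProps}; everything else is bookkeeping with the Taylor expansion and the regularity lemmas. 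Finally, for the statement to be consistent I would note that both sides of the equivalence depend only on the class $\mathbf u$: replacing $u$ by $u+A_{q-1}^{(0)}v$ changes $A_q\omega u$ by $A_q(\omega A_{q-1}^{(0)}v)$, and $\omega A_{q-1}^{(0)}v\in\Dom_{\min}^q$ by Example~\ref{DminInterference}, so this correction lies in $A_q(\Dom_{\min}^q)\subset\Dom_{\min}^{q+1}$.
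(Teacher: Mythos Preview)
Your proposal is correct and follows essentially the same approach as the paper: expand $A_q\omega u$ via the Taylor expansion to isolate $\omega P_q^{(1)}u$ modulo $\Dom_{\min}^{q+1}$, then invoke Example~\ref{DminInterference} for the ``if'' direction and Proposition~\ref{DminInSigmaIsExact} for the ``only if'' direction. The only organizational difference is that you first establish the reduction $A_q\omega u\in\Dom_{\min}^{q+1}\iff\omega P_q^{(1)}u\in\Dom_{\min}^{q+1}$ as a standalone equivalence before handling both implications, whereas the paper treats the two directions separately (and in the forward direction rewrites $A_q^{(0)}v$ as $A_qv-\tilde P_q^{(1)}v$ rather than citing Example~\ref{DminInterference} directly); your closing remark on independence of representative is an addition not present in the paper.
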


\begin{proof}
The condition $\mathbf P_q^{(1)}\mathbf u=0$ means there is $v\in \gSing_{\sigma_0-\im}(\Z^\wedge;E^q_\Z)$ such that $P_q^{(1)} u=A_q^{(0)}v$. In $\M$ near $\Z$ we have
\begin{equation*}
A_q u=A_q^{(0)}u+P_q^{(1)}u+x\tilde P_q^{(2)}u=A_q^{(0)}v+x\tilde P_q^{(2)}u=A_q v-\tilde P_q^{(1)}v+x\tilde P_q^{(2)}u.
\end{equation*}
Thus
\begin{align*}
A_q \omega u&=\omega A_q u-\im \csym(A_q)(d\omega)(u) \\
&=A_q\omega v+\omega(-\tilde P_q^{(1)}v+x\tilde P_q^{(2)}u)+\im \csym(A_q)(d\omega)(v-u).
\end{align*}
By Example \ref{DminInterference}, $A_q\omega v\in \Dom_{\min}^{q+1}$. The term $\im \csym(A)(d\omega)(v-u)$ belongs to the space $C_c^\infty(\open\M;E^{q+1})$, so it also lies in $\Dom_{\min}^{q+1}$. Finally, $\omega(-\tilde P_q^{(1)}v+x\tilde P_q^{(2)}u)\in x^{-\gamma+1}H^\infty_b\subset \Dom_{\min}^{q+1}$.

Let conversely $A_q\omega u \in \Dom^{q+1}_{\min}$. We have
$$
A_q\omega u = \omega P_q^{(1)} u + \omega x\tilde{P}_q^{(2)}u - \im\csym(A_q)(d\omega)(u),
$$
where $\omega x\tilde{P}_q^{(2)}u - \im\csym(A_q)(d\omega)(u) \in x^{-\gamma+1}H^{\infty}_b \subset \Dom^{q+1}_{\min}$,
and thus $P_q^{(1)}u \in \gSing_{\sigma_0}(\Z^{\wedge};E^{q+1}_{\Z})$ such that $\omega P_q^{(1)}u \in \Dom^{q+1}_{\min}$.
By Proposition~\ref{DminInSigmaIsExact} there exists $v \in \gSing_{\sigma_0-\im}(\Z^{\wedge};E^q_{\Z})$ such that $A_q^{(0)}v = P_q^{(1)}u$, so $\mathbf P_q^{(1)}\mathbf u = 0$.
\end{proof}

\begin{corollary}
If $\sigma_0\in \spec_b^q(A)$ with $\gamma-1<\Im\sigma_0<\gamma$ and $\sigma_0\notin \spec_b^{q+1}(A)$, then $A_q(\omega u)\in \Dom_{\min}^{q+1}$ for every representative $u$ of an element in $\H_{\sigma_0}^q(\Z;A)$. In particular, if
\begin{equation*}
\spec^{q}_b(A)\cap\spec^{q+1}_b(A)\cap\{\sigma_0\in\C;\; \gamma-1<\Im(\sigma_0)<\gamma\} = \emptyset
\end{equation*}
then $A_q\Dom_{\max}^q\subset \Dom_{\min}^{q+1}$.
\end{corollary}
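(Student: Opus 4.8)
The plan is to obtain both assertions as immediate consequences of the theorem just proved, combined with the structural description of $\Dom_{\max}^q/\Dom_{\min}^q$ from Corollary~\ref{SingReprestentative}; no new analytic input is needed.

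For the first assertion, I would argue as follows. Fix $\sigma_0\in\spec_b^q(A)$ with $\gamma-1<\Im\sigma_0<\gamma$ and $\sigma_0\notin\spec_b^{q+1}(A)$, and let $u\in\gSing_{\sigma_0}(\Z^\wedge;E^q_\Z)$ be $A_q^{(0)}$-closed, with class $\mathbf u\in\H^q_{\sigma_0}(\Z;A)$. By Definition~\ref{bSpec}, $\sigma_0\notin\spec_b^{q+1}(A)$ means $\H^{q+1}_{\sigma_0}(\Z;A)=\{0\}$, hence the induced map $\mathbf P_q^{(1)}\colon\H^q_{\sigma_0}(\Z;A)\to\H^{q+1}_{\sigma_0}(\Z;A)$ is the zero map, and in particular $\mathbf P_q^{(1)}\mathbf u=0$. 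The preceding theorem then yields $A_q(\omega u)\in\Dom_{\min}^{q+1}$, which is the claim.

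For the second assertion I would take an arbitrary $u\in\Dom_{\max}^q$ and use Corollary~\ref{SingReprestentative} to write $u=\omega\sum_{\sigma_0\in\Sigma_q^\gamma}u_{\sigma_0}+u'$ with $u'\in\Dom_{\min}^q$, $u_{\sigma_0}\in\gSing_{\sigma_0}(\Z^\wedge;E^q_\Z)$, and $A_q^{(0)}u_{\sigma_0}=0$. Since $A_q$ carries $\Dom_{\min}^q$ into $\Dom_{\min}^{q+1}$ (this is the relative complex), the term $A_qu'$ lies in $\Dom_{\min}^{q+1}$. For each $\sigma_0\in\Sigma_q^\gamma$ one has $\sigma_0\in\spec_b^q(A)$ and $\gamma-1<\Im\sigma_0<\gamma$ by \eqref{SigmaQGamma}, so the emptiness hypothesis forces $\sigma_0\notin\spec_b^{q+1}(A)$; applying the first assertion to the $A_q^{(0)}$-closed section $u_{\sigma_0}$ gives $A_q(\omega u_{\sigma_0})\in\Dom_{\min}^{q+1}$. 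Adding over the finite set $\Sigma_q^\gamma$ gives $A_qu\in\Dom_{\min}^{q+1}$.

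There is no genuine obstacle here: the entire content of the corollary has already been established in the preceding theorem (which in turn rests on Proposition~\ref{DminInSigmaIsExact}) and in the decomposition of Corollary~\ref{SingReprestentative}. The only point requiring a little care is matching up the ranges of indices — namely that the exponents $\sigma_0$ occurring in the singular expansion of an element of $\Dom_{\max}^q$ are exactly the points of $\spec_b^q(A)$ in the open strip $\gamma-1<\Im\sigma<\gamma$, which is precisely the set of $\sigma_0$ for which the hypothesis guarantees $\H^{q+1}_{\sigma_0}(\Z;A)=\{0\}$.
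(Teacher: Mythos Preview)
Your proposal is correct and matches the paper's intended approach; the paper states this result as a corollary with no separate proof, so the derivation is meant to be exactly the two-step argument you give --- first invoke the vanishing of $\H^{q+1}_{\sigma_0}(\Z;A)$ to force $\mathbf P_q^{(1)}\mathbf u=0$ and apply the theorem, then use the singular decomposition from Corollary~\ref{SingReprestentative} together with $A_q(\Dom_{\min}^q)\subset\Dom_{\min}^{q+1}$ to handle a general element of $\Dom_{\max}^q$.
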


\begin{corollary}
If $P_q^{(1)}=0$, then $A_q\Dom_{\max}^q\subset \Dom_{\min}^{q+1}$.
\end{corollary}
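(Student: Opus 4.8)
The plan is to deduce this directly from the theorem immediately preceding it, together with the normal form for elements of $\Dom_{\max}^q$ modulo $\Dom_{\min}^q$ supplied by Corollary~\ref{SingReprestentative}. The hypothesis $P_q^{(1)}=0$ makes the chain map $P_q^{(1)}$ in the diagram above identically zero, so the induced map $\mathbf P_q^{(1)}:\H^q_{\sigma_0}(\Z;A)\to\H^{q+1}_{\sigma_0}(\Z;A)$ is the zero map for every $\sigma_0$. Hence, by that theorem, $A_q(\omega u_{\sigma_0})\in\Dom_{\min}^{q+1}$ for every $\sigma_0$ with $\gamma-1<\Im\sigma_0<\gamma$ and every $A_q^{(0)}$-closed $u_{\sigma_0}\in\gSing_{\sigma_0}(\Z^\wedge;E^q_\Z)$.

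Next I would take an arbitrary $w\in\Dom_{\max}^q$ and invoke Corollary~\ref{SingReprestentative} to write $w=\omega\sum_{\sigma_0\in\Sigma_q^\gamma}u_{\sigma_0}+w'$ with each $u_{\sigma_0}\in\gSing_{\sigma_0}(\Z^\wedge;E^q_\Z)$ satisfying $A_q^{(0)}u_{\sigma_0}=0$ and with $w'\in\Dom_{\min}^q$. Then $A_qw=\sum_{\sigma_0\in\Sigma_q^\gamma}A_q(\omega u_{\sigma_0})+A_qw'$. Each $A_q(\omega u_{\sigma_0})$ lies in $\Dom_{\min}^{q+1}$ by the previous step, and $A_qw'\in\Dom_{\min}^{q+1}$ because $A_q$ carries $\Dom_{\min}^q$ into $\Dom_{\min}^{q+1}$ (continuity in the graph norm and density of $C_c^\infty(\open\M;E^q)$). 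Summing, $A_qw\in\Dom_{\min}^{q+1}$, which is the assertion $A_q\Dom_{\max}^q\subset\Dom_{\min}^{q+1}$.

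A self-contained variant avoids cohomology altogether: with $P_q^{(1)}=0$ the Taylor expansion \eqref{TaylorOfA} reads $A_q=A_q^{(0)}+x\tilde P_q^{(2)}$ near $\Z$, so for $A_q^{(0)}$-closed $u_{\sigma_0}$ with $\gamma-1<\Im\sigma_0<\gamma$ one computes $A_q(\omega u_{\sigma_0})=\omega x\tilde P_q^{(2)}u_{\sigma_0}-\im\csym(A_q)(d\omega)(u_{\sigma_0})$; since $\Im\sigma_0<\gamma$, Lemma~\ref{RegularityOfSing} puts $\omega u_{\sigma_0}$ in $x^{-\mu}H^\infty_b(\M;E^q)$ for some $\mu<\gamma$, hence $\omega x\tilde P_q^{(2)}u_{\sigma_0}\in x^{1-\mu}H^\infty_b\subset x^{1-\gamma}H^\infty_b\subset\Dom_{\min}^{q+1}$ by Lemma~\ref{OnDomMinGenericA}, while the remaining term is in $C_c^\infty(\open\M;E^{q+1})$ because $d\omega$ is supported away from $\Z$; one then finishes as above via Corollary~\ref{SingReprestentative}. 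There is no genuine obstacle here — the theorem preceding the corollary already does the work — the only thing to watch is the weight bookkeeping in this variant, namely that the remainder terms strictly improve the weight past $-\gamma$, which is immediate from the strict inequality $\Im\sigma_0<\gamma$ defining $\Sigma_q^\gamma$.
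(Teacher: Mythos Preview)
Your proof is correct and matches the paper's intent: the corollary is stated without proof because it is immediate from the preceding theorem, and your first argument spells out exactly that deduction via Corollary~\ref{SingReprestentative}. Your self-contained variant is also fine; the only cosmetic point is that $\omega x\tilde P_q^{(2)}u_{\sigma_0}$ differs from $x\tilde P_q^{(2)}(\omega u_{\sigma_0})$ by a commutator term supported where $d\omega\ne0$, hence in $C_c^\infty(\open\M;E^{q+1})$, which you absorb anyway.
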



\end{document}